\DeclareMathAlphabet{\mathcal}{OMS}{cmsy}{m}{n}
\newcommand{\E}{\mathbb E}
\newcommand{\R}{\mathbb{R}}
\newcommand{\N}{\mathbb{N}}
\newcommand{\Z}{\mathbb{Z}}
\renewcommand{\P}{\mathbb{P}}
\newcommand{\ZZZ}{\mathcal{Z}}
\newcommand{\MMM}{\mathbf{M}}
\newcommand{\UUU}{\mathbf{U}}
\newcommand{\TTT}{\mathbb{T}}
\newcommand{\bfT}{\mathbf{T}}
\newcommand{\JJJ}{\mathbb{J}}
\newcommand{\III}{\mathbb{I}}
\newcommand{\VVV}{\mathbb{V}}
\newcommand{\BBB}{\mathbb{B}}
\newcommand{\Var}{\mathop{\mathrm{Var}}\nolimits}
\newcommand{\calR}{\mathcal{R}}
\newcommand{\calL}{\mathcal{L}}
\newcommand{\calX}{\mathcal{X}}
\newcommand{\calJ}{\mathcal{J}}
\newcommand{\eps}{\varepsilon}
\newcommand{\todistr}{\overset{d}{\underset{n\to\infty}\longrightarrow}}
\newcommand{\bsl}{\backslash}
\newcommand{\ind}{\mathbbm{1}}
\theoremstyle{plain}
\newtheorem{theorem}{Theorem}[section]
\newtheorem{lemma}[theorem]{Lemma}
\newtheorem{proposition}[theorem]{Proposition}
\theoremstyle{definition}
\theoremstyle{remark}
\newtheorem{remark}[theorem]{Remark}
\newcommand{\xxx}{}
\journal{Stochastic Processes and their Applications}
\begin{document}

\begin{frontmatter}
\title{Limiting distribution for the maximal standardized increment of a random walk}
%\titlerunning {Limiting distribution for the maximal standardized increment of a random walk}

\author[au1]{Zakhar Kabluchko\corref{cor1}}
\ead{zakhar.kabluchko@uni-ulm.de}
\cortext[cor1]{Corresponding author}

\author[au2]{Yizao Wang}
\ead{yizao.wang@uc.edu}

\address[au1]{Institute of Stochastics,
Ulm University,
Helmholtzstr.\ 18,
89069 Ulm, Germany}

\address[au2]{
Department of Mathematical Sciences,
University of Cincinnati,
4302 French Hall,
2815 Commons Way,
Cincinnati, OH, 45221-0025}

%\institute{Zakhar Kabluchko \at Institute of Stochastics,
%Ulm University,
%Helmholtzstr.\ 18,
%89069 Ulm, Germany
%\email{zakhar.kabluchko@uni-ulm.de}
%           \and
%Yizao Wang   \at
%Department of Mathematical Sciences,
%University of Cincinnati,
%4302 French Hall,
%2815 Commons Way,
%Cincinnati, OH, 45221-0025
%\email{yizao.wang@uc.edu}
%            }

%\date{Received: date / Accepted: date}

%
%\subjclass[2010]{Primary, 60G50, 60G70; secondary, 	60F10, 	60F05}

\begin{abstract}
Let $X_1,X_2,\ldots$ be independent identically distributed (i.i.d.)\ random variables with $\E X_k=0$, $\Var X_k=1$. Suppose that $\varphi(t):=\log \E e^{t X_k}<\infty$ for all $t>-\sigma_0$ and some $\sigma_0>0$.  Let $S_k=X_1+\ldots+X_k$ and $S_0=0$. We are interested in the limiting distribution of the \textit{multiscale scan statistic}
$$
\MMM_n=\max_{0\leq i <j\leq n} \frac{S_j-S_i}{\sqrt{j-i}}.
$$
We prove that for an appropriate normalizing sequence $a_n$, the random variable $\MMM_n^2-a_n$ converges to the Gumbel extreme-value law  $\exp\{-e^{-c x}\}$. The behavior of $\MMM_n$ depends strongly on the distribution of the $X_k$'s. We distinguish between four cases.  In the \textit{superlogarithmic} case we assume that $\varphi(t)<t^2/2$ for every $t>0$. In this case, we show that the main contribution to $\MMM_n$ comes from the intervals $(i,j)$ having  length $l:=j-i$ of order $a(\log n)^{p}$, $a>0$, where $p=q/(q-2)$ and $q\in\{3,4,\ldots\}$ is the order of the first non-vanishing cumulant of $X_1$ (not counting the variance). In the \textit{logarithmic} case we assume that the function $\psi(t):=2\varphi(t)/t^2$ attains its maximum $m_*>1$ at some unique point $t=t_*\in (0,\infty)$.
In this case, we show that the main contribution to $\MMM_n$ comes from the intervals $(i,j)$ of length $d_*\log n+a\sqrt{\log n}$, $a\in\R$, where $d_*=1/\varphi(t_*)>0$. In the \textit{sublogarithmic} case we assume that the tail of $X_k$ is heavier than $\exp\{-x^{2-\eps}\}$, for some $\eps>0$.  In this case, the main contribution to $\MMM_n$ comes from the intervals of length $o(\log n)$ and in fact, under regularity assumptions, from the intervals of length $1$. In the remaining, fourth case, the $X_k$'s are \textit{Gaussian}. This case has been studied earlier in the literature. The main contribution comes from intervals of length $a\log n$, $a>0$.  We argue that our results cover most interesting distributions with light tails.
The proofs are based on the precise asymptotic estimates for  large and moderate deviation probabilities for sums of i.i.d.\ random variables due to Cram\'er, Bahadur, Ranga Rao, Petrov and others, and a careful extreme value analysis of the random field of standardized increments by the double sum method.
\end{abstract}

\begin{keyword}
Extreme value theory  \sep increments of random walks \sep Erd\H{o}s--R\'enyi law \sep  large deviations \sep moderate deviations  \sep  multiscale scan statistic \sep Cram\'er series \sep Gumbel distribution \sep double sum method \sep subgaussian distributions \sep change-point detection
\MSC[2010] 60G50 \sep 60G70 \sep 60F10 \sep 60F05
\end{keyword}

\end{frontmatter}

\section{Introduction and statement of results}\label{sec:intro}
\subsection{Introduction}\label{subsec:intro}
Suppose we are given a long sequence of observations.  The observations are assumed to be independent identically distributed (i.i.d.)\ random variables with zero mean and unit variance, except, possibly, for a short interval, where the observations have positive mean.  This interval may be interpreted as a signal in an i.i.d.\ noise. The question is how to decide whether a signal is present and if yes, how to locate it.  A natural approach is to  build a \textit{multiscale scan statistic}. For every interval we compute the sum of the observations in this interval divided by the square root of the length of the interval. Large values of this normalized sum indicate the presence of a signal. Since no a priori knowledge about the location and length of the interval containing the signal is available, we take the maximum of such normalized sums over all possible intervals of all possible lengths.  Scan statistics with windows of \textit{fixed size} have been much studied; see, e.g.,\ \cite{glaz_book1,glaz_book2}. A large class of limit theorems dealing with fixed window size are the Erd\"os--R\'enyi--Shepp laws; see, e.g.,~\cite{csoergoe_book,csoergoe,deheuvels,deheuvels1,deheuvels2,book}.  The scan statistic we are interested in is built using windows of \textit{all possible sizes}.  In order to use this statistic for testing purposes we need to know its asymptotic distribution under the null hypothesis.

%In more precise terms, the problem can be stated as follows.
We arrive at the following problem. Let $X_1,X_2,\ldots$ be i.i.d.\ non-degenerate random variables with $\E X_k=0$, $\Var X_k=1$. Consider a random walk given by $S_k=X_1+\ldots+X_k$, $k\in\N$, and $S_0=0$.   For $n\in\N$ define the multiscale scan statistic $\MMM_n$ by
\begin{equation}\label{eq:def_Ln}
\MMM_n=\max_{0\leq i <j\leq n} \frac{S_j-S_i}{\sqrt{j-i}}.
\end{equation}
Following results on the asymptotic behavior of $\MMM_n$ as $n\to\infty$ are known. For random variables with finite exponential moments, \citet{shao}, confirming and extending a conjecture of R\'ev\'esz~\cite{revesz_book},  proved that
\begin{equation}\label{eq:shao}
\lim_{n\to\infty} \frac{\MMM_n}{\sqrt{2\log n}} = \sqrt{m_*} \;\;\; \text{a.s.}
\end{equation}
Here, $m_*\in [1,\infty]$ is a constant determined explicitly in terms of the distribution of $X_1$.
Shao's proof has been considerably simplified by~\citet{steinebach}; see also~\cite{kabluchko_munk1} for a multidimensional generalization. %For an exact convergence rate in Shao's law, see~\cite{kabluchko_munk2}.
%Concerning the limiting distribution of $\MMM_n$.
This describes the a.s.\ rate of growth of $\MMM_n$. But what about the limiting distribution?
In the case when $X_1,X_2,\ldots$ are i.i.d.\ standard Gaussian, \citet{siegmund_venkatraman} showed that for all $\tau\in\R$,
\begin{equation}\label{eq:siegmund_venkatraman}
\lim_{n\to\infty}\P\left[\MMM_n\leq \sqrt{2\log n}+ \frac{\frac 12 \log \log n +\log \frac{H}{2\sqrt {\pi}}+\tau}{\sqrt {2\log n}} \right]=\exp\{-e^{-\tau}\}.
\end{equation}
Here, $H>0$ is some explicit constant.  The distribution on the right-hand side is the Gumbel extreme-value law.  An independent proof of the same result was given in~\cite{kabluchko_unpub_07}. It was shown in~\cite{kabluchko_diss,kabluchko_unpub_07} that a result similar to~\eqref{eq:siegmund_venkatraman}, but with a different normalization, holds if we replace the Gaussian random walk by a Brownian motion. Generalizations of both results to the multidimensional setting with intervals replaced by cubes or rectangles, have been obtained in~\cite{kabluchko_spa_10}. Similar problem for a totally skewed $\alpha$-stable L\'evy process has been considered in~\cite{kabluchko_diss}. In the case when $X_1$ has regularly varying right tail, limit Fr\'echet distribution for $\MMM_n$ has been obtained by~\citet{mikosch1}; see also~\citet{mikosch2}.

Apart from these  special cases nothing has been known about the limiting distribution of $\MMM_n$.  Our aim is to settle this problem  for a broad class of random variables with light tails. It turns out that the behavior of $\MMM_n$ depends heavily on some fine properties of the distribution of $X_1$.
%In the first two cases, called the \textit{superlogarithmic} and the \textit{logarithmic} case,
We assume that for some $\sigma_0>0$,
\begin{equation}\label{eq:varphi_def}
\varphi(t) := \log \E e^{t X_1}<\infty \text{ for all } t\geq -\sigma_0.
\end{equation}
The function $\varphi$ (called the cumulant generating function of $X_1$) is strictly increasing on $[0,\infty)$, strictly convex, infinitely differentiable, and vanishes at $0$.

We will consider four cases depending on where the supremum of the function
\begin{equation}\label{eq:def_psi}
\psi(t):=\frac{\varphi(t)}{t^2/2}, \;\;\; t > 0,
\end{equation}
is attained. The constant $m_*$ in Shao's result~\eqref{eq:shao} is determined by $m_*=\sup_{t>0}\psi(t)$. Note that $\lim_{t\downarrow 0} \psi(t)=1$ since $\varphi(t)\sim t^2/2$ as $t\downarrow 0$. Hence, $m_*\geq 1$.  If $X_1$ is standard Gaussian, we even have $\psi(t) = 1$ identically, for all $t\in\R$.   Our four cases can be roughly described as follows, see Figure~\ref{fig:four_cases}:
\begin{enumerate}
\item \textit{Gaussian case:} $\psi(t)=1$ for all $t\in\R$.
\item \textit{Superlogarithmic case:}
%the supremum of $\psi$, equal to $1$, is attained as $t\downarrow 0$.
the supremum $m_*=1$  is attained as $t\downarrow 0$.
\item \textit{Logarithmic case:}
%the supremum of $\psi$, which is strictly larger than $1$, is attained at some finite $t=t_*\in (0,\infty)$.
the supremum $m_*>1$  is attained at some $t=t_*\in (0,\infty)$.
\item \textit{Sublogarithmic case:}
$m_*=+\infty$.
%the supremum of $\psi$  is infinite.  %is attained as $t\to\infty$.
\end{enumerate}
Since the Gaussian case has been fully analyzed in~\cite{siegmund_venkatraman,kabluchko_unpub_07,kabluchko_spa_10}, we concentrate on the remaining three cases.
%We will consider three cases depending on whether the supremum of the function $\frac{\varphi(t)}{t^2}$, $t\geq 0$, is attained at $0$, some finite positive number $t_*$ or at $+\infty$.
%The main difference between the three cases is the set length of the intervals which make the main contribution to $\MMM_n$.
Let us explain the difference between the cases. The definition of $\MMM_n$ involves a maximum taken over intervals $(i,j)$ of different lengths $l:=j-i$. It turns out that different lengths make different contributions to $\MMM_n$. In all three cases we will single out some family of lengths which are \textit{optimal} in the sense that the contribution of all other lengths to $\MMM_n$ is asymptotically negligible. We will show that the optimal lengths are given as follows:
\begin{enumerate}
\item \textit{Gaussian case:} $l=a\log n$, $a>0$.
\item \textit{Superlogarithmic case:} $l=a\log^p n$, $a>0$, where $p>1$.
\item \textit{Logarithmic case:} $l=d_*\log n + a\sqrt {\log n}$, $a\in\R$, where $d_*>0$.
\item \textit{Sublogarithmic case:} $l=o(\log n)$, and, under more assumptions, $l=1$.
\end{enumerate}
Here, $p>1$ and $d_*>0$ are parameters depending on the distribution of $X_1$.
To give  exact meaning to these statements we will analyze the random variable $\MMM_n(h_1,h_2)$ obtained by restricting the lengths over which the maximum is taken to some range $[h_1,h_2]$. Namely, for  $0\leq h_1<h_2$,  define
\begin{equation}\label{eq:def_Ln_A1A2}
\MMM_n(h_1,h_2)=\max_{\substack{0\leq i <j\leq n\\h_1\leq j-i\leq h_2}} \frac{S_j-S_i}{\sqrt{j-i}}.
\end{equation}
Then, the statement that in the superlogarithmic case the lengths $a\log^p n$, $a>0$, are optimal, means that
$$
\lim_{A\to +\infty} \limsup_{n\to\infty} \P[\MMM_n = \MMM_n(A^{-1} \log^p n, A\log^p n)] = 1.
$$
As we will show below, analogous statements hold in all four cases.

We are now ready to state our results on the limiting distribution of the multiscale scan statistic $\MMM_n$. In fact, the results will be stated in terms of $\MMM_n^2$ because this greatly simplifies the notation. It is easy to switch back to $\MMM_n$; see Section~\ref{subsubsec:Mn_Mn_square}.

%%%%%%%%%%%%%%%%%FIGURE 1: The graph of varphi %%%%%%%%%%%%%%%%%%%%%%
\begin{figure}[t]
\begin{center}
\includegraphics[height=0.4\textwidth, width=0.8\textwidth]{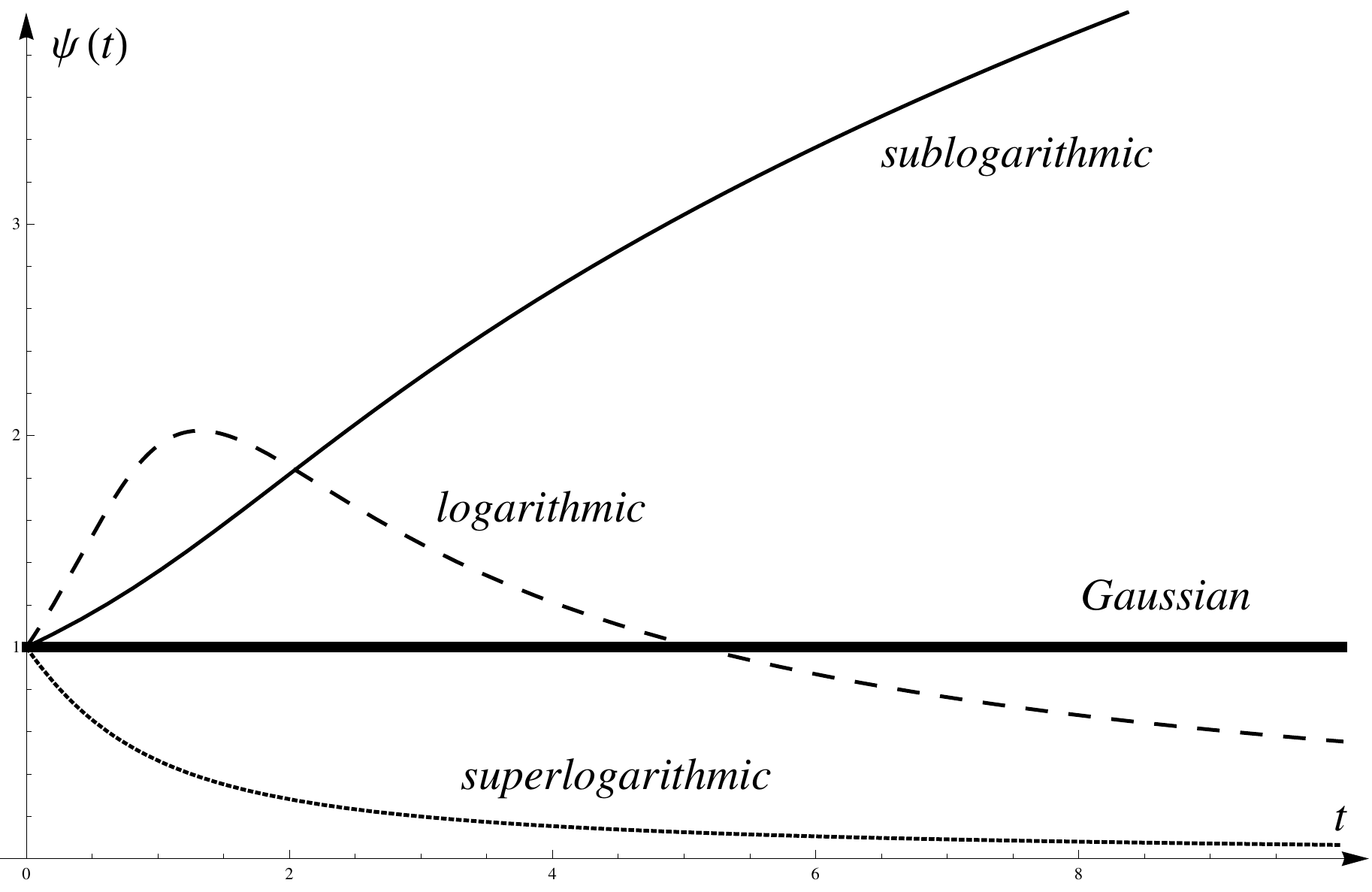}
\end{center}
\caption
{
The graph of $\psi(t)=2\varphi(t)/t^2$ in the Gaussian case (bold), superlogarithmic case (dotted), logarithmic case (dashed), sublogarithmic case (solid).
} \label{fig:four_cases}
\end{figure}
%%%%%%%%%%%%%%%%%%%%%%%%%%%%%%%%%%%%%%%%%%%%%%%%%%%%%%%%

%Let us stress that the normal distribution has $\psi(t)\equiv 1$ and hence does \textit{not} belong to any of these cases.

%Let us stress that the normal distribution having $\varphi(t)=\frac{t^2}{2}$ belongs to none of the three cases (since the supremum is attained for every $t>0$).

%We agree that $t_{\infty}$ is chosen to be maximal with the above property.
%Note that $\varphi$ is strictly  convex, infinitely differentiable function.
%The trivial case in which $X_1=0$ a.s.\ is always excluded from consideration.
%With this convention, $\varphi$ is even strictly convex.

\subsection{The superlogarithmic case} \label{subsec:subgauss}
Here we consider random variables which are in some sense dominated by the Gaussian distribution.
We assume that for all $\eps>0$,
%Recall that a random variable $X_1$ is called subgaussian if $\varphi(t)\leq \frac{t^2}{2}$ for all $t\in\R$; see~\cite{buldygin_kozachenko_book}.  %Here, we make a slightly stronger assumption.
%We make a closely related assumption, namely  that for all $\eps>0$,
\begin{equation}\label{eq:varphi_subgauss}
\sup_{t\geq \eps} \frac{\varphi(t)}{t^2/2}<1.
\end{equation}
Equivalently, $\psi(t)<1$ for every $t>0$, and $\limsup_{t\to\infty}\psi(t)<1$. A closely related notion is the subgaussianity; see~\cite{buldygin_kozachenko_book}.
The first non-zero term in the Taylor expansion of $\varphi(t)$ at $0$ is $t^2/2$ since we assume that $\E X_k=0$, $\Var X_k=1$. Of crucial importance will be the second non-zero term in the Taylor expansion of $\varphi(t)$. We have, for some $q\in\{3,4,\ldots\}$ and $\kappa>0$,
\begin{equation}\label{eq:varphi_taylor}
\varphi(t)=\frac {t^2}{2}-\kappa t^{q}+o(t^{q}), \;\;\; t\downarrow 0.
\end{equation}
Thus, $q$ is the order of the first non-zero cumulant of $X_1$, not counting the variance. The most common value of $q$ is $3$, however, for symmetric distributions the third cumulant vanishes and we typically have $q=4$.  Note that the coefficient $\kappa$ cannot be negative, since otherwise~\eqref{eq:varphi_subgauss} would be violated for sufficiently small $\eps>0$.
%Recall that the coefficients in the Taylor series of $\varphi(t)$ are called cumulants. The third cumulant is called the skewness, whereas the fourth one is called curtosis.  Usually, we have $q=3$. However, for symmetric distributions (or more generally, for distributions with non-zero skewness), but non-zero curtosis, we have $q=4$. Note also that the normal distribution does not satisfy~\eqref{eq:varphi_subgauss}, but would correspond formally to the case $q=\infty$ since all its cumulants starting with the third one vanish.
\begin{theorem}\label{theo:main_subgauss}
Let $X_1,X_2,\ldots$ be i.i.d.\ random variables with $\E X_k=0$, $\Var X_k=1$. Suppose that conditions~\eqref{eq:varphi_def}, \eqref{eq:varphi_subgauss}, \eqref{eq:varphi_taylor} are satisfied. Then, for all $\tau\in\R$,
\begin{equation}
\lim_{n\to\infty}\P\left[\frac 12 \MMM_n^2\leq  \log (n\log^{\frac 12 \cdot \frac  {q-6}{q-2}}n)+\tau\right]
=
\exp\big\{-\Lambda_{q,\kappa} e^{-\tau}\big\},
\end{equation}
where $\Lambda_{q,\kappa}=\frac{1}{\sqrt {\pi}} \Gamma(\frac{q}{q-2}) (2\kappa)^{\frac 2 {q-2}} $.
\end{theorem}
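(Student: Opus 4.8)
The plan is to apply the double sum method to the random field of standardized increments, after reducing the problem to the optimal length scale $l\asymp\log^p n$ identified in the introduction. First I would establish the reduction: using Shao's a.s.\ result~\eqref{eq:shao} (or a moderate-deviation tail bound) together with the announced statement $\lim_{A\to\infty}\limsup_n\P[\MMM_n=\MMM_n(A^{-1}\log^p n,A\log^p n)]=1$, it suffices to analyze $\MMM_n(A^{-1}\log^p n,A\log^p n)$ for large fixed $A$ and then let $A\to\infty$ at the end. For this truncated statistic the core task is to understand, for a single length $l$ in the optimal range, the probability $\P[(S_j-S_i)/\sqrt l > u_n]$ where $u_n$ is the relevant threshold (so that $\frac12 u_n^2\approx\log(n\log^{\frac12\cdot\frac{q-6}{q-2}}n)+\tau$), and this is exactly where the moderate-deviation expansions of Cramér, Bahadur--Ranga Rao and Petrov enter: because $u_n\sim\sqrt{2\log n}$ and $l\sim a\log^p n$, the quantity $x:=u_n/\sqrt l$ is of order $(\log n)^{(1-p)/2}\to 0$ at the precise rate $\log^{-1/(q-2)}n$, so $S_l/\sqrt l$ lies in the moderate-deviation zone and the Cramér series correction is governed by the first nonvanishing cumulant, i.e.\ by $q$ and $\kappa$ through the term $-\kappa t^q$ in~\eqref{eq:varphi_taylor}. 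Carrying out the saddle-point/Legendre-transform computation, one finds $\P[S_l > u_n\sqrt l]\approx \frac{1}{\sqrt{2\pi}\,u_n/\sqrt l\cdot\sqrt l}\exp\{-\frac12 u_n^2 - \kappa l (u_n/\sqrt l)^q(1+o(1))\}$, and substituting $l=al_n$ with $l_n=\log^p n$ shows the extra factor $\exp\{-\kappa a^{1-q/2}(\text{const})\cdot(\text{something of order }1)\}$ — it is this computation that makes the constant $\Lambda_{q,\kappa}$ and the exponent $p=q/(q-2)$ emerge, and it is the technically delicate heart of the proof.

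Next I would set up the double sum method proper. Fix $A$ and partition the length range $[A^{-1}l_n,Al_n]$ together with the position range $\{0,\dots,n\}$ into a grid of blocks; within each block the field $(S_j-S_i)/\sqrt{j-i}$ is approximately stationary (after centering the length at $l$) and approximately Gaussian on the short scale, with a covariance structure that, rescaled, converges to that of a stationary Gaussian field (an Ornstein--Uhlenbeck-type process in the "scale" direction and a Slepian-type process in the "position" direction). The first moment (single sum): sum the moderate-deviation probability $\P[(S_j-S_i)/\sqrt{j-i}>u_n]$ over all admissible $(i,j)$; the number of intervals of length near $l$ is $\approx n$, the number of dyadic-type length scales contributes a logarithmic factor whose interaction with the $l$-dependent Cramér correction produces the $\log^{\frac12\cdot\frac{q-6}{q-2}}n$ factor and, via a Laplace-type integral over $a$, the Gamma-function constant $\Gamma(q/(q-2))$ and the power $(2\kappa)^{2/(q-2)}$. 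The second moment (double sum / local correlations): show that the sum over pairs of \emph{correlated} intervals $(i,j)$ and $(i',j')$ — those that overlap substantially or have comparable length and nearby position — is of smaller order than the square of the first moment divided by the number of blocks, so that a Bonferroni/inclusion–exclusion argument (or the Chen--Stein method) gives Poisson convergence of the number of exceedances and hence the Gumbel limit $\exp\{-\Lambda_{q,\kappa}e^{-\tau}\}$. One also needs to handle the "long-range" pairs, where independence of increments makes the contribution factorize and be negligible, and the diagonal/self-overlap terms, where a maximal inequality for the standardized-increment field (of the type used by Shao and Steinebach) controls the local behavior.

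The main obstacle, I expect, is the uniformity of the moderate-deviation asymptotics: the Bahadur--Ranga Rao / Petrov expansions for $\P[S_l>x\sqrt l]$ must be applied \emph{simultaneously} for all lengths $l$ in the range $[A^{-1}l_n,Al_n]$ and with the deviation parameter $x=u_n/\sqrt l$ ranging over a shrinking-but-not-too-fast window, and one needs the error terms to be genuinely uniform so that summation over $l$ and over $(i,j)$ does not accumulate error. Keeping precise track of the Cramér series to the order dictated by $q$ — i.e.\ verifying that only the $t^q$ term of~\eqref{eq:varphi_taylor} survives in the limit while all higher-order terms and the $o(t^q)$ remainder are asymptotically negligible after multiplication by $l$ and exponentiation — is the crux; everything else (the Gaussian approximation on short scales, the block decomposition, the second-moment estimate) is standard double-sum-method machinery, though laborious. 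A secondary difficulty is correctly matching the deterministic normalizing constants so that the Laplace integral $\int_0^\infty \exp\{-\kappa' a^{1-q/2}\}\,\frac{\dd a}{a}$-type expression evaluates to $\frac{1}{\sqrt\pi}\Gamma(q/(q-2))(2\kappa)^{2/(q-2)}$, which is really a bookkeeping check on the substitution $l=a\log^p n$ but is where the stated value of $\Lambda_{q,\kappa}$ is pinned down.
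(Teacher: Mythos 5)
Your proposal is correct and follows the paper's proof essentially step for step: reduction to the optimal length window $l\asymp\log^p n$ and elimination of non-optimal lengths, individual exceedance probabilities from Cram\'er moderate deviations controlled by the Taylor expansion $I(s)=s^2/2+\kappa s^q+o(s^q)$ (which is where $p=q/(q-2)$ emerges exactly as you describe), a local analysis over discrete squares of side $q_n\sim\log^{p-1}n$ producing a Pickands-type constant, a Bonferroni double-sum estimate showing asymptotic independence of blocks, a Poisson limit theorem for finite-range-dependent exceedance events, and a Gamma integral over the scale parameter $a$ yielding $\Lambda_{q,\kappa}$. The one cosmetic deviation is your phrasing of the local limiting structure as Ornstein--Uhlenbeck/Slepian type: the paper instead writes the field near a base interval $(x,x+l)$ as the base value $\ZZZ_{x,x+l}$ plus two independent incremental random walks (one for each endpoint extension) which, after Donsker rescaling by $q_n$, converge to two independent drifted Brownian motions, giving $H(B)=\E\big[\sup_{t\in[0,B]}e^{\sqrt 2 W(t)-t}\big]$ -- a bookkeeping difference, not one of method.
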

It turns out that in the superlogarithmic case, the main contribution to $\MMM_n$ is done by intervals with length $a \log^p n$, where $a\in (0,\infty)$, and
\begin{equation}\label{eq:def_p}
p=\frac{q}{q-2}\in \left\{3,2, \frac 53, \frac 32, \frac 75, \ldots\right\}.
\end{equation}
%The most common values are $p=3$ and $p=2$.
Moreover, we will even prove that the ``intensity'' with which the length $a\log^p n$ contributes to $\MMM_n$ is given by some explicit function $\Lambda_{q,\kappa}(a)$. Namely, we have the following result.
\begin{theorem}\label{theo:main_subgauss_scales}
%Assume that~\eqref{eq:varphi_subgauss} and~\eqref{eq:varphi_taylor} hold.
%Let $d_n(a)=a\log^{\frac{q}{q-2}} n$, $a>0$, and fix arbitrary $0<A_1<A_2$.
Fix arbitrary $0<A_1<A_2$. Define $l_n^-=A_1\log^{p} n$ and $l_n^+=A_2\log^{p} n$.
Under the same assumptions as in Theorem~\ref{theo:main_subgauss}, for every $\tau\in\R$,
\begin{equation}
\lim_{n\to\infty}\P\left[\frac 12 \MMM_{n}^2(l_n^-,l_n^+) \leq  \log (n\log^{\frac 12 \cdot \frac {q-6}{q-2}}n)+\tau\right]
=
\exp\left\{ - e^{-\tau} \int_{A_1}^{A_2} \Lambda_{q,\kappa}(a) da \right\},
%e^{ - e^{-\tau} \int_{A_1}^{A_2} \Lambda_{q,\kappa}(a) da },
\end{equation}
where $\Lambda_{q,\kappa}:(0,\infty)\to (0,\infty)$ is a function given by
\begin{equation}\label{eq:def_Lambda_q_kappa}
\Lambda_{q,\kappa}(a)=\frac 1 {2\sqrt {\pi} a^{2}} \exp\{-\kappa 2^{\frac q2}a^{-\frac{q-2}{2}}\}. % \;\;\;a>0.
\end{equation}
\end{theorem}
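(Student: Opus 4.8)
The plan is to view $\MMM_n(l_n^-,l_n^+)$ as the maximum over the finite index set $T_n:=\{(i,j):0\le i<j\le n,\ l_n^-\le j-i\le l_n^+\}$ of the random field $Y(i,j):=(S_j-S_i)/\sqrt{j-i}$, and to analyse this maximum by the double sum method at the threshold $u_n$ determined by $\tfrac12u_n^2=\log(n\log^{\frac{q-6}{2(q-2)}}n)+\tau$, so that $u_n=\sqrt{2\log n}\,(1+o(1))$. It suffices to prove that the number of clusters of exceedances of $Y$ above $u_n$ inside $T_n$ converges in distribution to a Poisson random variable with mean $\mu:=e^{-\tau}\int_{A_1}^{A_2}\Lambda_{q,\kappa}(a)\,\dd a$; then $\P[\MMM_n(l_n^-,l_n^+)\le u_n]=\P[\text{no exceedance}]\to e^{-\mu}$, which is the assertion.

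\smallskip

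The first ingredient is a precise single-interval estimate. For $l\in[l_n^-,l_n^+]$ and $u$ in a bounded window around $\sqrt{2\log n}$ we have $u/\sqrt l\to0$, so we are in the moderate-deviation regime, and the Cram\'er--Petrov sharpening of the Bahadur--Ranga Rao asymptotics gives, uniformly over this range,
$$
\P[S_l\ge u\sqrt l]=\frac{1+o(1)}{u\sqrt{2\pi}}\,\exp\{-l\,I(u/\sqrt l)\},\qquad I(a)=\frac{a^2}{2}+\kappa a^{q}+o(a^{q})\quad(a\downarrow0),
$$
where $I$ is the Legendre transform of $\varphi$ and the expansion of $I$ is obtained by inverting \eqref{eq:varphi_taylor}. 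Writing $l=a\log^{p}n$ with $p=q/(q-2)$, $a\in[A_1,A_2]$, and using the identity $p(1-\tfrac q2)=-\tfrac q2$, the exponent becomes $l\,I(u_n/\sqrt l)=\tfrac12u_n^2+\kappa 2^{q/2}a^{-(q-2)/2}+o(1)$, whence
$$
\P[S_l\ge u_n\sqrt l]=\frac{e^{-\tau}\,e^{-\kappa 2^{q/2}a^{-(q-2)/2}}}{2\sqrt\pi\,n\,\log^{(q-4)/(q-2)}n}\,(1+o(1)).
$$
The delicate point is that this holds with the stated uniformity and that the Cram\'er-series correction $l\,I(u_n/\sqrt l)-\tfrac12u_n^2$ stays bounded (converging to $\kappa 2^{q/2}a^{-(q-2)/2}$) rather than blowing up --- which is exactly what singles out the exponent $p=q/(q-2)$; note that the remainder in Petrov's expansion is $O((u_n+1)/\sqrt l)=O(\log^{(1-p)/2}n)=o(1)$ since $p>1$. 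This is the only place where hypothesis \eqref{eq:varphi_subgauss} and the Taylor coefficient $\kappa$ enter.

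\smallskip

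The second ingredient is the local structure near a high value. Fix a scale $l_0\asymp\log^{p}n$ and let $t_0$ solve $\varphi'(t_0)=u_n/\sqrt{l_0}$, so $t_0\sim u_n/\sqrt{l_0}\to0$. Under the Cram\'er tilt $\widetilde\P$ with per-coordinate density $e^{t_0 X_k-\varphi(t_0)}$, the increments are i.i.d.\ with mean $u_n/\sqrt{l_0}$, variance $\varphi''(t_0)\to1$ and uniformly bounded third absolute moment, so an exceedance of $Y$ at $(i_0,i_0+l_0)$ is an event of order-one probability; a Berry--Esseen estimate under $\widetilde\P$ then shows that, in the rescaled coordinates $i=i_0+(l_0/u_n^2)s$, $j-i=l_0+(l_0/u_n^2)v$,
$$
u_n\bigl(Y(i,j)-u_n\bigr)\ \longrightarrow\ \tfrac v2+W_1(s+v)-W_2(s)+\xi ,
$$
with $W_1,W_2$ independent two-sided Brownian motions and $\xi$ an independent exponential excess; the drift $v/2$ (from expanding $t_0\sqrt{l_0+v}$) is what makes the cluster of finite expected size. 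The expected size of the excursion set $\{(s,v):\tfrac v2+W_1(s+v)-W_2(s)+\xi>0\}$ is a finite, $\kappa$-independent constant --- the limiting local field is Brownian --- computable by a Pickands-type double-limit argument or a Slepian comparison with the Gaussian case treated in \cite{siegmund_venkatraman,kabluchko_unpub_07}; undoing the $l_0/u_n^2\asymp l_0/(2\log n)$ rescaling turns it into a mean cluster size $\sim(l_0/\log n)^2$ and supplies the prefactor $\tfrac1{2\sqrt\pi}$ of $\Lambda_{q,\kappa}(a)$. The double sum method then proceeds routinely: partition $T_n$ into blocks of side $\asymp\delta\,l_0/u_n^2$ at local scale $l_0$ (with $\delta\downarrow0$ taken last), so $\P[\max_{B}Y>u_n]$ is $\asymp\delta^2\,\P[S_{l_0}\ge u_n\sqrt{l_0}]$ with an explicit $\delta$-free prefactor; multiplying by the number $\asymp n\delta^{-2}a^{-2}\log^{2-p}n$ of scale-$l_0$ blocks per unit of $a$ and invoking the single-interval estimate, the powers of $\log n$ cancel (because $2-p-(q-4)/(q-2)=0$) and the first moment of the cluster count tends to $\mu$. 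To get the Poisson limit one shows $\sum_{B\ne B'}\P[\max_B Y>u_n,\ \max_{B'}Y>u_n]$ is negligible: for blocks whose length--offset windows are disjoint or well separated the two events involve essentially disjoint families of increments and are asymptotically independent by the independence of increments of the walk, while for neighbouring blocks the local Gaussian bound makes the joint probability $o(1)$ times the product. A Chen--Stein coupling (or the factorial-moment method) upgrades this to convergence of the cluster point process to a Poisson process of intensity $\mu$, and letting $\delta\downarrow0$ removes the block discretization.

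\smallskip

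The hardest step, I expect, is the local analysis above: establishing the rescaled conditional convergence of $Y$ to the Brownian functional \emph{uniformly} and jointly in the length $l$ and the level $u$, both of which move with $n$, and extracting the exact prefactor of $\Lambda_{q,\kappa}(a)$ from the resulting Pickands-type quantity. This couples a Petrov-type moderate-deviation expansion that must be uniform with a controlled error precisely at the critical exponent $p=q/(q-2)$, with a Berry--Esseen bound valid uniformly over the whole family of tilted laws $\widetilde\P$ as $t_0\downarrow0$; it is also the only point where one must check that the cluster-size constant is $\kappa$-free, so that the limit has the clean product form of the statement, with all the $\kappa$-dependence concentrated in the exponential factor $e^{-\kappa 2^{q/2}a^{-(q-2)/2}}$.
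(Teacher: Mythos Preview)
Your overall architecture---double sum method at the threshold $u_n$, Poisson limit for the cluster count, single-interval moderate-deviation estimate via the expansion $I(a)=a^2/2+\kappa a^q+o(a^q)$---matches the paper's proof exactly, and your identification of the critical exponent $p=q/(q-2)$ as the scale at which the Cram\'er correction stays bounded is precisely the point of Lemma~\ref{lem:one_point_sub_gauss}.

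Where you diverge is in the local analysis. You propose a Cram\'er tilt followed by a Berry--Esseen bound under the tilted law to obtain the conditional limit of the rescaled field near an exceedance. The paper takes a more direct and lighter route (Lemma~\ref{lem:local_sub_gauss}): it writes every interval in the local square $\TTT_{Bq_n}(x,x+l_n)$ as the base interval $(x,x+l_n)$ extended by $k_1$ steps to the left and $k_2$ to the right; the resulting corrections $S_{k_1}^{(1)},S_{k_2}^{(2)}$ are \emph{independent} of the base value $\ZZZ_{x,x+l_n}$ and of each other, so no tilt is needed at all. One conditions on the base value $V_n$ (whose distribution is read off from the single-interval estimate, giving the asymptotic exponential density $e^s$ you call $\xi$), and applies plain Donsker to the untilted walks $S^{(1)},S^{(2)}$ over $Bq_n$ steps with $q_n\sim\log^{p-1}n$. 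The limit is then $\sup_{t_1,t_2\in[0,B]}(W_1(t_1)+W_2(t_2)-(t_1+t_2)/\sqrt{2a})$, which factorises and yields the Pickands-type factor $H^2(B/a)$ with $H(B)=\E\sup_{[0,B]}e^{\sqrt2 W-t}$; the final constant comes from $\lim_{B\to\infty}H(B)/B=1$. This buys two things over your approach: no uniform-in-$t_0$ Berry--Esseen is needed, and the $\kappa$-independence of the cluster constant is immediate because the incremental walks never see the tilt.

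One smaller point: your block parameter should go to infinity, not to zero. The paper takes squares of side $Bq_n$ with $B\in\N$ fixed, lets $n\to\infty$, then $B\to\infty$; the Pickands factor $H^2(B/a)/B^2\to a^{-2}$ in this limit supplies the $1/a^2$ in $\Lambda_{q,\kappa}(a)$. Your ``$\delta\downarrow0$ taken last'' with blocks of side $\delta l_0/u_n^2$ has the limit order reversed and would collapse each block to a single point, losing the clustering correction.
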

Note that $\Lambda_{q,\kappa}=\int_{0}^{\infty}\Lambda_{q,\kappa}(a)da$, so that formally we can  obtain Theorem~\ref{theo:main_subgauss} from Theorem~\ref{theo:main_subgauss_scales} by taking $A_1=0$, $A_2=\infty$.
%We can view $\Lambda_{q,\kappa}(a)$ as an intensity measuring the contribution of intervals having length  $(a+o(1))\log^{p} n$.
Note that $\Lambda_{q,\kappa}(a)\to 0$ as $a\downarrow 0$ or $a\uparrow \infty$. This means that too small and too large intervals make small contributions to $\MMM_n$. The unique maximum of the function $a\mapsto \Lambda_{q,\kappa}(a)$  is attained at
$$
a_*= 2^{\frac{q-4}{q-2}}\kappa^{\frac 2 {q-2}}(q-2)^{\frac 2 {q-2}}.
$$
Thus, the largest contribution to $\MMM_n$ comes from the intervals of length $(a_*+o(1))\log^p n$.

\subsection{The logarithmic case}\label{subsec:supergauss}
We assume that~\eqref{eq:varphi_def} holds and there is $t_*>0$ such that
\begin{equation}\label{eq:varphi_supergauss}
m_*:=\frac{\varphi(t_*)}{t_*^2/2} > 1.
\end{equation}
Moreover, we assume that $t_*$ is the unique point of maximum of $\psi(t)=2\varphi(t)/t^2$ in the following uniform sense: for every $\eps>0$,
\begin{equation}\label{eq:t*_unique}
\sup_{0<t<t_*-\eps}  \frac{\varphi(t)}{t^2/2} < m_*
\;\;\;\text{and}\;\;\;
\sup_{t>t_*+\eps}  \frac{\varphi(t)}{t^2/2} < m_*.
\end{equation}
Equivalently, $\psi(t)<m_*$ for all $t>0$, $t\neq t_*$, and $\limsup_{t\to\infty}\psi(t)<m_*$. Recall that a random variable $X_1$ is lattice if there are $a>0$, $b\in\R$ such that $X_1\in a\Z+b$ with probability $1$. Otherwise, $X_1$ is called non-lattice.
\begin{theorem}\label{theo:main_supergauss}
Let $X_1,X_2,\ldots$ be i.i.d.\ random variables with $\E X_k=0$, $\Var X_k=1$. Suppose that conditions~\eqref{eq:varphi_def}, \eqref{eq:varphi_supergauss}, \eqref{eq:t*_unique} are satisfied and the distribution of $X_1$ is non-lattice.
Then, for every $\tau\in\R$,
\begin{equation}\label{eq:main_supergauss}
\lim_{n\to\infty}\P\left[\frac 12 \MMM_n^2\leq m_*(\log n+\tau)\right]
=
\exp\left\{- \Theta_*  e^{-\tau} \right\},
\end{equation}
where $\Theta_*= \frac{\sqrt{m_*} H_*^2}{\sqrt 2\beta_* \sigma_*}$, $\sigma_*^2=\varphi''(t_*)$  and
%$H_*$ is given by~\eqref{eq:H*}.
\begin{align}
\beta_*^2 &=\frac {(\varphi'(t_*))^4}{8 m_*} \left(\frac 1 {\sigma_*^{2}}-\frac 1 {m_*}\right)>0,\label{eq:beta*_H*1}\\
H_* &= \lim_{B\to\infty} \frac 1B \E \left[ \max _{k=0,\ldots,B} e^{t_* S_k-k\varphi(t_*)}\right]\in (0,1). \label{eq:beta*_H*2}
\end{align}
%Here $s_*>0$ is the unique solution of $\frac{I(s_*)}{s_*^2/2}= \frac 1 {m_*}$.
\end{theorem}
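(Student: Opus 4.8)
The plan is to apply the double sum method to the two-parameter field $(i,j)\mapsto(S_j-S_i)/\sqrt{j-i}$ after an exponential change of measure and a reduction to the optimal length scale. Set $u_n^2=2m_*(\log n+\tau)$, so that the probability in \eqref{eq:main_supergauss} equals $\P[\MMM_n\le u_n]$, and let $I(x)=\sup_t(tx-\varphi(t))$ be the rate function of $X_1$. The variational heart of the matter is elementary: $\psi(t)\le m_*$ gives $\varphi(t)\le\frac{m_*}2t^2$, hence $I(x)\ge x^2/(2m_*)$, while \eqref{eq:t*_unique} additionally forces $\varphi$ to be finite on $[0,\infty)$ and $\limsup_{x\to\infty}x^2/I(x)<2m_*$; moreover $x\mapsto I(x)/x^2$ attains its minimum $\frac1{2m_*}$ at the single point $x_*:=\varphi'(t_*)=m_*t_*$, where $I(x_*)=\varphi(t_*)=\frac{m_*}2t_*^2$, $I'(x_*)=t_*$ and $I''(x_*)=1/\sigma_*^2$. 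Consequently $f(l):=l\,I(u_n/\sqrt l)$ satisfies $f(l)\ge\log n+\tau$ for all $l$, with minimum $\log n+\tau$ attained at $l_*:=u_n^2/x_*^2=d_*(\log n+\tau)\sim d_*\log n$ (recall $d_*=1/\varphi(t_*)=2/(m_*t_*^2)$), and a short computation using \eqref{eq:beta*_H*1} gives $f''(l_*)\sim\beta_*^2/\log n$. These identities single out the length scale $d_*\log n$ and, via a Gaussian sum over lengths, will produce the factor $\sqrt{2\pi\log n}/\beta_*$ appearing below.

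\emph{Step 1: reduction to the optimal window.} Put $l_n^\pm:=d_*\log n\pm C\sqrt{\log n}$. A Chernoff bound gives $\P[S_l\ge u_n\sqrt l]\le e^{-f(l)}\le e^{-\tau}/n$, while the Bahadur--Ranga Rao--Petrov refined expansion gives $\P[S_l\ge u_n\sqrt l]=(1+o(1))e^{-f(l)}/(t_l\sigma_l\sqrt{2\pi l})$ uniformly over the relevant range, with no lattice correction because $X_1$ is non-lattice and $u_n/\sqrt l$ stays in a compact subset of $(0,\infty)$. Using that $f(l)/\log n\to1$ only for $l\sim d_*\log n$, that $f(l)\ge(1+\eta)\log n$ for $l\le\eps_0\log n$ (a consequence of $\limsup_{x\to\infty}x^2/I(x)<2m_*$), and that $f(l)-\log n-\tau\sim\frac12 f''(l_*)(l-l_*)^2$ near $l_*$, a union bound over lengths $l\le L_0\log n$ outside the window contributes $o(1)$ plus a Gaussian tail $\int_{|s|>C}e^{-\beta_*^2 s^2/2}\,ds$, while a dyadic blocking argument (valid since for $l\gg\log n$ the standardized increments lie in the moderate deviation regime) shows $\MMM_n(L_0\log n,n)\le\sqrt{2\log n}\,(1+o_P(1))<u_n$ as $m_*>1$. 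Hence $\lim_{C\to\infty}\limsup_{n\to\infty}\P[\MMM_n\neq\MMM_n(l_n^-,l_n^+)]=0$, so it suffices to prove \eqref{eq:main_supergauss} for $\MMM_n(l_n^-,l_n^+)$ and then let $C\to\infty$.

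\emph{Step 2: block decomposition and single-block asymptotics.} Cover $\{(i,j):l_n^-\le j-i\le l_n^+,\ j\le n\}$ by disjoint cubes $Q$ of side $T$ (a large fixed integer) in the $(i,j)$-plane. For $Q$ anchored at $(i_0,j_0)$ with $l_0=j_0-i_0$, tilt the $X_k$ by $t_{l_0}\approx t_*$, so that on the tilted law the increments over intervals of length $\approx l_0$ become typical; the event $\{S_j-S_i>u_n\sqrt{j-i}\}$ turns into a moderate deviation of a centered walk over a region of size $O(T)$, and because $\{S_j-S_{j_0}\}_{j\approx j_0}$ and $\{S_i-S_{i_0}\}_{i\approx i_0}$ depend on disjoint summands, the refined expansion yields
\[
\P\Big[\max_{(i,j)\in Q}\frac{S_j-S_i}{\sqrt{j-i}}>u_n\Big]=(1+o(1))\,\frac{e^{-f(l_0)}}{t_*\sigma_*\sqrt{2\pi l_0}}\;\E\Big[\max_{j\approx j_0}e^{t_*(S_j-S_{j_0})-\cdots}\Big]\,\E\Big[\max_{i\approx i_0}e^{-t_*(S_i-S_{i_0})-\cdots}\Big].
\]
Normalized and with $T\to\infty$, each of the two expectations is $\sim TH_*$, where $H_*$ is the constant in \eqref{eq:beta*_H*2} (existence of the limit and $H_*\in(0,1)$ follow from subadditivity and Doob's maximal inequality for the martingale $e^{t_*S_k-k\varphi(t_*)}$), so $Q$ contributes $\sim T^2H_*^2e^{-f(l_0)}/(t_*\sigma_*\sqrt{2\pi l_0})$. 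Summing over cubes, the sum over $i_0$ yields a factor $\sim n$ and the Gaussian sum over the cube lengths $l_0$ yields $\sim e^{-(\log n+\tau)}\sqrt{2\pi\log n}/(\beta_* T)$, up to a factor tending to $1$ as $C\to\infty$; the $T^2$ cancels, and using $l_0\sim d_*\log n$ and $1/(t_*\sqrt{d_*})=\sqrt{m_*/2}$ one obtains
\[
\sum_Q\P\Big[\max_{(i,j)\in Q}\frac{S_j-S_i}{\sqrt{j-i}}>u_n\Big]\ \ton\ \Theta_{*,C}\,e^{-\tau},\qquad \Theta_{*,C}\uparrow\Theta_*=\frac{\sqrt{m_*}\,H_*^2}{\sqrt2\,\beta_*\sigma_*}\ \ (C\to\infty).
\]

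\emph{Step 3: double sum, Poissonization, and the main obstacle.} By the Bonferroni inequalities, $\P[\MMM_n(l_n^-,l_n^+)\le u_n]\to e^{-\Theta_{*,C}e^{-\tau}}$ will follow from Step 2 once the double sum $\sum_{Q\neq Q'}\P[\text{an exceedance occurs in }Q\text{ and one in }Q']$ is shown to vanish after $n\to\infty$ and then $T\to\infty$, the remaining Poissonization being supplied by the Chen--Stein method since the field is independent over intervals with disjoint supports; combining this with Step 1 and letting $C\to\infty$ then proves the theorem. The double sum is the crux: the pairs $(i,j),(i',j')$ are classified by their mutual position — disjoint, overlapping, nested, or sharing one endpoint — and in each class one needs sharp and uniform large/moderate deviation bounds for the joint law of $(S_j-S_i,S_{j'}-S_{i'})$ showing that the rate of a simultaneous exceedance exceeds $2\log n$ by enough to outweigh the number of configurations (the disjoint case by independence; the overlapping and nested cases by conditioning on the common increment and exploiting the strict minimality of $I(x)/x^2$ at $x_*$). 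This case analysis, together with the verification that the single-block constant is $H_*^2$ and not $H_*$ — which rests precisely on the independence of the left- and right-endpoint increments inside a cube — is the main technical difficulty; the remaining ingredients are careful but routine applications of the Cram\'er--Bahadur--Ranga Rao--Petrov large and moderate deviation estimates.
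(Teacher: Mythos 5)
You follow the same double-sum architecture as the paper -- reduce to lengths $d_*\log n\pm C\sqrt{\log n}$, compute a local block asymptotic, Bonferroni, Poisson approximation -- and your large-deviation bookkeeping at $s_*=\varphi'(t_*)$ (giving $d_*$, $\beta_*^2$, and the Gaussian sum over lengths) is correct. But Step~3 contains a genuine gap: you identify the double sum as ``the crux'' and outline a four-way case analysis, but you do not carry it out, and the required estimate is far from routine. The paper's key Lemma~\ref{lem:double_probab_supergauss} shows that for two near-optimal intervals $K_1,K_2$ with non-overlap $\Delta(K_1,K_2)$ one has $\P[\ZZZ_{K_1}>u_n,\,\ZZZ_{K_2}>u_n]\le C_1e^{-\tau}n^{-1}(\log n)^{-1/2}e^{-C_2\Delta}$, proved by covering the joint exceedance with $E_1\cup(E_2\cap E_3)$ (a threshold-plus-$\Delta$ deviation of the common increment; the first exceedance; an independent positive deviation of the complementary increment) and feeding conditions~\eqref{eq:I_supergauss}, \eqref{eq:s*_unique} into Petrov's inequality (Theorem~\ref{theo:petrov_est}) and Lemma~\ref{lem:ld_est}. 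Your ``conditioning on the common increment and exploiting strict minimality of $I(x)/x^2$'' is exactly what must be quantified, uniformly over all overlapping and nested configurations and over all $|\tau|<\sqrt{\log n}$; without this estimate the Poissonization does not go through, and writing it down is most of the work.

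Two secondary issues. In Step~2, the tilting heuristic is not a proof of the local block asymptotic; the paper instead conditions on $\ZZZ_{x,x+l_n}=u_n-V_n/u_n$, shows the conditional exceedance probability $G_n(s_n)$ converges (for $s_n\to s$) to $\P[U_1+U_2>s/m_*]$ by letting the two independent incremental walks converge in distribution, and then invokes a purpose-built dominated-convergence lemma (Lemma~\ref{lem:dom_conv}) to justify $\int G_n\,d\mu_n/P_n(0)\to(\E e^U)^2$; this interchange of limit and integral is exactly what your tilting sketch glosses over, and it is where $H_*(B)^2$ really comes from. Finally, subadditivity of $N\mapsto\E\max_{k\le N}e^{t_*S_k-k\varphi(t_*)}$ does give existence of $H_*$, but Doob's maximal inequality does not give $H_*<1$; the paper proves $H_*\in(0,1)$ via Spitzer's identity and a Tauberian argument (Lemma~\ref{lem:pickands_const_supergauss}), identifying $H_*=\P[\forall k\in\N:\,W_k<0,\,W_{-k}\le0]$, which is strictly between $0$ and $1$ because the tilted walks drift to $-\infty$.
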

The  positivity of $\beta_*^2$  will be established in Lemma~\ref{lem:one_point_super_gauss}. More explicit expression for $H_*$ will be given in Section~\ref{subsec:pickands_type_const} below. We believe that in the lattice case the convergence in Theorem~\ref{theo:main_supergauss} breaks down. (A similar phenomenon was observed in~\cite{komlos} for scan statistics with fixed window size). However, we still have tightness.
\begin{theorem}\label{theo:main_supergauss_lattice}
Let $X_1,X_2,\ldots$ be i.i.d.\ random variables with $\E X_k=0$, $\Var X_k=1$. Assume that conditions~\eqref{eq:varphi_def}, \eqref{eq:varphi_supergauss}, \eqref{eq:t*_unique} hold.
%hold and the distribution of $X_1$ is lattice.
Then,  the sequence of random variables $\MMM_n^2- 2m_*\log n$, $n\in\N$, is tight.
\end{theorem}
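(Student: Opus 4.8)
The plan is to establish tightness by a two-sided bound. For the upper tail, I would show that for every $\eps>0$ there is $\tau$ with $\limsup_n \P[\MMM_n^2 - 2m_*\log n > \tau] < \eps$. This follows from a first-moment (union bound) estimate: writing $\MMM_n^2 = \max_{0\le i<j\le n}(S_j-S_i)^2/(j-i)$ and splitting the maximum over dyadic blocks of lengths $l=j-i$, one bounds $\P[\MMM_n^2 > 2m_*\log n + \tau]$ by $\sum_{l=1}^n n\,\P[S_l > \sqrt{l(2m_*\log n + \tau)}]$. For each length $l$ one applies the large/moderate deviation machinery referenced in the abstract (Cram\'er, Bahadur--Ranga Rao, Petrov): the key point is that the rate function $x\mapsto x^2/(2\psi(\sqrt{l}\,x/\ldots))$ — really, the Legendre transform $\varphi^*$ — controls $\P[S_l \ge l y]$ by $e^{-l\varphi^*(y)}$, and by the definition of $m_*=\sup_t \psi(t)$ one has $\varphi^*(y)\ge y^2/(2m_*)$ for all $y$, with the bound tight only near $y=\varphi'(t_*)$, i.e. near $l\asymp d_*\log n$. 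Summing over $l$ and using that the number of relevant scales is polylogarithmic gives a bound of the form $C e^{-\tau/(2m_*)}\cdot(\text{polylog})$ uniformly in $n$, once $\tau$ is large; the polylog factor is harmless because it is dominated by the exponential in $\tau$. The only subtlety is the non-lattice assumption is \emph{not} available here, but for an upper bound the cruder inequality $\P[S_l\ge ly]\le e^{-l\varphi^*(y)}$ (Chernoff) needs no smoothness, so this direction works for lattice distributions too.

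For the lower tail, I would show that for every $\eps>0$ there is $\tau$ with $\limsup_n \P[\MMM_n^2 - 2m_*\log n < -\tau] < \eps$, equivalently that $\MMM_n^2$ is not too small with high probability. The natural route is a second-moment / disjoint-blocks argument restricted to the optimal scale $l_n = \lfloor d_*\log n\rfloor$. Partition $\{1,\ldots,n\}$ into roughly $n/l_n$ disjoint blocks of length $l_n$; the increments $(S_{(k+1)l_n}-S_{kl_n})/\sqrt{l_n}$ over these blocks are i.i.d., and each exceeds $\sqrt{2m_*\log n - \tau}$ with probability $p_n$ that — by a \emph{lower} large-deviation bound (here one does want a local limit / Bahadur--Ranga Rao estimate, but for a lower bound a weaker exponential lower bound on $\P[S_{l_n}\ge \ldots]$ suffices, and such a bound holds for lattice variables provided the target value is achievable, which after tilting to $t_*$ it essentially is up to $O(1)$ corrections absorbed into $\tau$) — satisfies $n p_n \to \infty$ as $\tau\to\infty$. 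Since the blocks are independent, $\P[\MMM_n^2 < 2m_*\log n - \tau] \le \P[\text{all blocks fail}] = (1-p_n)^{n/l_n} \le \exp(-np_n/l_n)$, which tends to $0$ once $\tau$ is large. This argument deliberately avoids the delicate constant $\Theta_*$ and the non-lattice hypothesis, trading precision for robustness.

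The main obstacle, and the place requiring care, is the lower-tail direction for \emph{lattice} $X_1$: one must guarantee that $\P[S_{l_n}\ge \sqrt{l_n}\sqrt{2m_*\log n - \tau}]$ is not accidentally zero or exponentially too small because the required partial-sum value falls strictly between lattice points. I would handle this by exponentially tilting the walk by $t_*$ (the Cram\'er transform), under which $S_{l_n}$ has mean $l_n\varphi'(t_*)$ and fluctuations of order $\sqrt{l_n}=O(\sqrt{\log n})$; the target $\sqrt{l_n(2m_*\log n-\tau)}$ differs from this mean by $O(\sqrt{\log n})$ (this is precisely why the optimal window is $d_*\log n + a\sqrt{\log n}$), so the event has probability bounded below by $c/\sqrt{l_n}$ times the tilt factor $e^{-l_n\varphi^*(\cdots)}$, and the $O(1/\sqrt{\log n})$ loss is again absorbed into choosing $\tau$ slightly larger. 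A clean way to package both halves is to invoke the sharper Theorems~\ref{theo:main_subgauss_scales}-type estimates from the body of the paper for the non-lattice case and, for the lattice case, observe that tightness is monotone under the constructions above; since only tightness (not convergence) is claimed, no uniformity in the lattice phase is needed. I expect the write-up to be short once the large-deviation lemmas of the paper (the single-length probability estimates, presumably proved in a preliminary section) are quoted as black boxes.
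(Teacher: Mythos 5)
Your two-sided plan (a union-bound upper tail and an independent-blocks lower tail) matches the architecture of the paper's proof via Lemmas~\ref{lem:tight_lattice_1} and~\ref{lem:tight_lattice_2}, but both halves of your argument have quantitative gaps that cannot be papered over by ``absorbing a polylog into~$\tau$''.

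For the upper tail, you insist on using only the Chernoff bound $\P[S_l\ge l y]\le e^{-l I(y)}$ on the grounds that the sharper estimates might not hold in the lattice case. But with Chernoff alone the union bound over the optimal window $l\in[d_*\log n - A\sqrt{\log n},\, d_*\log n + A\sqrt{\log n}]$ already fails: there are roughly $n\cdot A\sqrt{\log n}$ intervals $(i,j)$ in this window, each term is $\approx e^{-\tau}e^{-\beta_*^2 a^2/2}/n$ with $a=(l-d_*\log n)/\sqrt{\log n}$, and the Riemann sum over $l$ produces an extra factor of order $\sqrt{\log n}$. The union bound thus yields $C e^{-\tau}\sqrt{\log n}$, which diverges as $n\to\infty$ for any \emph{fixed}~$\tau$; you cannot then conclude $\limsup_n\P[\MMM_n^2>2m_*\log n+\tau]<\eps$. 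The paper avoids this by applying Theorem~\ref{theo:petrov_est}, which supplies the crucial extra factor $u_n^{-1}\sim 1/\sqrt{\log n}$ and which, contrary to your worry, is stated in the paper (and in Petrov's original work) to hold both in the lattice and non-lattice cases. So your decision to downgrade to Chernoff is exactly the mistake; the Bahadur--Ranga Rao theorem (Theorem~\ref{theo:large_dev}) is what requires non-lattice, not the one-sided Petrov-type bound.

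For the lower tail, your disjoint-blocks argument with one interval of length $l_n\approx d_*\log n$ per block is off by a factor of order $\log^{3/2}n$. The exceedance probability for a single optimal-length interval at level $u_n(-\tau)$ is of order $e^{\tau}/(n\sqrt{\log n})$, while the number of disjoint blocks of length $l_n$ is of order $n/\log n$; hence the expected number of exceedances across your blocks is $\sim e^{\tau}/\log^{3/2}n\to 0$ for every fixed~$\tau$, and $(1-p_n)^{n/l_n}\to 1$ rather than to something bounded away from~$1$. To make the expected number of exceedances a constant you must count all $\sim n\sqrt{\log n}$ intervals in the optimal strip (varying both the starting point over all $n$ positions and the length over a window of width $\sim\sqrt{\log n}$), which forces you to handle overlapping intervals. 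This is exactly what the paper's Lemma~\ref{lem:exc_JJJ_lattice_supergauss} does: it groups intervals into macro-blocks $\JJJ_n(z)$ of width $w_n\sim\log n$ (each containing $\sim\log^{3/2}n$ overlapping intervals), establishes a Bonferroni lower bound $\P[\max_{\JJJ_n(z)}\ZZZ_{i,j}>u_n(\tau)]\ge C e^{-\tau}w_n/n$ by controlling the double sum (Lemma~\ref{lem:double_probab_supergauss}) together with Petrov's lower estimate for lattice deviations, and only then plays the independence card across the $\sim n/w_n$ disjoint macro-blocks. Your worry about lattice gaps in the lower large-deviation bound is real but is a secondary issue; the primary flaw is that one interval per block simply does not see enough of the field.
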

In the next theorem we compute the contribution of different lengths to $\MMM_n$. Let $d_*=1/\varphi(t_*)$. We will show that only intervals whose length differs from $d_*\log n$ by a quantity of order $\sqrt {\log n}$ are relevant. Recall that $\MMM_n(h_1,h_2)$ was defined in~\eqref{eq:def_Ln_A1A2}.
\begin{theorem}\label{theo:main_supergauss_scales}
%Suppose that~\eqref{eq:varphi_supergauss} and~\eqref{eq:t*_unique} hold. Assume that the distribution of $X_1$ is non-lattice.
Fix arbitrary $A_1<A_2$. Define $l_n^-=d_*\log n + A_1\sqrt{\log n}$ and $l_n^+=d_*\log n + A_2\sqrt{\log n}$.
Under the same assumptions as in Theorem~\ref{theo:main_supergauss}, for every $\tau\in\R$,
\begin{equation}
\lim_{n\to\infty}\P\left[\frac 12 \MMM_n^2(l_n^-, l_n^+) \leq  m_*(\log n+\tau)\right]=\exp\left\{ - e^{-\tau} \int_{A_1}^{A_2} \Theta(a) da\right\}.
\end{equation}
Here, $\Theta:\R\to (0,\infty)$ is a function given by $\Theta(a)=\frac{\sqrt{m_*}H_*^2}{2\sqrt {\pi} \sigma_*} e^{-\frac{\beta_*^2 a^2}2}$, where $\beta_*$ and $H_*$ are as in~\eqref{eq:beta*_H*1} and~\eqref{eq:beta*_H*2}.
\end{theorem}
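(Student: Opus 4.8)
\medskip\noindent\textbf{Proof strategy.} The plan is to run the double sum method. Set $u_n:=\sqrt{2m_*(\log n+\tau)}$, so that the probability in question is $\P[\MMM_n(l_n^-,l_n^+)\le u_n]$, i.e.\ the probability that the standardized increment field $(S_j-S_i)/\sqrt{j-i}$ fails to exceed $u_n$ on every interval $(i,j)$ with $l_n^-\le j-i\le l_n^+$. I will show that the number $N_n$ of exceedance \emph{clusters} converges in distribution to a Poisson random variable with mean $\mu:=e^{-\tau}\int_{A_1}^{A_2}\Theta(a)\,da$; since $\{N_n=0\}$ is exactly the event above, this gives the limit $e^{-\mu}$. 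Write $L=\log n$. From the definitions of $m_*$ and $d_*=1/\varphi(t_*)$ together with the stationarity relation $t_*\varphi'(t_*)=2\varphi(t_*)$ (equivalently $\psi'(t_*)=0$) one checks that $u_n\sqrt l/l\to\varphi'(t_*)$ whenever $l=d_*L+a\sqrt L$, so all relevant lengths sit in the large deviation regime governed by the tilting parameter $t_*$.

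\medskip\noindent\textbf{Step 1: one-point estimate.} For a single interval of length $l=d_*L+a\sqrt L$ I would feed $\P[S_l>u_n\sqrt l]$ into the precise large deviation theorem of Bahadur and Rao, in the non-lattice refinement of Petrov. Writing $I$ for the Legendre transform of $\varphi$, the exponent is the smooth function $g(l):=l\,I(u_n/\sqrt l)=\sup_t\{t u_n\sqrt l-l\varphi(t)\}$, which I Taylor-expand about its joint critical point in $(t,l)$. The critical equations $\varphi'(t)=u_n/\sqrt l$ and $\varphi(t)=t u_n/(2\sqrt l)$, upon division, force $t\varphi'(t)=2\varphi(t)$, i.e.\ $t=t_*$, and then $l=l^*:=d_*(L+\tau)$ with $g(l^*)=L+\tau$. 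Thus the linear term in $l-l^*$ vanishes, and computing $g''(l^*)$ (using $I''=1/\varphi''$ and $\sigma_*^2=\varphi''(t_*)$) gives $g''(l^*)\sim\beta_*^2/L$ with $\beta_*^2$ as in~\eqref{eq:beta*_H*1}, whose positivity is Lemma~\ref{lem:one_point_super_gauss}. Combined with the Bahadur--Rao prefactor $1/(t_*\sigma_*\sqrt{2\pi l})$ and $l-l^*=a\sqrt L+O(1)$ this yields, uniformly for $a$ in compact sets,
\[
\P[S_l>u_n\sqrt l]\ \sim\ \frac{1}{t_*\sigma_*\sqrt{2\pi d_* L}}\cdot\frac{e^{-\tau}}{n}\,e^{-\beta_*^2a^2/2}.
\]
Summing over the $\sim n$ admissible starting points and over the integer lengths in $[l_n^-,l_n^+]$ (a Riemann sum of mesh $1/\sqrt L$ in the variable $a$), and using the identity $t_*\sqrt{d_*}=\sqrt{2/m_*}$, one obtains
\[
\E[\#\text{exceedances in }[l_n^-,l_n^+]]\ \longrightarrow\ \frac{\sqrt{m_*}}{2\sqrt\pi\,\sigma_*}\,e^{-\tau}\int_{A_1}^{A_2}e^{-\beta_*^2a^2/2}\,da\ =:\ \nu.
\]

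\medskip\noindent\textbf{Step 2: local structure and the constant $H_*$.} Because $S_j-S_i$ and $S_{j'}-S_{i'}$ are independent when $(i,j]$ and $(i',j']$ are disjoint, the only dependence in the field comes from overlapping intervals, and these organize exceedances into clusters. Parametrizing intervals by (starting point, length) and tiling the relevant strip by blocks of $B$ consecutive starting points times $B$ consecutive lengths, I would argue via the Cram\'er tilt by $t_*$ --- under which the increments \emph{inside} the reference interval acquire drift $\varphi'(t_*)$ while the fresh increments \emph{outside} it keep drift $0$ --- that: (i) enlarging an exceeding interval by $m$ units at either endpoint forces a fresh partial sum of length $|m|$ to exceed a quantity of order $|m|$, which becomes exponentially unlikely as $|m|\to\infty$, so clusters occupy only $O(1)\times O(1)$ cells and a block of any fixed size $B$ eventually contains whole clusters; and (ii) a running-maximum computation shows that the probability that a $B\times B$ block carries an exceedance is asymptotically $H_*^2B^2$ times the per-cell probability, the two factors of $H_*=\lim_{B\to\infty}\frac1B\E[\max_{0\le k\le B}e^{t_*S_k-k\varphi(t_*)}]\in(0,1)$ from~\eqref{eq:beta*_H*2} accounting for the perturbation freedom in the two endpoints. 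Consequently the expected number of clusters tends to $H_*^2\nu=\mu$, and combining with Step~1 identifies the per-length intensity as $\Theta(a)=\frac{\sqrt{m_*}H_*^2}{2\sqrt\pi\,\sigma_*}e^{-\beta_*^2a^2/2}$.

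\medskip\noindent\textbf{Step 3: Poisson limit, and the main difficulty.} It then remains to upgrade the first moment to a genuine Poisson approximation, which I would do by the method of factorial moments (or the Chen--Stein method): the $k$-th factorial moment of $N_n$ is a sum over ordered $k$-tuples of blocks; tuples containing two blocks within distance $O(B)$ are negligible by (i), while for $k$ pairwise well-separated blocks the joint exceedance probability factorizes exactly by independence of disjoint increments, so the $k$-th factorial moment converges to $\mu^k$. Hence $N_n\Rightarrow\mathrm{Poisson}(\mu)$ and $\P[\MMM_n(l_n^-,l_n^+)\le u_n]=\P[N_n=0]\to e^{-\mu}$, which is the assertion; letting $A_1\to-\infty$, $A_2\to+\infty$ and using $\int_{\R}\Theta(a)\,da=\Theta_*$ recovers Theorem~\ref{theo:main_supergauss}. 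Step~1 is essentially bookkeeping with classical large deviation asymptotics; the crux is Step~2, namely pinning down the two-dimensional cluster geometry, proving the block probability factorizes through $H_*^2$ with $H_*\in(0,1)$, and carrying out all estimates \emph{uniformly} in the length parameter $a$ and in $B$ so that the limits $n\to\infty$ and then $B\to\infty$ may be taken in turn. The non-lattice hypothesis enters precisely to keep the Bahadur--Rao prefactor in Step~1 free of arithmetic oscillations; without it only the tightness of Theorem~\ref{theo:main_supergauss_lattice} remains.
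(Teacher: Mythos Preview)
Your proposal is correct and follows essentially the same three-step architecture as the paper: the one-point estimate via Bahadur--Rao--Petrov with a quadratic Taylor expansion of $l\mapsto lI(u_n/\sqrt l)$ about $l^*=d_*\log n$ (the paper's Lemma~\ref{lem:one_point_super_gauss}), the local $B\times B$ block computation producing the factor $H_*^2(B)$ and hence $H_*^2$ (Lemma~\ref{lem:local_super_gauss}), and then a Poisson limit over approximately independent blocks. The only cosmetic differences are that the paper conditions on the value at the base interval rather than invoking the Cram\'er tilt directly, and it organizes the Poisson step through Pickands' Bonferroni double sum (Lemmas~\ref{lem:exc_JJJ_supergauss}--\ref{lem:double_sum_supergauss}) followed by a finite-range Poisson limit on the $\JJJ_n(z)$ blocks, rather than via factorial moments or Chen--Stein; both routes are standard and equivalent here.
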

Note that $\Theta_*=\int_{-\infty}^{\infty}\Theta(a)da$, so that we can obtain (at least formally) Theorem~\ref{theo:main_supergauss} from Theorem~\ref{theo:main_supergauss_scales} by taking $A_1=-\infty$, $A_2=\infty$. Since $\Theta(a)$ attains its maximum at $a=0$, the maximal contribution to $\MMM_n$ comes from  the intervals with length $d_*\log n+ o(\sqrt {\log n})$.

\subsection{The sublogarithmic case}\label{subsec:exponential}
In this case we consider random variables whose right tail is heavier than the standard Gaussian tail. In this case only intervals of length $o(\log n)$ make contribution to $\MMM_n$, as the next theorem shows.
\begin{theorem}\label{theo:main_exp}
Let $X_1,X_2,\ldots$ be i.i.d.\ random variables with $\E X_k=0$, $\Var X_k=1$ and such that $\varphi(t)=\log \E e^{tX_1}$ is finite on $[0,t_0)$, for some $t_0>0$. Assume that for some $\alpha<2$ we have $\P[X_1>x]>e^{-x^{\alpha}}$, for sufficiently large $x$.   Then, for every $a>0$,
\begin{equation}
\lim_{n\to\infty}\P[\MMM_n=\MMM_n(1, a\log n)] =1.
\end{equation}
\end{theorem}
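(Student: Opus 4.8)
The plan is to sandwich $\MMM_n$ between the contribution of the length-one intervals, which is already large, and the contribution of all intervals of length at least $a\log n$, which is negligible by comparison. The device is a deterministic threshold $T_n=(\log n)^{\beta}$, and the whole argument hinges on one inequality: since $\alpha<2$ we have $1/\alpha>1/2$, so we may fix an exponent $\beta$ with $1/2<\beta<1/\alpha$. I would then prove the two facts
\[
\P\big[\MMM_n(1,1)\ge T_n\big]\longrightarrow 1
\qquad\text{and}\qquad
\P\big[\MMM_n(a\log n,n)>T_n\big]\longrightarrow 0 .
\]
Since every admissible length $l$ satisfies $l\le a\log n$ or $l\ge a\log n$, one has $\MMM_n=\max\{\MMM_n(1,a\log n),\,\MMM_n(a\log n,n)\}$ for $n$ large; hence on the intersection of the two events above $\MMM_n(a\log n,n)\le T_n\le\MMM_n(1,1)\le\MMM_n(1,a\log n)$, forcing $\MMM_n=\MMM_n(1,a\log n)$, which is the assertion of the theorem.

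For the first fact I would use only single observations: $\MMM_n(1,1)=\max_{1\le k\le n}X_k$, so by independence and the tail hypothesis $\P[X_1>x]>e^{-x^{\alpha}}$,
\[
\P\big[\MMM_n(1,1)<T_n\big]=\big(1-\P[X_1>T_n]\big)^n\le\exp\{-n\,\P[X_1>T_n]\}\le\exp\{-n\,e^{-T_n^{\alpha}}\}.
\]
Because $\beta<1/\alpha$ we have $T_n^{\alpha}=(\log n)^{\alpha\beta}=o(\log n)$, so $n\,e^{-T_n^{\alpha}}\to\infty$ and the right-hand side tends to $0$. This is the only place the tail assumption enters, and it is entirely routine.

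The substance is the second fact. A crude union bound over the at most $n$ starting points and at most $n$ admissible lengths gives $\P[\MMM_n(a\log n,n)>T_n]\le n^2\,\max_{a\log n\le l\le n}\P[S_l>T_n\sqrt l]$, so it suffices to bound $\P[S_l>T_n\sqrt l]$ by $o(n^{-2})$ uniformly in $l\in[a\log n,n]$; I expect such super-polynomial decay to be available precisely because $\beta>1/2$. Fix $t_1\in(0,t_0)$ so small that $\varphi(t)\le t^2$ on $[0,t_1]$ (legitimate since $\varphi(t)=t^2/2+o(t^2)$ as $t\downarrow0$), and distinguish two ranges according to the size of $T_n/\sqrt l$. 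If $T_n/\sqrt l\ge 2t_1$ (``short'' $l$, i.e.\ $l\le T_n^2/(4t_1^2)$), Chernoff's bound at $t=t_1$ gives $\P[S_l>T_n\sqrt l]\le e^{-t_1T_n\sqrt l+t_1^2 l}\le e^{-\frac12 t_1T_n\sqrt l}\le e^{-\frac12 t_1\sqrt a\,(\log n)^{\beta+1/2}}$, using $t_1^2 l\le\frac12 t_1T_n\sqrt l$ and $l\ge a\log n$. If $T_n/\sqrt l<2t_1$ (``long'' $l$), the optimizing parameter $t^{\ast}:=T_n/(2\sqrt l)$ lies in $(0,t_1)$, so Chernoff's bound at $t^{\ast}$ together with $\varphi(t^{\ast})\le(t^{\ast})^2$ gives $\P[S_l>T_n\sqrt l]\le e^{-T_n^2/4}=e^{-(\log n)^{2\beta}/4}$. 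Since $\beta+\tfrac12>1$ and $2\beta>1$, both bounds are $o(n^{-k})$ for every $k$, hence $o(n^{-2})$ uniformly, and the second fact follows.

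I do not expect a genuinely hard step: no sharp large- or moderate-deviation asymptotics are needed here, only Chernoff's inequality and the second-order Taylor expansion of $\varphi$ at the origin. The one delicate point is the calibration of the exponent $\beta$. It must be $>1/2$ so that, after normalization by $\sqrt l$, every interval of length $\ge a\log n$ sits in a large-deviation regime whose probability decays faster than any power of $n$ — this is what lets the wasteful $n^2$ union bound survive, and uniformly in $a$, so that the conclusion holds for all $a>0$. And it must be $<1/\alpha$ so that a single observation already exceeds $T_n$. The two constraints are compatible exactly when $\alpha<2$; the same dichotomy is what makes the sublogarithmic case the regime in which arbitrarily short intervals carry all the mass.
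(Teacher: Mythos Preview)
Your proposal is correct and follows essentially the same route as the paper: choose the threshold $T_n=(\log n)^\beta$ with $\tfrac12<\beta<\tfrac1\alpha$, show that $\UUU_n=\MMM_n(1,1)$ exceeds $T_n$ with probability tending to $1$, and show by a crude union bound plus Chernoff that $\MMM_n(a\log n,n)$ stays below $T_n$ with probability tending to $1$. The only cosmetic difference is that the paper packages the Chernoff estimates via the Legendre--Fenchel transform $I$ (splitting according to whether $u_n/\sqrt l$ is larger or smaller than a fixed $s_0$), whereas you apply Chernoff directly with the Taylor bound $\varphi(t)\le t^2$ near the origin and split according to whether $T_n/\sqrt l$ is large or small; the two case distinctions and the resulting bounds are equivalent.
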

Under some regularity assumptions on the tail of $X_1$ it is possible to show that only intervals of length $1$ (that is, only individual observations) contribute to $\MMM_n$. In this case, the study of $\MMM_n$ is equivalent to the study of the maximum $\UUU_n=\max\{X_1,\ldots,X_n\}$.  We assume that for some $\alpha\in [1,2)$ and $D>0$,
\begin{equation}\label{eq:exponenntial_tail}
\lim_{x\to +\infty} \frac{1}{x^{\alpha}} \log \P [X_1>x] = -D. %\text{ as } x\to +\infty.
\end{equation}
\begin{theorem}\label{theo:main_exp_regular}
Let $X_1,X_2,\ldots$ be i.i.d.\ random variables with $\E X_k=0$, $\Var X_k=1$ and such that~\eqref{eq:exponenntial_tail} holds. Then,
$
\lim_{n\to\infty}\P[\MMM_n=\UUU_n] =1.
$
\end{theorem}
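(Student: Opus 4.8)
\medskip
\noindent\textit{Proof proposal.}
The plan is to show that, with probability tending to one, the optimal interval in the definition of $\MMM_n$ has length one, so that $\MMM_n=\UUU_n$. All limits below are as $n\to\infty$.

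First I would reduce to short intervals. Condition~\eqref{eq:exponenntial_tail} guarantees that $\P[X_1>x]\ge e^{-x^{\alpha'}}$ for all large $x$ and any fixed $\alpha'\in(\alpha,2)$, and that $\varphi(t)=\log\E e^{tX_1}$ is finite in a right neighbourhood of $0$ (on all of $\R$ if $\alpha>1$, on $[0,D)$ if $\alpha=1$). Hence Theorem~\ref{theo:main_exp} applies, so for every fixed $a>0$ we have $\P[\MMM_n=\MMM_n(1,a\log n)]\to1$. Since $\MMM_n(1,1)=\UUU_n$ and $\MMM_n(1,a\log n)=\max(\UUU_n,\MMM_n(2,a\log n))$, it suffices to produce a single $a>0$ for which $\P[\MMM_n(2,a\log n)<\UUU_n]\to1$.

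Next I would fix a small $\delta>0$ and set $\mu_n:=(1-\delta)(D^{-1}\log n)^{1/\alpha}$. From~\eqref{eq:exponenntial_tail}, $\P[X_1\ge\mu_n]=n^{-(1-\delta)^{\alpha}(1+o(1))}$, so $n\,\P[X_1\ge\mu_n]\to\infty$ because $(1-\delta)^\alpha<1$, and therefore $\P[\UUU_n<\mu_n]\le\exp\{-n\,\P[X_1\ge\mu_n]\}\to0$. It thus remains to prove $\P[\MMM_n(2,a\log n)\ge\mu_n]\to0$ for suitably small $a,\delta$. For an interval of length $l$ with $2\le l\le a\log n$ put $v=v_{n,l}:=\sqrt l\,\mu_n$; then $v/l=\mu_n/\sqrt l\ge\mu_n/\sqrt{a\log n}\asymp(\log n)^{1/\alpha-1/2}\to\infty$ uniformly in $l$, since $\alpha<2$. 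The exponential Chebyshev inequality gives $\P[S_l\ge v]\le e^{-lI(v/l)}$, where $I(t)=\sup_{\lambda\ge0}(\lambda t-\varphi(\lambda))$ is the Cram\'er rate function, and a Laplace/Tauberian computation based on~\eqref{eq:exponenntial_tail} (applied to $\varphi(\lambda)$ as $\lambda\to\infty$ when $\alpha>1$, and as $\lambda\uparrow D$ when $\alpha=1$) yields $I(t)\sim Dt^\alpha$ as $t\to\infty$. Hence, for any $\eps>0$ and all large $n$,
\[
\P[S_l\ge v]\ \le\ e^{-(1-\eps)D\,l\,(v/l)^\alpha}\ =\ e^{-(1-\eps)D\,v^\alpha l^{1-\alpha}}\ =\ n^{-(1-\eps)(1-\delta)^\alpha\,l^{\,1-\alpha/2}},
\]
uniformly in $2\le l\le a\log n$, where I used $v^\alpha l^{1-\alpha}=\mu_n^\alpha l^{1-\alpha/2}=(1-\delta)^\alpha D^{-1}(\log n)\,l^{1-\alpha/2}$. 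Since there are at most $n$ intervals of each length, a union bound gives
\[
\P[\MMM_n(2,a\log n)\ge\mu_n]\ \le\ \sum_{l=2}^{\lfloor a\log n\rfloor}n\cdot n^{-(1-\eps)(1-\delta)^\alpha\,l^{\,1-\alpha/2}}.
\]
Because $1-\alpha/2>0$, the map $l\mapsto l^{1-\alpha/2}$ is increasing and $l^{1-\alpha/2}\ge 2^{1-\alpha/2}>1$ for $l\ge2$; choosing $\eps,\delta$ so small that $(1-\eps)(1-\delta)^\alpha 2^{1-\alpha/2}\ge1+2\eta$ for some $\eta>0$, every summand is at most $n^{-2\eta}$ and the whole sum is $\le a(\log n)n^{-2\eta}\to0$. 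Combining with the two previous steps shows $\MMM_n(2,a\log n)<\mu_n\le\UUU_n$ with probability $\to1$, whence $\MMM_n=\UUU_n$ with probability $\to1$.

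The main obstacle is the large-deviation estimate $\P[S_l\ge v]\le n^{-(1-\eps)(1-\delta)^\alpha l^{1-\alpha/2}}$: it has to recover the \emph{sharp} rate constant $D$ (any loss by a constant factor exceeding $2^{1-\alpha/2}$ in the exponent would already destroy the summation, since $2^{1-\alpha/2}<2$), and it must hold uniformly over the whole range $2\le l\le a\log n$ of lengths and over the corresponding levels $v=\sqrt l\,\mu_n$. This is exactly where the precise asymptotics for large and moderate deviation probabilities of i.i.d.\ sums are needed; the subcase $\alpha=1$, in which $\varphi$ has a finite abscissa of convergence, requires a slightly separate but entirely parallel argument (the same Chernoff bound together with $I(t)/t\to D$). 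An alternative to the $\varphi$-computation is to dominate $X_1$ stochastically by a variable with an exact Weibull tail $e^{-(D-\eps)x^\alpha}$, for which $I$ is available in closed form.
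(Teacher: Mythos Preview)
Your proposal is correct and follows essentially the same route as the paper: reduce to short intervals via Theorem~\ref{theo:main_exp}, pick a threshold $c\,(D^{-1}\log n)^{1/\alpha}$ with $c<1$ so that $\UUU_n$ exceeds it with probability tending to~$1$, derive $I(s)\sim Ds^{\alpha}$ as $s\to\infty$ (treating $\alpha=1$ and $\alpha\in(1,2)$ separately), apply the Chernoff bound of Lemma~\ref{lem:ld_est}, and finish with a union bound that succeeds precisely because $2^{1-\alpha/2}>1$. The only cosmetic difference is that you split the small parameter into two ($\eps$ and $\delta$) where the paper uses a single $v\in(0,1)$ with $2^{1-\alpha/2}v^{1+\alpha}>1$, playing the dual role of threshold scale and multiplicative loss in the rate function.
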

%The sublogarothmic case is in some sense trivial: $\MMM_n$ behaves in the same way as the maximum of the individual observations $X_1,\ldots,X_n$.
The above results show that the  square-root normalization we used in~\eqref{eq:def_Ln} is not very natural in the sublogarithmic case.  See~\cite{steinebach,lanzinger_stadtmueller,siegmund_yakir,mikosch1,mikosch2} for other types of normalization which can be used in the case of sublogarithmic (or even heavier) tails.

\subsection{Examples}\label{subsec:examples}
In this section we show that most classical families of distributions considered in the probability theory belong to one of the four cases considered above. %The only exception is the normal distribution which belongs to neither of the assumptions. The case of the normal distribution has been completely analyzed in~\cite{siegmund_venkatraman}, \cite{kabluchko_unpub_07}, \cite{kabluchko_spa_10}.
%In all examples considered below

%\smallskip
%\noindent\textit{Symmetric Bernoulli.}
\subsubsection{Symmetric Bernoulli}
Let $X_1,X_2,\ldots$ be i.i.d.\ with symmetric Bernoulli distribution, that is $\P[X_k=\pm 1]=1/2$. We have
$$
\varphi(t)=\log \cosh (t)=\frac{t^2}{2}-\frac{t^4}{12}+o(t^4), \;\;\; t\to 0.
$$
We are in the superlogarithmic case. Indeed, all coefficients of the Taylor series
$$
\cosh t-e^{t^2/2}=\frac 12 \sum_{k=2}^{\infty} t^{2k}\left(\frac{1}{(2k)!}-\frac{1}{2^kk!}\right)
$$
are negative. This shows that $\varphi(t)<t^2/2$ for every $t>0$. Since we also have $\lim_{t\to\infty} \varphi(t)/t^2=0$, it follows that condition~\eqref{eq:varphi_subgauss} is fulfilled. We are in the superlogarithmic case with $q=4$. The optimal lengths are $a\log^2 n$, $a>0$.

%%%%%%%%%%%%%%%%%FIGURE 1: The graph of varphi %%%%%%%%%%%%%%%%%%%%%%
\begin{figure}[t]
\begin{center}
\includegraphics[height=0.4\textwidth, width=0.8\textwidth]{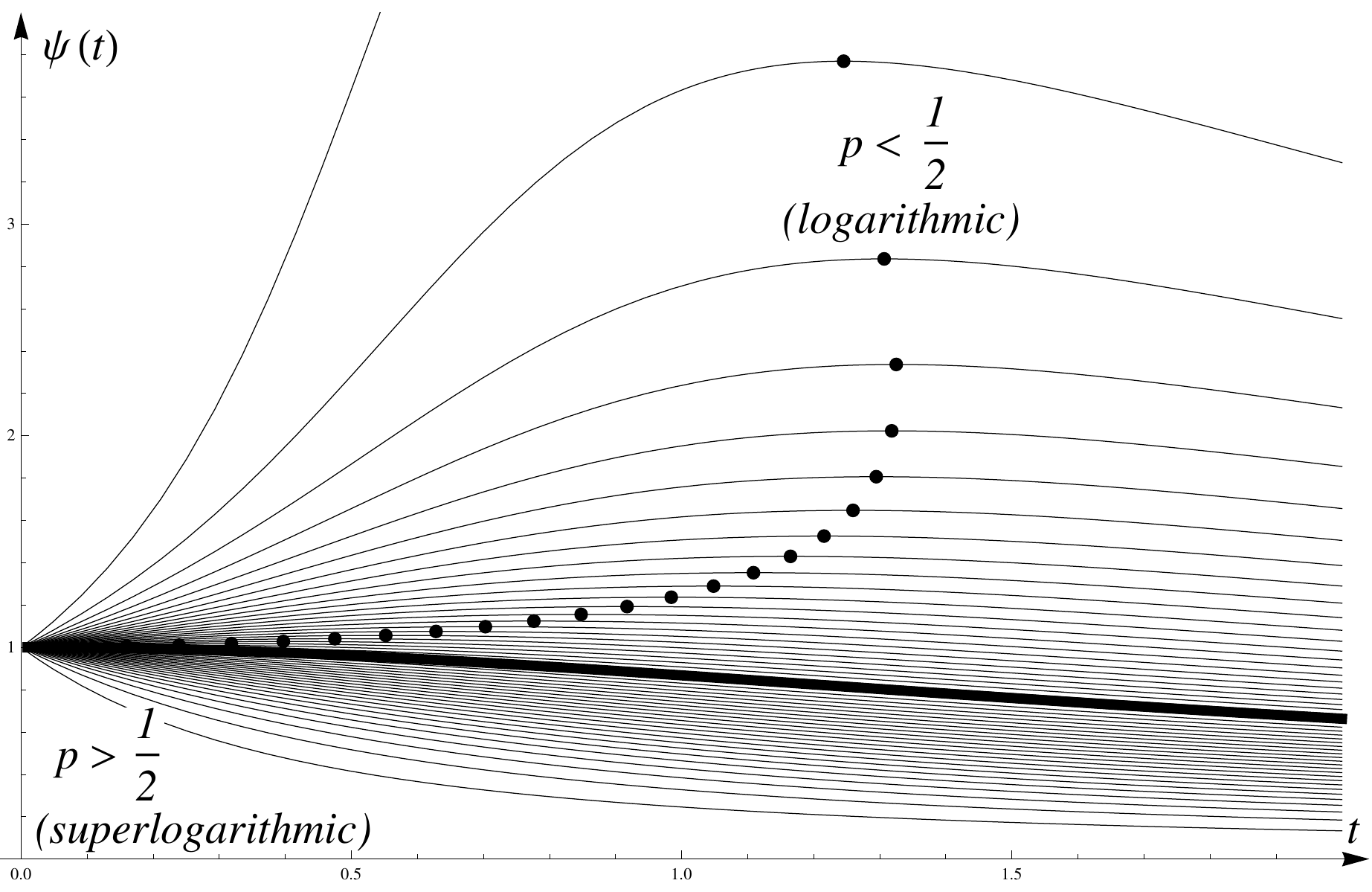}
\end{center}
\caption
{
The graph of $\psi(t)=2\varphi(t)/t^2$ for the (generally, non-symmetric) Bernoulli distributions in dependence on the parameter $p=1/50,\ldots,49/50$. The bold black boundary corresponds to $p=1/2$ (superlogarithmic case). The curves below the boundary correspond to $p>1/2$ (superlogarithmic case), the curves above the boundary correspond to $p<1/2$ (logarithmic case). On each curve above the boundary the point of maximum $(t_*,m_*)$ is shown.
} \label{fig:psi}
\end{figure}
%%%%%%%%%%%%%%%%%%%%%%%%%%%%%%%%%%%%%%%%%%%%%%%%%%%%%%%
%\smallskip
%\noindent \textit{

\subsubsection{Non-symmetric Bernoulli}\label{subsubsec:non_symm_bern}
Fix $p\in (0,1)$ and consider i.i.d.\ Bernoulli random variables $Y_1,Y_2,\ldots$ with $\P[Y_k=+1]=p$ and $\P[Y_k=-1]=1-p$. Consider also the normalized random variables
$$
X_k=\frac 1{\sigma} (Y_k-(2p-1)),\;\;\; \sigma=\sqrt{4p(1-p)}.
$$
Then, $\E X_k=0$ and $\Var X_k=1$. The cumulant generating function of $X_k$ is given by
\begin{equation}\label{eq:non_symm_bern_varphi}
\varphi(t)=-\frac{2p t}{\sigma}+\log (1+pe^{\frac{2t}{\sigma}}-p)=\frac{t^2}{2}+ \frac{1-2p}{3\sigma}t^3+o(t^3), \;\;\; t\to 0.
\end{equation}
The graph of the function $\psi(t)=2\varphi(t)/t^2$ in dependence on the parameter $p$ is shown in Figure~\ref{fig:psi}. As already shown above, for $p=1/2$ we are in the superlogarithmic case with $q=4$, and the optimal lengths are $a\log^2 n$, $a>0$.
\begin{proposition}\label{prop:non_symm_bern_subgauss}
If $p\in(1/2,1)$, then $\varphi(t)<t^2/2$ for all $t>0$. We are in the superlogarithmic case with $q=3$. The optimal lengths are $a\log^3 n$, $a\geq 0$.
\end{proposition}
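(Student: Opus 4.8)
The plan is to verify the three facts that together yield the proposition: (i) $\varphi(t)<t^2/2$ for every $t>0$; (ii) $\limsup_{t\to\infty}\psi(t)<1$, so that condition~\eqref{eq:varphi_subgauss} holds; and (iii) the first non-vanishing cumulant of $X_1$ beyond the variance has order $q=3$ with a positive coefficient. Once this is established we are in the superlogarithmic case, and the claim about the optimal lengths is then immediate from Theorem~\ref{theo:main_subgauss} together with Theorem~\ref{theo:main_subgauss_scales}, since $p=q/(q-2)=3$ by~\eqref{eq:def_p}. Condition~\eqref{eq:varphi_def} is automatic, because the cumulant generating function $\varphi(t)=-2pt/\sigma+\log(1-p+pe^{2t/\sigma})$ displayed in~\eqref{eq:non_symm_bern_varphi} is finite for every $t\in\R$.

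The only substantial point is (i). First I would put $g(t):=t^2/2-\varphi(t)$ and observe $g(0)=0$, $g'(0)=-\varphi'(0)=-\E X_1=0$, and $g''(t)=1-\varphi''(t)$; consequently it is enough to prove $\varphi''(t)<1$ for all $t>0$, for then $g'>0$ and hence $g>0$ on $(0,\infty)$. Differentiating~\eqref{eq:non_symm_bern_varphi} twice and simplifying with $\sigma^2=4p(1-p)$ gives the closed form
\[
\varphi''(t)=\frac{s}{(1-p+ps)^2},\qquad s:=e^{2t/\sigma}>1\ \text{ for }\ t>0 .
\]
Probabilistically, $\varphi''(t)$ is the variance of $X_1$ under the exponential tilt $\dd\P_t\propto e^{tX_1}\dd\P$; since the two-point law of $X_1$ tilts to a two-point law with success probability $ps/(ps+1-p)$, which for $t>0$ strictly exceeds $p>1/2$, and $x\mapsto x(1-x)$ is strictly decreasing on $[1/2,1]$, this tilted variance is strictly below $1$. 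Concretely, $\varphi''(t)<1$ is equivalent to $F(s):=(1-p+ps)^2-s>0$ for $s>1$, which holds because $F(1)=0$, $F'(1)=2p-1>0$ when $p>1/2$, and $F''\equiv 2p^2>0$, so $F$ is strictly convex and strictly increasing past $s=1$.

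Finally I would dispatch the two routine points. From~\eqref{eq:non_symm_bern_varphi} one gets $\varphi(t)=\tfrac{2(1-p)}{\sigma}t+\log p+o(1)$ as $t\to\infty$, whence $\psi(t)=2\varphi(t)/t^2\to0$; together with $\psi(t)<1$ for all $t>0$ (just proved) and the continuity of $\psi$ on $(0,\infty)$, this gives $\sup_{t\ge\eps}\psi(t)<1$ for every $\eps>0$, i.e.\ precisely~\eqref{eq:varphi_subgauss}. And the $t^3$-coefficient of $\varphi$ exhibited in~\eqref{eq:non_symm_bern_varphi} equals $\tfrac{1-2p}{3\sigma}$, which is nonzero and negative for $p\in(1/2,1)$; matching with~\eqref{eq:varphi_taylor} yields $q=3$ and $\kappa=\tfrac{2p-1}{3\sigma}>0$, completing the verification. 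The only real obstacle is the global bound $\varphi''<1$ on $(0,\infty)$; the remaining steps are bookkeeping, and the exponential-tilting viewpoint is what makes that bound transparent.
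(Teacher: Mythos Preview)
Your proof is correct. It takes a genuinely different route from the paper's, and in some ways a more elegant one. The paper works on the Legendre--Fenchel side: it computes the full Taylor series of $I(s)$ at $0$ (equation~\eqref{eq:I_non_symm_bern}), observes that for $p\in(1/2,1)$ every coefficient of $I(s)-s^2/2$ is nonnegative (with the cubic coefficient strictly positive), concludes $I(s)>s^2/2$ for all $s>0$, and then invokes Proposition~\ref{prop:dual_conditions_subg} to pass back to~\eqref{eq:varphi_subgauss}. You instead stay on the $\varphi$ side and prove the single pointwise bound $\varphi''(t)<1$ on $(0,\infty)$, from which $\varphi(t)<t^2/2$ follows by two integrations. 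Your exponential-tilting interpretation --- that the tilted Bernoulli has success probability $ps/(ps+1-p)>p>1/2$ and hence smaller variance --- makes the mechanism transparent, and the algebraic verification via the convex function $F(s)=(1-p+ps)^2-s$ is clean. What the paper's approach buys is an explicit power series for $I$, which is of some independent interest; what your approach buys is brevity and a probabilistic explanation, and it avoids both the series manipulation and the appeal to duality.
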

For $p<1/2$ the coefficient of $t^3$ in the Taylor series~\eqref{eq:non_symm_bern_varphi} is positive. This implies that $m_*=\sup_{t>0}\psi(t)>1$. Also, it follows from~\eqref{eq:non_symm_bern_varphi} that $\lim_{t\to\infty}\psi(t)=0$, hence the maximum $m_*$ is finite.  From  Figure~\ref{fig:psi} we see that the maximum is attained at a unique point. (We were not able to prove this fact rigorously. It seems that the proof requires tedious computations with transcendental functions). Thus, we should be in the logarithmic case.  The optimal lengths are $d_* \log n + a\sqrt {\log n}$, $a\in\R$. Here, $d_*=1/\varphi(t_*)$, where $t_*$ is the solution of the transcendental equation $2t_*\varphi'(t_*)=\varphi(t_*)$.

%\smallskip
%\noindent \textit{
\subsubsection{Binomial}
If some distribution satisfies the superlogarithmic or the logarithmic assumptions, then the same is true for its convolution powers. More precisely, this means the following. Let $X_1,X_2,\ldots$ be i.i.d.\ random variables with distribution function $F(z)$. Let $\tilde X_1,\tilde X_2,\ldots$ be i.i.d.\ random variables with distribution function $F^{*m}(z\sqrt m)$, where $m\in\N$ is fixed, and $F^{*m}$ denotes the $m$-th convolution power of $F$. Then, the cumulant generating function of $\tilde X_k$ is given by $\tilde \varphi(t)=m \varphi(t/\sqrt m)$. The equality
$$
\tilde \psi(\sqrt m t):=\frac{\tilde \varphi(\sqrt m t)}{(\sqrt m t)^2/2}=\frac{\varphi(t)}{t^2/2}=\psi(t)
$$
entails that if $X_1$ satisfies the superlogarithmic or the  logarithmic conditions, then the same holds for the variable $\tilde X_1$. For example, our results on the Bernoulli distributions imply that the binomial distribution $\text{Bin}(k,p)$ (after standardization) belongs to the superlogarithmic case for $p\geq 1/2$ and to the logarithmic case for $p<1/2$.
%If the distribution function $F$ is infinitely divisible, we can even take $m>0$ to be non-integer.  It follows that the Gamma distribution $\text{G}(\alpha,\lambda)$ and the negative binomial distribution $\text{NB}(\alpha, p)$ belong, after standardization, to the logarithmic class regardless of the values of the parameters.

%\smallskip
%\noindent\textit{
\subsubsection{Uniform} Let $X_1,X_2,\ldots$ be i.i.d.\ random variables with uniform distribution on the interval $[-\sqrt 3, \sqrt 3]$, so that $\E X_k=0$ and $\Var X_k=1$. We have
$$
\varphi(t)=\log \frac{\sinh (\sqrt 3 t)}{\sqrt 3 t} = \frac{t^2}{2}-\frac {t^4}{20}+ o(t^4), \;\;\; t\to 0.
$$
We are in the superlogarithmic case with $q=4$.  To see this note that
$$
\frac{\sinh (\sqrt 3 t)}{\sqrt 3 t}- e^{t^2/2} =
\sum_{k=2}^{\infty} t^{2k} \left(\frac{3^k}{(2k+1)!}-\frac{1}{2^k k!}\right).
$$
All coefficients are negative, as one easily verifies by induction. The optimal lengths are $a\log^2n$, $a>0$.  %since $(2n+1)!\leq 6^n n!$, $n\in\N$.

%\smallskip
%\noindent \textit{Two-sided exponential.} Let $X_1,\ldots$ be i.i.d.\ with density function $\frac{1}{\sqrt 2} e^{-\sqrt 2 |x|}$, %$x\in\R$. Then,
%$$
%varphi(t)=-\log (1-\frac {t^2}2)
%$
%and we are in the logarithmic case.

%\smallskip
%\noindent \textit{Poisson.} Let $Y_1,\ldots$ be i.i.d.\ having Poisson distribution with parameter %$\lambda>0$. Consider the normalized variables $X_k=\frac1{\sqrt{\lambda}}(Y_k-\lambda)$. We have
%$$
%\varphi(t)=\lambda(e^{\frac{t}{\sqrt {\lambda}}}-1)=\frac{t^2}{2}+\frac{t^3}{6\sqrt {\lambda}}+o(t^3).
%$$
%We are in the logarithmic case. We have tightness. We believe that there is no convergence.

%\smallskip
%\noindent \textit{
\subsubsection{Gamma, Negative Binomial, Poisson} The former two distributions (including \textit{exponential} and \textit{geometric} as special cases) are covered by Theorems~\ref{theo:main_exp} and~\ref{theo:main_exp_regular}.   The Poisson distribution  is covered by Theorem~\ref{theo:main_exp}. Although it does not satisfy the assumptions of Theorem~\ref{theo:main_exp_regular}, it is easy to check that the conclusion of this theorem remains valid in the Poisson case. The square root normalization in the definition of $\MMM_n$, see~\eqref{eq:def_Ln}, is thus not natural for these distributions. See~\cite{steinebach,siegmund_yakir} for alternative normalizations.

%The exponential and more generally, Gamma distribution, belongs to the exponential class and are covered by Theorem~\ref{theo:main_exp}.

%\smallskip
%\noindent \textit{Geometric and Negative Binomial.} The geometric  distribution $\text{Geo}(p)$ fails to satisfy~\eqref{eq:exponenntial_tail}. However, the same method of proof as in Section~\ref{sec:proof_exp} shows that the behavior of $\MMM_n$ is dominated by the maximum of individual observations. Now, the geometric distribution does not belong to a max-domain of attraction. Therefore, there is no non-trivial limiting distribution for $\MMM_n$. Instead, the sequence $\MMM_n + \log_p n$ is tight.
%Similar considerations apply more generally to the negative binomial distribution.

\subsection{Remarks}\label{subsec:remarks}
We sketch some possible extensions and modifications of our results.

\subsubsection{$\MMM_n$ versus $\MMM_n^2$}\label{subsubsec:Mn_Mn_square} In order to simplify the formulas, we stated our results for $\MMM_n^2$ instead of $\MMM_n$. It is easy to translate everything to $\MMM_n$: if  $\MMM_n^2-a_n$ converges weakly to some distribution $G$ for some sequence $a_n\to +\infty$, then
\begin{equation}\label{eq:limit_Ln_no_square}
%\lim_{n\to\infty} \P\left[\MMM_n\leq \sqrt {a_n}+ \frac{\tau}{2\sqrt{a_n}}\right] = G(\tau).
2\sqrt a_n (\MMM_n-\sqrt {a_n})\todistr G.
\end{equation}
Here is the proof of this implication. Note that $\P[\MMM_n\leq 0]$ goes to $0$ as $n\to\infty$ since it can be estimated above by $\P[\UUU_n\leq 0]$, where $\UUU_n=\max\{X_1,\ldots,X_n\}$. Hence, for every $\tau\in\R$,
$$
\P\left[\MMM_n\leq \sqrt {a_n}+ \frac{\tau}{2\sqrt{a_n}}\right] = \P\left[\MMM_n^2 \leq a_n+\tau + \frac{\tau^2}{4a_n} \right]+o(1), \;\;\; n\to\infty.
$$
By our assumption, the right-hand side goes to $G(\tau)$, for every $\tau\in\R$ where $G$ is continuous. This yields~\eqref{eq:limit_Ln_no_square}. Similar argumentation applies to Theorems~\ref{theo:main_subgauss_scales} and~\ref{theo:main_supergauss_scales}.

\subsubsection{Hitting times}
It is possible to state our main results, Theorems~\ref{theo:main_subgauss} and~\ref{theo:main_supergauss} in terms of the hitting time
\begin{equation}\label{eq:hitting_time}
\bfT(u)=\min\{n\in\N: \MMM_n>u\}, \;\;\;u>0,
\end{equation}
rather than in terms of $\MMM_n$. This approach was used in~\cite{siegmund_venkatraman}. There, $\bfT(u)$ was introduced as a stopping rule for a sequential change-point detection.  It turns out that $\bfT(u)$ has limiting exponential distribution, as $u\to\infty$. The Gaussian case was analyzed in~\cite{siegmund_venkatraman}. In the non-Gaussian case we have the following two results.
\begin{proposition}\label{prop:main_subgauss_hitting}
Let the assumptions of Theorem~\ref{theo:main_subgauss} be fulfilled. Let $\alpha=\frac 12 \cdot \frac{q-6}{q-2}$. Then, for every $y>0$,
\begin{equation}
\lim_{u\to\infty} \P\left[2^{-\alpha}u^{2\alpha}e^{-\frac{u^2}2}\bfT(u)>y \right]=\exp\{-\Lambda_{q,\kappa} y\}.
\end{equation}
\end{proposition}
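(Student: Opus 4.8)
The plan is to derive Proposition~\ref{prop:main_subgauss_hitting} directly from Theorem~\ref{theo:main_subgauss} via the standard duality between the maximum process and the hitting time. The two events are essentially complementary: $\{\bfT(u) > n\}$ if and only if $\{\MMM_n \leq u\}$, because $\MMM_n$ is nondecreasing in $n$. So I would first rewrite Theorem~\ref{theo:main_subgauss} in a form where the threshold $u$ is the free variable and $n$ is expressed as a function of $u$. Recall Theorem~\ref{theo:main_subgauss} states that $\P[\tfrac12\MMM_n^2 \leq \log(n\log^{\alpha} n) + \tau] \to \exp\{-\Lambda_{q,\kappa}e^{-\tau}\}$, where $\alpha = \tfrac12\cdot\tfrac{q-6}{q-2}$. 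Setting $u^2/2 = \log(n\log^\alpha n) + \tau$, i.e. solving for the value of $n$ at which the typical order of $\MMM_n$ reaches $u$, one gets $n \approx y\, 2^\alpha u^{-2\alpha} e^{u^2/2}$ with $y = e^{-\tau}$ after absorbing the $\log^\alpha n \sim (u^2/2)^\alpha \sim 2^{-\alpha}u^{2\alpha}$ factor; this is exactly the normalization $2^{-\alpha}u^{2\alpha}e^{-u^2/2}\bfT(u)$ appearing in the statement.

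The core of the argument is then: fix $y>0$, and let $n = n(u,y) = \lfloor y\, 2^\alpha u^{-2\alpha} e^{u^2/2}\rfloor$. Then
$$
\P\left[2^{-\alpha}u^{2\alpha}e^{-u^2/2}\bfT(u) > y\right] = \P[\bfT(u) > n(u,y)] = \P[\MMM_{n(u,y)} \leq u].
$$
It remains to check that $\tfrac12 u^2 = \log(n(u,y)\log^\alpha n(u,y)) + \tau_u$ with $\tau_u \to -\log y$ as $u\to\infty$. Here I would be careful to track the $\log^\alpha n$ correction: since $\log n(u,y) = \tfrac{u^2}{2} - 2\alpha\log u + \alpha\log 2 + \log y + o(1)$, we have $\log(\log^\alpha n(u,y)) = \alpha\log(\tfrac{u^2}{2}) + o(1) = 2\alpha \log u - \alpha\log 2 + o(1)$, so these two correction terms cancel in $\log(n(u,y)\log^\alpha n(u,y)) = \tfrac{u^2}{2} + \log y + o(1)$, giving $\tau_u = -\log y + o(1)$. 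Applying Theorem~\ref{theo:main_subgauss} along the sequence $n(u,y)$ (with $\tau = \tau_u \to -\log y$; uniform convergence in $\tau$ on compacts follows from monotonicity of the distribution functions and continuity of the Gumbel limit) yields
$$
\P[\MMM_{n(u,y)} \leq u] \to \exp\{-\Lambda_{q,\kappa} e^{\log y}\} = \exp\{-\Lambda_{q,\kappa}\, y\},
$$
which is the claim.

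The only genuine subtlety — and the step I would treat most carefully — is the passage from convergence of $\P[\MMM_n^2 \leq \cdots]$ along integer $n$ with a \emph{fixed} $\tau$ to the same limit with $\tau = \tau_u$ drifting. This is handled by the usual sandwich: for any $\eta>0$ and $u$ large, $-\log y - \eta \leq \tau_u \leq -\log y + \eta$, so monotonicity of $\MMM_n$ in the threshold gives
$$
\P\left[\tfrac12\MMM_{n(u,y)}^2 \leq \log(n(u,y)\log^{\alpha}n(u,y)) + (-\log y - \eta)\right] \leq \P[\MMM_{n(u,y)}\leq u],
$$
and an analogous upper bound with $+\eta$; letting $u\to\infty$ and then $\eta\downarrow 0$ and using continuity of $\tau\mapsto\exp\{-\Lambda_{q,\kappa}e^{-\tau}\}$ pins down the limit. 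One should also note $n(u,y)\to\infty$ as $u\to\infty$ for each fixed $y>0$, so Theorem~\ref{theo:main_subgauss} does apply. No large-deviation input beyond what already went into Theorem~\ref{theo:main_subgauss} is needed; this is purely a bookkeeping deduction.
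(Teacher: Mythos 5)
Your proposal is correct and follows essentially the same route as the paper: rewrite $\{\bfT(u)>n\}$ as $\{\MMM_{[n]}\leq u\}$ with $n=n(u,y)$ chosen so that $\tfrac12 u^2 = \log([n]\log^{\alpha}[n]) - \log y + o(1)$, then apply Theorem~\ref{theo:main_subgauss}. The paper's version is terser (it fixes $\tau=-\log y$ and absorbs the drift into an $o(1)$, relying on continuity of the Gumbel limit), while you spell out the drifting-$\tau$ sandwich explicitly; the only tiny point you leave implicit in the ``analogous upper bound'' is the passage from $\{\MMM_n\leq u\}$ to $\{\MMM_n^2\leq u^2\}$, which uses $\P[\MMM_n<0]\to 0$ — a fact the paper recalls explicitly from Section~\ref{subsubsec:Mn_Mn_square}.
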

\begin{proposition}\label{prop:main_supergauss_hitting}
Let the assumptions of Theorem~\ref{theo:main_supergauss} be fulfilled. For every $y>0$,
\begin{equation}
\lim_{u\to\infty} \P\left[e^{-\frac{u^2}{2m_*}} \bfT(u)>y \right]=\exp\{-\Theta_* y\}.
\end{equation}
\end{proposition}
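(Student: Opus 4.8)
The plan is to deduce Proposition~\ref{prop:main_supergauss_hitting} directly from Theorem~\ref{theo:main_supergauss} using the standard duality between a hitting time and the running maximum it monitors. First I would record two elementary facts. The sequence $\MMM_1\le\MMM_2\le\cdots$ is non-decreasing, since adjoining the index $j=n+1$ to the maximum can only enlarge it, and $\MMM_n\to+\infty$ a.s.\ by Shao's law~\eqref{eq:shao} (as $m_*\geq 1>0$); hence $\bfT(u)<\infty$ a.s.\ for every $u>0$ and, for every positive integer $n$,
\begin{equation*}
\{\bfT(u)>n\}=\{\MMM_n\le u\}.
\end{equation*}
Moreover, because $\bfT(u)$ is integer valued, $\{\bfT(u)>r\}=\{\bfT(u)>\lfloor r\rfloor\}$ for every real $r>0$. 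Writing $n(u):=\lfloor y\,e^{u^2/(2m_*)}\rfloor$, which is $\geq 1$ for all large $u$, these remarks give
\begin{equation*}
\P\!\left[e^{-u^2/(2m_*)}\bfT(u)>y\right]=\P\!\left[\bfT(u)>y\,e^{u^2/(2m_*)}\right]=\P\!\left[\MMM_{n(u)}\le u\right].
\end{equation*}

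The second step is to rewrite the last probability in the form handled by Theorem~\ref{theo:main_supergauss}. Since $n(u)=y\,e^{u^2/(2m_*)}\bigl(1+O(e^{-u^2/(2m_*)})\bigr)$, taking logarithms yields $\log n(u)=\tfrac{u^2}{2m_*}+\log y+o(1)$ as $u\to\infty$. Consequently
\begin{equation*}
\tfrac12 u^2=m_*\bigl(\log n(u)+\tau_u\bigr),\qquad \tau_u:=\frac{u^2}{2m_*}-\log n(u)\longrightarrow -\log y\quad(u\to\infty),
\end{equation*}
so that $\{\MMM_{n(u)}\le u\}=\bigl\{\tfrac12\MMM_{n(u)}^2\le m_*(\log n(u)+\tau_u)\bigr\}$, and it remains to identify the limit of $\P\bigl[\tfrac12\MMM_{n(u)}^2\le m_*(\log n(u)+\tau_u)\bigr]$.

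The third step is a locally-uniform-in-$\tau$ upgrade of Theorem~\ref{theo:main_supergauss}, which is immediate from monotonicity of $\sigma\mapsto\P[\tfrac12\MMM_n^2\le m_*(\log n+\sigma)]$. Fix $\eps>0$; for all large $u$ we have $-\log y-\eps\le\tau_u\le-\log y+\eps$, and since $n(u)\to\infty$, Theorem~\ref{theo:main_supergauss} applied with the two fixed shifts $-\log y\mp\eps$ gives
\begin{equation*}
\exp\{-\Theta_*\,y\,e^{\eps}\}\le\liminf_{u\to\infty}\P\!\left[\MMM_{n(u)}\le u\right]\le\limsup_{u\to\infty}\P\!\left[\MMM_{n(u)}\le u\right]\le\exp\{-\Theta_*\,y\,e^{-\eps}\}.
\end{equation*}
Letting $\eps\downarrow 0$ yields $\P[\MMM_{n(u)}\le u]\to\exp\{-\Theta_*\,y\}$, which is the assertion. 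I do not expect a genuine obstacle: the whole analytic content sits in Theorem~\ref{theo:main_supergauss}, and the only points needing (mild) care are the integer-part bookkeeping in the hitting-time identity and the sandwiching needed to pass from a fixed shift $\tau$ to the slowly varying $\tau_u$. The same scheme, now with $\alpha=\tfrac12\cdot\tfrac{q-6}{q-2}$ and $n(u):=\lfloor y\,2^{\alpha}u^{-2\alpha}e^{u^2/2}\rfloor$, proves Proposition~\ref{prop:main_subgauss_hitting} from Theorem~\ref{theo:main_subgauss} (here one also expands $\log\log n(u)=2\log u-\log 2+o(1)$ to check that $\tau_u\to-\log y$).
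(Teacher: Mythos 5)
Your argument follows essentially the same route as the paper's proof of Propositions~\ref{prop:main_subgauss_hitting}, \ref{prop:main_supergauss_hitting}: rewrite the hitting-time event as a running-maximum event, convert the threshold $u$ into a shift $\tau_u$ inside Theorem~\ref{theo:main_supergauss}, and pass to the limit. Your explicit two-sided sandwich to handle the slowly drifting shift $\tau_u\to-\log y$ is a more careful formulation than what the paper writes out (which just invokes the theorem directly), and there is nothing wrong with it.

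The one step you should tighten is the claimed set identity $\{\MMM_{n(u)}\le u\}=\{\tfrac12\MMM_{n(u)}^2\le m_*(\log n(u)+\tau_u)\}$. Since $m_*(\log n(u)+\tau_u)=u^2/2$, the right-hand event is $\{\MMM_{n(u)}^2\le u^2\}$, and for $u>0$ the two sets differ by $\{\MMM_{n(u)}<-u\}$, so only the probability identity $\P[\MMM_{n(u)}\le u]=\P[\tfrac12\MMM_{n(u)}^2\le u^2/2]+\P[\MMM_{n(u)}<-u]$ holds. To dismiss the error term you need $\P[\MMM_n<0]\to 0$, which the paper recalls from Section~\ref{subsubsec:Mn_Mn_square}; it in fact follows from the two facts you already recorded (monotonicity of $n\mapsto\MMM_n$ and $\MMM_n\to+\infty$ a.s., hence $\P[\MMM_n<0]\downarrow 0$). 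Once that $o(1)$ is inserted your sandwich goes through unchanged and the proof is complete.
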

\begin{proof}[Proof of Propositions~\ref{prop:main_subgauss_hitting}, \ref{prop:main_supergauss_hitting}]
Fix $y>0$. Let $n=2^{\alpha}u^{-2\alpha}e^{u^2/2}y$ (for Proposition~\ref{prop:main_subgauss_hitting}) or $n=e^{u^2/(2m_*)} y$ (for Proposition~\ref{prop:main_supergauss_hitting}). Note that $n$ need not be integer. With $\tau=-\log y$, we have, as $u,n\to\infty$,
\begin{equation}\label{eq:tech_hitting1}
\frac{u^2}2= \log ([n]\log^{\alpha}[n])+\tau+o(1),
\;\;\;
\text{resp.}
\;\;\;
\frac {u^2}2= m_*(\log [n]+\tau).
\end{equation}
Recall from Section~\ref{subsubsec:Mn_Mn_square} that $\lim_{n\to\infty}\P[\MMM_n<0]=0$. In the case of Proposition~\ref{prop:main_subgauss_hitting},
\begin{equation}\label{eq:tech_hitting2}
\P\left[2^{-\alpha}u^{2\alpha}e^{-\frac{u^2}2}\bfT(u)>y \right]
%=
%\P[\bfT(u)>n]
=
\P[\MMM_{[n]}\leq u]
=
\P\left[\frac 12 \MMM_{[n]}^2\leq \frac{u^2}{2}\right]+o(1).
\end{equation}
Taking into account~\eqref{eq:tech_hitting1} and applying to the right hand side of~\eqref{eq:tech_hitting2} Theorem~\ref{theo:main_subgauss} we obtain that the limit of the right-hand side of~\eqref{eq:tech_hitting2} is $e^{-\Lambda_{q,k} y}$. Proposition~\ref{prop:main_supergauss_hitting} is proven analogously.
\end{proof}

\subsubsection{Two-sided version of $\MMM_n$}
If in the signal detection problem mentioned at the beginning of the paper we do not know whether the signal  has positive or negative mean, it is natural to consider $|\MMM_n|=\max\{\MMM_n^+, \MMM_n^-\}$ as a test statistic, where
$$
\MMM_{n}^+=\MMM_n=\max_{0\leq i <j\leq n} \frac{S_j-S_i}{\sqrt{j-i}},
\;\;\;
\MMM_{n}^-=-\min_{0\leq i <j\leq n} \frac{S_j-S_i}{\sqrt{j-i}}.
$$
%$\tilde \MMM_n$ is defined in the same way as $\MMM_n$, but with $\tilde X_k=-X_k$ in place of $X_k$.
Large values of $\MMM_n^+$ (resp.,\ $\MMM_n^-$) indicate the presence of a signal with positive (resp.,\ negative) mean.
%Assume that $\E e^{tX_1}<\infty$ for all $t\in\R$.
Since $\MMM_n^-$ is obtained from $\MMM_n$ by the substitution $X_k\mapsto -X_k$, our results (under appropriate assumptions)  yield  limiting distributions for both  $\MMM_n^+$ and  $\MMM_n^-$. Moreover, $\MMM_n^+$ and $\MMM_n^-$ become asymptotically independent as $n\to\infty$. We leave this fact without a proof, but note that for i.i.d.\ random variables it is well-known that the maximum and the minimum become asymptotically independent as the sample size goes to $\infty$. If the $X_k$'s have symmetric distribution and if $\MMM_n$ has a limiting distribution of the form $\exp\{- b e^{-c\tau}\}$, for some constants $b,c>0$, the asymptotic independence implies that $|\MMM_n|$ has limiting distribution of the form $\exp\{- 2 b e^{-c\tau}\}$. For non-symmetric $X_k$, it is possible that $\MMM_n^+$ and $\MMM_n^-$ belong to different cases. If this happens, the case with the larger normalizing sequence determines the behavior of $|\MMM_n|$.

\subsubsection{Non-unique maximum}
Among the distributions satisfying~\eqref{eq:varphi_def} there are some exotic examples which are not covered by  our results. For example, it is possible  that the supremum of $\psi$ (which is strictly larger than $1$) is attained at several points $t_{1}, \ldots, t_{m}>0$ simultaneously. In this case, Theorem~\ref{theo:main_supergauss} still holds, but the constant $\Theta_{*}$ in~\eqref{eq:main_supergauss} has to be replaced by $\Theta_{1}+\ldots+\Theta_{m}$, where the summands $\Theta_{i}$ correspond to the contributions of the different $t_{i}$'s. It is however not possible that the maximum of $\psi$ is attained at some interval (or some set having a limit point in $[0,\infty)$). This follows from the uniqueness theorem for analytic functions. (Note that $\psi$ can be extended analytically to the right half-plane).  It is also possible that the maximum of $\psi$ is equal to $1$, but is attained at $t_{1}= 0$ and some other point $t_{2}>0$.  The first point is described by Theorem~\ref{theo:main_subgauss} with normalization sequence $a_{1,n}=\log (n\log^{\frac 12 \cdot \frac{q-6}{q-2}} n)$, the second point is described by Theorem~\ref{theo:main_supergauss} with normalization sequence $a_{2,n}=\log n$.  If $q<6$ (which is usually the case), then $a_{2,n}-a_{1,n}\to+\infty$ and the contribution of $t_1=0$ is asymptotically negligible. Our results do not cover the situation in which $\psi(t)<1$ for all $t>0$, but $\lim_{t\to\infty}\psi(t)=1$. It is, however, difficult to find a distribution with these properties.

\subsubsection{Strong approximation} The first na\"{\i}ve attempt to obtain the limiting distribution for $\MMM_n$ is to approximate the random walk $S_n$ by a Gaussian random walk $W_n$  using  the strong invariance principle  of Koml\'os--Major--Tusn\'ady~\cite{csoergoe_book}.
We will now explain why this approach fails.
%This approach must fail, since for no distribution of $X_k$ the sequence needed to normalize $\MMM_n$ is the same as for the normal distribution.
If we exclude the case in which $S_n$ is itself Gaussian, the best possible rate of strong approximation is $|W_n-S_n|=O(\log n)$ a.s.; see~\cite{csoergoe_book}. Given $0\leq i <j\leq n$ with $l=j-i$ we obtain for the difference
$$
\frac{W_j-W_i}{\sqrt{j-i}} - \frac{S_j-S_i}{\sqrt{j-i}}
$$
the estimate $O(\log n/\sqrt l)$. If we want to apply this to show that the weak limit theorem satisfied by $\MMM_n$ has the same form in the Gaussian and in the non-Gaussian case, the approximation error should be of smaller order than the fluctuations of $\MMM_n$, which are of order $1/\sqrt {\log n}$; see, e.g.,~\eqref{eq:siegmund_venkatraman}.  Thus, we obtain a sufficiently accurate strong approximation if $l$ is of larger order than $\log^3 n$.
%If $l$ is of order $\log^3 n$ or smaller, the strong approximation is too inaccurate.
However, our results show that the behavior of $\MMM_n$ is determined by the intervals of length at most $O(\log^3 n)$. In this domain the strong approximation is too inaccurate.
%That's why the strong approximation argument fails.
The best one can prove using a direct strong approximation argument is the following result:  for any sequence $l_n$ such that $l_n/\log^3 n\to \infty$ but $l_n/n\to 0$, there is a sequence $b_n$ not depending on the distribution of $X_1$ such that $\MMM_n^2(l_n,n)-b_n$ converges to the Gumbel distribution. In the Gaussian case this can be proved by the methods of~\cite{kabluchko_unpub_07}. Then, the strong approximation implies that the same limit theorem holds for any $X_1$ satisfying~\eqref{eq:varphi_def}.

% \subsubsection{The optimal lengths}

\section{Notation and strategy of the proof}\label{subsec:method}
First we fix some notation which will be used throughout the paper. Let $\{X_k, k\in\Z\}$ be non-degenerate i.i.d.\ random variables with $\E X_k=0$, $\Var X_k=1$. We always assume that~\eqref{eq:varphi_def} holds. Define the two-sided random walk $\{S_k, k\in\Z\}$ by
$$
S_k=X_1+\ldots+X_k,\;\;\; S_0=0,\;\;\; S_{-k}=X_{-1}+\ldots+X_{-k}, \;\;\; k\in\N.
$$

Consider the set $\III=\{(i,j)\in\Z^2: i<j\}$.  Our main object of study is the \textit{standardized increments random field}  $\ZZZ=\{\ZZZ_{i,j}, (i,j)\in\III\}$ defined by
\begin{equation}\label{eq:def_ZZZ}
\ZZZ_{i,j}=\frac{S_j-S_i}{\sqrt{j-i}}, \;\;\; (i,j)\in \III.
\end{equation}
\xxx
See Figure~\ref{fig:random_field} for a realization of this random field. Elements $(i,j)\in\III$ will be called \textit{intervals}. Any interval $(i,j)\in\III$ will be identified  with $\ZZZ_{i,j}$, the corresponding standardized increment, as well as with the set $\{i+1,\ldots,j\}$. We call $l:=j-i\in\N$ the \textit{length} of $(i,j)$.

%%%%%%%%%%%%%%%%%FIGURE: Realization of the Random Field \ZZZ %%%%%%%%%%%%%%%%%%%%%%
\begin{figure}[t]
\begin{center}
\includegraphics[height=0.33\textwidth, width=0.99\textwidth]{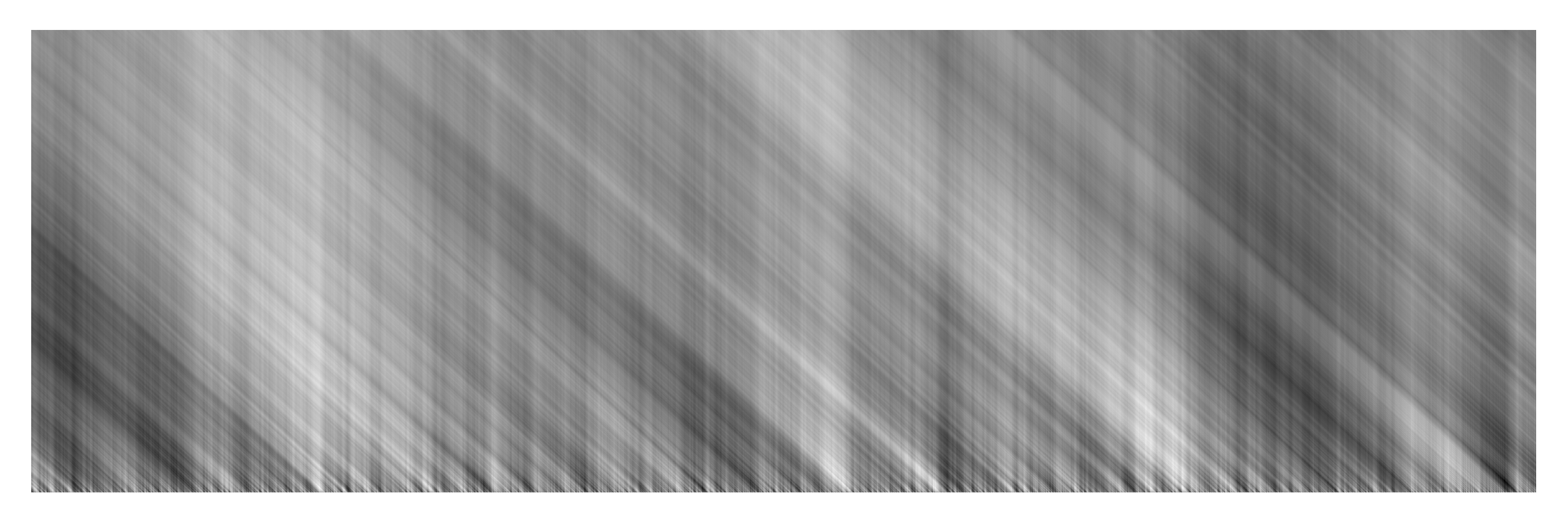}
\end{center}
\caption
{
A realization of the random field $\ZZZ_{i, i+l}$ for $1\leq i\leq 3000$ (on the horizontal axis) and $1\leq l\leq 1000$ (on the vertical axis). Dark points correspond to large values of the field.
} \label{fig:random_field}
\end{figure}
%%%%%%%%%%%%%%%%%%%%%%%%%%%%%%%%%%%%%%%%%%%%%%%%%%%%%%%

%%%%%%%%%%%%%%%%%FIGURE %%%%%%%%%%%%%%%%%%%%%%
\begin{figure}[t]
\begin{center}
\includegraphics[height=0.35\textwidth, width=0.49\textwidth]{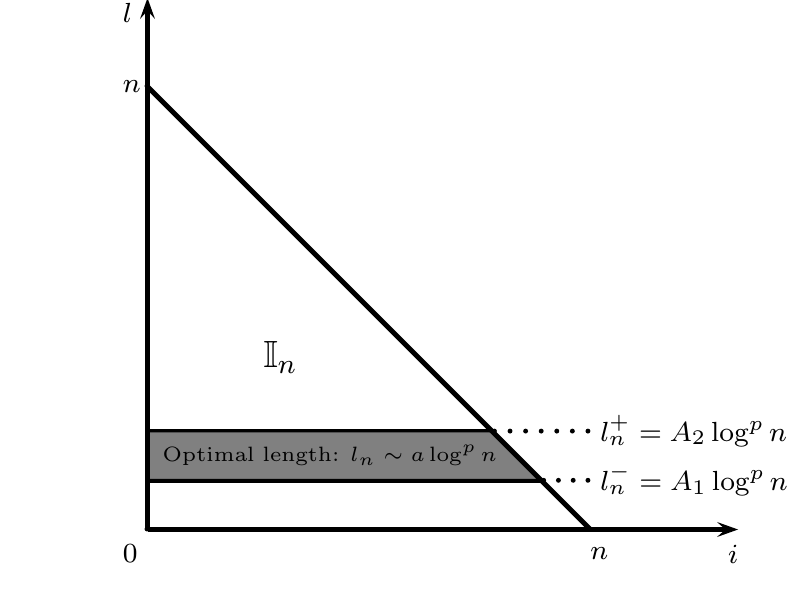}
\end{center}
\caption
{
The set $\III_n$. On the horizontal axis, we put the initial point $i$ of the interval $(i,j)$. On the vertical axis, we put its length $l=j-i$.  The strip shows the intervals whose length $l$ is between $l_n^-=A_1\log^p n$ and $l_n^+=A_2\log^p n$ (the optimal lengths in the superlogarithmic case).
} \label{fig:draw_I_n}
\end{figure}
%%%%%%%%%%%%%%%%%%%%%%%%%%%%%%%%%%%%%%%%%%%%%%%%%%%%%%%

We are interested in the random variable
$\MMM_n=\max_{(i,j)\in\III_n}\ZZZ_{i,j}$,
where $\III_n\subset \III$ is a set of intervals given by
\begin{equation}
\III_n=\{(i,j)\in\Z^2: 0\leq i < j \leq n\}, \;\;\; n\in\N;
\end{equation}
see Figure~\ref{fig:draw_I_n}. Clearly, $\MMM_n$ is a maximum of random variables, but these are neither independent, nor identically distributed.  We prove our results by a careful extreme-value analysis of the field $\ZZZ$.
%Let us explain the method of the proof.
We will take an appropriate threshold $u_n\to\infty$ and compute the limit of the \textit{exceedance probability} $\P[\MMM_n>u_n]$. Our method can be subdivided into $3$ steps.

\vspace*{2mm}
\noindent
\textsc{Step 1.} We start by  computing the \textit{individual probability} $\P[\ZZZ_{i,j}>u_n]$ for intervals $(i,j)$ having ``optimal length'' $l_n$.   The optimal length is chosen as in Section~\ref{subsec:intro}. For example, in the superlogarithmic case the optimal lengths are of the form $a\log^p n$, $a>0$, meaning that (as we will prove in Section~\ref{subsec:non_opt_subgauss}) the contribution of all other lengths is negligible:
$$
\lim_{A_1\to 0} \limsup_{n\to\infty} \P[\MMM_n(1, A_1\log^p n) >u_n]
=
\lim_{A_2\to +\infty} \limsup_{n\to\infty} \P[\MMM_n(A_2\log^p n, n) >u_n]
=
0.
$$
To compute the individual probability, we need classical limit theorems on large and moderate deviations which will be recalled in Section~\ref{sec:large_dev}. The key results of Step~1 are Lemma~\ref{lem:one_point_sub_gauss} (in the superlogarithmic case) and Lemma~\ref{lem:one_point_super_gauss} (in the logarithmic case).

\vspace*{2mm}
\noindent
\textsc{Step 2.}
In the second step we compute the \textit{local probability}  $\P[\max_{(i,j)\in \TTT_n}\ZZZ_{i,j}]$, where $\TTT_n\subset \III$ is a discrete square with side length of order $q_n$. Here, $q_n$ is chosen to be the ``extremal decorrelation length'' of $\ZZZ$. This means that the exceedances of $\ZZZ$ at two points of $\III$ at distance of order $q_n$ retain non-trivial asymptotic dependence in the large $n$ limit (that is, they become neither completely dependent, nor completely independent). There is a way to characterize $q_n$ using  the language of the Poisson clumping heuristic; see~\cite{aldous_book}. The intervals $(i,j)$ for which  $\ZZZ_{i,j}>u_n$ form small clumps distributed randomly in $\III$. Then, the linear size of these clumps is of order $q_n$. From this part of the proof it will be clear why it is not possible to choose the ``optimal length'' as large as possible (say, of order $n$).  Namely, long intervals are strongly dependent (meaning that the extremal decorrelation length is large there); see also Figure~\ref{fig:random_field}.   Therefore, long intervals make only a small contribution to $\MMM_n$. The key results of Step~2 are Lemma~\ref{lem:local_sub_gauss} (in the superlogarithmic case) and Lemma~\ref{lem:local_super_gauss} (in the logarithmic case).

\vspace*{2mm}
\noindent
\textsc{Step 3.}
The final step is to compute the exceedance probability over a domain of size much larger than $q_n$. Such domain can be decomposed into many small domains of size $q_n$, see Figure~\ref{fig:draw_block}, and the exceedance events over these small domains are asymptotically independent due to the extremal decorrelation. The asymptotic independence is shown by estimating the \textit{double sum} appearing in the Bonferroni inequality.  Thus, we can apply the Poisson limit theorem for weakly dependent events. The key steps of the third step are Lemma~\ref{lem:exc_JJJ_subgauss} (in the superlogarithmic case) and Lemma~\ref{lem:exc_JJJ_supergauss} (in the logarithmic case).

\vspace*{2mm}
On a rigorous level, there are several (closely related) powerful methods to analyze extremes of random fields; see~\cite{leadbetter_etal_book,piterbarg_book,berman_book,aldous_book}. We use a modification of the double sum method of~\citet{pickands}; see also~\citet[Chapter~12]{leadbetter_etal_book}, \citet[Chapter~D]{piterbarg_book}.
Originally, the method was used to analyze extremes of Gaussian processes,  but it can be applied to non-Gaussian scan statistics as well; see~\cite{book,piterbarg_kozlov,piterbarg,chan}. These references deal with fixed window size, for an example with variable window size see~\cite{kabluchko_spodarev08}.  A related method was also used by~\cite{siegmund_yakir}.

Throughout the paper $C_1,C_2,\ldots$ and $c_1,c_2,\ldots$ are positive constants which may change from line to line. They may depend on the distribution of $X_1$ and parameters specified in the text. We write $a_n\sim b_n$ if $\lim_{n\to\infty}a_n/b_n=1$. Let $\N_0=\{0,1,2,\ldots\}$.

\section{Results on large and moderate deviations}\label{sec:large_dev}
In our proofs we will make a heavy use of the exact asymptotic results for the probabilities of large and moderate deviations of sums of i.i.d.\ random variables. Recall from~\eqref{eq:varphi_def} that $\varphi$ is the cumulant generating function of the $X_k$'s.
%Let $X_1,X_2,\ldots$ be i.i.d.\ non-degenerate random variables with $\E X_k=0$, $\Var X_k$, and partial sums $S_k=X_1+\ldots+X_k$. We always assume that the log-Laplace transform $\varphi(t)$ given by~\eqref{eq:varphi_def} exists for all $t\geq -\eps_0$, for some $\eps_0>0$.
Define the Legendre--Fenchel transform of $\varphi$:
\begin{equation}\label{eq:def_I}
I(s)=\sup_{t\geq 0} (st-\varphi(t)), \;\;\; s\geq 0.
\end{equation}
Let $s_{\infty}=\sup\{s\in\R:\P[X_1\leq s]<1\}>0$ be the right endpoint of $X_1$. Then,  $I$ is a finite, strictly convex, strictly increasing, infinitely differentiable function on $[0,s_{\infty})$, and $I(s)=+\infty$ for $s>s_{\infty}$. Also, we have $s_{\infty}=\limsup_{t\to\infty} \varphi(t)/t$.

The next theorem on the probability of ``moderate'' deviations proved originally by Cram\'er~\cite{cramer} has been subsequently strengthened by~\citet{feller}, \citet{petrov_cramer} and~\citet{hoeglund}; see also~\cite{petrov_book} and~\cite{ibragimov_linnik_book}.
\begin{theorem}%[Cram\'er--Feller--Petrov]
\label{theo:moderate_dev}
Let $x_k>0$ be a sequence such that $x_k\to\infty$ but $x_k=o(\sqrt k)$ as $k\to\infty$. Then, as $k\to\infty$,
\begin{equation}\label{eq:cramer}
\P\left[\frac {S_k}{\sqrt k} >x_k\right] \sim \frac{1}{\sqrt{2\pi} x_k} \exp\left\{-kI\left(\frac {x_k}{\sqrt k} \right) \right\}.
\end{equation}
\end{theorem}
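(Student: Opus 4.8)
The plan is to prove this by Cram\'er's conjugate-distribution (exponential tilting) device combined with a central limit theorem for the tilted walk that is uniform in the tilting parameter. Write $s_k=x_k/\sqrt k$, so that $s_k\downarrow 0$ by hypothesis; for $k$ large $s_k\in(0,s_\infty)$, hence the supremum in~\eqref{eq:def_I} is attained at the unique $t_k>0$ with $\varphi'(t_k)=s_k$ and $I(s_k)=t_ks_k-\varphi(t_k)$. From $\varphi'(t)=t+O(t^2)$, $\varphi''(t)=1+O(t)$ and $I(s)=\tfrac12 s^2+O(s^3)$ as $t,s\to0$ one reads off the asymptotics that drive everything: $t_k\sim s_k$, $\sigma_k^2:=k\varphi''(t_k)\sim k$, hence $t_k\sigma_k\sim\sqrt k\,s_k=x_k\to\infty$ and $kI(s_k)\sim\tfrac12 ks_k^2=\tfrac12 x_k^2\to\infty$. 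Introduce the tilted law $\P_{t_k}$ under which $X_1,\dots,X_k$ are i.i.d.\ with density $e^{t_kx-\varphi(t_k)}$ relative to the law of $X_1$; then under $\P_{t_k}$ the centered walk $\widetilde S_k:=S_k-ks_k$ has mean $0$ and variance $\sigma_k^2$, and the likelihood-ratio identity, together with $-t_kS_k+k\varphi(t_k)=-t_k\widetilde S_k-kI(s_k)$, gives
\[
\P\!\left[\frac{S_k}{\sqrt k}>x_k\right]=\P[S_k>ks_k]=e^{-kI(s_k)}\,\E_{t_k}\!\left[e^{-t_k\widetilde S_k}\ind\{\widetilde S_k>0\}\right].
\]

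It now suffices to show $\E_{t_k}[e^{-t_k\widetilde S_k}\ind\{\widetilde S_k>0\}]\sim(\sqrt{2\pi}\,\sigma_k t_k)^{-1}$, since $\sigma_k t_k\sim x_k$ then yields exactly the asserted equivalent. To this end I would write, using $e^{-ta}=t\int_a^\infty e^{-ty}\,dy$ and Fubini,
\[
\E_{t_k}\!\left[e^{-t_k\widetilde S_k}\ind\{\widetilde S_k>0\}\right]=t_k\int_0^\infty e^{-t_ky}\,\P_{t_k}[0<\widetilde S_k\le y]\,dy,
\]
note that the integral is carried by $y$ of order $1/t_k=o(\sigma_k)$, and invoke a Berry--Esseen bound for $\widetilde S_k/\sigma_k$ under $\P_{t_k}$ to get $\P_{t_k}[0<\widetilde S_k\le y]\approx\Phi(y/\sigma_k)-\tfrac12\approx\frac{y}{\sqrt{2\pi}\,\sigma_k}$ in that range; then $t_k\int_0^\infty e^{-t_ky}\,\frac{y}{\sqrt{2\pi}\sigma_k}\,dy=(\sqrt{2\pi}\,\sigma_k t_k)^{-1}$ is the main term, while the range $y\gtrsim\sigma_k$ contributes negligibly because there $e^{-t_ky}\le e^{-t_k\sigma_k}\to0$ and $\P_{t_k}[0<\widetilde S_k\le y]\le1$.

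The technical heart, and the main obstacle, is making this Gaussian approximation quantitative and uniform in $k$: one needs $\sup_x\bigl|\P_{t_k}[\widetilde S_k/\sigma_k\le x]-\Phi(x)\bigr|=O(1/\sqrt k)$ with a constant not depending on $k$, which holds because $t_k\to0$ and $\varphi$ is finite near $0$, so the tilted increment laws converge to that of $X_1$ and have eventually bounded third absolute moments. A subtler issue is that near $y=0$ the main term $y/(\sqrt{2\pi}\sigma_k)$ is itself $o(1)$, so a bare $O(1/\sqrt k)$ error is not good enough there; to obtain a clean ``$\sim$'' one splits the integral at a level $\delta_k$ with $1/t_k\ll\delta_k\ll\sigma_k$ and on $[0,\delta_k]$ uses a sharper estimate — a local limit theorem for the density of $\widetilde S_k$ in the non-lattice case, or an Edgeworth expansion of the distribution function (following Petrov), which after the smoothing supplied by the continuous weight $e^{-t_ky}$ also covers the lattice case. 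With these uniform deviation estimates in hand, the remainder is routine bookkeeping with the Taylor expansions of $\varphi$, $\varphi'$, $\varphi''$ at the origin.
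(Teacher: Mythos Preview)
The paper does not give its own proof of this statement: Theorem~\ref{theo:moderate_dev} is quoted as a classical result, with references to Cram\'er, Feller, Petrov and H\"oglund, and is used as a black box in the subsequent arguments. So there is nothing in the paper to compare against beyond the citations.

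That said, your sketch is exactly the classical Cram\'er conjugate-distribution proof that underlies the cited references, and the outline is correct. One remark: your worry that a bare Berry--Esseen bound is ``not good enough'' near $y=0$ is somewhat overstated in the moderate-deviation regime. After your Fubini step, the total error contribution is
\[
t_k\int_0^\infty e^{-t_ky}\,O(k^{-1/2})\,dy=O(k^{-1/2}),
\]
whereas the main term is $\sim (\sqrt{2\pi}\,t_k\sigma_k)^{-1}\sim(\sqrt{2\pi}\,x_k)^{-1}$; the hypothesis $x_k=o(\sqrt k)$ is precisely what makes $k^{-1/2}=o(x_k^{-1})$, so a uniform Berry--Esseen bound for the tilted walk (with a constant uniform in $t_k\to0$, guaranteed by the bounded third moments you mention) already suffices. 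The sharper tools you invoke --- local limit theorems and Edgeworth expansions --- are what one genuinely needs for the \emph{large}-deviation Theorem~\ref{theo:large_dev}, where $x_k\asymp\sqrt k$ and the Berry--Esseen error is of the same order as the main term; there the lattice/non-lattice distinction also becomes essential, as the paper notes.
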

 Often, it is more convenient to introduce the Cram\'er series $\lambda(y)=y^{-3} (\frac{y^2}{2}-I(y))$ and state~\eqref{eq:cramer} in the following equivalent form (which is valid without the requirement $x_k\to\infty$):
\begin{equation}\label{eq:cramer1}
\P\left[\frac{S_k}{\sqrt k}>x_k\right] \sim \bar \Phi(x_k)  \cdot  \exp \left\{\frac{x_k^3}{\sqrt k}\cdot \lambda \left(\frac{x_k}{\sqrt k}\right)\right\}.
\end{equation}
Here, $\bar \Phi(x)$ is the tail function of the standard normal distribution. Relations~\eqref{eq:cramer} and~\eqref{eq:cramer1} are equivalent since $\bar \Phi(x)\sim \frac{1}{\sqrt{2\pi}x}e^{-x^2/2}$ as $x\to\infty$. Note that for $x_k$ being constant the central limit theorem is recovered.

The next theorem due to~\citet{bahadur_rao} and~\citet{petrov_ld} deals with the probabilities of ``large'' deviations of $S_k$.
\begin{theorem}%[Bahadur--Ranga Rao--Petrov]
\label{theo:large_dev}
Assume that $X_1$ is non-lattice. Let $x_k>0$ be a sequence such that $x_k\sim \alpha \sqrt k$ for some $\alpha>0$, as $k\to\infty$. Then, as $k\to\infty$,
\begin{equation}
\P\left[\frac {S_k}{\sqrt k} >x_k\right]\sim \frac{1}{\sqrt{2\pi k}}  \frac{1}{I'(\alpha) \sigma(\alpha)} \exp\left\{-k I\left(\frac{x_k}{\sqrt k}\right)\right\}.
\end{equation}
Here, $\sigma^2(\alpha)=\varphi''(I'(\alpha))$.
\end{theorem}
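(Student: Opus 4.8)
The plan is to obtain the estimate by Cram\'er's exponential change of measure, reducing the large-deviation probability to a local limit theorem for a tilted random walk. Write $\alpha_k:=x_k/\sqrt k$, so $\alpha_k\to\alpha$ (the boundary cases $\alpha\notin(0,s_\infty)$ being degenerate, we may assume $\alpha\in(0,s_\infty)$). Since $\varphi'$ maps $[0,\infty)$ bijectively onto $[0,s_\infty)$, there is a unique $t_k>0$ with $\varphi'(t_k)=\alpha_k$; by Legendre duality $t_k=I'(\alpha_k)$, $I(\alpha_k)=t_k\alpha_k-\varphi(t_k)$, and $t_k\to I'(\alpha)=:t$. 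I would then introduce the tilted law $\widetilde{\P}_k$ under which $X_1,X_2,\ldots$ remain i.i.d.\ with $\dd\widetilde{\P}_k/\dd\P=e^{t_kX_1-\varphi(t_k)}$; under $\widetilde{\P}_k$ each $X_i$ has mean $\alpha_k$ and variance $\sigma_k^2:=\varphi''(t_k)\to\varphi''(I'(\alpha))=\sigma^2(\alpha)$. Using $-t_kS_k+k\varphi(t_k)=-t_k(S_k-k\alpha_k)-kI(\alpha_k)$ together with the density, this gives the exact identity
\[
\P\!\left[\frac{S_k}{\sqrt k}>x_k\right]=e^{-kI(\alpha_k)}\,\widetilde{\E}_k\!\left[e^{-t_kR_k}\,\ind\{R_k>0\}\right],\qquad R_k:=S_k-k\alpha_k,
\]
and since $kI(\alpha_k)=kI(x_k/\sqrt k)$ it remains to show $\widetilde{\E}_k[e^{-t_kR_k}\ind\{R_k>0\}]\sim\big(I'(\alpha)\,\sigma(\alpha)\,\sqrt{2\pi k}\big)^{-1}$.

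For the remaining expectation, observe that under $\widetilde{\P}_k$ the walk $R_k$ is centred with standard deviation of order $\sqrt k$, while the weight $e^{-t_kR_k}$ (with $t_k$ bounded away from $0$) concentrates $R_k$ on the unit scale, so only the law of $R_k$ near the origin matters. I would integrate by parts in the Stieltjes integral, using that the boundary term at $+\infty$ vanishes, to get
\[
\widetilde{\E}_k\!\left[e^{-t_kR_k}\,\ind\{R_k>0\}\right]=t_k\int_0^\infty e^{-t_ky}\,\widetilde{\P}_k[0<R_k\le y]\,\dd y,
\]
and then invoke the local central limit theorem for the non-lattice tilted walk to get $\sqrt k\,\widetilde{\P}_k[0<R_k\le y]\to y/(\sigma(\alpha)\sqrt{2\pi})$ for each fixed $y>0$, together with a uniform bound $\sqrt k\,\widetilde{\P}_k[0<R_k\le y]\le C(1+y)$. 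Dominated convergence (the bound being integrable against $e^{-t_ky}$ once $t_k\ge t/2$) then yields
\[
\sqrt k\,\widetilde{\E}_k\!\left[e^{-t_kR_k}\ind\{R_k>0\}\right]\longrightarrow t\int_0^\infty e^{-ty}\,\frac{y}{\sigma(\alpha)\sqrt{2\pi}}\,\dd y=\frac{1}{t\,\sigma(\alpha)\sqrt{2\pi}}=\frac{1}{I'(\alpha)\,\sigma(\alpha)\sqrt{2\pi}},
\]
which combined with the exact identity finishes the proof.

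The hard part is the local limit step: the summands are the tilted variables, forming a triangular array that depends on $k$ through $t_k$, so one needs a version of Stone's local CLT that is uniform in the tilt parameter over the compact set $\{t_k:k\ge1\}\cup\{t\}$. This is exactly where the non-lattice hypothesis is indispensable: it ensures $|\widetilde{\E}_k[e^{\mathrm{i}\theta X_1}]|<1$ for $\theta\ne0$, with the bound uniform in $k$ outside any neighbourhood of $\theta=0$ because $t_k\to t$, while a second-order Taylor expansion of the characteristic function controls a neighbourhood of the origin; in the lattice case the conclusion picks up an extra arithmetic factor, which is why the theorem excludes it. Everything else is routine: the smoothness and strict convexity of $\varphi$ and $I$ on $[0,\infty)$ and $[0,s_\infty)$ respectively guarantee that $t_k$ and $\sigma_k$ converge, and the final integral is the elementary $\int_0^\infty ye^{-ty}\,\dd y=t^{-2}$. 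Alternatively, one could simply quote the Stone--Feller local limit theorem, or an Esseen-type Edgeworth expansion with explicit remainder, and read the asymptotics off directly --- essentially the route taken by Bahadur--Rao and Petrov.
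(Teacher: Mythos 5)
The paper does not prove Theorem~\ref{theo:large_dev}; it cites the result to Bahadur--Rao and Petrov and uses it as a black box. Your blind reconstruction --- exponential change of measure to the tilted law with parameter $t_k=I'(\alpha_k)$, the exact identity $\P[S_k/\sqrt k>x_k]=e^{-kI(\alpha_k)}\widetilde\E_k[e^{-t_kR_k}\ind\{R_k>0\}]$, integration by parts against $\widetilde\P_k[0<R_k\le y]$, and a local CLT for the tilted triangular array with dominated convergence --- is precisely the classical Bahadur--Rao argument, and it is correct. The two technical points you correctly isolate are the genuinely delicate ones: uniformity of the local CLT in the tilt parameter (handled because $t_k\to t>0$ so the tilts lie in a compact set, and the tilted law inherits the non-lattice property of $X_1$), and the domination bound $\sqrt k\,\widetilde\P_k[0<R_k\le y]\le C(1+y)$, which follows from a concentration-function (Esseen/Kolmogorov--Rogozin) inequality applied to the tilted walk. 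Your sketch is therefore a faithful proof of the statement the paper merely quotes, and nothing more is needed for the paper's purposes.
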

For lattice variables the theorem should be modified; see~\cite{petrov_ld}. In fact, Theorems~\ref{theo:moderate_dev} and~\ref{theo:large_dev} can be included  as special cases in a general result; see~\cite{hoeglund}. We will need just the following inequality; see~\cite{petrov_cramer}, \cite{hoeglund}. It is valid both for ``moderate'' and ``large'' deviations,  both in the lattice and in the non-lattice case.
\begin{theorem}\label{theo:petrov_est}
For every $A\in (0, s_{\infty})$ there is a constant $C=C(A)$ such that for all $k\in\N$, $x\in (0, A\sqrt k)$,
\begin{equation}
\P\left[\frac{S_k}{\sqrt k}>x\right] \leq \frac C x  \exp \left\{-k I\left(\frac{x}{\sqrt k}\right)\right\}.
\end{equation}
\end{theorem}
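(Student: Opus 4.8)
The plan is to derive the inequality from the exponential (Markov/Chernoff) bound at the optimal tilting parameter, together with a uniform lower bound on the derivative of the rate function that captures the polynomial prefactor $1/x$. First I would recall that for any $t\ge 0$ and $x>0$,
\[
\P\left[\frac{S_k}{\sqrt k}>x\right]\le e^{-tx\sqrt k}\,\E e^{tS_k}=\exp\{k\varphi(t)-t x\sqrt k\}.
\]
Writing $s=x/\sqrt k\in(0,A)$ and optimizing over $t$ gives the crude Chernoff bound $\P[S_k/\sqrt k>x]\le e^{-kI(s)}$, which already has the correct exponential order. The whole content of the theorem is to improve the constant $1$ in front to $C/x$, i.e.\ to gain a factor $\sqrt k/x$ (equivalently $1/s$). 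This gain cannot come from the Chernoff bound alone; it must come from a second-order (Bahadur--Rao / Cram\'er type) refinement.

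The cleanest route is to invoke the refined estimates already available in the literature, exactly as the statement of the theorem indicates (``see~\cite{petrov_cramer}, \cite{hoeglund}''). Specifically, I would split the range $x\in(0,A\sqrt k)$ into a ``moderate'' regime and a ``large'' regime and treat each with the corresponding sharp asymptotic. In the large regime, say $x\ge \delta\sqrt k$ for a fixed small $\delta>0$: here $s=x/\sqrt k$ ranges over the compact set $[\delta,A]\subset(0,s_\infty)$, and the local CLT / Bahadur--Rao expansion (Theorem~\ref{theo:large_dev} in the non-lattice case, and its lattice analogue from~\cite{petrov_ld}) gives $\P[S_k/\sqrt k>x]\le \frac{C_1}{\sqrt k}e^{-kI(s)}$ uniformly, where the uniformity in $s$ over the compact set $[\delta,A]$ follows because $I'(s)$ and $\sigma(s)$ are continuous and bounded away from $0$ and $\infty$ there; since $x=s\sqrt k\le A\sqrt k$, we have $\frac{1}{\sqrt k}\le \frac{A}{x}$, giving the claimed $C/x$. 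In the moderate regime $x\le \delta\sqrt k$ (but $x$ may be bounded or grow): here I would use the Cram\'er series form~\eqref{eq:cramer1}, $\P[S_k/\sqrt k>x]\sim\bar\Phi(x)\exp\{x^3 k^{-1/2}\lambda(x/\sqrt k)\}$; since $\bar\Phi(x)\le \frac{1}{\sqrt{2\pi}\,x}e^{-x^2/2}$ for $x>0$ (a standard Mills-ratio bound, valid for all $x>0$ up to a multiplicative constant, or exactly for $x$ bounded below) and the exponential correction factor is $e^{kI(s)-kx^2/(2k)}$ rearranged, one recovers $\frac{C}{x}e^{-kI(x/\sqrt k)}$; for $x$ in a bounded interval $(0,M]$ one simply uses that $\P[\,\cdot\,]\le 1\le \frac{M}{x}\cdot e^{0}$ together with $I(x/\sqrt k)\to 0$, absorbing everything into the constant. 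Choosing $\delta$ so that the two regimes overlap and taking $C$ to be the maximum of the two constants (which depend only on $A$, via $\delta$ and via the compact-set bounds) yields the result.

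The main obstacle is \emph{uniformity}: the sharp asymptotics in Theorems~\ref{theo:moderate_dev} and~\ref{theo:large_dev} are stated for a single sequence $x_k$, whereas here we need a bound holding simultaneously for all $x\in(0,A\sqrt k)$ and all $k$. Resolving this requires either (i) citing the uniform versions in~\cite{petrov_cramer} and~\cite{hoeglund}, which are precisely designed to interpolate between the moderate and large regimes with uniform control, or (ii) a compactness argument: if the inequality failed, there would be sequences $k_m\to\infty$ and $x_m\in(0,A\sqrt{k_m})$ with $x_m\P[S_{k_m}/\sqrt{k_m}>x_m]e^{k_mI(x_m/\sqrt{k_m})}\to\infty$; passing to a subsequence along which $s_m:=x_m/\sqrt{k_m}\to s_\infty^*\in[0,A]$ and along which $x_m$ either stays bounded or tends to $\infty$, one reaches a contradiction with the appropriate single-sequence asymptotic in each case. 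Since the theorem is quoted as known and only an inequality (not the sharp constant) is needed, I would simply cite~\cite{petrov_cramer} and~\cite{hoeglund} for the uniform statement, remarking that it covers moderate and large deviations and the lattice and non-lattice cases alike.
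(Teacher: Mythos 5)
Your proposal takes the same route as the paper, which offers no proof of Theorem~\ref{theo:petrov_est} and simply cites the uniform large/moderate-deviation inequalities of Petrov and H\"oglund; your additional sketch (Chernoff bound for the exponential order, Bahadur--Rao and Cram\'er expansions with a regime split and a compactness/uniformity argument to turn asymptotics into a uniform inequality) is a reasonable account of why those citations deliver the stated bound. One small point worth tightening in the sketch: the compactness reduction should first dispose of bounded $k$ by the trivial bound $\P\le 1$ (noting $k\,I(x/\sqrt k)$ is bounded there), so that along the hypothetical bad sequence one may assume $k_m\to\infty$ before invoking the single-sequence asymptotics.
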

The next lemma is elementary and well-known. It is weaker than Theorem~\ref{theo:petrov_est}, but valid  without restriction on $x$.  Instead of~\eqref{eq:varphi_def} we assume that $\varphi(t)=\log \E e^{tX_1}$ is finite on $[0,t_{\infty})$, for some $t_{\infty}>0$.
\begin{lemma}\label{lem:ld_est}
%Let $X_1,X_2,\ldots$ be i.i.d.\ random variables with $\E X_1=0$.
For every $k\in\N$ and $x>0$, we have
$$
%\P[S_k>ks]\leq \exp\{-kI(s)\}.
\P\left[\frac{S_k}{\sqrt k}>x\right]\leq \exp\left\{-kI\left(\frac{x}{\sqrt k}\right)\right\}.
$$
\end{lemma}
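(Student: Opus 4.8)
The plan is to use the exponential Chebyshev (Markov) inequality together with the optimal choice of tilting parameter. First I would fix $k\in\N$ and $x>0$, and write $\P[S_k/\sqrt k > x] = \P[S_k > x\sqrt k]$. For any $t\ge 0$ with $\varphi(t)<\infty$, Markov's inequality applied to the nonnegative random variable $e^{tS_k}$ gives
\begin{equation*}
\P\left[\frac{S_k}{\sqrt k}>x\right] \le e^{-tx\sqrt k}\,\E e^{tS_k} = e^{-tx\sqrt k}\,\E\Big[\prod_{i=1}^k e^{tX_i}\Big] = e^{-tx\sqrt k}\, e^{k\varphi(t)},
\end{equation*}
where the last equality uses that the $X_i$ are i.i.d.\ with cumulant generating function $\varphi$. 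Rewriting the exponent, the right-hand side equals $\exp\{-k(t\cdot \tfrac{x}{\sqrt k} - \varphi(t))\}$.

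Next I would optimize over $t\ge 0$. Taking the infimum over all admissible $t\ge 0$, the exponent becomes $-k\sup_{t\ge 0}\big(\tfrac{x}{\sqrt k}\,t - \varphi(t)\big) = -kI\big(\tfrac{x}{\sqrt k}\big)$ by the definition \eqref{eq:def_I} of the Legendre--Fenchel transform $I$. This yields exactly
\begin{equation*}
\P\left[\frac{S_k}{\sqrt k}>x\right]\le \exp\left\{-kI\left(\frac{x}{\sqrt k}\right)\right\},
\end{equation*}
as claimed. One small point to address: if $x/\sqrt k > s_\infty$ then $I(x/\sqrt k)=+\infty$ and the bound reads $\P[\,\cdot\,]\le 0$, which is consistent since $S_k \le k s_\infty$ almost surely forces the probability to be zero in that range; and if the supremum defining $I$ is attained only in a limit (e.g.\ the finiteness interval of $\varphi$ is $[0,t_\infty)$), the bound still follows by taking the infimum over $t\in[0,t_\infty)$ and passing to the limit.

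There is essentially no obstacle here — this is the standard Chernoff bound and the only thing worth being careful about is that we only need $\varphi$ finite on a right-neighborhood $[0,t_\infty)$ of the origin, not the two-sided condition \eqref{eq:varphi_def}, so the statement is genuinely more robust than Theorem \ref{theo:petrov_est}; the trade-off is the loss of the polynomial prefactor $C/x$. I would state the proof in two or three lines exactly along the above computation.
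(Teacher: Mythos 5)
Your proof is correct and is essentially identical to the paper's: both apply Markov's inequality to $e^{tS_k}$, use independence to get the factor $e^{k\varphi(t)}$, and then optimize over $t\ge 0$ to recover $I(x/\sqrt k)$. The extra remarks about the case $x/\sqrt k > s_\infty$ and about only needing $\varphi$ finite on a right-neighborhood of the origin are accurate but not needed for the bound itself.
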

\begin{proof}
By Markov's inequality $\P[S_k>\sqrt k x]\leq e^{- t \sqrt kx+k\varphi(t)}$, for every $t\geq 0$. Take the minimum over $t\geq 0$.
\end{proof}

%********************************************************%
%************* PROOF SUPERLOGARITHMIC   *****************%
%********************************************************%

\section{Proof in the superlogarithmic case}\label{sec:proof_subgauss}
In this section we prove Theorem~\ref{theo:main_subgauss} and Theorem~\ref{theo:main_subgauss_scales}. It will be convenient to pass from conditions~\eqref{eq:varphi_subgauss} and~\eqref{eq:varphi_taylor} to their Legendre--Fenchel conjugates. We will assume that for every $\eps>0$,
\begin{equation}\label{eq:I_subgauss}
\inf_{s\geq \eps} \frac{I(s)}{s^2/2}>1.
\end{equation}
We also need the Taylor expansion of $I$ at $0$: with $\kappa>0$,
\begin{equation}\label{eq:I_taylor}
I(s)=\frac {s^2}{2}+\kappa s^{q} +o(s^{q}), \;\;\; s\downarrow 0.
\end{equation}
\begin{proposition}\label{prop:dual_conditions_subg}
Assume that~\eqref{eq:varphi_def} holds. Then, conditions~\eqref{eq:varphi_subgauss} and~\eqref{eq:I_subgauss} are equivalent. Also, conditions~\eqref{eq:varphi_taylor} and~\eqref{eq:I_taylor} are equivalent.
\end{proposition}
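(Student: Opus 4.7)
The plan is to exploit Legendre--Fenchel duality between $\varphi$ and $I$, together with the self-conjugacy of the quadratic $(ct^2/2)^* = s^2/(2c)$. Since $\varphi$ is convex with $\varphi'(0) = 0$, the one-sided supremum in~\eqref{eq:def_I} coincides with the full Legendre transform $\varphi^*(s)$ for $s \geq 0$, and Fenchel--Moreau gives $\varphi(t) = \sup_{s \geq 0}(st - I(s))$ for $t \geq 0$. This duality lets me flip every statement about $\varphi$ versus $t^2/2$ into a statement about $I$ versus $s^2/2$.

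For \eqref{eq:varphi_subgauss} $\Rightarrow$ \eqref{eq:I_subgauss}, the clean step is to plug $t = s$ into the supremum defining $I$, yielding the pointwise lower bound
\begin{equation*}
\frac{I(s)}{s^2/2} \;\geq\; \frac{s \cdot s - \varphi(s)}{s^2/2} \;=\; 2 - \psi(s).
\end{equation*}
So $\sup_{t \geq \eps}\psi(t) \leq 1 - 2\delta$ immediately gives $\inf_{s \geq \eps} I(s)/(s^2/2) \geq 1 + 2\delta > 1$. For the converse I would first establish the pointwise equivalence $[I(s) > s^2/2\ \forall s > 0] \Leftrightarrow [\varphi(t) < t^2/2\ \forall t > 0]$ by applying Fenchel--Moreau with the optimizer $s^*(t) = \varphi'(t) > 0$: for $t > 0$, $\varphi(t) = s^* t - I(s^*) < s^* t - (s^*)^2/2 \leq t^2/2$. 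To upgrade to the uniform bound, I would handle large $t$ by noting $s^*(t) = \varphi'(t)$ eventually exceeds the threshold in~\eqref{eq:I_subgauss}, which gives $\varphi(t) \leq s^* t - (1+2\delta)(s^*)^2/2 \leq t^2/(2(1+2\delta))$; on the intermediate compact range I would use continuity of $\psi$ together with the pointwise strict inequality.

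For \eqref{eq:varphi_taylor} $\Leftrightarrow$ \eqref{eq:I_taylor} I would carry out a direct expansion at the origin via the substitution $s = \varphi'(t)$. Differentiating \eqref{eq:varphi_taylor} gives $\varphi'(t) = t - q\kappa t^{q-1} + o(t^{q-1})$, which inverts to $t = s + q\kappa s^{q-1} + o(s^{q-1})$, so the displacement $r := t - s = O(s^{q-1})$ satisfies $r^2 = O(s^{2q-2}) = o(s^q)$ (using $2q-2 > q$ for $q \geq 3$) and $\kappa(s+r)^q = \kappa s^q + o(s^q)$. Substituting into $I(s) = st - \varphi(t)$ and collecting terms,
\begin{equation*}
I(s) \;=\; s^2 + q\kappa s^q - \Bigl(\tfrac{s^2}{2} + (q-1)\kappa s^q\Bigr) + o(s^q) \;=\; \tfrac{s^2}{2} + \kappa s^q + o(s^q),
\end{equation*}
identifying the same coefficient $\kappa$. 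The converse is symmetric, using $\varphi = I^*$ (Fenchel--Moreau) and rerunning the calculation with the roles of $I$ and $\varphi$ interchanged.

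The main obstacle I anticipate is the uniform converse \eqref{eq:I_subgauss} $\Rightarrow$ \eqref{eq:varphi_subgauss}: one must carefully splice the large-$t$ Legendre argument (which requires the optimizer $s^*(t) = \varphi'(t)$ to lie above the threshold from~\eqref{eq:I_subgauss}) with a compactness argument on the intermediate range, where only the pointwise strict inequality is available. Verifying that $\varphi'$ is a homeomorphism from $[0,\infty)$ onto $[0,s_\infty)$ is what ensures every relevant $s$ is hit. By contrast, the Taylor part is conceptually routine, but the hypothesis $q \geq 3$ is essential to absorb the cross-terms $r^2$ and $\kappa((s+r)^q - s^q)$ into $o(s^q)$.
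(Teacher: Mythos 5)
Your proposal is correct and follows essentially the same route as the paper: both directions of the subgaussian equivalence rest on Legendre--Fenchel duality against the self-conjugate quadratic, and the Taylor equivalence is obtained by differentiating, inverting $\varphi'$, and reassembling $I$. The only differences are cosmetic --- you plug in $t=s$ rather than the optimal $t=s/c$ in the forward direction, you finish the Taylor computation by substituting into $I(s)=st-\varphi(t)$ instead of integrating $I'$, and you spell out the converse implication (large-$t$ Legendre bound spliced with compactness) that the paper dismisses as ``analogous''.
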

\begin{proof}
Assume that~\eqref{eq:varphi_subgauss} holds. Let $\eps>0$ be arbitrary. By~\eqref{eq:varphi_subgauss} we can find $c=c(\eps)<1$ such that $\varphi(t)\leq c t^2/2$ for all $t\geq \eps$. It follows that for every $s\geq \eps$,
$$
I(s)=\sup_{t\geq 0} (st-\varphi(t)) \geq \sup_{t\geq \eps} (st-\varphi(t)) \geq  \sup_{t\geq \eps} \left(st-\frac {ct^2}2\right)=\frac{s^2}{2c}.
$$
Note that the last equality holds since the supremum of $st-\frac {ct^2}2$ is attained at $t=\frac sc >\eps$. It follows that~\eqref{eq:I_subgauss} holds. The proof that~\eqref{eq:I_subgauss} implies~\eqref{eq:varphi_subgauss} is analogous.

Assume now that~\eqref{eq:varphi_taylor} holds. Note that $\varphi$ is analytic in a neighborhood of zero. Taking the derivative, we obtain $\varphi'(t)= t - q \kappa t^{q-1}+o(t^{q-1})$.  Now, by Legendre--Fenchel duality, $I'$ is the inverse function of $\varphi'$. Taking the inverse function of $\varphi'$ we obtain $I'(s)=s + q \kappa s^{q-1}+o(s^{q-1})$. Integrating, we obtain~\eqref{eq:I_taylor}. The proof of the converse implication is analogous.
\end{proof}
Recall the notation $p=\frac{q}{q-2}$. Fix some $\tau\in\R$ and define a normalizing sequence $u_n=u_n(\tau)>0$ by
\begin{equation}\label{eq:def_un_subgauss}
u_n^2= 2\log (n\log^{\frac 32-p}n) + 2\tau.
\end{equation}
Throughout the remainder of Section~\ref{sec:proof_subgauss} we assume that conditions~\eqref{eq:I_subgauss} and~\eqref{eq:I_taylor} are satisfied. Sections~\ref{subsec:ind_subgauss}--\ref{subsec:global_subg} are devoted to the proof of Theorem~\ref{theo:main_subgauss_scales}. In Section~\ref{subsec:non_opt_subgauss} we complete the proof of Theorem~\ref{theo:main_subgauss}.

\subsection{Individual probability}\label{subsec:ind_subgauss}
The first step is to compute the probability that the random variable $\ZZZ_{i,j}$ exceeds some large threshold at some individual point $(i,j)\in \III$. We will consider intervals $(i,j)$ whose length $l_n$  is optimal, that is $l_n\sim a\log ^p n$, for some $a>0$.
\begin{lemma}\label{lem:one_point_sub_gauss}
Let $l_n\in\N$ be a sequence such that $a:=\lim_{n\to\infty}l_n/\log^p n \in (0,\infty)$. Let $s\in\R$ be fixed. Then, as $n\to\infty$,
$$
P_{n}(s)
:=
\P\left[\frac{S_{l_n}}{\sqrt{l_n}}>u_n-\frac{s}{u_n}\right]
\sim
%\frac{e^s}{\sqrt {2\pi} u_n} \exp\left\{-\frac{u_n^2}{2} - \kappa 2^{\frac{q}2} a^{-\frac {q-2}{2}} \right\}.
\frac{1}{2\sqrt {\pi}} \cdot e^{s-\kappa 2^{\frac{q}2} a^{-\frac {q-2}{2}}} \cdot \frac{e^{-\tau}}{n\log^{2-p} n}.
$$
\end{lemma}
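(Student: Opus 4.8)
The plan is to apply the moderate-deviation asymptotics of Theorem~\ref{theo:moderate_dev} (Cram\'er's theorem in the form~\eqref{eq:cramer}) to the random walk $S_{l_n}$ with the deviation level $x_k = x_{l_n} := u_n - s/u_n$. First I would verify that the hypotheses of Theorem~\ref{theo:moderate_dev} are met: since $u_n^2 = 2\log(n\log^{3/2-p}n) + 2\tau \sim 2\log n$, we have $u_n\to\infty$, hence $x_{l_n}\to\infty$; and since $l_n \sim a\log^p n$ with $p>1$, we get $x_{l_n}/\sqrt{l_n}\sim \sqrt{2\log n}/\sqrt{a\log^p n}\to 0$, so indeed $x_{l_n} = o(\sqrt{l_n})$. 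Therefore
\[
P_n(s) \sim \frac{1}{\sqrt{2\pi}\,x_{l_n}}\exp\left\{-l_n\, I\!\left(\frac{x_{l_n}}{\sqrt{l_n}}\right)\right\}.
\]

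Next I would expand the two factors. For the prefactor, $x_{l_n}\sim u_n\sim\sqrt{2\log n}$, so $\frac{1}{\sqrt{2\pi}x_{l_n}}\sim \frac{1}{\sqrt{2\pi}\sqrt{2\log n}} = \frac{1}{2\sqrt{\pi}\sqrt{\log n}}$. For the exponent, write $y_n := x_{l_n}/\sqrt{l_n}$, which tends to $0$, and use the Taylor expansion~\eqref{eq:I_taylor}: $l_n I(y_n) = l_n\left(\frac{y_n^2}{2} + \kappa y_n^q + o(y_n^q)\right) = \frac{x_{l_n}^2}{2} + \kappa\, l_n y_n^q + o(l_n y_n^q)$. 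Now $l_n y_n^q = x_{l_n}^q / l_n^{q/2-1} = x_{l_n}^q/l_n^{(q-2)/2}$. Since $x_{l_n}^q \sim (2\log n)^{q/2}$ and $l_n^{(q-2)/2}\sim a^{(q-2)/2}(\log n)^{p(q-2)/2} = a^{(q-2)/2}(\log n)^{q/2}$ (using $p(q-2)=q$), the $\log n$ powers cancel and $l_n y_n^q \to 2^{q/2} a^{-(q-2)/2}$; the error term $o(l_n y_n^q)$ is therefore $o(1)$. Hence $l_n I(y_n) = \frac{x_{l_n}^2}{2} + \kappa 2^{q/2} a^{-(q-2)/2} + o(1)$. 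Finally, $\frac{x_{l_n}^2}{2} = \frac12\left(u_n - \frac{s}{u_n}\right)^2 = \frac{u_n^2}{2} - s + \frac{s^2}{2u_n^2} = \log(n\log^{3/2-p}n) + \tau - s + o(1)$, so $\exp\{-x_{l_n}^2/2\} \sim e^{s-\tau}/(n\log^{3/2-p}n)$.

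Combining the prefactor $\frac{1}{2\sqrt{\pi}\sqrt{\log n}}$ with $\exp\{-x_{l_n}^2/2\}\cdot e^{-\kappa 2^{q/2}a^{-(q-2)/2}}$ gives
\[
P_n(s) \sim \frac{1}{2\sqrt{\pi}} \cdot e^{s - \kappa 2^{q/2} a^{-(q-2)/2}} \cdot \frac{e^{-\tau}}{n\log^{3/2-p}n \cdot \sqrt{\log n}},
\]
and since $(3/2 - p) + 1/2 = 2 - p$, the denominator is $n\log^{2-p}n$, which is exactly the claimed asymptotic. The argument is essentially a sequence of routine expansions; the only point requiring care is the bookkeeping of logarithmic powers in the term $l_n y_n^q$, which is precisely where the choices $p = q/(q-2)$ and the exponent $3/2-p$ in the definition~\eqref{eq:def_un_subgauss} of $u_n$ are calibrated so that everything converges to a finite nonzero constant. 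That cancellation is the heart of the lemma and the reason the optimal length scales like $\log^p n$.
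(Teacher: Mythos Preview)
Your argument is correct and follows essentially the same route as the paper's proof: apply the Cram\'er--Petrov moderate-deviation asymptotics (Theorem~\ref{theo:moderate_dev}), expand $l_n I(x_{l_n}/\sqrt{l_n})$ via the Taylor expansion~\eqref{eq:I_taylor}, observe that the relation $p(q-2)=q$ makes $l_n y_n^q$ converge to $2^{q/2}a^{-(q-2)/2}$, and then collect the logarithmic powers. Your verification of the hypotheses of Theorem~\ref{theo:moderate_dev} and your explicit tracking of the exponent $(3/2-p)+1/2=2-p$ are, if anything, slightly more detailed than the paper's version.
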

\begin{proof}
By~\eqref{eq:def_un_subgauss}, we have
$
u_n\sim\sqrt{2\log n}
$.
 Note that $u_n-\frac {s}{u_n}=o(\sqrt {l_n})$ since $p>1$. By Theorem~\ref{theo:moderate_dev},
\begin{equation}\label{eq:P_nc_subgauss}
P_{n}(s)\sim \frac{1}{\sqrt{2\pi} u_n} \exp\left\{-l_nI\left( \frac {u_n-\frac {s}{u_n}}{\sqrt {l_n}} \right) \right\}.
\end{equation}
We evaluate the term under the sign of the exponential. Since $l_n\sim a \log^p n$ and $p=\frac q{q-2}$, we have
$$
\left(\frac{u_n-\frac {s}{u_n}}{\sqrt {l_n}}\right)^{q} = 2^{\frac{q}2} a^{-\frac {q-2}{2}}l_n^{-1}+o(l_n^{-1}).
$$
Using the Taylor expansion in~\eqref{eq:I_taylor}, we obtain
$$
l_n I\left(\frac {u_n-\frac {s}{u_n}} {\sqrt l_n} \right)
=
\frac 12 \left(u_n-\frac {s}{u_n}\right)^2
+
\kappa 2^{\frac{q}2} a^{-\frac {q-2}{2}} +o(1).
%\left(\frac{u_n-\frac {c}{u_n}}{\sqrt {l_n}} \right)^{3+q}.
$$
Inserting this into~\eqref{eq:P_nc_subgauss} yields
\begin{equation}\label{eq:Pn_s_subgauss}
P_{n}(s)
\sim
\frac{e^s}{\sqrt {2\pi} u_n} \exp\left\{-\frac{u_n^2}{2} - \kappa 2^{\frac{q}2} a^{-\frac {q-2}{2}} \right\}.
\end{equation}
To complete the proof of Lemma~\ref{lem:one_point_sub_gauss} note that $e^{u_n^2/2} = e^{\tau} n \log^{\frac 32 -p} n$ by~\eqref{eq:def_un_subgauss}.
\end{proof}

\subsection{Local probability}\label{subsec:local_subgauss}
The next step is to compute the exceedance probability over a small discrete square in the space of intervals.
Given an interval  $(x,y)\in \III$ of length  $l:=y-x$ and a ``length fluctuation'' $r\in\N$, let $\TTT_{r}(x,y)$ be the set of all intervals $(i,j)\in \III$ satisfying
$$
x-r < i \leq x \text{ and } y \leq  j < y+r.
$$
Note that all intervals from the set $\TTT_r(x,y)$ are extensions of the ``base'' interval $(x,y)$; see Figure~\ref{fig:draw_cluster2}. We can view $\TTT_r(x,y)$ as a discrete square with side length $r$ in the grid $\III\subset \Z^2$.  The base interval $(x,y)$ corresponds to the right bottom  vertex of this square. The cardinality of $\TTT_r(x,y)$ is $r^2$. Fix any sequence $q_n\in\N$ satisfying $q_n\sim \log^{p-1} n$ as $n\to\infty$. Note that $q_n\to\infty$ since $p>1$.
%%%%%%%%%%%%%%%%%FIGURE %%%%%%%%%%%%%%%%%%%%%%
\begin{figure}[t]
\begin{center}
\includegraphics[height=0.4\textwidth, width=0.5\textwidth]{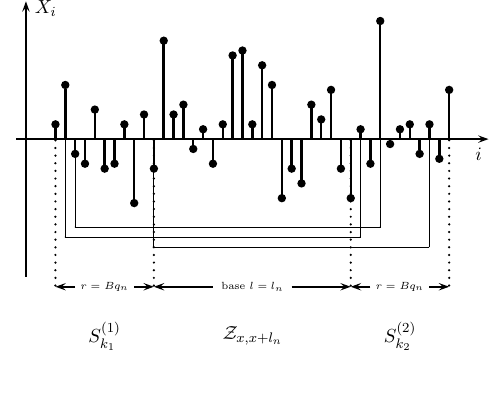}
\end{center}
\vspace{-0.8cm}
\caption
{
%Lemma~\ref{lem:local_sub_gauss} and its proof.
The structure of the set $\TTT_{n}=\TTT_{Bq_n}(x,x+l_n)$. All the intervals share the common base $(x,x+l_n)$.
} \label{fig:draw_cluster2}
\end{figure}
%%%%%%%%%%%%%%%%%%%%%%%%%%%%%%%%%%%%%%%%%%%%%%%%%%%%%%%
\begin{lemma}\label{lem:local_sub_gauss}
Let $l_n\in\N$ be a sequence such that $a:=\lim_{n\to\infty}l_n/\log^p n \in (0,\infty)$. Fix $B\in\N$ and $x\in\Z$. Write $\TTT_n=\TTT_{Bq_n}(x,x+l_n)$; see Figure~\ref{fig:draw_cluster2}. Then,  as $n\to \infty$, we have
\begin{equation}
Q_n:=\P\left[\max_{(i,j)\in \TTT_n} \ZZZ_{i,j} > u_n \right]
\sim
P_n(0) \cdot
\left\{
%1+  \left( \E \exp \sup _{t\in [0,B/a]} ( \sqrt 2  W(t) - t ) \right)^2
1+  H^2\left(\frac Ba\right)
\right\},
\end{equation}
where $P_n(0)$ is as in Lemma~\ref{lem:one_point_sub_gauss}, the function $H:(0,\infty)\to(0,\infty)$ is defined by
\begin{equation}\label{eq:HB}
H(B)=\E \left[\sup _{t\in [0,B]} e^{ \sqrt 2  W(t) - t }\right], \;\;\;B>0,
\end{equation}
and  $\{W(t), t\geq 0\}$ is a standard Brownian motion.
%\E \exp \sup _{t\in [0,B]} \left( \sqrt{\frac 2a} W(t) - \frac {t}{a}\right)
\end{lemma}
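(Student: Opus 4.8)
The plan is to carry out the Pickands--type ``local'' computation for the field $\ZZZ$, exploiting the fact that every interval in $\TTT_n$ is an extension of the common base $(x,x+l_n)$. Write each $(i,j)\in\TTT_n$ as $i=x-i'$, $j=x+l_n+j'$ with $0\le i',j'<Bq_n$, and set
\[
Y_n:=S_{x+l_n}-S_x,\qquad L_{i'}:=S_x-S_{x-i'},\qquad R_{j'}:=S_{x+l_n+j'}-S_{x+l_n}.
\]
The index blocks $\{x-i'+1,\dots,x\}$, $\{x+1,\dots,x+l_n\}$, $\{x+l_n+1,\dots,x+l_n+j'\}$ are pairwise disjoint, so the families $(L_{i'})_{i'}$, $Y_n$ and $(R_{j'})_{j'}$ are independent, with $Y_n\eqdist S_{l_n}$, and $\ZZZ_{i,j}=(Y_n+L_{i'}+R_{j'})/\sqrt{l_n+i'+j'}$. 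Conditioning on $\mathcal F:=\sigma\big((L_{i'})_{i'},(R_{j'})_{j'}\big)$, the event $\{\max_{(i,j)\in\TTT_n}\ZZZ_{i,j}>u_n\}$ is exactly $\{Y_n>u_n\sqrt{l_n}-W_n\}$, where
\[
W_n:=\max_{0\le i',j'<Bq_n}\Big(u_n\big(\sqrt{l_n}-\sqrt{l_n+i'+j'}\big)+L_{i'}+R_{j'}\Big)\ \ge\ 0,
\]
the value $0$ being attained at the base $i'=j'=0$. Since $Y_n$ is independent of $\mathcal F$, this gives $Q_n=\E[\Psi_n(W_n)]$ with $\Psi_n(w):=\P[S_{l_n}/\sqrt{l_n}>u_n-w/\sqrt{l_n}]$, so that $\Psi_n(0)=P_n(0)$.

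Next I identify the weak limit of $W_n\,u_n/\sqrt{l_n}$. Because $i'+j'=O(q_n)=o(l_n)$, the expansion $\sqrt{l_n}-\sqrt{l_n+m}=-m/(2\sqrt{l_n})+O(m^2/l_n^{3/2})$ shows that replacing the square root by its linear part perturbs $W_n u_n/\sqrt{l_n}$ by $O(u_n^2q_n^2/l_n^2)\to0$, uniformly over $\TTT_n$; with a drift linear in $i'+j'$ the maximum splits,
\[
W_n\,\frac{u_n}{\sqrt{l_n}}=\frac{u_n}{\sqrt{l_n}}\max_{0\le i'<Bq_n}\Big(L_{i'}-\frac{u_ni'}{2\sqrt{l_n}}\Big)+\frac{u_n}{\sqrt{l_n}}\max_{0\le j'<Bq_n}\Big(R_{j'}-\frac{u_nj'}{2\sqrt{l_n}}\Big)+o(1).
\]
Rescaling time by $q_n$ and using Donsker's theorem for the two independent increment families, together with $u_n^2q_n/l_n\to2/a$ (which also gives $u_n^2q_n/(2l_n)\to1/a$), each maximum converges in distribution to $\sup_{s\in[0,B]}\big(\sqrt{2/a}\,W(s)-s/a\big)$ for a standard Brownian motion $W$; here one uses that $g\mapsto\sup_{[0,B]}(g(s)-cs)$ is continuous on Skorokhod space at continuous $g$, and that the limit is Brownian motion. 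By the scaling $s\mapsto as$ this random variable has the same law as $\sup_{t\in[0,B/a]}(\sqrt2\,W(t)-t)$. Hence $W_nu_n/\sqrt{l_n}$ converges in law to $Z:=Z_1+Z_2$, with $Z_1,Z_2$ i.i.d.\ copies of $\sup_{t\in[0,B/a]}(\sqrt2\,W(t)-t)$.

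To transfer this to $Q_n=\E[\Psi_n(W_n)]$, note that on the relevant scale $W_n=O(\sqrt{q_n})$, so $w\,u_n/\sqrt{l_n}$ is of order $1$ --- precisely the regime of Lemma~\ref{lem:one_point_sub_gauss}, whose proof through Theorem~\ref{theo:moderate_dev} in fact delivers the uniform statement $\Psi_n(w_n)/P_n(0)\to e^{z}$ whenever $w_n u_n/\sqrt{l_n}\to z$. Theorem~\ref{theo:petrov_est} supplies the matching upper bound $\Psi_n(w)/P_n(0)\le Ce^{Cw u_n/\sqrt{l_n}}$, valid for all $w\ge0$ and all large $n$. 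Combining the weak convergence $W_nu_n/\sqrt{l_n}\Rightarrow Z$ with this bound and with uniform integrability of $e^{(1+\delta)W_nu_n/\sqrt{l_n}}$ --- the latter from the exponential moments of $X_1$ via a maximal inequality dominating $W_n$ by $\max_{i'\le Bq_n}L_{i'}+\max_{j'\le Bq_n}R_{j'}$, a quantity of order $\sqrt{q_n}$ with a (sub)Gaussian-type upper tail --- yields $Q_n/P_n(0)=\E[\Psi_n(W_n)/P_n(0)]\to\E[e^Z]$, so that $Q_n\sim P_n(0)\,\E[e^Z]$. Evaluating $\E[e^Z]$ by the independence of $Z_1,Z_2$, the identity $e^{\sup}=\sup e^{(\cdot)}$, the definition~\eqref{eq:HB}, and the Brownian scaling of the previous step produces the factor claimed in the lemma.

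The step I expect to be the main obstacle is the interchange of limit and expectation in $\E[\Psi_n(W_n)]$. Lemma~\ref{lem:one_point_sub_gauss} is stated only pointwise in $s$, whereas here $s=W_n u_n/\sqrt{l_n}$ is genuinely of order $1$ and random; one therefore needs a uniform-on-compacts moderate-deviation asymptotic for $\Psi_n$ together with the quantitative Petrov-type upper bound controlling atypically large $W_n$, and one must establish the accompanying uniform integrability, which in turn rests on a maximal inequality for $\max_{i'\le Bq_n}L_{i'}$ with the correct $\sqrt{q_n}$-scaling and exponential tail. The remaining ingredients --- the square-root linearization with its $o(1)$ error, Donsker's theorem, continuity of the supremum functional on Skorokhod space, and the Brownian scaling identity --- are routine, as is bookkeeping the mild imprecisions ($q_n\sim\log^{p-1}n$ rather than equality, $l_n/\log^p n\to a$, integer rounding in the time change), which enter only through the already-used limits $u_n^2q_n/l_n\to2/a$ and $u_n\sim\sqrt{2\log n}$.
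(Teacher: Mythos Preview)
Your argument is correct and essentially mirrors the paper's, with one organisational difference: the paper conditions on the \emph{base} value, writing $\ZZZ_{x,x+l_n}=u_n-V_n/u_n$ and decomposing $Q_n=P_n(0)+\int_0^\infty G_n(s)\,d\mu_n(s)$, where $\mu_n$ is the law of $V_n$ and $G_n(s)$ is the conditional probability that the increment process overcomes the shortfall $s$; you condition instead on the \emph{increments} and average the base's tail probability $\Psi_n(W_n)$. The two routes are dual to each other and use identical ingredients --- Donsker for the two independent increment walks, Brownian scaling, and a Petrov upper bound plus a maximal inequality to justify passing the limit inside the integral/expectation. The paper packages that last step as a tailored dominated-convergence lemma (Lemma~\ref{lem:dom_conv}) fed by the estimates of Lemma~\ref{lem:local_est_sub_gauss}; your uniform-integrability formulation is the standard alternative, and the sub-Gaussian tail you need for $\max_{k<Bq_n}S_k/\sqrt{q_n}$ is precisely what the paper obtains from~\eqref{eq:I_subgauss} together with the maximal inequality on p.~52 of Petrov's book.

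One small point: your computation actually yields $Q_n/P_n(0)\to\E[e^Z]=H^2(B/a)$, not $1+H^2(B/a)$. This is an off-by-one slip in the paper's statement (the identity in~\eqref{eq:lem_local_complete_proof} should read $\int_0^\infty\P[M_1+M_2>s]e^s\,ds=\E e^{M_1+M_2}-1$, which combined with the leading $P_n(0)$ in~\eqref{eq:Q_n_dec_subgauss} gives exactly $H^2(B/a)$). It is harmless downstream since only $B^{-2}H^2(B/a)\to a^{-2}$ is ever used, and the sanity check $Q_n/P_n(0)\to1$ as $B\downarrow0$ matches $H^2$ but not $1+H^2$.
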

\begin{proof}
The idea of the proof is to represent the field $\ZZZ_{i,j}$ as its value at the base interval $(x,x+l_n)$ plus some incremental process. In order to exceed the level $u_n$ over $\TTT_n$ either the value $\ZZZ_{x,x+l_n}$ at the base interval should be larger than $u_n$, or this value should be of the form $u_n-u_n^{-1} s$, $s\geq 0$, and the supremum of the incremental process should be larger than $u_n^{-1}s$.  We will show that the incremental process converges to the sum of two independent Brownian motions.

Let $X_k^{(1)}$, $X_k^{(2)}$, $k\in\N$, be i.i.d.\ random variables with the same distribution as the $X_k$'s, and  which are independent of the $X_k$'s. Define two independent random walks
\begin{equation}\label{eq:Sk_i}
S_k^{(i)}=X_1^{(i)}+\ldots+X_k^{(i)},\;\;\; k\in\N_0,\;\;\; i=1,2.
%S_k^1=X_1^1+\ldots+X_k^1, \;\;\; S_k=X_1''+\ldots+X_k''.
\end{equation}
Let $V_n$ be a random variable defined by $\ZZZ_{x,x+l_n}=u_n-u_n^{-1}V_n$. Then, since any interval from $\TTT_n$ has the form $(x-k_1,x+l_n+k_2)$ with some integers $0\leq k_1,k_2 < Bq_n$, we have %a distributional equality
$$
%\max_{(i,j)\in\TTT_n} \ZZZ_{i,j} \eqdistr
Q_n=\P\left[\max_{0\leq k_1,k_2<Bq_n} \frac{(u_n-\frac{V_n}{u_n})\sqrt{l_n}+S_{k_1}^{(1)} + S_{k_2}^{(2)}} {\sqrt{l_n+k_1+k_2}}>u_n\right].
$$
By taking $k_1=k_2=0$ we see that the maximum  on the right-hand side exceeds $u_n$ if $V_n<0$. Note that $\P[V_n<0]=P_n(0)$, the probability which was evaluated in Lemma~\ref{lem:one_point_sub_gauss}.  The random variables $V_n$, $S_{k_1}^{(1)}$, $S_{k_2}^{(2)}$ are independent.  Conditioning on $V_n=s$  and integrating over  $s\geq 0$, we obtain
%Therefore,
%$$
%Q_n=\P\left[\max_{0\leq k_1,k_2<Bq_n}\left(S_{k_1}^{(1)} + %S_{k_2}^{(2)}-\frac{u_n(k_1+k_2)}{\sqrt{l_n+k_1+k_2}+\sqrt{l_n}}\right)>\frac{V_n\sqrt{l_n}}{u_n} \right]
%$$
%By taking $k_1=k_2=0$ we see that the event in the right-hand side occurs if $V_n<0$. Note that $\P[V_n<0]=P_n(0)$. Conditioning on $V_n=s$ and integrating over  $s\geq 0$, we obtain
\begin{equation}\label{eq:Q_n_dec_subgauss}
Q_n
=
%P_n(0)\left(1+\int_{0}^{\infty} G_n(s) d\mu_n(s) \right),
P_n(0) + \int_{0}^{\infty} G_n(s) d\mu_n(s),
\end{equation}
where $\mu_n$ is the probability distribution of $V_n$ and $G_n$ is a non-increasing function defined by
\begin{align*}
G_n(s)
=
\P\left[\max_{0\leq k_1,k_2 < Bq_n}
\left(S_{k_1}^{(1)} + S_{k_2}^{(2)}-u_n(\sqrt{l_n+k_1+k_2}-\sqrt{l_n}) -\frac{s\sqrt{l_n}}{u_n}\right)> 0 \right].
%\frac{u_n(k_1+k_2)}{\sqrt{l_n+k_1+k_2}+\sqrt{l_n}}
\end{align*}
%a measure on $[0,\infty)$ given  by
%$\mu_n([0,s])=\P[V_n\leq s]/P_n(0)$.
By Lemma~\ref{lem:one_point_sub_gauss}, for every $s\geq 0$,
\begin{equation}\label{eq:asympt_F_n}
\lim_{n\to\infty} \frac{\mu_n([0,s))}{P_n(0)}= \lim_{n\to\infty} \frac{P_n(s)}{P_n(0)}= e^s.
\end{equation}

We are going to compute $\lim_{n\to\infty} G_n(s)$ for $s\geq 0$. In fact, to be able to use Lemma~\ref{lem:dom_conv}, see below, we need a slightly stronger result.  Let $s_n$ be any sequence converging to $s$. We will compute $\lim_{n\to\infty} G_n(s_n)$. We have
\begin{align}\label{eq:def_Gnsn}
G_n(s_n)
=
\P\left[\sup_{t_1,t_2\in[0,B]} (W_n^{(1)}(t_1)+W_n^{(2)}(t_2)-f_n(t_1,t_2)) >0\right],
\end{align}
where $\{W_n^{(i)}(t), t\in [0,B]\}$, $i=1,2$, are stochastic processes and $f_n:[0,B]^2\to\R$ is a function given by
$$
W_n^{(i)}(t)= \frac{S_{tq_n}^{(i)}}{\sqrt {q_n}},\;\;\; f_n(t_1,t_2)=\frac{u_n(\sqrt{l_n+t_1 q_n+t_2 q_n}-\sqrt{l_n})+\frac{s_n\sqrt{l_n}}{u_n}}{\sqrt {q_n}}
$$
if $t,t_1,t_2\in q_n^{-1}\Z\cap [0,B)$, and by linear interpolation otherwise. On the last interval of length $q_n^{-1}$ we agree to use constant interpolation.
Recall that $u_n\sim \sqrt{2\log n}$, $l_n\sim a \log^{p} n$, $q_n\sim \log^{p-1} n$, as $n\to\infty$.
Elementary calculus shows that uniformly in $t_1,t_2\in [0,B]$,
$$
%f(t_1,t_2):=\lim_{n\to\infty} f_n(t_1,t_2)=\frac{t_1+t_2}{\sqrt{2a}}+\sqrt{\frac a 2} s.
f_n(t_1,t_2) \to f(t_1,t_2):=\frac{t_1+t_2}{\sqrt{2a}}+\sqrt{\frac a 2} s \;\; \text{ as } n\to\infty.
$$
Given a compact metric space $K$ let $C(K)$ be the space of continuous functions on $K$ endowed with the sup-metric.  By Donsker's invariance principle, as $n\to\infty$, the processes $\{W_n^{(i)}(t), t\in [0,B]\}$, $i=1,2$, converge weakly on the space $C[0,B]$ to two independent standard Brownian motions $\{W_i(t), t\in[0,B]\}$, $i=1,2$. Consider the map
$$
\Psi: C[0,B]\times C[0,B] \times C([0,B]^2) \to \R
$$
defined by
$$
\Psi(w_1,w_2,f)= \sup_{t_1,t_2\in [0,B]} (w_1(t_1)+w_2(t_2)-f(t_1,t_2)).
$$
The  map $\Psi$ is continuous in the product topology.
By the continuous mapping theorem, see Theorem 3.27 in~\cite{kallenberg_book}, it follows that the sequence of random variables
$$
\Psi(W_n^{(1)},W_n^{(2)}, f_n) = \sup_{t_1,t_2\in [0,B]}(W_n^{(1)}(t_1)+W_n^{(2)}(t_2)-f_n(t_1,t_2))
$$
converges in distribution to the random variable
$$
\Psi(W_1,W_2, f)
=
\sup_{t_1,t_2\in [0,B]} \left(W_1(t_1)+W_2(t_2)- \frac{t_1+t_2}{\sqrt {2a}}-\sqrt{\frac a 2} s\right).
$$
By the scaling property of the Brownian motion, the latter variable has the same distribution as
$
\sqrt{\frac a 2}(M_1+M_2-s)
$,
where $M_1,M_2$ are two independent copies of the random variable
$$
M:=
\sup_{t\in [0,a^{-1}B]} (\sqrt 2W(t)-t)\geq 0.
%\sup_{t\in [0,B]} \left(\sqrt {\frac 2a}W(t)-\frac{t}{a}\Big.
$$
Here, $\{W(t), t\geq 0\}$ is a standard Brownian motion.  Note that the random variable $M$ (and hence, $M_1+M_2$) has continuous distribution function. It follows that for every sequence $s_n$ converging to $s\geq 0$,
\begin{equation}\label{eq:asympt_G_n}
\lim_{n\to\infty} G_n(s_n) = \P[M_1+M_2>s].
\end{equation}
Taking~\eqref{eq:asympt_F_n} and~\eqref{eq:asympt_G_n} together, we obtain, formally,
\begin{equation}\label{eq:lem_local_complete_proof}
\lim_{n\to\infty} \int_{0}^{\infty} G_n(s) \frac {d\mu_n(s)} {P_n(0)}
=
\int_0^{\infty} \P[M_1+M_2>s] e^s ds
=
\E e^{M_1+M_2} = (\E e^{M})^2.
\end{equation}
Recalling~\eqref{eq:Q_n_dec_subgauss} we obtain the statement of the lemma. The first equality in~\eqref{eq:lem_local_complete_proof} will be justified in Remark~\ref{rem:dom_conv_just_supergauss} after some technical preparations have been done.
\end{proof}

The next lemma gives a somewhat weaker statement than Lemma~\ref{lem:local_sub_gauss}, but this statement is valid under more general assumptions. The lemma will be used later to estimate the exceedance probability over the non-optimal lengths. Essentially, it states that the local exceedance probability can be estimated by the individual exceedance probability times some constant.   %For $x\in\Z$ and $l,r\in\N$  let $T(x,l,r)$ be the set of intervals $(i,j)\in\III$ such that $x-r<i \leq x$ and $x+l\leq j<x+l+r$. Note that all intervals contain the base interval $(x,x+l)$.
\begin{lemma}\label{lem:local_est_sub_gauss}
Fix constants $B_1,B_2>0$. Then, for all $x\in\Z$, $l,r\in\N$ and all $u>0$ such that $B_1 l > u^2$ and $r<B_2lu^{-2}$, we have
$$
Q(l,r,u)
:=
\P\left[ \max_{(i,j)\in \TTT_r(x,x+l)} \frac{S_j-S_i}{\sqrt l}>u \right]
<
C_1 u^{-1}\exp\left\{-\frac{u^2}{2} -C_2 u^{q} l^{-\frac{q-2}{2}}\right\},
$$
where the constants $C_1$ and $C_2$ depend on $B_1$ and $B_2$ but don't depend on $x,l,r,u$.
\end{lemma}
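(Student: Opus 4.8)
The plan is to decompose the exceedance event over the discrete square $\TTT_r(x,x+l)$ in the same way as in the proof of Lemma~\ref{lem:local_sub_gauss}, but now keeping all estimates at the level of inequalities so that no moderate-deviation asymptotics (and hence no lower bound $B_1 l > u^2$ hidden constants) are needed beyond the crude bound of Lemma~\ref{lem:ld_est}. First I would write any interval of $\TTT_r(x,x+l)$ as $(x-k_1, x+l+k_2)$ with $0\le k_1,k_2<r$, split off the contribution of the base interval, and condition on the value $S_{x+l}-S_x = u\sqrt l - u^{-1}s\sqrt l/\sqrt l\cdots$ — more precisely set $V := u^{-1}(u\sqrt l - (S_{x+l}-S_x))/\sqrt{l}\cdot u$, i.e. write $(S_{x+l}-S_x)/\sqrt l = u - s/u$ and integrate over $s$. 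This yields, exactly as in~\eqref{eq:Q_n_dec_subgauss},
\begin{equation}
Q(l,r,u) \le \P\!\left[\tfrac{S_{x+l}-S_x}{\sqrt l}>u\right] + \int_0^\infty G(s)\,d\mu(s),
\end{equation}
where $\mu$ is the law of $s$ under the event $\{(S_{x+l}-S_x)/\sqrt l = u - s/u\}$ restricted to $s\ge 0$, and $G(s)$ is the probability that the two-sided incremental process built from the independent walks $S^{(1)}, S^{(2)}$ overshoots the deterministic barrier. The first term is bounded by $\exp\{-lI(u/\sqrt l)\}\le \exp\{-u^2/2 - \kappa u^q l^{-(q-2)/2} + o(\cdots)\}$ using~\eqref{eq:I_taylor} and the hypothesis $u^2 < B_1 l$ (which makes $u/\sqrt l$ small, so the Taylor expansion applies with uniform remainder); absorbing the $o$-term into the constant $C_2$ gives the claimed form, possibly up to the polynomial prefactor which we can supply from Theorem~\ref{theo:petrov_est} (since $u/\sqrt l < \sqrt{B_1} < s_\infty$ eventually, or one simply uses the weaker $\exp\{-lI(\cdot)\}$ and pays a harmless $u$).

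For the integral term, I would bound $G(s)$ by a union bound over $k_1,k_2$: $G(s)\le \sum_{0\le k_1,k_2<r}\P[S^{(1)}_{k_1}+S^{(2)}_{k_2} > u(\sqrt{l+k_1+k_2}-\sqrt l) + s\sqrt l/u]$. Since $k_1+k_2 < 2r < 2B_2 l u^{-2}$, the drift $u(\sqrt{l+k_1+k_2}-\sqrt l)$ is of order $u(k_1+k_2)/\sqrt l$, which is $\ge c(k_1+k_2)/\sqrt l$ but also $\le$ a constant, so each summand is dominated by $\P[S^{(1)}_{k_1}+S^{(2)}_{k_2} > c_1(k_1+k_2)/\sqrt l + s\sqrt l/u]$ for a suitable $c_1=c_1(B_1,B_2)>0$. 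Applying Lemma~\ref{lem:ld_est} to each of the two independent walks (splitting the barrier) and using $I(y)\ge y^2/2$ gives a subgaussian tail; summing the geometric-type series over all $k_1,k_2\ge 0$ yields $G(s)\le C_3 e^{-c_2 s}$ for constants depending only on $B_1,B_2$ (here one also uses $\sqrt l / u \ge 1/\sqrt{B_1}$ to control the $s$-dependence). Finally, by Lemma~\ref{lem:ld_est} again, $d\mu(s)/ds$ or rather $\mu([0,s))\le \P[(S_{x+l}-S_x)/\sqrt l > u - s/u]\le \exp\{-lI((u-s/u)/\sqrt l)\}$; expanding as before, $lI((u-s/u)/\sqrt l) \ge \tfrac12(u-s/u)^2 + \kappa' u^q l^{-(q-2)/2} \ge \tfrac{u^2}{2} - s + \tfrac{s^2}{2u^2} + \kappa' u^q l^{-(q-2)/2}$, so $\mu$ has a density-type bound $\lesssim e^{s} e^{-u^2/2} e^{-C_2 u^q l^{-(q-2)/2}}$ against $ds$ (after differentiating the tail bound, or integrating by parts). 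Combining $G(s)\le C_3 e^{-c_2 s}$ with this bound on $\mu$ and integrating over $s\in[0,\infty)$ produces $C_1 u^{-1}\exp\{-u^2/2 - C_2 u^q l^{-(q-2)/2}\}$ after we throw in the polynomial factor from Theorem~\ref{theo:petrov_est}, which finishes the proof.

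The main obstacle is making the union-bound estimate on $G(s)$ genuinely uniform in $x,l,r,u$ subject only to $B_1 l > u^2$ and $r < B_2 l u^{-2}$: one must verify that the drift term $u(\sqrt{l+k_1+k_2}-\sqrt l)$ really does dominate a fixed positive multiple of $(k_1+k_2)u/\sqrt l$ for all admissible $k_1,k_2$ — which requires $k_1+k_2$ to stay bounded relative to $l$, guaranteed by $r < B_2 lu^{-2}$ and $u^2<B_1l$ — and that the resulting geometric sums converge with rate depending only on $B_1, B_2$. The cleanest route is probably to replace $\sqrt{l+k_1+k_2}-\sqrt l$ by the lower bound $(k_1+k_2)/(2\sqrt{l+2r})$ and then use $l+2r\le (1+2B_2/B_1)\,l$. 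Everything else is a routine application of Lemma~\ref{lem:ld_est} together with the elementary inequality $I(s)\ge s^2/2$ and the Taylor expansion~\eqref{eq:I_taylor} with its remainder made uniform on a fixed neighbourhood $[0,\sqrt{B_1}]$ of the origin.
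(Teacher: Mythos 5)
The key difficulty in this lemma is making the integral $\int_0^\infty G(s)\,d\mu(s)$ converge, because the measure $\mu$ grows like $e^s$ (as you correctly observe via the Taylor expansion of $I$). Your proposed bound $G(s)\le C_3 e^{-c_2 s}$ with $c_2 = c_2(B_1,B_2)$ is \emph{not} sufficient: if $c_2 \le 1$ (which is entirely possible, since nothing in the hypotheses $B_1 l > u^2$, $r < B_2 l u^{-2}$ forces $c_2$ to be large — indeed $c_2$ should shrink as $B_2$ grows), then $\int_0^\infty e^{-c_2 s}\,e^{s}\,ds$ diverges and your combination step collapses. What the paper actually obtains, and what is genuinely needed, is a \emph{Gaussian} tail $G(s)\le c_6 e^{-c_7 s^2}$, which dominates $e^s$ for any $c_7>0$.

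The reason your union bound over $(k_1,k_2)$ fails to give a Gaussian tail is quantitative: bounding $\P\big[S^{(1)}_{k_1}+S^{(2)}_{k_2} > s\sqrt l/u\big]$ by Lemma~\ref{lem:ld_est} and $I(y)\ge y^2/2$ gives something like $\exp\{-s^2 l/(2u^2(k_1+k_2))\}$, whose decay rate in $s$ degrades as $k_1+k_2$ approaches $r \asymp l/u^2$; the drift term (which you in any case should not introduce, since the lemma's normalization is $\sqrt l$, not $\sqrt{j-i}$, so there is no drift) only rescues you to an exponential rate $e^{-cs}$, and summing over $r^2$ pairs produces further loss. The paper sidesteps all of this with a single application of a maximal inequality (Theorem~2.4, p.~52 of Petrov's book), which collapses $\max_{0\le k<r}S_k$ into a single probability $\P[S_r > s\sqrt l/(2u)-\sqrt{2r}]$; dividing by $\sqrt r$ and using $r<B_2 l u^{-2}$ makes the threshold linear in $s$ with a slope bounded below in terms of $B_2$ alone, and then Lemma~\ref{lem:ld_est} with $I(y)\ge y^2/2$ yields the Gaussian tail. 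You should replace the union bound by this maximal-inequality step; the rest of your outline (conditioning on the base value, estimating $\mu$ via Theorem~\ref{theo:petrov_est} and the Taylor expansion of $I$, supplying the $u^{-1}$ prefactor from Theorem~\ref{theo:petrov_est}) matches the paper's structure and is fine.
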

\begin{proof}
%The idea the same as in the proof of Lemma~\ref{lem:local_sub_gauss}.
Define two independent random walks $S_k^{(i)}$, $i=1,2$ as in~\eqref{eq:Sk_i}. Let $V_{l,u}$ be a random variable defined by $\ZZZ_{x,x+l}=u-u^{-1}V_{l,u}$.  Then, since any interval from $\TTT_r(x,x+l)$ has the form $(x-k_1,x+l+k_2)$ with some integers $0\leq k_1,k_2<r$, we have
$$
Q(l,r,u)= \P\left[\max_{0\leq k_1,k_2<r} \frac{S_{k_1}^{(1)}+S_{k_2}^{(2)}}{\sqrt l} > \frac{V_{l,u}}{u}\right].
$$
Taking $k_1,k_2=0$ we see that the maximum on the right-hand side is non-negative. Conditioning on  $V_{l,u}=s$ and considering the cases $s<0$ and $s\geq 0$ separately, we obtain
\begin{equation}\label{eq:Qlru}
Q(l,r,u)=F_{l,u}(0)+\int_{0}^{\infty} G_{l,r,u}(s) dF_{l,u}(s),
\end{equation}
where $F_{l,u}$ is the distribution function of $V_{l,u}$ and
\begin{equation}\label{eq:Glru}
G_{l,r,u}(s)=\P\left[\max_{0\leq k_1,k_2<r} \left(S_{k_1}^{(1)} + S_{k_2}^{(2)}\right) >\frac{s\sqrt l}{u} \right]
\leq
2\P\left[\max_{0\leq k <r} S_{k} >\frac{s\sqrt l}{2u} \right].
\end{equation}
%\begin{equation}\label{eq:Qlru}
%Q(l,r,u)=\P[\ZZZ_{x,x+l}>u]+\P\left[\max_{(i,j)\in T(x,l,r)} \frac{S_j-S_i}{\sqrt l} >u, \ZZZ_{x,x+l}\leq %u\right].
%\end{equation}
%The first probability is equal to $\P[S_l/\sqrt l>u]$. Denote the second probability by $\tilde Q(l,r,u)$. We %have
%$$
%\tilde Q(l,r,u)=\int_{0}^{\infty} G_{l,r,u}(s) dF_{l,u}(s),
%$$
%where $F_{l,u}$ is the distribution function of $u(u-\ZZZ_{x,x+l})$ and
%$$
%G_{l,r,u}(s)=\P\left[\max_{(i,j)\in T(x,l,r)} \ZZZ_{i,j}>u \Big| \ZZZ_{x,x+l}=u-\frac{s}{u}\right].
%$$
We estimate $F_{l,u}(s)$ for $s\in [0, \frac 34 u^2]$. Write $v=u-\frac su$. By the assumption $v<\sqrt{B_1 l}$ we can apply Theorem~\ref{theo:petrov_est} and~\eqref{eq:I_subgauss}, \eqref{eq:I_taylor} to get
$$
F_{l,u}(s)
=
\P\left[\frac{S_l}{\sqrt l} \geq v\right]
\leq
c_1 v^{-1} \exp\left\{ -\frac{v^2}2-c_2 v^{q}l^{-\frac{q-2}{2}}\right\}.
$$
Here, the constants $c_1,c_2,\ldots$ depend on $B_1$, $B_2$ but don't depend on $x,l,r,u$. If $s\in [0, \frac 34 u^2]$, then $v\geq u/4$ and we obtain
\begin{equation}\label{eq:asympt_F_lu}
F_{l,u}(s)\leq c_3 u^{-1} e^s  \exp\left \{ -\frac{u^2}2-c_4 u^{q}l^{-\frac{q-2}{2}}\right\}.
\end{equation}
It is however easy to see that this inequality continues to hold for $s\geq 3u^2/4$. Indeed, if $c_4$ is sufficiently small, then the assumption $B_1 l > u^2$ implies that $c_4 u^{q}l^{-(q-2)/2}\leq u^2/8$. Hence, if $c_3$ is sufficiently large, the right-hand side of~\eqref{eq:asympt_F_lu} is greater than $1$ and~\eqref{eq:asympt_F_lu} holds.

We estimate $G_{l,r,u}(s)$ for $s\geq 0$.
%Let $X_k^{(i)}$, $k\in\N$, $i=1,2$, be i.i.d.\ random variables with the same distribution as the $X_k$'s which are independent of the $X_k$'s. Define the random walks $S_k^{(i)}=X_1^{(i)}+\ldots+X_k^{(i)}$, $i=1,2$.
%Then, since any pair from $T(x,l,r)$ has the form $(x-k_1,x+l+k_2)$ with some integer $0\leq k_1,k_2<r$, we have
%\begin{align*}
%G_{l,r,u}(s)
%%&=
%%\P\left[\max_{k_1,k_2=0,\ldots,r-1} \frac{\sqrt l \ZZZ_{x,x+l}+S_{k_1}^{(1)} + S_{k_2}^{(2)}} %{\sqrt{l+k_1+k_2}}>u \Big| %\ZZZ_{x,x+l}=u-\frac{s}{u}\right]\\
%&=
%\P\left[\max_{k_1,k_2=0,\ldots,r-1} \left(\ZZZ_{x,x+l}+ \frac{S_{k_1}^{(1)} + S_{k_2}^{(2)}} %{\sqrt{l}}\right) >u \Big| \ZZZ_{x,x+l}=u-\frac{s}{u}\right]\\
%&=
%\P\left[\max_{k_1,k_2=0,\ldots,r-1} \left(S_{k_1}^{(1)} + S_{k_2}^{(2)}\right) >\frac{s\sqrt l}{u} \right]\\
%&\leq
%2\P\left[\max_{k=0,\ldots,r-1} S_{k}^{(1)} >\frac{s\sqrt l}{2u} \right].
%\end{align*}
Applying to the right-hand side of~\eqref{eq:Glru} the inequality stated in Theorem~2.4 on p.~52 in~\cite{petrov_book}, we obtain
$$
G_{l,r,u}(s)
\leq
4\P\left[ S_{r} >\frac{s\sqrt l}{2u} -\sqrt{2r} \right]
\leq
4\P\left[ \frac{S_{r}}{\sqrt r} > c_5s -\sqrt{2} \right]
\leq
4 e^{-r I(\frac{c_5s-\sqrt 2}{\sqrt r})}
.
$$
In the second inequality, we used the assumption $r<B_2lu^{-2}$. In the third inequality we used Lemma~\ref{lem:ld_est}. From~\eqref{eq:I_subgauss} we obtain
\begin{equation}\label{eq:Glru_est}
G_{l,r,u}(s)
\leq
c_6 e^{-c_7s^2}.
\end{equation}
Strictly speaking, this is valid only as long as $c_5 s\geq \sqrt 2$, however, we can choose the constant $c_6$ so large that~\eqref{eq:Glru_est} continues to hold in the case $c_5 s\leq \sqrt 2$.
It follows from~\eqref{eq:Qlru}, \eqref{eq:asympt_F_lu}, \eqref{eq:Glru_est}  that
\begin{align*}
Q(l,r,u)
&\leq
F_{l,u}(0)+\sum_{k=0}^{\infty} G_{l,r,u}(k) F_{l,u}(k+1)\\
&\leq
c_8\left(1+\sum_{k=0}^{\infty}e^{-c_7 k^2}e^k\right) u^{-1} \exp\left \{ -\frac{u^2}2-c_4 u^{q}l^{-\frac{q-2}{2}}\right\}. %+ c_9 e^{-c_{10}u^4}\\
%&\leq
%c_{11} u^{-1} \exp\Big \{ -\frac{u^2}2-c_4 u^{q}l^{-\frac{q-2}{2}}\right\}
\end{align*}
The proof of Lemma~\ref{lem:local_est_sub_gauss} is complete.
%To complete the proof recall~\eqref{eq:Qlru} and estimate the first term on the right-hand side of~\eqref{eq:Qlru} by~\eqref{eq:asympt_F_lu} with $s=0$.
\end{proof}

\begin{lemma}\label{lem:dom_conv}
Let $\nu, \nu_n$, $n\in\N$, be measures on $[0,\infty)$ which are finite on compact intervals. Let $G, G_n$, $n\in\N$, be measurable functions on $[0,\infty)$ which are uniformly bounded on compact intervals. Assume that
\begin{enumerate}
\item $\nu_n$ converges to $\nu$ weakly on every interval $[0,t]$, $t\geq 0$;
\item for $\nu$-a.e.\ $s\geq 0$ and for every sequence $s_n\to s$ we have  $\lim_{n\to\infty} G_n(s_n)=G(s)$;
\item $\lim_{T\to+\infty} \int_{T}^{\infty} |G_n| d\nu_n=0$ uniformly over $n\in\N$.
\end{enumerate}
Then, $\lim_{n\to\infty}\int_{0}^{\infty} G_nd\nu_n=\int_0^{\infty} G d\nu$.
\end{lemma}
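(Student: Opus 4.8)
The plan is to combine a truncation controlled by hypothesis (3) with a Skorokhod-representation argument on compact intervals controlled by hypotheses (1) and (2). The conceptual core is the following claim, which I would prove first: \emph{for every fixed $T\geq 0$ one has $\int_{[0,T]}G_n\,d\nu_n\to\int_{[0,T]}G\,d\nu$, and the same holds with $G_n,G$ replaced by $|G_n|,|G|$.} Since the convergence in (2) is ``continuous'' (it is required along \emph{every} sequence $s_n\to s$), it is inherited by the absolute values, so it suffices to prove the displayed claim as stated. Taking $f\equiv1$ in (1) gives convergence of total masses $m_n:=\nu_n([0,T])\to m:=\nu([0,T])$. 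If $m=0$, the claim is immediate from the uniform bound $C_T:=\sup_n\sup_{s\in[0,T]}|G_n(s)|<\infty$, since $\bigl|\int_{[0,T]}G_n\,d\nu_n\bigr|\leq C_Tm_n\to0$.

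Assume now $m>0$. Then for all large $n$ the restrictions, renormalized, give probability measures $\tilde\nu_n:=\nu_n|_{[0,T]}/m_n$ and $\tilde\nu:=\nu|_{[0,T]}/m$ on the compact metric space $[0,T]$, with $\tilde\nu_n\to\tilde\nu$ weakly by (1). By the Skorokhod representation theorem there exist, on one probability space, random variables $\xi_n$ with law $\tilde\nu_n$ and $\xi$ with law $\tilde\nu$ such that $\xi_n\to\xi$ almost surely. Let $N$ be a $\nu$-null set outside which (2) holds; since $\tilde\nu(N)=0$ we have $\xi\notin N$ a.s., so $G_n(\xi_n)\to G(\xi)$ a.s.\ by (2). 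As $|G_n(\xi_n)|\leq C_T$, bounded convergence yields $\E G_n(\xi_n)\to\E G(\xi)$, that is $m_n^{-1}\int_{[0,T]}G_n\,d\nu_n\to m^{-1}\int_{[0,T]}G\,d\nu$; multiplying by $m_n\to m$ proves the claim. (This argument also shows $|G|\leq C_T$ $\nu$-a.e.\ on $[0,T]$, so every integral over $[0,T]$ appearing below is finite.)

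The rest is a routine $\eps$-argument. Fix $\eps>0$ and, using (3), pick $T$ with $\sup_n\int_{(T,\infty)}|G_n|\,d\nu_n<\eps$. For any $T'>T$, applying the claim (to $|G_n|,|G|$) on $[0,T']$ and on $[0,T]$ and subtracting gives $\int_{(T,T']}|G|\,d\nu=\lim_{n\to\infty}\int_{(T,T']}|G_n|\,d\nu_n\leq\eps$; letting $T'\to\infty$ and invoking monotone convergence gives $\int_{(T,\infty)}|G|\,d\nu\leq\eps$, so in particular $G$ is $\nu$-integrable and $\int_0^\infty G\,d\nu$ is well defined. Then
\[
\Bigl|\int_0^\infty G_n\,d\nu_n-\int_0^\infty G\,d\nu\Bigr|
\leq
\Bigl|\int_{[0,T]}G_n\,d\nu_n-\int_{[0,T]}G\,d\nu\Bigr|
+\int_{(T,\infty)}|G_n|\,d\nu_n+\int_{(T,\infty)}|G|\,d\nu,
\]
where the first term tends to $0$ by the claim and the last two are each $\leq\eps$; hence $\limsup_{n\to\infty}\bigl|\int_0^\infty G_n\,d\nu_n-\int_0^\infty G\,d\nu\bigr|\leq2\eps$, and letting $\eps\downarrow0$ completes the proof.

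There is no genuine obstacle here; the single point deserving care is the Skorokhod step: one must check that the exceptional $\nu$-null set from (2) remains null for the limiting probability measure $\tilde\nu$, and that it is precisely the ``$s_n\to s$'' form of (2) (not merely $s_n\equiv s$) that licenses passing to the limit in $G_n(\xi_n(\omega))\to G(\xi(\omega))$ along the a.s.\ convergence $\xi_n(\omega)\to\xi(\omega)$. Convergence of total masses, the degenerate case $m=0$, the tail truncation, and the $\nu$-integrability of $G$ are all bookkeeping.
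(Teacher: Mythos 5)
Your argument is essentially the paper's: a Skorokhod-representation step on compact intervals combined with the tail truncation from hypothesis~(3), and your additional bookkeeping (the degenerate case $\nu([0,T])=0$ and the explicit verification that $\int_{(T,\infty)}|G|\,d\nu$ is small, which the paper leaves tacit) only tightens the write-up. The single point where you are slightly looser than the paper is the claim that $\nu_n([0,T])\to\nu([0,T])$ for \emph{every} $T$: the paper deduces from hypothesis~(1) only that $F_n(T)\to F(T)$ for $T$ outside the at-most-countable discontinuity set of $F(s)=\nu([0,s])$, and restricts the truncation level $T$ accordingly; this is a one-line fix in your proof, since continuity points of $F$ are dense and (3) is uniform in $T$.
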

\begin{proof}
Write $F_n(s)=\nu_n([0,s])$ and $F(s)=\nu([0,s])$, $s\geq 0$.  By the first assumption,  $\lim_{n\to\infty} F_n(T)=F(T)$ for all $T\geq 0$, $T\notin \mathcal D$, where $\mathcal D$ is the set of (at most countable) discontinuities of  $F$.  By the third assumption it suffices to show that for every $T>0$, $T\notin \mathcal D$,
\begin{equation}\label{eq:lem_dom_conv_proof1}
\lim_{n\to\infty}\int_{0}^{T} G_nd\nu_n=\int_0^{T} G d\nu.
\end{equation}
Fix some $T\notin \mathcal D$.  By the first assumption and  by Skorokhod's representation theorem we can construct (generally, dependent) $[0,T]$-valued random variables $\xi_n$ and $\xi$ on a common probability space such that $\xi_n\to \xi$ a.s.,\ $\P[\xi_n\in B]=\nu_n(B)/F_n(T)$ and  $\P[\xi\in B]=\nu(B)/F(T)$, for every Borel set $B\subset [0,T]$. It follows from the second condition that $G_n(\xi_n)\to G(\xi)$ a.s. Since for uniformly bounded random variables the a.s.\ convergence implies the convergence of expectations, we obtain~\eqref{eq:lem_dom_conv_proof1}.
\end{proof}

\begin{remark}\label{rem:dom_conv}
We will verify the last condition of Lemma~\ref{lem:dom_conv} by showing that $\nu_n([0,s])\leq c_1 e^{c_2s}$ and $|G_n(s)|\leq c_3e^{-c_4 s^2}$ for all (large) $n\in\N$.
Then,
$$
\int_{T}^{\infty} |G_n|d\nu_n
\leq
\sum_{k=[T]}^{\infty}  \nu_n([k,k+1]) \sup_{s\in [k,k+1]} |G_n(s)|
\leq
\sum_{k=[T]}^{\infty} c_5 e^{c_2 k} e^{-c_4 k^2},
$$
which converges to $0$ uniformly in $n\in\N$,  as $T\to+\infty$.
\end{remark}

\begin{remark}\label{rem:dom_conv_just_supergauss}
We are in position to justify the first equality in~\eqref{eq:lem_local_complete_proof}. The first two assumptions of Lemma~\ref{lem:dom_conv}, with $\nu_n=\mu_n/P_n(0)$, are fulfilled by~\eqref{eq:asympt_F_n} and~\eqref{eq:asympt_G_n}. To verify the last assumption we use Remark~\ref{rem:dom_conv}.
For $s\geq 0$ we have, as established in~\eqref{eq:asympt_F_lu} and Lemma~\ref{lem:one_point_sub_gauss},
$$
\mu_n([0,s])\leq P_n^{-1}(0) u_n^{-1} e^s \exp\left\{-\frac{u_n^2}{2}- cu_n^q l_n^{-\frac{q-2}{2}}\right\}\leq c_1 e^s.
$$
For $G_n$ we obtain from~\eqref{eq:Glru_est}  the estimate $G_n(s)\leq G_{l_n, q_n, u_n}(s)\leq c_2 e^{-c_3 s^2}$. Now, Remark~\ref{rem:dom_conv} can be applied.
%the measure $\mu_n$ converges weakly, on every interval $[0,T]$, to the measure with density $e^s$. Using this fact together with~\eqref{eq:asympt_G_n} and the continuous mapping theorem (see Theorem~3.27 in~\cite{kallenberg_book}), we obtain that for every $T>0$,
%$$
%\lim_{n\to\infty} \int_{0}^{T} G_n(s) dF_n(s) = \int_0^{T} \P[M_1+M_2>s] e^s ds.
%$$
%On the other hand, by~\eqref{eq:Glru_est} below, we have $G_n(s)\leq e^{-c s^2}$ for every $s\geq 0$. Also,
\end{remark}

\subsection{Estimating the double sum}\label{subsec:double_sum_subgauss}
Given $0<A_1<A_2$ we define  $l_n^-=A_1 \log^p n$ and $l_n^+=A_2 \log^p n$. Recall that $q_n$ is any sequence such that $q_n\sim \log^{p-1} n$ as $n\to\infty$.
%%%%%%%%%%%%%%%%%FIGURE %%%%%%%%%%%%%%%%%%%%%%
\begin{figure}[t]
\begin{center}
\includegraphics[height=0.35\textwidth, width=0.49\textwidth]{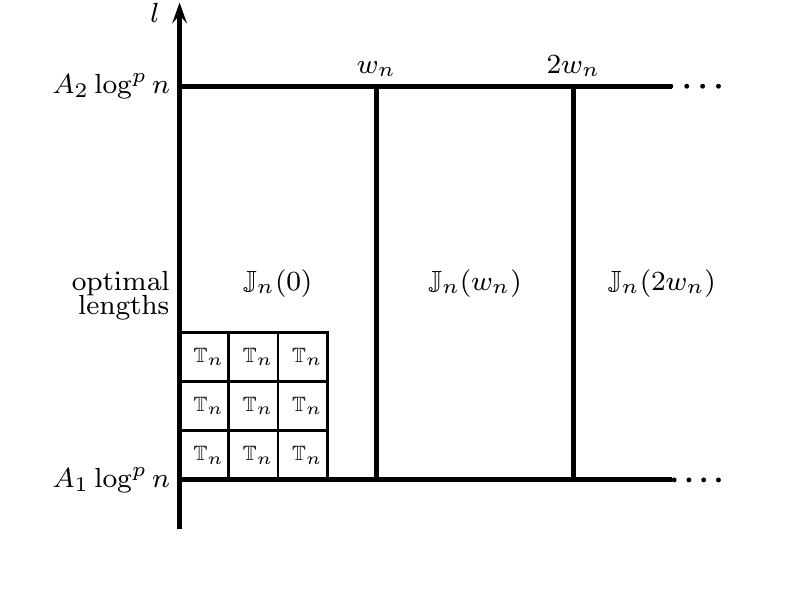}
\end{center}
\caption
{
%Method of proof of Theorem~\ref{theo:main_subgauss_scales}. The left figure shows the set $\III_n$. On the horizontal axis we put the initial point $i$ of the interval $(i,j)$, on the vertical axis we put its length $l=j-i$.
On the horizontal axis we put the initial point $i$ of the interval $(i,j)$, on the vertical axis we put its length $l=j-i$. The strip is the set of intervals whose length $l$ is between $l_n^-$ and $l_n^+$. The figure shows the decomposition of this strip into disjoint ``blocks'' of the form $\JJJ_n(z)$. Lemma~\ref{lem:exc_JJJ_subgauss} computes the exceedance probability over one block $\JJJ_n(z)$ by decomposing it into disjoint squares of the form $\TTT_n=\TTT_{Bq_n}(x,y)$. The exceedance probability over a single square $\TTT_{n}$ is computed in Lemma~\ref{lem:local_sub_gauss}. In Section~\ref{subsec:global_subg} it is shown that the exceedance events over different blocks of the form $\JJJ_n(z)$ are approximatively independent.
%The main tool to prove this is Lemma~\ref{lem:double_probab_subgauss} which allows to estimate the ``double sum''.
} \label{fig:draw_block}
\end{figure}
%%%%%%%%%%%%%%%%%%%%%%%%%%%%%%%%%%%%%%%%%%%%%%%%%%%%%%%

\begin{lemma}\label{lem:exc_JJJ_subgauss}
Let $w_n$  be any integer sequence such that $w_n/q_n\to\infty$ but  $w_n=O(\log^p n)$, as $n\to\infty$. For $z\in\Z$ let $\JJJ_n(z)$ be the set of all intervals $(i,j)\in\III$ such that $z\leq i< z+w_n$ and $j-i\in [l_n^-, l_n^+]$; see Figure~\ref{fig:draw_block}. Then, as $n\to\infty$,
\begin{equation}\label{eq:lem:exc_JJJ_subgauss}
\P\left[\max_{(i,j)\in \JJJ_n(z)} \ZZZ_{i,j}>u_n\right]\sim e^{-\tau} \frac{w_n}{n} \int_{A_1}^{A_2} \Lambda(a)da,
\end{equation}
where $\Lambda(a)=\Lambda_{q,\kappa}(a)=\frac{1}{2\sqrt {\pi} a^2} \exp\{-\kappa 2^{\frac q 2} a^{-\frac{q-2}{2}}\}$, $a>0$, is as in~\eqref{eq:def_Lambda_q_kappa}.
\end{lemma}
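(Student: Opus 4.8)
The plan is to compute the exceedance probability over the block $\JJJ_n(z)$ by the double sum method: decompose $\JJJ_n(z)$ into a disjoint union of small squares $\TTT_n = \TTT_{Bq_n}(x,y)$ of the type analyzed in Lemma~\ref{lem:local_sub_gauss}, apply the Bonferroni inequalities, show the first Bonferroni sum converges to the claimed limit, and show the second Bonferroni sum (the ``double sum'') is negligible. Concretely, I would foliate $\JJJ_n(z)$ by the length variable $l$: partition the range $[l_n^-, l_n^+]$ into consecutive windows of width $Bq_n$ (the decorrelation length), and for each such length-window, partition the range of initial points $[z, z+w_n)$ again into consecutive windows of width $Bq_n$. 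This gives roughly $N_n \approx \frac{w_n (l_n^+ - l_n^-)}{(Bq_n)^2}$ squares $\TTT_n^{(m)}$, plus a negligible remainder coming from boundary effects and from the fact that $Bq_n$ need not divide the side lengths exactly.

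For the first Bonferroni sum, I would apply Lemma~\ref{lem:local_sub_gauss} to each square $\TTT_n^{(m)} = \TTT_{Bq_n}(x_m, x_m + l_m)$: writing $a_m = l_m/\log^p n$, each contributes $P_n(0)\cdot(1 + H^2(B/a_m))$ with $P_n(0)$ evaluated (via Lemma~\ref{lem:one_point_sub_gauss}) at length $l_m$, i.e.\ $P_n(0) \sim \frac{1}{2\sqrt\pi}\, e^{-\kappa 2^{q/2} a_m^{-(q-2)/2}}\cdot \frac{e^{-\tau}}{n\log^{2-p}n}$. Summing over the $N_n$ squares and recognizing the sum over the length variable as a Riemann sum (the length-windows have width $Bq_n \sim B\log^{p-1}n$, and $l = a\log^p n$ so $dl = \log^p n\, da$, giving a factor $\log^p n / (Bq_n) \sim \log n / B$ per length-slab), one gets, after also summing the $\approx w_n/(Bq_n)$ initial-point windows in each slab,
\[
\sum_m P_n(0)\bigl(1 + H^2(B/a_m)\bigr)
\sim
e^{-\tau}\frac{w_n}{n}\int_{A_1}^{A_2} \frac{1}{2\sqrt\pi\, a^2}\, e^{-\kappa 2^{q/2} a^{-(q-2)/2}}\cdot \frac{1 + H^2(B/a)}{B^2}\, da .
\]
The point is that the $B$-dependence disappears in the limit $B\to\infty$: since $H(B)/B \to 1$ as $B\to\infty$ (the maximum of $\sqrt 2 W(t)-t$ over $[0,B]$ grows linearly because of the negative drift dominating, so $\E[\sup_{t\le B}e^{\sqrt2 W(t)-t}] \sim B$), we have $(1+H^2(B/a))/B^2 \to 1/a^2$, recovering exactly $\Lambda_{q,\kappa}(a) = \frac{1}{2\sqrt\pi a^2}e^{-\kappa 2^{q/2}a^{-(q-2)/2}}$. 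One runs the double sum argument for fixed $B$, extracts limits as $n\to\infty$, and then lets $B\to\infty$ at the end; a sandwiching argument between the Bonferroni lower and upper bounds closes the gap.

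The main obstacle is the double sum estimate: one must show $\sum_{m\ne m'} \P[\max_{\TTT_n^{(m)}}\ZZZ > u_n,\ \max_{\TTT_n^{(m')}}\ZZZ > u_n]$ is of smaller order than the first sum, uniformly for fixed $B$. This splits into several regimes depending on how the squares $\TTT_n^{(m)}, \TTT_n^{(m')}$ are positioned relative to each other. When the two base intervals are far apart in the initial-point direction (so the underlying increments involve disjoint or nearly-disjoint blocks of the $X_k$), near-independence gives a bound like $C(P_n(0))^2 \cdot N_n^2$, which is $o$ of the first sum since $N_n P_n(0) \to 0$; the delicate case is when the base intervals overlap substantially but the squares are distinct (adjacent length-slabs, or nearby initial points within a slab), where one needs the joint exceedance probability to decay geometrically in the separation. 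For this I would use Lemma~\ref{lem:local_est_sub_gauss} applied to an enlarged square containing both, together with the subgaussian tail decay \eqref{eq:I_subgauss}, \eqref{eq:I_taylor} to get an extra factor $e^{-c\,\mathrm{dist}^2/(\text{something})}$ controlling the sum over separations; the condition $w_n = O(\log^p n)$ and $w_n/q_n \to \infty$ ensures the number of squares is large enough for the Riemann sum yet the block is short enough that the individual-probability asymptotics stay uniform. Handling the correlation structure of the standardized increments $\ZZZ_{i,j}$ (which is not stationary and whose covariance depends on overlap lengths in a nontrivial way) is where the technical care concentrates.
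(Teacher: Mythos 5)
Your plan for the first Bonferroni sum is essentially the paper's: same partition into $Bq_n$-squares, same Riemann-sum evaluation via Lemmas~\ref{lem:one_point_sub_gauss} and~\ref{lem:local_sub_gauss}, same $B\to\infty$ limit using $H(B)/B\to1$. (One small slip: for fixed $B$ the pre-limit integrand should be $\frac{1}{2\sqrt\pi}e^{-\kappa 2^{q/2}a^{-(q-2)/2}}\,(1+H^2(B/a))/B^2$; the factor $1/a^2$ in your displayed formula only appears after letting $B\to\infty$, so as written it is double-counted.)

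The genuine gap is the double-sum estimate. You correctly flag overlapping squares as the delicate case, but the mechanism you propose does not do the job. Applying Lemma~\ref{lem:local_est_sub_gauss} to an enlarged square containing both $\TTT_n^{(m)}$ and $\TTT_n^{(m')}$ only bounds the probability of \emph{some} exceedance in the union; it does not bound the \emph{joint} probability that both squares exceed, and in any case it gives no decay as the separation grows (the bound for a bigger square is looser, not smaller). What is actually needed is a two-point joint exceedance inequality of the form $\P[F_n(K_1)\cap F_n(K_2)]\le C\,u_n^{-1}e^{-u_n^2/2}e^{-c\,\Delta(K_1,K_2)/q_n}$, where $\Delta$ measures the gap between the base intervals; this is Lemma~\ref{lem:double_probab_subgauss}. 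Its proof hinges on a conditional-independence decomposition $F_n(K_1)\cap F_n(K_2)\subset E_1\cup(E_2\cap E_3)$, where $E_2$ depends only on the increments indexed by $K_1$ and its $q_n$-neighborhood, $E_3$ depends only on $K_2\setminus\bar K_1$ (so $E_2,E_3$ are independent), and $E_1$ is a residual event controlled separately. The subgaussian inputs~\eqref{eq:I_subgauss}--\eqref{eq:I_taylor} enter through moderate/large-deviation bounds on $\P[E_1]$ and $\P[E_3]$ inside this decomposition, not through an enlarged-square bound. Once this inequality is available, one needs Lemma~\ref{lem:Delta_sup_norm} to relate $\Delta$ to the sup-norm on the index grid, and Lemma~\ref{lem:double_sum_general} to carry out the summation over pairs. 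Without the two-point inequality your outline has no handle on the overlapping pairs, which --- since $w_n=O(\log^p n)$ and the lengths themselves are of order $\log^p n$ --- constitute essentially all pairs in a block, so the ``independent far pairs'' bound you mention covers at most a negligible minority of the double sum.
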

\begin{proof}
By translation invariance we may take $z=0$. Take some $B\in\N$ and recall that $q_n\in\N$ is a sequence satisfying $q_n\sim \log^{p-1}n$. To get rid of the boundary effects we introduce two sequences $\eps_n$ and $\delta_n$ such that $\eps_n/q_n\to\infty$ and $\delta_n/q_n\to\infty$, but $\eps_n=o(w_n)$ and $\delta_n=o(\log^p n)$, as $n\to\infty$.
%Given $q\in\N$ we
Introduce the following two-dimensional discrete grids with mesh size $q_n$:
\begin{align}
\calJ_n(B) &=\{(x,y)\in Bq_n\Z^2: x\in [-\eps_n, w_n+\eps_n],\; y-x \in [l_n^--\delta_n,  l_n^++\delta_n] \}, \label{eq:calXn}\\
\calJ_n'(B)&=\{(x,y)\in Bq_n\Z^2: x\in [\eps_n, w_n-\eps_n], \; y-x\in [l_n^-+\delta_n, l_n^+-\delta_n] \}. \label{eq:calXn_prime}
\end{align}
%Introduce the one-dimensional discrete grids with mesh size $q$:
%\begin{align}
%\calX_n(B) &= Bq_n\Z\cap [-\eps_n, w_n+\eps_n], &  \calL_n(Bq_n) &=Bq_n\Z\cap [l_n^--\delta_n, l_n^++\delta_n], \label{eq:calXn}\\
%\calX_n'(B)&= Bq_n\Z\cap [\eps_n, w_n-\eps_n], &  \calL_n'(Bq_n) &=Bq_n\Z\cap [l_n^-+\delta_n, l_n^+-\delta_n]. \label{eq:calXn_prime}
%\end{align}
Note that $\calJ_n'(B)\subset \calJ_n(B)$.  The discrete squares $\{\TTT_{Bq_n}(x,y)\}_{(x,y)\in \calJ_n(B)}$ (which were defined in Section~\ref{subsec:local_subgauss}) are disjoint and cover the set $\JJJ_n(0)$. Similarly, the discrete squares $\{\TTT_{Bq_n}(x,y)\}_{(x,y)\in \calJ_n'(B)}$  are disjoint and contained in $\JJJ_n(0)$. By the Bonferroni inequality, we have, for every $B\in\N$,
\begin{equation}\label{eq:bonferroni_subgauss}
S_n'(B)-S_n''(B)\leq \P\left[\max_{(i,j)\in \JJJ_n(0)} \ZZZ_{i,j}>u_n\right]\leq S_n(B),
\end{equation}
where
\begin{align}
S_n(B)&=\sum_{(x,y)\in\calJ_n(B)}   \P \left[\max_{(i,j)\in \TTT_{Bq_n}(x,y)} \ZZZ_{i,j}>u_n\right],
\label{eq:Sn_B_subg}\\
S_n'(B)&=\sum_{(x,y)\in\calJ_n'(B)} \P \left[\max_{(i,j)\in \TTT_{Bq_n}(x,y)} \ZZZ_{i,j}>u_n\right],
\label{eq:Sn_B_prime_subg}\\
S_n''(B)&=\sum \P \left[\max_{(i,j)\in \TTT_{Bq_n}(x_1,y_1)} \ZZZ_{i,j}>u_n , \max_{(i,j)\in \TTT_{Bq_n}(x_2,y_2)} \ZZZ_{i,j}>u_n \right]
\label{eq:Sn_B_2prime_subg}
\end{align}
and in~\eqref{eq:Sn_B_2prime_subg} the sum is taken over all pairs $(x_1,y_1)\in \calJ_n'(B)$ and $(x_2,y_2)\in \calJ_n'(B)$ such that $(x_1, y_1)\neq  (x_2,y_2)$.
The statement of Lemma~\ref{lem:exc_JJJ_subgauss} follows by letting $n\to\infty$ and then $B\to\infty$ in~\eqref{eq:bonferroni_subgauss} and applying  Lemmas~\ref{lem:Sn_B_subgauss}, \ref{lem:Sn_B_prime_subgauss}, \ref{lem:double_sum_subgauss} which we will prove below.
\end{proof}

%The next lemma can be found in~\cite{pickands}, \cite{leadbetter_etal_book}, \cite{piterbarg_book}.
%\begin{lemma}\label{lem:pickands_const_subgauss}
%For $B>0$ define $H(B)=\E \exp\sup_{t\in [0,B]} (\sqrt 2 W(t)-t)$, where $\{W(t), t\geq 0\}$ is a standard Brownian motion. Then, $\lim_{B\to\infty} %H(B)/B = 1$.
%\end{lemma}

%Recall that $w_n\to\infty$ is an integer sequence such that $w_n=O(\log^{p-1} n)$ as $n\to\infty$.
\begin{lemma}\label{lem:Sn_B_subgauss}
Let $S_n(B)$ be defined as in~\eqref{eq:Sn_B_subg}. We have
\begin{equation}\label{eq:lem:Sn_B_subgauss}
\lim_{B\to\infty} \limsup_{n\to\infty} nw_n^{-1} S_n(B) \leq  e^{-\tau} \int_{A_1}^{A_2} \Lambda(a)da.
\end{equation}
\end{lemma}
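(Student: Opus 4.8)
The plan is to reduce $S_n(B)$ to a one-dimensional sum over interval lengths, evaluate that sum via Lemmas~\ref{lem:one_point_sub_gauss} and~\ref{lem:local_sub_gauss} as a Riemann integral, and finally let $B\to\infty$. Since the field $\ZZZ$ is stationary under the shift $(i,j)\mapsto(i+1,j+1)$, the summand $\P[\max_{(i,j)\in\TTT_{Bq_n}(x,y)}\ZZZ_{i,j}>u_n]$ in~\eqref{eq:Sn_B_subg} depends on $(x,y)\in\calJ_n(B)$ only through the length $l:=y-x$; write $p_n^B(l)$ for its common value. As $x$ and $y$ both range over $Bq_n\Z$, the admissible lengths form $L_n:=Bq_n\Z\cap[l_n^--\delta_n,l_n^++\delta_n]$, and for every admissible $l$ the number of admissible $x$ equals $N_n:=\#\bigl(Bq_n\Z\cap[-\eps_n,w_n+\eps_n]\bigr)$, which satisfies $N_n\sim w_n/(Bq_n)$ because $\eps_n=o(w_n)$ and $w_n/q_n\to\infty$. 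Hence $S_n(B)=N_n\sum_{l\in L_n}p_n^B(l)$, and everything reduces to the inner sum.

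For the inner sum I would first record the uniform form of Lemmas~\ref{lem:one_point_sub_gauss} and~\ref{lem:local_sub_gauss}: inspecting their proofs, the moderate-deviation asymptotics of Theorem~\ref{theo:moderate_dev} and the Donsker invariance principle used there enter uniformly as long as $l/\log^p n$ stays in a fixed compact subset of $(0,\infty)$, and, since $\delta_n=o(\log^p n)$, for large $n$ every length in $L_n$ has $a:=l/\log^p n$ in such a set. Thus
$$p_n^B(l)=(1+o(1))\,\frac{e^{-\tau}}{2\sqrt\pi\,n\log^{2-p}n}\,\exp\{-\kappa 2^{q/2}a^{-(q-2)/2}\}\bigl(1+H^2(B/a)\bigr)$$
uniformly over $l\in L_n$. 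Substituting this and regarding $\sum_{l\in L_n}$ as a Riemann sum of mesh $Bq_n$, i.e. $\sum_{l\in L_n}(\cdot)=(1+o(1))(Bq_n)^{-1}\int_{l_n^-}^{l_n^+}(\cdot)\,dl=(1+o(1))(Bq_n)^{-1}\log^p n\int_{A_1}^{A_2}(\cdot)\,da$, and using $q_n\sim\log^{p-1}n$ so that all powers of $\log n$ cancel (the exponent is $p+(p-2)-2(p-1)=0$), I obtain
$$\lim_{n\to\infty}\frac{n}{w_n}S_n(B)=\frac{e^{-\tau}}{2\sqrt\pi\,B^2}\int_{A_1}^{A_2}\exp\{-\kappa 2^{q/2}a^{-(q-2)/2}\}\bigl(1+H^2(B/a)\bigr)\,da=e^{-\tau}\int_{A_1}^{A_2}\frac{a^2}{B^2}\bigl(1+H^2(B/a)\bigr)\Lambda(a)\,da,$$
the last step by $\Lambda(a)=\tfrac{1}{2\sqrt\pi a^2}\exp\{-\kappa 2^{q/2}a^{-(q-2)/2}\}$. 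The $\eps_n$- and $\delta_n$-fattening of the grid inflates the sum only by a factor $1+o(1)$, so this is in fact an equality; only ``$\le$'' is needed here.

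It remains to send $B\to\infty$, and the crucial fact is that $H(C)\sim C$ as $C\to\infty$. To see this, note that $e^{\sqrt2 W(t)-t}$ is a positive martingale, so $M_C:=\sup_{t\in[0,C]}(\sqrt2 W(t)-t)\ge0$ and $H(C)=\E\,e^{M_C}=1+\int_0^\infty e^x\,\P[M_C>x]\,dx$; inserting the classical running-maximum formula for Brownian motion with drift, $\P[M_C>x]=\bar\Phi((C+x)/\sqrt{2C})+e^{-x}\Phi((C-x)/\sqrt{2C})$ for $x\ge0$, an elementary computation (for the second term the substitution $u=(C-x)/\sqrt{2C}$ together with $\int_{-\infty}^a\Phi=a\Phi(a)+\phi(a)$) gives $\int_0^\infty e^x\bar\Phi((C+x)/\sqrt{2C})\,dx=O(\sqrt C)$ and $\int_0^\infty\Phi((C-x)/\sqrt{2C})\,dx=C+o(1)$, whence $H(C)=C+O(\sqrt C)$. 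Consequently $\tfrac{a^2}{B^2}(1+H^2(B/a))\to1$ for every $a\in[A_1,A_2]$ as $B\to\infty$; since $H$ is nondecreasing and $H(C)=C+O(\sqrt C)$, this quantity is bounded on $[A_1,A_2]$ uniformly in $B\ge1$, so dominated convergence yields $\lim_{B\to\infty}\limsup_{n\to\infty}\tfrac{n}{w_n}S_n(B)=e^{-\tau}\int_{A_1}^{A_2}\Lambda(a)\,da$, which is~\eqref{eq:lem:Sn_B_subgauss} (indeed with equality).

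The non-routine ingredients are (i) the uniform upgrade of Lemma~\ref{lem:local_sub_gauss} over a compact range of the scale parameter $a$, without which the Riemann-sum step is vacuous, and (ii) the asymptotics $H(C)\sim C$, which is precisely what eliminates the Pickands-type constant $H$ and leaves exactly $\Lambda_{q,\kappa}(a)$ in the limit. The shift-invariance factorization, the lattice-point count (including the harmless $\eps_n,\delta_n$ boundary corrections), the convergence of the Riemann sum, and the dominated convergence in $B$ are routine bookkeeping.
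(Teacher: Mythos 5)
Your overall plan---reduce to a one-dimensional sum over lengths by shift-invariance, recognize a Riemann sum with mesh $Bq_n$, and then let $B\to\infty$ using $H(C)\sim C$---is the same as the paper's, and the lattice-point bookkeeping (including the observation that the powers of $\log n$ cancel via $p+(p-2)-2(p-1)=0$) matches the paper's computation. Your self-contained derivation of $H(C)\sim C$ from the running-maximum law for drifted Brownian motion is correct (in fact the first integral is $O(1)$, not merely $O(\sqrt C)$), whereas the paper simply cites \cite{pickands,leadbetter_etal_book} for this fact; that part is a nice independent verification.

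The one genuine gap is precisely the step you flag as ingredient (i): the claim that the asymptotics of Lemmas~\ref{lem:one_point_sub_gauss} and~\ref{lem:local_sub_gauss} hold \emph{uniformly} in $a=l/\log^p n$ over a compact set. You assert this follows ``by inspecting their proofs,'' but it is nowhere established and is not routine: the proof of Lemma~\ref{lem:local_sub_gauss} passes through a Donsker weak-convergence argument and an application of Lemma~\ref{lem:dom_conv}, neither of which is stated in a form that is uniform over the scale parameter, and making them uniform is exactly the work you would still have to do. The paper deliberately avoids this. Its proof of Lemma~\ref{lem:Sn_B_subgauss} never uses a uniform asymptotic \emph{equivalence}: it defines the rescaled function $\lambda_{n,B}(a)$, invokes Lemmas~\ref{lem:local_sub_gauss} and~\ref{lem:one_point_sub_gauss} only for \emph{pointwise} convergence at each fixed $a$, and gets a uniform \emph{upper bound} on $\lambda_{n,B}(a)$ from the separate Lemma~\ref{lem:local_est_sub_gauss}; dominated convergence then carries the Riemann sum to the integral. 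That is a strictly weaker requirement---pointwise limit plus an integrable dominating bound---and unlike your uniform-asymptotic claim it is actually proved. Replacing your uniformity assertion by pointwise convergence together with Lemma~\ref{lem:local_est_sub_gauss} and the dominated convergence theorem closes the gap and recovers the paper's proof. (Note also that the statement of the lemma only requires an upper bound, so even the pointwise lower asymptotic is not needed here; the matching lower bound is the content of Lemma~\ref{lem:Sn_B_prime_subgauss}.)
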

\begin{proof}
Since the probability in the right-hand side of~\eqref{eq:Sn_B_subg} depends only on $l:=y-x$ by translation invariance, we have
$$
S_n(B)\leq  \frac{w_n+o(w_n)}{Bq_n} \sum_{l\in\calL_n(B)} \P \left[\max_{(i,j)\in \TTT_{Bq_n}(0,l)} \ZZZ_{i,j}>u_n\right],
$$
where $\calL_n(B)=Bq_n\Z\cap [l_n^--\delta_n, l_n^++\delta_n]$.
The idea is now to apply to each probability Lemma~\ref{lem:local_sub_gauss} and replace Riemann sums by integrals. Introduce the function
$$
\lambda_{n,B}(a)=n \log^{2 - p} n \cdot \P \left[\max_{(i,j)\in \TTT_{Bq_n}(0,l_{n,B}(a))} \ZZZ_{i,j}>u_n\right], \;\;\; a>0,
$$
where $l_{n,B}(a)=\max\{l\in Bq_n\Z: l\leq a\log^p n\}$. The function $\lambda_{n,B}(a)$ is locally constant and its constancy intervals have length $Bq_n / \log^{p} n\sim B/\log n$.
It follows that
\begin{equation}\label{eq:tech0_subg}
S_n(B)\leq \frac{w_n+o(w_n)}{B^2n} \int_{A_1-\frac{2\delta_n}{\log^{p} n}}^{A_2+\frac{2\delta_n}{\log^{p} n}}
\lambda_{n,B}(a)da.
\end{equation}
For every fixed $a>0$, the sequence $l_n=l_{n,B}(a)$ satisfies the assumption of Lemma~\ref{lem:local_sub_gauss}. Hence, by Lemmas~\ref{lem:local_sub_gauss} and~\ref{lem:one_point_sub_gauss}, we have the pointwise convergence
\begin{equation}\label{eq:tech_pointwise1}
\lim_{n\to\infty} \lambda_{n,B}(a) = e^{-\tau}\Lambda_B(a), \;\;\; \Lambda_B(a)=\frac{1}{2\sqrt {\pi}}\cdot e^{-\kappa 2^{\frac q 2} a^{-\frac{q-2}{2}}} \left(1+H^2\left(\frac B a\right)\right).
\end{equation}
Also, by Lemma~\ref{lem:local_est_sub_gauss},  $\lambda_{n,B}(a)$ is bounded by a constant not depending on $a, n$, as long as $a$ stays bounded away from $0$ and $\infty$.
%Also, $|\calX_n(B)|\sim w_n/(Bq_n)$ and hence,
Applying the dominated convergence theorem to~\eqref{eq:tech0_subg} we obtain
\begin{equation}\label{eq:tech1_subg}
\limsup_{n\to\infty} nw_n^{-1}S_n(B) \leq e^{-\tau} \int_{A_1}^{A_2}  B^{-2}  \Lambda_B(a) da.
\end{equation}
Now we let $B\to\infty$. Recall that $H$ is a function defined by~\eqref{eq:HB}. It is known  that  $\lim_{B\to\infty} H(B)/B = 1$; see~\cite[p.~72]{pickands} or~\cite[p.~232]{leadbetter_etal_book}.
It follows from~\eqref{eq:tech_pointwise1} that  uniformly in $a\in [A_1,A_2]$, we have
$$
\lim_{B\to\infty} B^{-2} \Lambda_B(a) = \frac{1}{2\sqrt {\pi} a^2} e^{-\kappa 2^{\frac q 2} a^{-\frac{q-2}{2}}}=\Lambda(a).
$$
To complete the proof let $B\to\infty$ in~\eqref{eq:tech1_subg}.
\end{proof}

\begin{lemma}\label{lem:Sn_B_prime_subgauss}
Let $S_n'(B)$ be defined as in~\eqref{eq:Sn_B_prime_subg}. We have
\begin{equation}\label{eq:lem:Sn_B_prime_subgauss}
\lim_{B\to\infty} \liminf_{n\to\infty} nw_n^{-1} S_n'(B)\geq e^{-\tau} \int_{A_1}^{A_2} \Lambda(a)da.
\end{equation}
\end{lemma}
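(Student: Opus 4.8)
The plan is to mirror the proof of Lemma~\ref{lem:Sn_B_subgauss}, but using the inner grid $\calJ_n'(B)$ from~\eqref{eq:calXn_prime} to produce a lower bound instead of an upper bound. First I would exploit translation invariance of the probabilities $\P[\max_{(i,j)\in\TTT_{Bq_n}(x,y)}\ZZZ_{i,j}>u_n]$, which depend only on $l:=y-x$, to write
\begin{equation*}
S_n'(B)\geq \frac{w_n-o(w_n)}{Bq_n}\sum_{l\in\calL_n'(B)}\P\left[\max_{(i,j)\in\TTT_{Bq_n}(0,l)}\ZZZ_{i,j}>u_n\right],
\end{equation*}
where $\calL_n'(B)=Bq_n\Z\cap[l_n^-+\delta_n,\,l_n^+-\delta_n]$; the count of admissible $x$-values in $[\eps_n,w_n-\eps_n]\cap Bq_n\Z$ is $(w_n-o(w_n))/(Bq_n)$ because $\eps_n=o(w_n)$. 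Then I would reuse the function $\lambda_{n,B}(a)=n\log^{2-p}n\cdot\P[\max_{(i,j)\in\TTT_{Bq_n}(0,l_{n,B}(a))}\ZZZ_{i,j}>u_n]$ from the previous proof, and recognize the sum over $l\in\calL_n'(B)$ as (a lower Riemann sum for) an integral: since $\lambda_{n,B}$ is locally constant with constancy intervals of length $Bq_n/\log^p n$, one gets
\begin{equation*}
S_n'(B)\geq \frac{w_n-o(w_n)}{B^2 n}\int_{A_1+2\delta_n/\log^p n}^{A_2-2\delta_n/\log^p n}\lambda_{n,B}(a)\,da.
\end{equation*}

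Next I would pass to the limit $n\to\infty$. By Lemma~\ref{lem:local_sub_gauss} combined with Lemma~\ref{lem:one_point_sub_gauss} we have the pointwise convergence $\lambda_{n,B}(a)\to e^{-\tau}\Lambda_B(a)$ with $\Lambda_B(a)=\frac{1}{2\sqrt\pi}e^{-\kappa 2^{q/2}a^{-(q-2)/2}}(1+H^2(B/a))$, exactly as in~\eqref{eq:tech_pointwise1}, and by Lemma~\ref{lem:local_est_sub_gauss} the integrands are uniformly bounded on $[A_1,A_2]$ away from $0$ and $\infty$; the integration limits converge to $A_1,A_2$ since $\delta_n=o(\log^p n)$. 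Dominated convergence then yields
\begin{equation*}
\liminf_{n\to\infty} nw_n^{-1}S_n'(B)\geq e^{-\tau}\int_{A_1}^{A_2}B^{-2}\Lambda_B(a)\,da.
\end{equation*}
Finally I would let $B\to\infty$, using the Pickands-type fact $\lim_{B\to\infty}H(B)/B=1$ (quoted in the previous proof from~\cite{pickands,leadbetter_etal_book}) so that $B^{-2}\Lambda_B(a)\to\frac{1}{2\sqrt\pi a^2}e^{-\kappa 2^{q/2}a^{-(q-2)/2}}=\Lambda(a)$ uniformly on $[A_1,A_2]$, giving the claimed bound $\lim_{B\to\infty}\liminf_{n\to\infty}nw_n^{-1}S_n'(B)\geq e^{-\tau}\int_{A_1}^{A_2}\Lambda(a)\,da$.

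Since everything here reverses cleanly, I expect no genuine obstacle; the only points needing a little care are the bookkeeping that the boundary layers of width $\eps_n$ and $\delta_n$ are asymptotically negligible (lower-order corrections $o(w_n)$ in $x$ and $o(\log^p n)$ in $l$, used to shrink rather than enlarge the domain) and the direction of the Riemann-sum comparison, which now undershoots the integral — both handled exactly as in the proof of Lemma~\ref{lem:Sn_B_subgauss} but with all inequalities flipped. The nontrivial analytic input (the local probability asymptotics, the uniform domination, and $H(B)\sim B$) is already available, so this lemma is essentially the mirror image of the preceding one.
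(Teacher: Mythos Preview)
Your proposal is correct and is exactly the ``analogous'' argument the paper has in mind: the paper's own proof of Lemma~\ref{lem:Sn_B_prime_subgauss} consists of the single sentence ``Analogous to the proof of Lemma~\ref{lem:Sn_B_subgauss},'' and what you have written is precisely that analogue, with the inner grid $\calJ_n'(B)$, the shrunk integration limits, and all inequalities reversed.
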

\begin{proof}
Analogous to the proof of Lemma~\ref{lem:Sn_B_subgauss}.
\end{proof}
\begin{remark}
It follows from~\eqref{eq:lem:Sn_B_subgauss} and~\eqref{eq:lem:Sn_B_prime_subgauss} that in both equations we can replace inequality by equality.
\end{remark}

The next lemma is needed to estimate the ``double sum'' $S_n''(B)$.
It states that the exceedance events over different intervals become asymptotically independent with exponential decorrelation speed as the symmetric difference of the intervals gets larger.
%The next lemma states that the probability of two simultaneous exceedances over two intervals is much smaller than the probability of exceedance over one of the intervals provided that the symmetric difference of the intervals has cardinality much larger than $q_n$.
Consider two intervals $K_1=(i_1,j_1)\in\III$ and $K_2=(i_2,j_2)\in\III$ with lengths $k_1:=j_1-i_1$ and $k_2:=j_2-i_2$ satisfying $k_1,k_2\in[l_n^-,l_n^+]$ and such that $i_1,i_2, j_1,j_2\in q_n\Z$. Let $k\in\N_0$ be the length of the intersection $K:=K_1\cap K_2$. (More precisely, $K$ is the intersection of the sets $\{i_1+1,\ldots,j_1\}$ and $\{i_2+1,\ldots,j_2\}$). Assume without restriction of generality that $k_1\leq k_2$ and write $\Delta=\Delta(K_1,K_2)=k_2-k$. In some sense, $\Delta$ measures the distance between the intervals $K_1$ and $K_2$.
\begin{lemma}\label{lem:double_probab_subgauss}
%Given interval $(i_0,j_0)\in \III$ introduce the random event
Given an interval $K_0=(i_0,j_0)\in\III$  define a random event
$$
F_{n}(i_0,j_0)=\left\{\max_{(i,j)\in \TTT_{q_n}(i_0,j_0)} \ZZZ_{i,j}>u_n\right\}.
%\;\;\;
%F_{2n}=\left\{\max_{(i,j)\in T_{q_n}(i_2,k_2)} \ZZZ_{i,j}>u_n\right\}.
$$
There exist constants $C_1,C_2>0$  (depending on $A_1,A_2$ but not depending on $K_1$, $K_2$, $n$) such that for every $K_1,K_2$ as above,
$$
P_n(K_1,K_2) := \P[F_n(K_1)\cap F_n(K_2)] \leq C_1 u_n^{-1} e^{-u_n^2/2} e^{-C_2 \Delta(K_1,K_2)/q_n}.
$$
\end{lemma}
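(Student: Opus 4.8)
The plan is to reduce the joint exceedance over the two small squares $\TTT_{q_n}(K_1)$ and $\TTT_{q_n}(K_2)$ to a statement about the increments of the random walk over the symmetric difference of the two intervals, and then to apply the large/moderate deviation upper bound of Theorem~\ref{theo:petrov_est} (or Lemma~\ref{lem:ld_est}) to that increment. First I would dispose of the trivial regime: if $\Delta(K_1,K_2)/q_n$ is bounded by a fixed constant $D_0$, then $e^{-C_2\Delta/q_n}\ge e^{-C_2 D_0}$ is bounded below, so the claimed bound follows already from the single-square estimate $\P[F_n(K_1)]\le C_1 u_n^{-1}e^{-u_n^2/2}e^{-C_2u_n^q (l_n^-)^{-(q-2)/2}}\le C_1 u_n^{-1}e^{-u_n^2/2}$, which is Lemma~\ref{lem:local_est_sub_gauss} applied with $l=k_1$, $r=q_n$, $u=u_n$ (the hypotheses $B_1 l>u_n^2$ and $r<B_2 l u_n^{-2}$ are satisfied for $n$ large since $l\asymp\log^p n$, $u_n^2\asymp\log n$, $q_n\asymp\log^{p-1}n$, so $q_n u_n^2/l\asymp 1$ stays bounded). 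So from now on assume $\Delta/q_n$ is large.

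In the main regime I would write, for any $(i,j)\in\TTT_{q_n}(K_1)$ and $(i',j')\in\TTT_{q_n}(K_2)$, the deterministic identity $S_j-S_i=(S_{j'}-S_{i'})+(\text{sum of }X\text{'s over }K_1\setminus K_2)-(\text{sum over }K_2\setminus K_1)$ up to $O(q_n)$ boundary terms coming from the $\TTT_{q_n}$-extensions. On the event $F_n(K_1)\cap F_n(K_2)$ both $(S_j-S_i)/\sqrt{k_1}$ and $(S_{j'}-S_{i'})/\sqrt{k_2}$ exceed $u_n$ for some such intervals; since $k_1,k_2\in[l_n^-,l_n^+]$ are comparable, this forces $S_j-S_i$ and $S_{j'}-S_{i'}$ both to be at least $\sqrt{l_n^-}\,u_n(1+o(1))$. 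Subtracting, and using that the symmetric difference $K_1\triangle K_2$ contains a block of $X$'s of size at least $\Delta-O(q_n)\ge \Delta/2$ (for $\Delta/q_n$ large) that is disjoint from the common part $K$, I obtain that on $F_n(K_1)\cap F_n(K_2)$ the walk increment over a specific interval of length $\asymp\Delta$ is large — roughly, either $\ZZZ_{x,x+k_1}\ge u_n-s/u_n$ for the base interval and then an increment over a length-$\Delta$ block exceeds something of order $\sqrt{l_n^-}\,s/u_n + (\text{a positive quantity of order }\Delta u_n/\sqrt{l_n^-})$. The cleanest formalization is to condition on the value $\ZZZ$ takes at the base interval $K_1$ (this is exactly the device used in Lemmas~\ref{lem:local_sub_gauss} and~\ref{lem:local_est_sub_gauss}), integrate against the corresponding distribution function $F_{l,u_n}$ whose tail decay $e^{s}e^{-u_n^2/2}u_n^{-1}$ is controlled by~\eqref{eq:asympt_F_lu}, and bound the conditional probability of the remaining event — that an independent walk of length $\asymp\Delta$ exceeds a threshold of order $\max(s,\Delta u_n/\sqrt{\log^p n})$ — by Lemma~\ref{lem:ld_est} together with $I(y)\ge y^2/2$, giving a factor $\exp\{-c\,\Delta u_n^2/\log^p n\}\le\exp\{-c'\Delta/q_n\}$ (using $u_n^2/\log^p n\asymp 1/q_n$), uniformly absorbing the $e^{s}$-growth from $F_{l,u_n}$ via the extra Gaussian-in-$s$ decay. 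Summing/integrating over $s\ge 0$ as in the proof of Lemma~\ref{lem:local_est_sub_gauss} then yields $P_n(K_1,K_2)\le C_1 u_n^{-1}e^{-u_n^2/2}e^{-C_2\Delta/q_n}$.

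The step I expect to be the main obstacle is the bookkeeping in the second regime: one must choose the right ``base interval'' and the right length-$\Delta$ increment block so that (a) the block is genuinely disjoint from the base interval and from $K$ (so the relevant $X$'s are independent of $\ZZZ$ at the base), and (b) the threshold that this block's increment must exceed is genuinely of order $\Delta u_n/\sqrt{l_n}$ and not smaller, which is what produces the decay $e^{-C_2\Delta/q_n}$ rather than a weaker one. Handling the $O(q_n)$ fuzziness introduced by the $\TTT_{q_n}$-extensions (which is why the constants are allowed to depend on $A_1,A_2$ but the estimate is only asserted for $\Delta/q_n$ in the nontrivial range) and keeping all constants uniform in $K_1,K_2,n$ is routine but delicate. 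I would also note that $I(y)\ge y^2/2$ for all $y\ge 0$ (equivalently $\varphi(t)\le t^2/2$ fails generally, but $I(y)=\sup_t(ty-\varphi(t))\ge y^2/2-\varphi(y)$ and in the superlogarithmic case $\varphi(y)\le y^2/2$, giving $I(y)\ge\ldots$; more simply $I(y)\ge\frac{y^2}{2}$ holds because $I$ is convex with $I(0)=0$, $I'(0)=0$, $I''(0)=1$ together with~\eqref{eq:I_subgauss}) suffices for the needed quadratic lower bound on the rate function, so no delicate moderate-deviation asymptotics are required here — only the crude exponential upper bound.
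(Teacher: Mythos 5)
Your plan has the right ingredients — the regime split for bounded $\Delta/q_n$ versus large $\Delta/q_n$, the L\'evy-type maximal inequality to absorb the $O(q_n)$ corrections coming from the $\TTT_{q_n}$-extensions, the use of Theorem~\ref{theo:petrov_est}/Lemma~\ref{lem:ld_est}/Lemma~\ref{lem:local_est_sub_gauss}, and the quadratic lower bound $I(y)\ge y^2/2$ — but the central conditioning step has a genuine gap, not merely bookkeeping.

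You propose to condition on $\ZZZ_{K_1}=u_n-s/u_n$ and then claim that, conditionally, $F_n(K_2)$ forces an increment over a block of length $\asymp\Delta$, chosen disjoint from $K_1$ and from $K:=K_1\cap K_2$, to be large of order $s\sqrt{l_n}/u_n+\Delta u_n/\sqrt{l_n}$. This does not follow when $K_1$ and $K_2$ overlap. Given only the value $\ZZZ_{K_1}$, the sub-increment $S_K$ over the intersection is \emph{not} determined: only $S_K+S_{K_1\setminus K_2}$ is fixed, and $S_K$ may be anomalously large with $S_{K_1\setminus K_2}$ compensating. In that configuration $F_n(K_2)$ can occur with $S_{K_2\setminus K}$ small, so the conditional probability of $F_n(K_2)$ given $\ZZZ_{K_1}$ is not controlled by a large-deviation bound for an independent walk of length $\asymp\Delta$. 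The conditioning device is clean in Lemmas~\ref{lem:local_sub_gauss} and~\ref{lem:local_est_sub_gauss} precisely because there the incremental process lives entirely on the $q_n$-extensions, which are disjoint from the base $K_1$; here the two base intervals themselves overlap, and the overlap piece belongs to both. Your caveat ``(a) the block is genuinely disjoint from the base and from $K$'' is necessary for independence but makes the claimed implication false.

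The missing idea is a three-event dichotomy replacing the conditioning. Put $\bar K_1=(i_1-q_n,j_1+q_n)$ (the $q_n$-enlargement of $K_1$), $\bar K=K_2\cap\bar K_1$, $\bar k=|\bar K|$, $\bar\Delta=k_2-\bar k$. Define: $E_1$, the increment over $\bar K$ (plus the $O(q_n)$ boundary corrections for $K_2$) exceeds $\sqrt{\bar k}\,u_n$ by an extra of order $\bar\Delta\sqrt{l_n}/(q_nu_n)$; $E_2$, the one-square exceedance over $\TTT_{q_n}(K_1)$, which contains $F_n(K_1)$; and $E_3$, the increment over $K_2\setminus\bar K_1$ exceeds the threshold that $F_n(K_2)$ requires once $E_1$ fails. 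Then $F_n(K_1)\cap F_n(K_2)\subset E_1\cup(E_2\cap E_3)$, and the decisive point is that $E_2$ depends only on the walk over $\bar K_1$ while $E_3$ depends only on the walk over $K_2\setminus\bar K_1$, so $E_2$ and $E_3$ are independent. The anomalous-overlap scenario your conditioning cannot handle is absorbed by $E_1$, whose elevated threshold alone already yields $\P[E_1]\le Cu_n^{-1}e^{-u_n^2/2}e^{-C_2\Delta/q_n}$ via Theorem~\ref{theo:petrov_est} and~\eqref{eq:I_subgauss} (after stripping corrections by the L\'evy inequality); and $\P[E_2]\,\P[E_3]$ gives the same bound via Lemma~\ref{lem:local_est_sub_gauss} for $E_2$ and Lemma~\ref{lem:ld_est} with~\eqref{eq:I_subgauss} for $E_3$ (with the trivial bound when $\bar\Delta\lesssim q_n$). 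Once this dichotomy is in place, the rest of your plan goes through essentially as you describe.
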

\begin{remark}
Note that $\P[F_n(K_\eps)]\sim \text{const} \cdot  u_n^{-1} e^{-u_n^2/2}$, $\eps=1,2$, by Lemma~\ref{lem:local_sub_gauss} and~\eqref{eq:Pn_s_subgauss}. The factor $e^{-C_2 \Delta(K_1,K_2)/q_n}$ provides an estimate for the dependence between the events $F_n(K_1)$ and $F_n(K_2)$.
\end{remark}
\begin{proof}[Proof of Lemma~\ref{lem:double_probab_subgauss}]
Given a finite set $I\subset \Z$ let $S_I=\sum_{m\in I} X_m$. Any interval $(i_0,j_0)\in\III$ will be identified with the finite set $\{i_0+1,\ldots,j_0\}$. In particular, we need the random variables $S_{K_1}=S_{j_1}-S_{i_1}$ and $S_{K_2}=S_{j_2}-S_{i_2}$. Introduce the random variables
\begin{align*}
D_1^-&=\max_{0\leq k<q_n} (S_{i_1}-S_{i_1-k}), & D_1^+&=\max_{0\leq k<q_n} (S_{j_1+k}-S_{j_1}),\\
D_2^-&=\max_{0\leq k<q_n} (S_{i_2}-S_{i_2-k}),  & D_2^+&=\max_{0\leq k<q_n} (S_{j_2+k}-S_{j_2}).
\end{align*}
These random variables  are corrections appearing when we extend the base intervals $K_1$ and $K_2$ by small intervals of length at most $q_n$.
With this notation  we have an inclusion of events
$$
F_{n}(K_{\eps})\subset \{S_{K_{\eps}}+D_{\eps}^++D_{\eps}^->\sqrt{k_{\eps}} u_n\},\;\;\; \eps=1,2.
%\;\;\; F_{n}\subset \{S_{K_2}+D_2^++D_2^->\sqrt{k_2} u_n\}.
%F_{n}(K_{1})\subset \{S_{K_{1}}+D_{1}^++D_{1}^->\sqrt{k_{1}} u_n\},
%\;
%F_{n}(K_{2})\subset \{S_{K_{2}}+D_{2}^++D_{2}^->\sqrt{k_{2}} u_n\}.
$$
Denote by $\bar K_1=(i_1-q_n, j_1+q_n)$ the extended version of the interval $K_1$. Note that the interval $\bar K_1$ contains all intervals from $\TTT_{q_n}(i_1,j_1)$. Let $\bar K=K_2\cap \bar K_1$ be the extended intersection of $K_1$ and $K_2$, and denote its length by $\bar k=|\bar K| < k+2q_n$. Let $\bar \Delta=k_2-\bar k$ be the length of $K_2\bsl \bar K=K_2\bsl \bar K_1$. Fix $\eps>0$. Introduce the following random events
\begin{align*}
E_{1}&=\left\{S_{\bar K}+D_2^-+D_2^+>\sqrt{\bar k} u_n + 2\sqrt{2q_n} + \eps \frac{\bar \Delta \sqrt{k_2+2q_n}}{q_n u_n}\right\},\\
E_{2}&=\left\{S_{K_1}+D_1^-+D_1^+>\sqrt{k_1} u_n\right\},\\
E_{3}&=\left\{S_{K_2}-S_{\bar K}>(\sqrt{k_2}-\sqrt{\bar k}) u_n - 2\sqrt{2q_n} -\eps \frac{\bar \Delta \sqrt{k_2+2q_n}}{q_n u_n}\right\}.
\end{align*}
Note that $F_{n}(K_1) \subset E_{2}$. We have $F_{n}(K_1)\cap F_{2}(K_2)\subset E_1\cup (E_2\cap E_3)$. By construction, the events $E_{2}$ and $E_3$ are independent. We will estimate the probabilities of $E_1$, $E_2$, $E_3$. Bringing these estimates together will complete the proof of the lemma.
First we estimate the probability of $E_2$. By Lemma~\ref{lem:local_est_sub_gauss},
\begin{equation}\label{eq:est_E2_subg}
\P[E_2]
=
\P\left[\max_{(i,j)\in \TTT_{q_n}(i_1,j_1)} \frac{S_j-S_i}{\sqrt{k_1}}>u_n\right]
\leq
C u_n^{-1} e^{-u_n^2/2}.
\end{equation}
We now estimate $\P[E_3]$.
%In the case $\bar \Delta=0$ we have $K_2=\bar K$ and hence, $\P[E_3]=0$.
Consider the case $\bar \Delta\geq 10 \sqrt A q_n$. Since $\bar k\leq k_2 \leq A \log^p n$, $u_n\sim\sqrt{2\log n}$, $q_n\sim \log^{p-1} n$, we can choose $\eps>0$ so small that the following inequality is valid:
$$
(\sqrt{k_2}-\sqrt{\bar k}) u_n - \eps \frac{\Delta \sqrt{k_2+2q_n}}{q_n u_n}
=
\frac{u_n\bar {\Delta}}{\sqrt{k_2}+\sqrt{\bar k}} - 2\sqrt{2q_n} - \eps \frac{\bar \Delta \sqrt{k_2+2q_n}}{q_n u_n}
\geq
\frac{\eps \bar{\Delta}}{\sqrt{q_n}}.
$$
Note that $\Delta \leq 3\bar \Delta$ since $\Delta\leq \bar \Delta+2q_n$ and $\bar \Delta\geq q_n$. With Lemma~\ref{lem:ld_est} and~\eqref{eq:I_subgauss} it follows that
\begin{equation}\label{eq:est_E3_subg}
\P[E_3]
\leq
\P\left[S_{\bar{\Delta}}> \frac{\eps \bar{\Delta}}{\sqrt{q_n}} \right]
\leq
\exp\left\{-\bar{\Delta} I\left(\frac{\eps}{\sqrt {q_n}}\right)\right\}
%\leq
%\exp\left\{- \frac{\eps^2 \bar \Delta}{2q_n} \right\}
\leq
c_1 \exp\left\{- \frac{c_2\Delta}{q_n}\right\}.
\end{equation}
In the case $\bar \Delta < 10 \sqrt A q_n$ we can use the trivial estimate $\P[E_3]\leq 1$ and~\eqref{eq:est_E3_subg} remains valid provided that $c_1$ is sufficiently large.

We estimate $\P[E_1]$. First we have to get rid of the correction terms $D_2^-$ and $D_2^+$. By the inequality stated in Theorem~2.4 on p.~52 in~\cite{petrov_book}, for every $t\in\R$ we have
$$
\P[D_2^{\pm}\geq t]\leq 2\P[S_{q_n}\geq t-\sqrt{2q_n}].
$$
If $X$ is any random variable which is independent of $D_2^+$ and has distribution function $F$, then
\begin{align*}
\P[X+D_2^+\geq t]
&=
\int_{\R} \P[D_2^+\geq t-s] dF(s)\\
&
\leq 2 \int_{\R} \P[S_{q_n}\geq t-s-\sqrt{2q_n}] dF(s)\\
&=
2\P[X+S_{q_n}\geq t-\sqrt{2q_n}].
\end{align*}
Applying this trick twice to $D_2^+$ and $D_2^-$ we obtain
\begin{align*}
\P[E_1]
\leq
4 \P \left [S_{\bar k + 2q_n} \geq  \sqrt{\bar k} u_n+ \eps \frac{\bar \Delta \sqrt{k_2+2q_n}}{q_n u_n} \right]
\leq
4 \P \left [\frac{S_{\bar k+2q_n}}{\sqrt{\bar k+2q_n}} \geq   u_n + \eps \frac{\bar \Delta}{q_n u_n} \right],
\end{align*}
where in the second inequality we used the relations $q_n\sim \log^{p-1} n$, $u_n\sim \sqrt{2\log n}$,  $q_n\leq \bar k\leq k_2$. (The case $\bar k=0$ can be excluded since Lemma~\ref{lem:double_sum_subgauss} holds trivially in this case due to the independence of $F_n(K_1)$ and $F_n(K_2)$). Applying to the right-hand side Theorem~\ref{theo:petrov_est} and then~\eqref{eq:I_subgauss}, we obtain
\begin{align}
\P[E_1]
\leq
C u_n^{-1} \exp\left\{- \frac 12 \left(u_n + \eps \frac{\bar \Delta}{q_n u_n}\right)^2\right\}
\leq
C_1 u_n^{-1} e^{-u_n^2/2} e^{-C_2 \Delta/q_n}. \label{eq:est_E1_subg}
\end{align}
The proof of the lemma is completed by recalling that $F_{n}(K_1)\cap F_{n}(K_2) \subset E_1\cup (E_2\cap E_3)$, where $E_2$ and $E_3$ are independent, and applying~\eqref{eq:est_E2_subg}, \eqref{eq:est_E3_subg}, \eqref{eq:est_E1_subg}.
\end{proof}

Given $B\in\N$ consider a discrete $d$-dimensional cube $Q_B=\{0,\ldots,B-1\}^d$. We can decompose the lattice $\Z^d$ into disjoint discrete cubes of the form $Bu+Q_B$, $u\in\Z^d$. Given $v_1,v_2\in\Z^d$ we write  $v_1 \sim v_2$ if $v_1$ and $v_2$ are in the same cube in this decomposition, that is if there is $u\in\Z^d$ such that $v_1,v_2\in Bu+Q_B$. Even though this is not explicit in our notation, the relation $\sim$ depends on $B$.  Given a set $V\subset \Z^d$ let $\partial_B V$ be the $B$-boundary of $V$ defined as the union of discrete cubes $Bu+Q_B$, $u\in\Z^d$, which have a non-empty intersection with both $V$ and $\Z^d\bsl V$. Let $\|\cdot\|$ be the sup-norm on $\R^d$ and $|V|$ the cardinality of a finite set $V$.
\begin{lemma}\label{lem:double_sum_general}
Let $\VVV_n\subset \Z^d$ be a sequence of finite sets such that for every $B\in\N$, $|\partial_B \VVV_n|=o(|\VVV_n|)$, as $n\to\infty$. Then, for every $\eps>0$,
$$
\lim_{B\to\infty}\limsup_{n\to\infty} \frac{1}{|\VVV_n|} \sum_{\substack{v_1,v_2\in \VVV_n\\ v_1 \nsim  v_2}} e^{-\eps \|v_1-v_2\|}=0.
$$
\end{lemma}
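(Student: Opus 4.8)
The plan is to reduce the double sum to a sum of single-point contributions and then to split $\VVV_n$ according to whether a point sits deep inside its own $B$-cube or close to the boundary of that cube. For $v_1\in\VVV_n$ set
$$
S(v_1)=\sum_{\substack{v_2\in\VVV_n\\ v_2\nsim v_1}} e^{-\eps\|v_1-v_2\|},
$$
so that the expression to be controlled is exactly $|\VVV_n|^{-1}\sum_{v_1\in\VVV_n}S(v_1)$ (the diagonal is automatically excluded, since $v_1\sim v_1$). Because distinct $B$-cubes are disjoint, $v_2\nsim v_1$ means precisely that $v_2$ lies outside the unique cube $C(v_1)=Bu+Q_B$ containing $v_1$; hence one always has the crude bound $S(v_1)\le\sum_{v\in\Z^d}e^{-\eps\|v\|}=:K_d(\eps)<\infty$, but this by itself only bounds the average by a constant and must be improved for the ``typical'' $v_1$.

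The key observation is that if $v_1=Bu+w$ with $w\in\{0,\ldots,B-1\}^d$ lies at sup-distance at least $r$ from the complement of its cube, i.e.\ if $\delta(v_1):=\min_{1\le i\le d}\min(w_i,B-1-w_i)\ge r$, then every $v_2\notin C(v_1)$ satisfies $\|v_1-v_2\|\ge r+1$, so that
$$
S(v_1)\le\sum_{\substack{v\in\Z^d\\ \|v\|\ge r+1}}e^{-\eps\|v\|}=:R_d(\eps,r),
$$
and $R_d(\eps,r)\to 0$ as $r\to\infty$, since the series defining $K_d(\eps)$ converges. It then remains to bound the number of ``bad'' points $v_1\in\VVV_n$ with $\delta(v_1)<r$. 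Here I would use the hypothesis $|\partial_B\VVV_n|=o(|\VVV_n|)$ together with the elementary remark that every $B$-cube is either disjoint from $\VVV_n$, or entirely contained in $\VVV_n$, or contained in $\partial_B\VVV_n$: consequently all but at most $|\partial_B\VVV_n|$ points of $\VVV_n$ lie in cubes entirely contained in $\VVV_n$, and inside such a full cube the proportion of points with $\delta<r$ is at most $2dr\,B^{d-1}/B^d=2dr/B$ (a union bound over the coordinate responsible for $\delta<r$, valid for $B$ large). This yields
$$
\#\{v_1\in\VVV_n:\delta(v_1)<r\}\le|\partial_B\VVV_n|+\frac{2dr}{B}\,|\VVV_n|.
$$

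Combining the two bounds on $S(v_1)$ --- $S(v_1)\le K_d(\eps)$ on the bad set, $S(v_1)\le R_d(\eps,r)$ elsewhere --- gives
$$
\frac{1}{|\VVV_n|}\sum_{v_1\in\VVV_n}S(v_1)\le K_d(\eps)\left(\frac{|\partial_B\VVV_n|}{|\VVV_n|}+\frac{2dr}{B}\right)+R_d(\eps,r).
$$
Taking $\limsup_{n\to\infty}$ removes the first term by hypothesis, leaving $K_d(\eps)\,2dr/B+R_d(\eps,r)$; choosing $r=r(B)=\lfloor\sqrt{B}\rfloor$ (any $r=r(B)$ with $r\to\infty$ and $r/B\to 0$ works) and then letting $B\to\infty$ sends both remaining terms to $0$, which is the assertion. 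I expect the main obstacle to be organizational rather than technical: getting the order of the quantifiers right, so that $r$ grows with $B$ but only sublinearly, and carrying out the small combinatorial count that converts the boundary hypothesis into the bound on the number of bad points; the exponential-tail estimates themselves are routine once the geometry is set up.
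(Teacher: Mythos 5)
Your proof is correct and follows essentially the same kernel/shell decomposition as the paper's: points deep inside their $B$-cube (at sup-distance $\ge r$ from the cube boundary) contribute only a tail $\sum_{\|v\|\ge r+1}e^{-\eps\|v\|}$ of the convergent series, points in the thin shell or in a boundary cube contribute $O(1)$ but form an asymptotically negligible fraction, and one then lets $r\to\infty$ with $r/B\to0$. If anything your explicit count $\#\{v_1:\delta(v_1)<r\}\le|\partial_B\VVV_n|+\tfrac{2dr}{B}|\VVV_n|$ is slightly more careful than the published argument, which asserts $|Z_B''\cap\VVV_n|/|\VVV_n|\to0$ as $n\to\infty$ for fixed $B$ even though that limit is really the positive shell fraction $1-(1-2k_B/B)^d$ (vanishing only as $B\to\infty$); your version makes the order of limits and the role of the boundary hypothesis transparent.
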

\begin{proof}
Take any sequence $k_B\in\N$ such that $k_B\to\infty$ but $k_B=o(B)$ as $B\to\infty$. We have a disjoint decomposition of the cube $Q_B$ into the ``kernel'' $Q_B'$ and the ``shell'' $Q_B''$ defined by
$$
Q_B'=\{k_B, \ldots, B-k_B\}^d, \;\;\; Q_B''=Q_B\bsl Q_B'.
$$
We have also a disjoint decomposition $\Z^d=Z_B'\cup Z_B''$, where $Z_B'=\cup_{u\in\Z^d} (Bu+Q_B')$ and $Z_B''=\Z^d\bsl Z_B'$.

Consider any $v_1\in Z_B'$. Then, for every $v_2\in\Z^d$ such that $v_1\nsim v_2$ we have $\|v_2-v_1\| > k(B)$. Hence,
\begin{equation}\label{eq:double_gen_pr1}
\sum_{v_2\in \VVV_n: v_1\nsim v_2} e^{-\eps \|v_1-v_2\|}\leq \sum_{v_2\in\Z^d: \|v_2\| > k_B} e^{-\eps \|v_2\|}=:C_B.
\end{equation}
Consider any $v_1\in Z_B''$. There is a constant $C<\infty$ (not depending on $v_1$) such that
\begin{equation}\label{eq:double_gen_pr2}
\sum_{v_2\in \VVV_n: v_1\nsim v_2} e^{-\eps \|v_1-v_2\|}
\leq
\sum_{v_2\in \Z^d} e^{-\eps \|v_1-v_2\|}
=
\sum_{v_2\in \Z^d} e^{-\eps \|v_2\|}
=C.
\end{equation}
 It follows from $|\partial_B \VVV_n|=o(|\VVV_n|)$ that $|Z_B'\cap \VVV_n|/|\VVV_n|\to 1$, $|Z_B''\cap \VVV_n|/|\VVV_n|\to 0$, and $|\VVV_n|\to\infty$, as $n\to\infty$. Using~\eqref{eq:double_gen_pr1} and~\eqref{eq:double_gen_pr2} we obtain
$$
\limsup_{n\to\infty}\frac{1}{|\VVV_n|} \sum_{\substack{v_1,v_2\in \VVV_n\\ v_1\nsim v_2}} e^{-\eps \|v_1-v_2\|}
\leq
\limsup_{n\to\infty}\left( C_B \frac{|Z_B'\cap \VVV_n|}{|\VVV_n|}   + C \frac{|Z_B''\cap \VVV_n|}{|\VVV_n|}\right)
=C_B.
$$
%Using  we obtain
%$$
%\limsup_{n\to\infty} \frac{1}{|\VVV_n|} \sum_{\substack{v_1,v_2\in \VVV_n\\ v_1\nsim v_2}} e^{-\eps %|v_1-v_2|}\leq S(B).
%$$
To complete the proof note that $\lim_{B\to\infty} C_B=0$ by~\eqref{eq:double_gen_pr1}.
\end{proof}
\noindent
Introduce the finite set
\begin{equation}\label{eq:VVV_n}
\VVV_n=\Z^2\cap \left\{(x,y)\in\R^2: x\in \left[0, \frac {w_n}{q_n}\right],\; y-x \in \left[\frac{l_n^-}{q_n}, \frac{l_n^+}{q_n}\right]\right\}.
\end{equation}
\begin{lemma}\label{lem:Delta_sup_norm}
There is $c>0$ such that for all $v_1,v_2\in\VVV_n$ we have $\Delta:=\Delta(v_1,v_2)\geq c\|v_1-v_2\|$, where $\Delta(v_1,v_2)$ was introduced before Lemma~\ref{lem:double_probab_subgauss}.
\end{lemma}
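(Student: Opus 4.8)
The plan is to compare $\Delta$ with $\|v_1-v_2\|$ via the symmetric difference of the two intervals. Regard $v_i=(x_i,y_i)\in\VVV_n$ as the interval $[x_i,y_i]$ of length $l_i=y_i-x_i$; by the convention preceding Lemma~\ref{lem:double_probab_subgauss} we may assume $l_1\le l_2$, so that $\Delta=l_2-k$ with $k=|v_1\cap v_2|$ the overlap length. Since $|v_1\triangle v_2|=l_1+l_2-2k$, we obtain the exact identity $2\Delta=(l_2-l_1)+|v_1\triangle v_2|$, both summands being nonnegative. If $v_1$ and $v_2$ overlap ($k>0$), then an elementary computation (say $x_1\le x_2<y_1$: the symmetric difference consists of $[x_1,x_2]$ together with the segment between $y_1$ and $y_2$) gives $|v_1\triangle v_2|=|x_1-x_2|+|y_1-y_2|\ge\max(|x_1-x_2|,|y_1-y_2|)=\|v_1-v_2\|$, hence $\Delta\ge\tfrac12\|v_1-v_2\|$.

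It remains to handle disjoint intervals ($k=0$), where $\Delta=l_2\ge l_n^-/q_n$. Since $x_i\in[0,w_n/q_n]$ and $y_i=x_i+l_i\le w_n/q_n+l_n^+/q_n$, the sup-norm diameter of $\VVV_n$ is at most $w_n/q_n+l_n^+/q_n$, so $\|v_1-v_2\|\le w_n/q_n+l_n^+/q_n$. As $w_n=O(\log^p n)$ and $l_n^-=A_1\log^p n$, $l_n^+=A_2\log^p n$, the $q_n$'s cancel and the ratio $\frac{l_n^-/q_n}{w_n/q_n+l_n^+/q_n}=\frac{A_1\log^p n}{w_n+A_2\log^p n}$ stays bounded away from $0$ as $n\to\infty$; hence there is $c_0>0$ (depending only on $A_1,A_2$ and the constant implicit in $w_n=O(\log^p n)$) with $\Delta\ge l_n^-/q_n\ge c_0\|v_1-v_2\|$ for all large $n$.

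Combining the two cases gives $\Delta\ge\min(\tfrac12,c_0)\|v_1-v_2\|$ for all sufficiently large $n$; for the remaining finitely many $n$ the set $\VVV_n$ is finite and the identity above shows $\Delta>0$ whenever $v_1\ne v_2$, so taking $c$ to be the minimum over these $n$ of the best available constant completes the proof. I do not foresee any genuine obstacle here; the only slightly delicate point is the disjoint case, where one must use that the interval lengths $l_n^\pm$ and the diameter of $\VVV_n$ have the same order of magnitude $\log^p n/q_n$ — precisely where the scalings $w_n=O(\log^p n)$ and $q_n\sim\log^{p-1}n$ enter.
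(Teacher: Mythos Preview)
Your proof is correct and follows essentially the same case split as the paper: overlap versus disjoint, with the disjoint case handled by the length-scale comparison $\Delta\geq l_n^-/q_n$ against the diameter $\leq w_n/q_n+l_n^+/q_n$. The only cosmetic difference is that you package the overlap case via the identity $2\Delta=(l_2-l_1)+|v_1\triangle v_2|$, obtaining $c=\tfrac12$, whereas the paper splits it into the two subcases $x_2\leq y_1\leq y_2$ and $y_2\leq y_1$ and computes $\Delta$ directly, getting $c=1$ there; both are fine since only some positive $c$ is needed.
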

\begin{proof}
Let $v=(x_1,y_1)$, $v_2=(x_2,y_2)$, where $x_1<y_1$ and $x_2<y_2$. Then, $\|v_1-v_2\|=\max (|x_1-x_2|,|y_1-y_2|)$. Without restriction of generality, let $x_1\leq x_2$. If $x_2\leq y_1\leq y_2$, then $\Delta=\|v_1-v_2\|$ by definition. If $x_2 < y_2\leq y_1$, then $\Delta=|x_2-x_1|+|y_2-y_1|\geq \|v_1-v_2\|$.  Finally, if $y_1\leq x_2$, then $\Delta=\max(y_1-x_1,y_2-x_2)\geq l_n^-/q_n\geq c_1\log n$ by definition of $\VVV_n$ and $\|v_1-v_2\|\leq w_n/q_n+l_n^+/q_n\leq c_2\log n$.
\end{proof}
\begin{lemma}\label{lem:double_sum_subgauss}
Let $S_n''(B)$ be defined as in~\eqref{eq:Sn_B_2prime_subg}. We have
$$
\lim_{B\to\infty} \limsup_{n\to\infty} n w_n^{-1} S_n''(B) = 0.
$$
\end{lemma}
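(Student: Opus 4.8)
The plan is to feed the decorrelation estimate of Lemma~\ref{lem:double_probab_subgauss} into the combinatorial estimate of Lemma~\ref{lem:double_sum_general}. The only genuine issue is that Lemmas~\ref{lem:double_probab_subgauss} and~\ref{lem:Delta_sup_norm} are stated on the fine grid of mesh $q_n$, whereas the squares entering $S_n''(B)$ sit at the nodes of the coarse grid $\calJ_n'(B)$ of mesh $Bq_n$; so I would first refine. Each $\TTT_{Bq_n}(x,y)$ is the disjoint union of the $B^2$ squares $\TTT_{q_n}(x-aq_n,y+bq_n)$, $a,b\in\{0,\dots,B-1\}$, so that $\{\max_{\TTT_{Bq_n}(x,y)}\ZZZ_{i,j}>u_n\}=\bigcup_{a,b}F_n(x-aq_n,y+bq_n)$ in the notation of Lemma~\ref{lem:double_probab_subgauss}. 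Since $(x,y)\in\calJ_n'(B)$ and $\delta_n/q_n\to\infty$, for large $n$ every base interval $K'=(x-aq_n,y+bq_n)$ has endpoints in $q_n\Z$ and every interval in $\TTT_{q_n}(K')$ has length in $[l_n^-,l_n^+]$, so Lemma~\ref{lem:double_probab_subgauss} applies to any pair of such subsquares. Applying the union bound in $S_n''(B)$ over the subsquares of both arguments and then Lemma~\ref{lem:double_probab_subgauss} term by term, and writing $K_\eps'=(x_\eps-a_\eps q_n,y_\eps+b_\eps q_n)$, one gets
\begin{equation*}
S_n''(B)\ \le\ C_1 u_n^{-1}e^{-u_n^2/2}\sum_{(x_1,y_1)\ne(x_2,y_2)\in\calJ_n'(B)}\ \sum_{a_1,b_1,a_2,b_2=0}^{B-1} e^{-C_2\Delta(K_1',K_2')/q_n},
\end{equation*}
with $C_1,C_2$ not depending on $B$ or $n$.

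Next I would pass to a purely combinatorial sum. Dividing the endpoints of each $K_\eps'$ by $q_n$ gives a lattice point $v_\eps'\in\VVV_n$ --- after a harmless enlargement of $\VVV_n$ by $O(B)$ in the horizontal direction to absorb the shifts by $a_\eps q_n$ --- and Lemma~\ref{lem:Delta_sup_norm} yields $\Delta(K_1',K_2')/q_n\ge c\|v_1'-v_2'\|$. The points $v_\eps'$ arising from a fixed $(x_\eps,y_\eps)\in\calJ_n'(B)$ fill a $B\times B$ box which is a translate by a vector of $B\Z^2$ of the fixed box $\{-B+1,\dots,0\}\times\{0,\dots,B-1\}$; for distinct $(x_\eps,y_\eps)$ these boxes are disjoint, so together they form a sub-collection of a tiling of $\Z^2$ by translates of a $B$-cube. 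Hence, up to reindexing, the double sum above is a sum over pairs $v_1'\nsim v_2'$ for that tiling, and Lemma~\ref{lem:double_sum_general} applies verbatim (its proof uses only the tiling structure, not the specific representative $Q_B$). The union $\VVV_n'$ of these boxes satisfies $|\VVV_n'|\sim|\VVV_n|$ and $|\partial_B\VVV_n'|=o(|\VVV_n'|)$, so Lemma~\ref{lem:double_sum_general} gives
\begin{equation*}
\lim_{B\to\infty}\limsup_{n\to\infty}\frac{1}{|\VVV_n|}\sum_{\substack{v_1',v_2'\in\VVV_n'\\ v_1'\nsim v_2'}}e^{-C_2c\|v_1'-v_2'\|}=0 .
\end{equation*}

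It then remains to check the prefactor. By~\eqref{eq:def_un_subgauss} and~\eqref{eq:Pn_s_subgauss} one has $u_n^{-1}e^{-u_n^2/2}\sim\tfrac{1}{\sqrt2}e^{-\tau}/(n\log^{2-p}n)$, while counting nodes gives $|\VVV_n|\sim(A_2-A_1)w_n\log^{2-p}n$; therefore $nw_n^{-1}C_1 u_n^{-1}e^{-u_n^2/2}|\VVV_n|$ is bounded uniformly in $n$ and does not depend on $B$. Writing $nw_n^{-1}S_n''(B)$ as the product of this bounded quantity with $|\VVV_n|^{-1}\sum_{v_1'\nsim v_2'}e^{-C_2c\|v_1'-v_2'\|}$, and letting first $n\to\infty$ and then $B\to\infty$, yields the lemma. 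I expect the only real obstacle to be the bookkeeping in the refinement step: keeping $C_1,C_2$ free of $B$ after passing to the $q_n$-subsquares, verifying that every subsquare length stays inside $[l_n^-,l_n^+]$, and absorbing the boundary terms created both by the refinement and by the shifted tiling. All the probabilistic substance is already contained in Lemmas~\ref{lem:double_probab_subgauss}, \ref{lem:Delta_sup_norm} and~\ref{lem:double_sum_general}.
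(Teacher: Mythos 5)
Your argument matches the paper's proof step by step: both refine each coarse square $\TTT_{Bq_n}(x,y)$ into its $B^2$ fine subsquares $\TTT_{q_n}(\cdot)$, apply Lemma~\ref{lem:double_probab_subgauss} together with Lemma~\ref{lem:Delta_sup_norm} to each pair of subsquares, and feed the resulting sum into Lemma~\ref{lem:double_sum_general} after checking that the prefactor $n w_n^{-1} u_n^{-1} e^{-u_n^2/2}|\VVV_n|$ stays bounded uniformly in $n$. Your bookkeeping on the shifted $B$-tiling and on the endpoint/length constraints needed to invoke Lemma~\ref{lem:double_probab_subgauss} is correct and essentially reproduces, in slightly more explicit form, what the paper hides in its $v\mapsto v'$ reindexing.
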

\begin{proof}
Take first some fixed $B\in\N$. Given a vector $v=(x,y)\in\R^2$ we write $v'=(x+1,y)$.
%Given two intervals $K_1=(i_1,j_1)\in\III$ and $K_2=(i_2,j_2)\in\III$  we write $K_1\sim K_2$ if there are $x\in\calX_n(B)$ and $l\in\calL_n(B)$ such that $(i_1,j_1)$ and $(i_2,j_2)$ are both elements of $\TTT_{Bq_n}(x,l)$.
Let $(x,y)\in Bq_n\Z^2$, $x<y$, be some interval.  Then, we can represent the discrete square $\TTT_{Bq_n}(x,y)$ as a disjoint union of $B^2$ squares of the form  $\TTT_{q_n}(q_n v')$, where
$$
v\in v_0+ \{0,\ldots,B-1\}^2,
\;\;\;
v_0=\left( \frac{x-Bq_n}{q_n},\frac{y}{q_n}\right) \in B\Z^2.
$$
We can estimate the exceedance probability over $\TTT_{Bq_n}(x,y)$ by the sum of the exceedance probabilities over the $\TTT_{q_n}$'s.
It follows from~\eqref{eq:Sn_B_2prime_subg} that
$$
S_n''(B)\leq \sum_{v_1,v_2} \P\left[\max_{(i,j)\in \TTT_{q_n}(q_nv_1')} \ZZZ_{i,j}>u_n, \max_{(i,j)\in \TTT_{q_n}(q_nv_2')} \ZZZ_{i,j}>u_n\right],
$$
where  the sum is taken over all $v_1\in \VVV_n$ and $v_2\in \VVV_n$ such that $v_1 \nsim v_2$.
Applying Lemma~\ref{lem:double_probab_subgauss} and noting that $\Delta(q_nv_1',q_nv_2')=q_n\Delta(v_1,v_2)$ we get
$$
S_n''(B)\leq c_1 u_n^{-1} e^{-u_n^2/2} \sum_{v_1,v_2} e^{-c_2 \Delta(v_1,v_2)}
\leq \frac{c_3}{n \log^{2-p} n} \sum_{v_1,v_2} e^{-c_4 \|v_1-v_2\|}.
$$
Here, $\|\cdot\|$ denotes  the sup-norm and  the second inequality follows from~\eqref{eq:def_un_subgauss} and Lemma~\ref{lem:Delta_sup_norm}.
Applying to the right-hand side Lemma~\ref{lem:double_sum_general} and noting that $|\VVV_n|\leq C w_n \log^{2-p}n$ we arrive at the required statement.
\end{proof}

\subsection{Global probability} \label{subsec:global_subg}
Recall that for $0<A_1<A_2$ we define  $l_n^-=A_1 \log^p n$ and $l_n^+=A_2 \log^p n$.
Denote by $\III_n(A_1,A_2)$ the set of all intervals $(i,j)\in\III_n$ with length $l:=j-i\in[l_n^-, l_n^+]$. Our aim is to prove Theorem~\ref{theo:main_subgauss_scales} which states that
\begin{equation}\label{eq:global_probab_subg}
\lim_{n\to\infty}\P\left[\max_{(i,j)\in \III_n(A_1,A_2)} \ZZZ_{i,j}\leq u_n\right]= \exp\left\{- e^{-\tau} \int_{A_1}^{A_2} \Lambda(a)da \right\}.
\end{equation}
We will decompose the set $\III_n(A_1,A_2)$ into sets of the form $\JJJ_n(z)$; see Lemma~\ref{lem:exc_JJJ_subgauss}. Let $w_n=[3A_2\log^p n]$. To get rid of the boundary effects choose a sequence $\eta_n>0$ such that $\eta_n=o(n)$ but $\eta_n/w_n\to \infty$. Consider the one-dimensional grids
$$
\calR_n'=[-\eta_n, n+\eta_n]\cap w_n\Z, \;\;\; \calR_n''=[\eta_n, n-\eta_n]\cap w_n\Z.
$$
Note that the sets $\JJJ_n(z)$, where $z\in \calR_n'$, are disjoint and cover $\III_n(A_1,A_2)$. Similarly, the sets $\JJJ_n(z)$, where $z\in\calR_n''$, are disjoint and contained in $\III_n(A_1,A_2)$. The exceedance probability over each $\JJJ_n(z)$ satisfies, by Lemma~\ref{lem:exc_JJJ_subgauss},
\begin{equation}\label{eq:global_proof_JJJ_subg}
\P\left[\max_{(i,j)\in \JJJ_n(z)} \ZZZ_{i,j}>u_n\right]\sim e^{-\tau} \frac{w_n}{n} \int_{A_1}^{A_2} \Lambda(a) da,\;\;\; n\to\infty.
\end{equation}
Also, $|\calR_n'|\sim |\calR_n''|\sim n/w_n$ as $n\to\infty$. If the exceedance events over $\JJJ_n(z)$ were independent, the Poisson limit theorem would immediately yield~\eqref{eq:global_probab_subg}. However, the events are dependent. In fact, the dependence is quite weak:  the exceedance event over $\JJJ_n(z)$ depends only on the two neighboring exceedance events over $\JJJ_n(z\pm w_n)$, if $n$ is large. To justify the use of the Poisson limit theorem for such finite-range dependent events we need to check that (see, e.g.,\ \cite[Thm.~1]{arratia_etal})
\begin{equation}\label{eq:global_proof_2JJJ_subg}
\P\left[\max_{(i,j)\in\JJJ_n(0)} \ZZZ_{i,j}>u_n, \max_{(i,j)\in\JJJ_n(w_n)} \ZZZ_{i,j}>u_n \right]= o\left(\frac{w_n}{n}\right),  \;\;\; n \to\infty.
\end{equation}
Since we can apply Lemma~\ref{lem:exc_JJJ_subgauss} to the set $\JJJ_n(0)\cup\JJJ_n(w_n)$ (replacing $w_n$ by $2w_n$), we have
\begin{equation}\label{eq:global_proof_22JJJ_subg}
\P\left[\max_{(i,j)\in\JJJ_n(0)\cup \JJJ_n(w_n)} \ZZZ_{i,j}>u_n  \right]\sim e^{-\tau} \frac{2w_n}{n} \int_{A_1}^{A_2} \Lambda(a) da ,  \;\;\; n \to\infty.
\end{equation}
Combining~\eqref{eq:global_proof_22JJJ_subg} and~\eqref{eq:global_proof_JJJ_subg} we obtain the required relation~\eqref{eq:global_proof_2JJJ_subg}. Thus, the use of the Poisson limit theorem is justified. The proof of~\eqref{eq:global_probab_subg} is complete.

\subsection{Non-optimal lengths}\label{subsec:non_opt_subgauss}
We will now estimate the exceedance probability over all intervals $(i,j)\in\III_n$ which are non-optimal in the sense that their length $l:=j-i$ is not between $l_n^-=A^{-1} \log^p n$ and $l_n^+=A \log^p n$, where $A>0$ is large.   Denoting by $\BBB_n(A)$ the set of all such intervals, we will show that
$$
\lim_{A\to\infty}\limsup_{n\to\infty} \P\left[\max_{(i,j)\in \BBB_n(A)} \ZZZ_{i,j}>u_n \right]=0.
$$
This means that the contribution of $\BBB_n(A)$ to $\MMM_n$ becomes negligible as $A\to\infty$. Combining this with Theorem~\ref{theo:main_subgauss_scales} proved above, we obtain Theorem~\ref{theo:main_subgauss}. We will decompose the set $\BBB_n(A)$ into three subsets $\BBB_n'$, $\BBB_n''$, $\BBB_n'''$ and estimate the exceedance probabilities over these sets in the next three lemmas.
We start by considering  very small intervals.
\begin{lemma}
Fix an arbitrary $a>0$. Let $\BBB_n'$ be the set of all intervals $(i,j)\in\III_n$ whose length  $l=j-i$ satisfies $l\leq a \log n$. Then,
$$
\lim_{n\to\infty} \P\left[\max_{(i,j)\in \BBB_n'} \ZZZ_{i,j}>u_n \right]=0.
$$
\end{lemma}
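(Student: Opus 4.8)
The plan is to use a crude union bound and exploit that, for lengths $l\le a\log n$, the normalized threshold $u_n/\sqrt l$ stays bounded away from $0$, so that the subgaussian assumption \eqref{eq:I_subgauss} yields a strictly better-than-Gaussian exponential rate.

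First I would observe that there are at most $n$ intervals of each fixed length, hence
\begin{equation*}
\P\left[\max_{(i,j)\in \BBB_n'} \ZZZ_{i,j}>u_n\right]
\leq
n\sum_{l=1}^{\lfloor a\log n\rfloor} \P\left[\frac{S_l}{\sqrt l}>u_n\right],
\end{equation*}
and bound each summand by $\exp\{-lI(u_n/\sqrt l)\}$ via Lemma~\ref{lem:ld_est}, which requires no restriction on $l$ or $u_n$. Next, since $u_n\sim\sqrt{2\log n}$ by \eqref{eq:def_un_subgauss}, for every $l$ with $1\le l\le a\log n$ we have $u_n/\sqrt l\ge u_n/\sqrt{a\log n}\to\sqrt{2/a}$; thus there is $n_0$ such that $u_n/\sqrt l\ge\eps_0:=\tfrac12\sqrt{2/a}>0$ for all $n\ge n_0$ and all such $l$ simultaneously. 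Applying \eqref{eq:I_subgauss} with $\eps=\eps_0$ produces a single $\delta>0$ with $I(s)\ge(1+\delta)s^2/2$ for all $s\ge\eps_0$; taking $s=u_n/\sqrt l$ gives $lI(u_n/\sqrt l)\ge(1+\delta)u_n^2/2$, so that $\P[S_l/\sqrt l>u_n]\le e^{-(1+\delta)u_n^2/2}$ uniformly over the relevant range of $l$.

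Combining these bounds gives
\begin{equation*}
\P\left[\max_{(i,j)\in \BBB_n'} \ZZZ_{i,j}>u_n\right]\leq a\,n\log n\cdot e^{-(1+\delta)u_n^2/2},
\end{equation*}
and since $e^{u_n^2/2}=e^{\tau}n\log^{3/2-p}n$ by \eqref{eq:def_un_subgauss}, the right-hand side is of order $n^{-\delta}(\log n)^{1-(1+\delta)(3/2-p)}$, which tends to $0$ as $n\to\infty$ because $n^{-\delta}$ dominates any fixed power of $\log n$. I do not expect a real obstacle here; this is the easiest of the non-optimal-length estimates. The only mild point of care is that one must invoke \eqref{eq:I_subgauss} with one fixed $\eps$ valid for all $l\le a\log n$ at once — which is precisely what the uniform lower bound $u_n/\sqrt l\ge\eps_0$ secures, since small $l$ would otherwise push $s=u_n/\sqrt l$ off to infinity.
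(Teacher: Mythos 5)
Your proof is correct and follows essentially the same route as the paper: a union bound over the at most $an\log n$ intervals, Petrov's elementary bound (Lemma~\ref{lem:ld_est}), a uniform lower bound $u_n/\sqrt l\ge\eps_0$ valid across all $l\le a\log n$, and condition~\eqref{eq:I_subgauss} to upgrade the Gaussian rate by a factor $(1+\delta)$. The only cosmetic difference is that the paper simplifies the individual probability to $n^{-(1+\delta)}$ before summing, whereas you carry the exact exponent $e^{-(1+\delta)u_n^2/2}$ to the end and evaluate it via~\eqref{eq:def_un_subgauss}; both yield the same conclusion.
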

\begin{proof}
Let $l\in \N$ be such that $l\leq a\log n$. Recall from~\eqref{eq:def_un_subgauss} that $u_n\sim \sqrt{2\log n}$, as $n\to\infty$. Then, $u_n/\sqrt l > 1/\sqrt {a}$, for all large $n$.  Consequently, by~\eqref{eq:I_subgauss}, there is $\delta>0$ such that for all large $n$ and all $l\leq a\log n$,
$$
I\left(\frac{u_n}{\sqrt l}\right)>(1+2\delta)\frac{u_n^2}{2l} > (1+\delta) \frac{\log n}{l}.
$$
By Lemma~\ref{lem:ld_est} the exceedance probability for every individual interval from $\BBB_n'$ satisfies
$$
\P\left[\frac{S_l}{\sqrt l} >u_n\right]
\leq
\exp\left\{-l I\left(\frac{u_n}{\sqrt l}\right)\right\}
\leq
n^{-(1+\delta)}.
$$
Since the number of intervals in $\BBB_n'$ is at most $a n\log n$, we obtain the statement of the lemma.
\end{proof}

\begin{lemma}
Fix any $a>1$. Let $\BBB_n''=\BBB_n''(A)$ be the set of all intervals $(i,j)\in\III_n$ whose length $l=j-i$ satisfies $a\log n \leq l \leq A^{-1} \log^p n$. Then,
$$
\lim_{A\to\infty} \limsup_{n\to\infty}
\P \left[\max_{(i,j)\in \BBB_n''(A)}\ZZZ_{i,j}>u_n\right]
=
0.
$$
\end{lemma}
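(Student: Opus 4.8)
The plan is to control the exceedance probability over $\BBB_n''(A)$ by a union bound organised along a dyadic decomposition of the length range, feeding each block into the robust local estimate of Lemma~\ref{lem:local_est_sub_gauss}, and then to check that the accumulated bound reproduces (up to constants) the tail integral $\int_0^{1/A}\Lambda(a)\,da$, which tends to $0$ as $A\to\infty$.

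First I would fix $a>1$ as in the statement together with a large constant $C_0$, and split the length range $(a\log n,A^{-1}\log^p n]$ into a boundary piece $(a\log n,C_0\log n]$ and a bulk piece $(C_0\log n,A^{-1}\log^p n]$. The boundary piece is dispatched exactly as in the previous lemma: for $l\le C_0\log n$ and $n$ large one has $u_n/\sqrt l$ bounded away from $0$, so~\eqref{eq:I_subgauss} yields $\delta>0$ with $lI(u_n/\sqrt l)\ge(1+\delta)u_n^2/2$, whence Lemma~\ref{lem:ld_est} and~\eqref{eq:def_un_subgauss} bound each individual exceedance probability by $e^{-(1+\delta)\tau}(n\log^{3/2-p}n)^{-(1+\delta)}$; since there are at most $C_0 n\log n$ such intervals, this part tends to $0$ as $n\to\infty$ for fixed $a,C_0,A$.

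For the bulk piece I would decompose $(C_0\log n,A^{-1}\log^p n]$ into dyadic bands $[2^k,2^{k+1})$ and, for each band, cover all intervals $(i,j)$ with $0\le i\le n$ and $j-i$ in the band by discrete squares $\TTT_{r_k}(x,y)$ of side $r_k=\max\{1,\lfloor 2^k/(2\log n)\rfloor\}$ with $(x,y)$ on the grid $r_k\Z^2$; this uses $\asymp n\log^2 n/2^k$ squares. The crucial feature is that the choice $r_k\asymp 2^k/\log n\asymp 2^k u_n^{-2}$ makes both hypotheses $B_1\,2^k>u_n^2$ and $r_k<B_2\,2^k u_n^{-2}$ of Lemma~\ref{lem:local_est_sub_gauss} hold with band-independent constants $B_1,B_2$ (using $2^k\ge C_0\log n$ with $C_0$ large and $a>1$), so every square obeys $\P[\max_{\TTT_{r_k}(x,y)}\ZZZ_{i,j}>u_n]\le C_1 u_n^{-1}\exp\{-u_n^2/2-C_2 u_n^q\,2^{-k(q-2)/2}\}$ for fixed $C_1,C_2>0$. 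Inserting $e^{-u_n^2/2}=e^{-\tau}\log^{p-3/2}n/n$ from~\eqref{eq:def_un_subgauss}, $u_n\sim\sqrt{2\log n}$, and writing $2^k=b_k\log^p n$ with $b_k\in(0,A^{-1}]$, the identity $p(q-2)/2=q/2$ makes all powers of $\log n$ cancel, and the contribution of band $k$ collapses (for $n$ large, uniformly) to $\lesssim e^{-\tau}\,b_k^{-1}\exp\{-c\,b_k^{-(q-2)/2}\}$ with a constant $c>0$; this vanishes as $b_k\downarrow 0$, exactly mirroring $\Lambda(a)\to 0$ as $a\downarrow 0$.

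It then remains to sum over the bands. The $b_k$ form a geometric progression inside $(0,A^{-1}]$, and since $b\mapsto b^{-1}\exp\{-c\,b^{-(q-2)/2}\}$ is increasing on $(0,b_*]$ for a fixed $b_*>0$, for $A\ge b_*^{-1}$ the geometric tail is dominated by its largest term, giving $\sum_k b_k^{-1}\exp\{-c\,b_k^{-(q-2)/2}\}\le C A\exp\{-c A^{(q-2)/2}\}$ uniformly in $n$. Hence $\limsup_{n\to\infty}\P[\max_{(i,j)\in\BBB_n''(A)}\ZZZ_{i,j}>u_n]\le C'e^{-\tau}A\exp\{-c A^{(q-2)/2}\}\to 0$ as $A\to\infty$. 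I expect the only real obstacle to be the design of this covering: a naive interval-by-interval union bound over the bulk range diverges, so one must instead cover by squares whose side scales like $l/u_n^2$ in order that Lemma~\ref{lem:local_est_sub_gauss} apply with band-independent constants and that the accumulated bound reproduce the fast-decaying tail of the $\Lambda$-integral.
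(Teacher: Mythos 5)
Your proof is correct and follows essentially the same route as the paper: decompose the range of non-optimal lengths into dyadic bands, cover each band by discrete squares of side scaling like $l/u_n^2$ so that Lemma~\ref{lem:local_est_sub_gauss} applies with band-independent constants, and observe that the resulting bound per band reproduces (up to constants) the fast-decaying density $\Lambda(a)$ at small $a$, so that summing the dyadic tail and letting $A\to\infty$ finishes the argument. The paper does not actually need your separate boundary piece $(a\log n,C_0\log n]$: since the hypothesis $a>1$ already guarantees $B_1 l>u_n^2$ for a fixed $B_1>2$, the covering argument runs straight down to $l\geq a\log n$ with $r_{n,k}=[2^{-k}\log^{p-1}n]\geq 1$; your extra splitting is harmless but superfluous. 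The one step stated a bit loosely is the claim that the sum over bands is ``dominated by its largest term because the function is increasing'': monotonicity alone does not control a sum whose number of terms grows with $n$. What actually saves it (and what you implicitly use) is that the ratio of consecutive terms $g(b_{k-1})/g(b_k)=2\exp\{-c\,b_k^{-(q-2)/2}(2^{(q-2)/2}-1)\}$ is $<1/2$ once $b_k\leq A^{-1}$ with $A$ large, so the sum is dominated by a geometric series; alternatively, the paper sidesteps this entirely by bounding by the convergent $n$-independent series $\sum_{k\geq\log_2 A}2^k e^{-c\,2^{k(q-2)/2}}$ and invoking convergence of its tail. Either way the conclusion is correct.
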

\begin{proof}
For $k\in\N_0$ let $\BBB_{n,k}''$ be the set of all intervals $(i,j)\in\III_n$ whose length $l$ satisfies $2^{-(k+1)}\log^p n \leq l \leq 2^{-k} \log^p n$. We can cover the set $\BBB_{n,k}''\subset \Z^2$ by disjoint discrete squares $\TTT_{r_{n,k}}(x,x+l)$, see Section~\ref{subsec:local_subgauss}, with side length $r_{n,k}:=[2^{-k} \log^{p-1} n]$, at least as long as $2^k<\log^{p-1} n$. The number of squares we need is at most $c_1 2^k (\log^{2-p} n)  n$. The exceedance probability over any such square $\TTT_{r_{n,k}}(x,x+l)$ can be estimated by Lemma~\ref{lem:local_est_sub_gauss} with $r=r_{n,k}$, $u=u_n$ and is at most
$$
%\P\left[\max_{(i,j)\in T(x,l, r_{n,k})} \ZZZ_{i,j}>u_n\right]
%\leq
c_2 u_n^{-1}\exp\left\{-\frac{u_n^2}{2}-c_3u_n^{q}l^{-\frac{q-2}{2}}\right\}
\leq
c_2 u_n^{-1} \exp\left\{-\frac{u_n^2}{2}-c_42^{k\frac{q-2}{2}}\right\}.
$$
Here, $c_2,\ldots,c_4$ do not depend on $n,k$. We can cover the set $\BBB_n''(A)$ by the sets $\BBB_{n,k}''$, where $k$ is such that $A\leq 2^k\leq a^{-1} \log^{p-1} n$. For the exceedance probability over the set $\BBB_n''(A)$ we obtain the estimate
\begin{align*}
\P \left[\max_{(i,j)\in \BBB_n''(A)}\ZZZ_{i,j}>u_n\right]
&\leq
c_5 (\log^{2-p} n) n \cdot u_n^{-1} e^{-u_n^2/2} \sum_{k=[\log_2 A]}^{\infty} 2^k e^{-c_4 2^{k\frac{q-2}2}}\\
&\leq
c_6 \sum_{k=[\log_2 A]}^{\infty} 2^k e^{-c_4 2^{k\frac{q-2}2}}.
\end{align*}
In the last inequality we have used that $u_n e^{u_n^2/2}\geq c_7  (\log^{2-p}n) n$ by~\eqref{eq:def_un_subgauss}. To complete the proof note that the right-hand side tends to $0$ as $A\to\infty$.
\end{proof}

\begin{lemma}
Let $\BBB_n'''=\BBB_n'''(A)$ be the set of all intervals $(i,j)\in\III_n$ whose length $l=j-i$ satisfies $l\geq A\log^p n$. Then,
$$
\lim_{A\to\infty}\limsup_{n\to\infty}
\P \left[\max_{(i,j)\in \BBB_n'''(A)}\ZZZ_{i,j}>u_n\right]
=
0.
$$
\end{lemma}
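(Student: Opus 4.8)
The plan is to split $\BBB_n'''(A)$ into dyadic blocks according to the length $l=j-i$, and on each block to apply the local estimate of Lemma~\ref{lem:local_est_sub_gauss} with the square side length scaled to the extremal decorrelation length of intervals of that length, which is of order $l u_n^{-2}$.

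For $k\in\N$ I would let $\BBB_{n,k}'''$ denote the set of all intervals $(i,j)\in\III_n$ with $2^k\le j-i<2^{k+1}$; these cover $\BBB_n'''(A)$ when $k$ runs over $k_0\le k\le\log_2 n$, where $k_0$ is chosen so that $2^{k_0}\ge A\log^p n$ (for $n$ large, depending on $A$, there are no longer intervals since $A\log^p n<n$). On the block $\BBB_{n,k}'''$ I would pick an integer $r_k$ of order $2^k u_n^{-2}$ so that the hypotheses of Lemma~\ref{lem:local_est_sub_gauss} hold with some fixed $B_1,B_2$; this is possible because $2^k\ge A\log^p n\gg u_n^2$ (here the assumption $p>1$, hence $\log^{p-1}n\to\infty$, is used) and because $r_k$ and the lengths occurring in the block are of the same order $2^k$. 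I would then cover $\BBB_{n,k}'''$ by at most $Cn u_n^4 2^{-k}$ discrete squares of the form $\TTT_{r_k}(x,x+l)$ introduced in Section~\ref{subsec:local_subgauss}; this count comes from roughly $n/r_k$ choices of the base point $x$ and roughly $2^k/r_k\sim u_n^2$ choices of the base length $l$.

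By Lemma~\ref{lem:local_est_sub_gauss}, the exceedance probability over each such square is at most $C_1 u_n^{-1}e^{-u_n^2/2}$, where I simply discard the favourable term $-C_2 u_n^q l^{-(q-2)/2}$. Using $e^{-u_n^2/2}=e^{-\tau}n^{-1}(\log n)^{p-3/2}$, which follows from~\eqref{eq:def_un_subgauss}, together with $u_n\sim\sqrt{2\log n}$, a union bound gives
\begin{equation*}
\P\left[\max_{(i,j)\in\BBB_{n,k}'''}\ZZZ_{i,j}>u_n\right]\le Cn u_n^4 2^{-k}\cdot C_1 u_n^{-1}e^{-u_n^2/2}\le C_2 e^{-\tau}\frac{(\log n)^p}{2^k},
\end{equation*}
with $C_2$ independent of $k$ and $n$. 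Summing the resulting geometric series over $k\ge k_0$ and using $2^{k_0}\ge A\log^p n$,
\begin{equation*}
\P\left[\max_{(i,j)\in\BBB_n'''(A)}\ZZZ_{i,j}>u_n\right]\le C_2 e^{-\tau}(\log n)^p\sum_{k\ge k_0}2^{-k}\le\frac{2C_2 e^{-\tau}(\log n)^p}{2^{k_0}}\le\frac{2C_2 e^{-\tau}}{A}.
\end{equation*}
Taking $\limsup_{n\to\infty}$ and then letting $A\to\infty$ finishes the argument.

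The only point that genuinely needs care is the choice of scale: a single fixed square size cannot work, because the extremal decorrelation length of intervals of length $l$ grows proportionally to $l$, so that for the long intervals a fixed scale would force an enormous number of squares and a divergent bound (one checks that covering $\BBB_n'''(A)$ by squares of one size of order $\log^{p-1}n$ yields a bound of order $n/\log^p n$). The dyadic splitting is precisely what keeps the decorrelation length comparable to the length within each block, so that Lemma~\ref{lem:local_est_sub_gauss} applies with bounded constants and the block contributions form a summable geometric series of total size $O(1/A)$. Everything else is a routine union bound.
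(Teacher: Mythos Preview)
Your proof is correct and follows essentially the same approach as the paper: a dyadic decomposition of the lengths, covering each dyadic block by squares of side comparable to the extremal decorrelation length $l u_n^{-2}$, applying Lemma~\ref{lem:local_est_sub_gauss} on each square, and summing the resulting geometric series. The only difference is cosmetic: the paper indexes the dyadic blocks as $[2^k\log^p n,\,2^{k+1}\log^p n]$ with $k\ge[\log_2 A]$ and side $r_{n,k}=[2^k\log^{p-1}n]$, whereas you index by $[2^k,2^{k+1})$ with $2^{k_0}\ge A\log^p n$, which amounts to the same thing after the change of variables $2^k\mapsto 2^k\log^p n$.
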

\begin{proof}
For $k\in\N_0$ consider the set $\BBB_{n,k}'''$ of all intervals $(i,j)\in\III_n$ with length $l$ satisfying $2^k \log^p n\leq l\leq 2^{k+1}\log^p n$. We can cover the set $\BBB_{n,k}'''\in\Z^2$ by disjoint discrete squares $\TTT_{r_{n,k}}(x,x+l)$, see Section~\ref{subsec:local_subgauss}, with side length $r_{n,k}:=[2^k \log^{p-1} n]$. We need at most $c_1 2^{-k}(\log^{2-p} n) n$ squares. Exceedance probability over any single square can be estimated by Lemma~\ref{lem:local_est_sub_gauss} by
$
c_2 u_n^{-1} e^{-u_n^2/2}.
$
For the exceedance probability over the set $\BBB_n'''(A)\subset \cup_{k=[\log_2 A]}^{\infty}\BBB_{n,k}'''$ we obtain the estimate
\begin{align*}
\P \left[\max_{(i,j)\in \BBB_n'''(A)}\ZZZ_{i,j}>u_n\right]
\leq
c_3 u_n^{-1} e^{-u_n^2/2} \cdot n \log^{2-p} n  \sum_{k=[\log_2 A]}^{\infty} 2^{-k}
\leq
c_4 \sum_{k=[\log_2 A]}^{\infty} \frac 1 {2^{k}}.
\end{align*}
The right-hand side goes to $0$ as $A\to\infty$. The proof is complete.
\end{proof}
%Take $x\in \Z$, $l>A_1\log n$, constant $B$, $q_n=Bl/\log n$. Let $\calB_n$ be the set of intervals $(i,j)\in \III$ such that $x-q_n< i\leq x$ and $x+l\leq j<x+l+q_n$.

\subsection{Proof of Proposition~\ref{prop:non_symm_bern_subgauss}}
We assume that we are in the setting of Section~\ref{subsubsec:non_symm_bern}. Let $\gamma=\sqrt{(1-p)/p}$. Note that the $X_k$'s take values $\gamma$ and $-\gamma^{-1}$. First we will show that
\begin{equation}\label{eq:I_non_symm_bern}
I(s) =
\sum_{k=1}^{\infty} \frac{\sigma s^{2k}}{4k(2k-1)}\left(\gamma^{2k-1}+\gamma^{1-2k}\right)
+ \sum_{k=1}^{\infty} \frac{\sigma s^{2k+1}}{4k(2k+1)}\left(\gamma^{-2k}-\gamma^{2k}\right).
\end{equation}
Taking into account~\eqref{eq:non_symm_bern_varphi} and solving $\varphi'(\tilde t) = s$, we have
$$
\tilde t  = \frac\sigma2\left[\log\left(1+\gamma s\right) - \log\left(1-\frac {s}{\gamma}\right)\right],
$$
for $s\in (-\gamma^{-1}, \gamma)$.
For $s$ in this range,
\begin{equation}\label{eq:I_non_symm_bern1}
I(s) = s\tilde t - \varphi(\tilde t)
= p\left(1+\gamma s\right) \log\left(1+\gamma s\right) + (1-p)\left(1-\frac s {\gamma}\right)\log\left(1-\frac{s}{\gamma}\right).
\end{equation}
For $s$ outside the interval $(-\gamma^{-1}, \gamma)$, we have $I(s)=+\infty$. By Taylor's expansion,
$$
\log\left(1+\gamma s\right)
=
\sum_{k=1}^{\infty}(-1)^{k+1}\gamma^k \frac{s^k}k,
\;\;\;
\log\left(1-\frac{s}{\gamma}\right)
=
-\sum_{k=1}^{\infty} \gamma^{-k} \frac{s^k}k.
$$
%Then,
%\begin{multline*}
%\frac{s\sigma}2\left[\log\left(1+\frac{s\sigma}{2p}\right) - %\log\left(1-\frac{s\sigma}{2(1-p)}\right)\right] \\
%= \sum_{k=1}^\infty\frac{s^{2k}}{2k-1}\left[\left(\frac\sigma{2p}\right)^{2k-1} + %\left(\frac\sigma{2(1-p)})^{2k-1}\right]\frac\sigma2 \\
%+ \sum_{k=1}^\infty\frac{s^{2k+1}}{2k}\left[\left(\frac\sigma{2(1-p)}\right)^{2k} - %\left(\frac\sigma{2p}\right)^{2k}\right]\frac\sigma2,
%\end{multline*}
%and
%\begin{multline*}
%p\log\left(1+\frac{s\sigma}{2p}\right) + (1-p)\log\left(1-\frac{s\sigma}{2(1-p)}\right) \\
%= \sum_{k=1}^{\infty}  \frac{s^{2k-1}}{2k-1}\left[\left(\frac\sigma{2p}\right)^{2k-2} - %\left(\frac\sigma{2(1-p)}\right)^{2k-2}]\frac\sigma2 \\
%- \sum_{k=1}^{\infty} \frac{s^{2k}}{2k}\left[\left(\frac\sigma{2p}\right)^{2k-1} + %\left(\frac\sigma{2(1-p)}\right)^{2k-1}\right]\frac\sigma2.
%\end{multline*}
Inserting this into~\eqref{eq:I_non_symm_bern1}, we obtain~\eqref{eq:I_non_symm_bern}. If $p\in (1/2,1)$, then $0<\gamma<1$ and hence, it follows from~\eqref{eq:I_non_symm_bern} that all coefficients in the Taylor expansion of $I(s)-(s^2/2)$ are non-negative (and in fact, the coefficient of $s^3$ is strictly positive). It follows that $I(s)>s^2/2$ for all $s>0$. This implies that~\eqref{eq:I_subgauss} holds. By Proposition~\ref{prop:dual_conditions_subg}, this implies~\eqref{eq:varphi_subgauss}.
Together with the Taylor expansion in~\eqref{eq:non_symm_bern_varphi}, this shows that we are in the superlogarithmic case with $q=3$.

%********************************************************%
%************* PROOF LOGARITHMIC   ********************%
%********************************************************%

\section{Proof in the logarithmic case}\label{sec:proof_supergauss}
Our aim in this section is to prove Theorems~\ref{theo:main_supergauss}, \ref{theo:main_supergauss_lattice}, \ref{theo:main_supergauss_scales}.
%Let $X_1,X_2,\ldots$ be i.i.d.\ random variables with $\E X_k=0$, $\Var X_k=1$.
Assume that conditions~\eqref{eq:varphi_def}, \eqref{eq:varphi_supergauss}, \eqref{eq:t*_unique} hold. Fix $\tau\in\R$ and define the normalizing sequence $u_n=u_n(\tau)>0$ by
\begin{equation}\label{eq:def_un_supergauss}
u_n^2=2m_*(\log n+\tau).
\end{equation}
Our aim is to compute the limit of $\P[\MMM_n\leq u_n]$, as $n\to\infty$.

\subsection{Dual conditions}
First of all, we need to replace conditions~\eqref{eq:varphi_supergauss} and~\eqref{eq:t*_unique} by their Legendre--Fenchel conjugates. We will assume that there is $s_*>0$ such that
\begin{equation}\label{eq:I_supergauss}
\frac{I(s_*)}{s_*^2/2}= \frac 1 {m_*}<1
\end{equation}
and, additionally, for every $\eps>0$,
\begin{equation}\label{eq:s*_unique}
\inf_{0<s<s_*-\eps}  \frac{I(s)}{s^2/2} > \frac 1 {m_*}
\;\;\;\text{and}\;\;\;
\inf_{s>s_*+\eps}  \frac{I(s)}{s^2/2} > \frac 1 {m_*}.
\end{equation}
\begin{proposition}\label{prop:dual_conditions_superg}
Suppose that~\eqref{eq:varphi_def} holds. Then, conditions~\eqref{eq:varphi_supergauss} and~\eqref{eq:t*_unique} imply conditions~\eqref{eq:I_supergauss} and~\eqref{eq:s*_unique} and vice versa. Furthermore, if these conditions hold, then we have $\varphi(t_*)=I(s_*)=s_*t_*/2$ and
\begin{equation}\label{eq:legendre_fenchel_duality}
s_*=\varphi'(t_*)=t_*m_*,
\;\;\;
t_*=I'(s_*)=\frac{s_*}{m_*},
\;\;\;
I''(s_*)\varphi''(t_*)=1.
\end{equation}
\end{proposition}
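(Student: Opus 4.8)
The plan is to deduce everything from two standard facts about the Legendre--Fenchel transform: it reverses pointwise inequalities, and on $[0,\infty)$ it is involutive, i.e.\ $\varphi(t)=\sup_{s\geq 0}(st-I(s))$ for $t>0$ (legitimate since $\varphi$ is convex, lower semicontinuous and finite near $0$). The second fact makes the relationship between $\varphi$ and $I$ symmetric under $(\varphi,t_*,m_*)\leftrightarrow(I,s_*,1/m_*)$ and $\sup\leftrightarrow\inf$, so it suffices to prove that \eqref{eq:varphi_supergauss} and \eqref{eq:t*_unique} imply \eqref{eq:I_supergauss}, \eqref{eq:s*_unique} and \eqref{eq:legendre_fenchel_duality}; the converse then follows by the same argument. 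The hypotheses say exactly that $\sup_{t>0}\psi(t)=m_*$, attained only at $t_*$, with $\limsup_{t\to\infty}\psi(t)<m_*$. As a first step, since $\sup_{t\geq 0}(st-\lambda t^2/2)=s^2/(2\lambda)$ for every $\lambda>0$ and $s\geq 0$, the inequality $\varphi(t)\leq m_*t^2/2$ (valid for all $t\geq 0$ by the hypotheses) transforms into $I(s)\geq s^2/(2m_*)$ for all $s\geq 0$, which is the ``$\geq$'' half of \eqref{eq:I_supergauss} and \eqref{eq:s*_unique}.

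Next I would locate $s_*$ and prove \eqref{eq:legendre_fenchel_duality}. Put $s_*:=\varphi'(t_*)$. The function $u(t):=m_*t^2/2-\varphi(t)$ is nonnegative and vanishes at the interior point $t_*$, so $u'(t_*)=0$, i.e.\ $\varphi'(t_*)=m_*t_*$; hence $s_*=m_*t_*$ and $t_*=s_*/m_*$. Since $\varphi'$ is a strictly increasing smooth bijection of $(0,\infty)$ onto $(0,s_\infty)$ with inverse $I'$, we get $I'(s_*)=t_*$ and $I''(s_*)=1/\varphi''(t_*)$. Finally, the equality case of Young's inequality, $\varphi(t_*)+I(s_*)=s_*t_*$ (which holds because $s_*=\varphi'(t_*)$), combined with $\varphi(t_*)=m_*t_*^2/2$, gives $I(s_*)=s_*t_*-m_*t_*^2/2=m_*t_*^2/2=s_*^2/(2m_*)$; so $\varphi(t_*)=I(s_*)=s_*t_*/2$ and $I(s_*)/(s_*^2/2)=1/m_*$. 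Thus the infimum in \eqref{eq:I_supergauss} is actually attained at $s_*$.

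For the uniqueness statement \eqref{eq:s*_unique} I would use the equality-case correspondence: for $s,t>0$, equality in $\varphi(t)+I(s)\geq st$ holds iff $s=\varphi'(t)$ (iff $t=I'(s)$). Together with the inequalities established above, this shows that $t\mapsto\varphi'(t)$ maps $\{t>0:\varphi(t)=m_*t^2/2\}$ bijectively onto $\{s\in(0,s_\infty):I(s)=s^2/(2m_*)\}$, each equality forcing the tangency relation of the previous paragraph. By \eqref{eq:t*_unique} the former set is $\{t_*\}$, hence the latter is $\{s_*\}$, i.e.\ $I(s)/(s^2/2)>1/m_*$ for all $s\neq s_*$. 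It remains to upgrade this pointwise strictness to the two infima in \eqref{eq:s*_unique}. On the range $0<s<s_*-\eps'$ with $s_*-\eps':=\varphi'(t_*-\eps)$, the function $s\mapsto I(s)/(s^2/2)$ extends continuously to $s=0$ with value $1>1/m_*$, so its infimum over the compact interval $[0,s_*-\eps']$ is a minimum of a function everywhere $>1/m_*$. On the range $s>s_*+\eps'$ with $s_*+\eps':=\varphi'(t_*+\eps)$, one uses that $I(s)/(s^2/2)=+\infty$ for $s>s_\infty$ and that, by the $\limsup$ part of \eqref{eq:t*_unique}, $\varphi(t)\leq(m_*-\delta)t^2/2$ for $t\geq t_*+\eps$ and some $\delta>0$; transforming this as in the first paragraph gives $I(s)\geq s^2/(2(m_*-\delta))$ for all sufficiently large $s$, and a compactness argument over the remaining bounded $s$-interval handles the rest.

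The converse implication follows from the symmetry noted at the outset. I expect the only real obstacle to be the bookkeeping in the last paragraph: one must handle the case $s_\infty=+\infty$ and the region $s\geq s_\infty$ (where $I\equiv+\infty$), and convert the hypothesis $\limsup_{t\to\infty}\psi(t)<m_*$ into the corresponding large-$s$ lower bound on $I$. Everything else reduces to the reversal and involutivity of the Legendre--Fenchel transform together with the strict convexity and smoothness of $\varphi$ on $[0,\infty)$.
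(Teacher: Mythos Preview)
Your plan is correct and follows essentially the same route as the paper: define $s_*=\varphi'(t_*)$, use the first-order condition at the maximum of $\psi$ to get $s_*=m_*t_*$, read off the remaining identities from the equality case of the Young inequality, and invoke Legendre--Fenchel duality for the converse. The one organisational difference is in the proof of \eqref{eq:s*_unique}: the paper proceeds in a single step, using that $\varphi(t)\leq c(\delta)m_* t^2/2$ for all $t\in S_\delta(t_*)$ and checking that for $s\in S_\eps(s_*)$ the optimizer of $st-c(\delta)m_* t^2/2$ lies in $S_\delta(t_*)$, which directly yields $I(s)\geq s^2/(2c(\delta)m_*)$; you instead first establish pointwise strict inequality via the equality-set bijection and then upgrade to a uniform bound by compactness on bounded $s$-intervals together with the $\limsup$ condition at infinity. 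Both arguments are fine; the paper's is a bit shorter, while yours makes the role of the equality case of Young's inequality more explicit.
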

\begin{proof}
Assume that~\eqref{eq:varphi_supergauss} and~\eqref{eq:t*_unique} hold. Define $s_*=\varphi'(t_*)$. We will show that~\eqref{eq:I_supergauss} and~\eqref{eq:s*_unique} hold. By Legendre--Fenchel duality, $I'$ is the inverse function of $\varphi'$ and vice versa. Hence, $t_*=I'(s_*)$. The point $t_*$ is the unique maximum of the function $\frac{\varphi(t)}{t^2/2}$ by~\eqref{eq:varphi_supergauss} and~\eqref{eq:t*_unique}. The derivative of this function vanishes at $t_*$ and hence,  $\varphi'(t_*)t_*=2\varphi(t_*)$. In view of~\eqref{eq:varphi_supergauss} this implies that $s_*=t_* m_*$.
Since the maximum of $s_*t-\varphi(t)$ is attained at $t=t_*$, we have, see~\eqref{eq:def_I},
%for $s=s_*$ the maximum in~\eqref{eq:def_I} is attained at $t=t_*$, we have
$$
I(s_*)=\varphi'(t_*)t_* - \varphi(t_*)=\varphi(t_*)=\frac{m_* t_*^2}2=\frac{s_*t_*}{2}=\frac{s_*^2}{2m_*}.
$$
The inverse function of $\varphi'$ is $I'$.  Taking the derivative we obtain $I''(s_*)\varphi''(t_*)=1$. This proves~\eqref{eq:I_supergauss} and~\eqref{eq:legendre_fenchel_duality}.

We will now show that condition~\eqref{eq:s*_unique} is fulfilled. Fix $\eps>0$. Denote by $S_\eps(u)$ the set $\{s>0: |s-u|>\eps\}$. By~\eqref{eq:t*_unique}, for every $\delta>0$ we there exists $c=c(\delta)<1$ such that $\varphi(t)\leq c m_* \frac{t^2}{2}$  for all $t\in S_{\delta}(t_*)$. Then, for every $s\in S_{\eps}(s_*)$ and every $\delta>0$,
\begin{equation}\label{eq:tech_ineq1}
I(s)
=
\sup_{t\geq 0} (st-\varphi(t))
\geq
\sup_{t\in S_{\delta}(t_*)} (st-\varphi(t))
\geq
\sup_{t\in S_{\delta}(t_*)} \left(st- c(\delta)m_*\frac{t^2}{2}\right).
\end{equation}
The supremum of $st- c(\delta)m_*\frac{t^2}{2}$ is attained at $t=\frac s{c(\delta)m_*}$. However, we have to check that $t\in S_{\delta}(t_*)$. Recall that $|s-s_*|>\eps$. It follows that
$$
|t-t_*|=\left|\frac s{c(\delta)m_*}-t_*\Big|=\Big|\frac{s-c(\delta)s_*}{c(\delta)m_*}\right|>\delta,
$$
where the last inequality holds if $\delta=\delta(\eps)>0$ is sufficiently small. (Note that $\lim_{\delta\downarrow 0}c(\delta)=1$). In this case, $t\in S_{\delta}(t_*)$. It follows from~\eqref{eq:tech_ineq1} that $I(s)\geq \frac{s^2}{2c(\delta) m_*}$ for all $s\in S_{\eps}(s_*)$. This proves~\eqref{eq:s*_unique}. The proof that~\eqref{eq:I_supergauss} and~\eqref{eq:s*_unique} imply~\eqref{eq:varphi_supergauss} and~\eqref{eq:t*_unique} is analogous, by the Legendre--Fenchel duality.
\end{proof}

\subsection{Individual probability}
In the sequel, we assume that conditions~\eqref{eq:varphi_def}, \eqref{eq:I_supergauss}, \eqref{eq:s*_unique} hold.
In this section we compute asymptotically the exceedance probability for the value attained by the random field $\ZZZ_{i,j}$ at some individual point $(i,j)\in \III$. We focus here on intervals whose length is close to the optimal length $d_* \log n$, where
\begin{equation}\label{eq:d_*}
d_* = \frac {1}{\varphi(t_*)} = \frac{1}{I(s_*)} = \frac{2m_*}{s_*^{2}}.
\end{equation}
It turns out that the exceedance probability remains the same, up to a constant factor, if we allow fluctuations of the interval length of order $O(\sqrt {\log n})$ and fluctuations of the threshold of order $O(u_n^{-1})$.
\begin{lemma}\label{lem:one_point_super_gauss}
Assume that $X_1$ is non-lattice. Let $l_n\in\N$ be any sequence such that $l_n=d_* \log n + a\sqrt {\log n}+o(\sqrt {\log n})$, for some $a\in\R$, as $n\to\infty$. Fix $s\in\R$. Then, as $n\to\infty$,
$$
P_{n}(s)
:=
\P\left[\frac{S_{l_n}}{\sqrt {l_n}}>u_n-\frac{s}{u_n}\right]\sim \frac{\sqrt{m_*}}{2\sqrt{\pi}\sigma_*}\cdot e^{\frac s {m_*}-\frac{\beta_*^2 a^2}{2}}
\cdot  \frac{e^{-\tau}}{n\sqrt{\log n}}.
$$
Here, $\sigma_*=\sqrt{\varphi''(t_*)}$ and $\beta_*^2=\frac {s_*^4}{8 m_*} (\frac 1 {\sigma_*^{2}}-\frac 1 {m_*})>0$.
\end{lemma}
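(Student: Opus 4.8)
The plan is to work directly with the tail asymptotics of Theorem~\ref{theo:large_dev}. Write $x_n := u_n - s/u_n$, so that $P_n(s)=\P[S_{l_n}/\sqrt{l_n}>x_n]$. Since $u_n^2=2m_*(\log n+\tau)$ by~\eqref{eq:def_un_supergauss} and $l_n=d_*\log n+a\sqrt{\log n}+o(\sqrt{\log n})$, while $d_*=2m_*/s_*^2$ by~\eqref{eq:d_*}, one has $x_n/\sqrt{l_n}\to\sqrt{2m_*/d_*}=s_*\in(0,s_\infty)$; the finiteness of $t_*$ together with Proposition~\ref{prop:dual_conditions_superg} guarantees $s_*<s_\infty$. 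Thus $x_n\sim s_*\sqrt{l_n}$, and using the non-lattice hypothesis, Theorem~\ref{theo:large_dev} applies with $\alpha=s_*$. Recalling from Proposition~\ref{prop:dual_conditions_superg} that $I'(s_*)=t_*=s_*/m_*$ and $\varphi''(t_*)=\sigma_*^2$, it gives
\[
P_n(s)\sim\frac{1}{\sqrt{2\pi l_n}}\cdot\frac{m_*}{s_*\sigma_*}\,\exp\Bigl\{-l_nI\bigl(x_n/\sqrt{l_n}\bigr)\Bigr\}.
\]
Since $l_n\sim d_*\log n=(2m_*/s_*^2)\log n$, the prefactor is asymptotically $\tfrac{\sqrt{m_*}}{2\sqrt\pi\,\sigma_*\sqrt{\log n}}$, which matches the prefactor in the claim. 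It therefore remains to show that the exponent satisfies $l_nI(x_n/\sqrt{l_n})=\log n+\tau-s/m_*+\tfrac12\beta_*^2a^2+o(1)$.

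For the exponent I would introduce $E:=x_n^2-s_*^2l_n$. From $x_n^2=2m_*\log n+2m_*\tau-2s+o(1)$ and the identity $s_*^2d_*=2m_*$ one finds $E=2m_*\tau-2s-s_*^2a\sqrt{\log n}-s_*^2\rho_n+o(1)$, where $\rho_n:=l_n-d_*\log n-a\sqrt{\log n}=o(\sqrt{\log n})$; in particular $E=-s_*^2a\sqrt{\log n}+o(\sqrt{\log n})$ and $E^2/l_n\to s_*^4a^2/d_*$. Setting $v=x_n/\sqrt{l_n}$, so that $v^2=s_*^2+E/l_n$ with $E/l_n=O((\log n)^{-1/2})$, a two-term Taylor expansion of the square root gives $v-s_*=\tfrac{E}{2s_*l_n}-\tfrac{E^2}{8s_*^3l_n^2}+O((\log n)^{-3/2})$, and a second-order Taylor expansion of $I$ about $s_*$ (with $I(s_*)=1/d_*$, $I'(s_*)=t_*$) yields, after multiplying by $l_n$,
\[
l_nI(v)=\frac{l_n}{d_*}+\frac{t_*E}{2s_*}+\Bigl(\frac{I''(s_*)}{8s_*^2}-\frac{t_*}{8s_*^3}\Bigr)\frac{E^2}{l_n}+o(1).
\]
Now invoke the duality relations $s_*=t_*m_*$, $d_*=2m_*/s_*^2$ and $I''(s_*)\varphi''(t_*)=1$ from~\eqref{eq:legendre_fenchel_duality}. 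The first two terms combine: the $\sqrt{\log n}$-contributions cancel, and so does the contribution of the imprecise remainder $\rho_n$, leaving $\tfrac{l_n}{d_*}+\tfrac{t_*E}{2s_*}=\log n+\tau-s/m_*+o(1)$. The quadratic term reduces, using $I''(s_*)=1/\sigma_*^2$, $t_*/s_*=1/m_*$ and $1/d_*=s_*^2/(2m_*)$, to $\tfrac{s_*^4a^2}{16m_*}\bigl(\tfrac1{\sigma_*^2}-\tfrac1{m_*}\bigr)=\tfrac12\beta_*^2a^2$. Substituting back into the displayed asymptotics for $P_n(s)$ gives the assertion of the lemma.

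It remains to prove $\beta_*^2>0$, i.e.\ $\varphi''(t_*)<m_*$. The first- and second-order conditions for $t_*$ to be a maximum of $\psi(t)=2\varphi(t)/t^2$ read $\varphi'(t_*)t_*=2\varphi(t_*)$ (equivalently $s_*=t_*m_*$) and $\varphi''(t_*)\le m_*$; equivalently, the parabola $\tfrac{m_*}{2}t^2$ dominates $\varphi$ on $[0,\infty)$ and touches it at $t_*$. The strict inequality $\varphi''(t_*)<m_*$ then follows by excluding the degenerate tangency $\varphi''(t_*)=m_*$ via the strict convexity of $\varphi$ together with the uniqueness assumption~\eqref{eq:t*_unique} on $t_*$. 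The main obstacle in the whole argument is the bookkeeping in the exponent: because $l_n$ is prescribed only up to an $o(\sqrt{\log n})$ error, one must track the orders of all error terms carefully, and it is precisely the Legendre--Fenchel duality identities that force the leading $\sqrt{\log n}$ terms to cancel and the constant term to come out as $\tau-s/m_*+\tfrac12\beta_*^2a^2$; verifying $\beta_*^2>0$ is a secondary but genuinely necessary point.
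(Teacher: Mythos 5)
Your proof follows the same main route as the paper: both apply the Bahadur--Rao / Petrov large-deviation asymptotics (Theorem~\ref{theo:large_dev}) at $\alpha=s_*$, identify the prefactor via the duality relations of Proposition~\ref{prop:dual_conditions_superg}, and then expand the exponent $l_nI(x_n/\sqrt{l_n})$ to second order to extract $\log n+\tau-s/m_*+\tfrac12\beta_*^2a^2+o(1)$. The difference is in how the Taylor expansion is organized. The paper changes variables to $J(v)=vI(1/\sqrt v)$ and writes $l_nI(x_n/\sqrt{l_n})=J(v_n)(u_n-s/u_n)^2$; the assumptions~\eqref{eq:I_supergauss}, \eqref{eq:s*_unique} force $J$ to have its minimum at $v_*=s_*^{-2}$, so $J'(v_*)=0$ kills the linear term for free and one immediately reads off the quadratic coefficient $J''(v_*)$. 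You instead expand $I$ directly about $s_*$ via the auxiliary quantity $E=x_n^2-s_*^2l_n$, and must verify by hand that both the $\sqrt{\log n}$ contribution and the contribution from the unspecified $o(\sqrt{\log n})$ piece $\rho_n$ cancel between $l_n/d_*$ and $t_*E/(2s_*)$. Your algebra is correct (I checked each step, and $\left(\tfrac{I''(s_*)}{8s_*^2}-\tfrac{t_*}{8s_*^3}\right)\cdot\tfrac{s_*^4a^2}{d_*}=\tfrac12\beta_*^2a^2$ comes out right), but this manual cancellation is exactly what the $J$-parametrization is designed to avoid; the $J$ route is shorter and less error-prone.

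On the positivity $\beta_*^2>0$: your argument (second-order condition for $\psi$ to have a maximum at $t_*$, giving $\varphi''(t_*)\le m_*$, then excluding the degenerate tangency) is the right idea but, as you yourself flag, the last step is not fully filled in: strict convexity of $\varphi$ alone, and the uniformity of~\eqref{eq:t*_unique}, do not by themselves rule out $\varphi''(t_*)=m_*$ with a higher-order touching. The paper's treatment is at the same level of brevity (it asserts $J''(v_*)>0$ from $v_*$ being a minimum), so you are not worse off, but if you want this airtight you would need an additional observation — e.g.\ that $\varphi''(t_*)=m_*$ forces the analytic function $m_*t^2/2-\varphi(t)$ to vanish to order $\ge4$ at $t_*$, which one then rules out — rather than appealing to convexity and uniqueness.
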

\begin{proof}
We are going to apply Theorem~\ref{theo:large_dev} with $\alpha:=\lim_{n\to\infty} \frac{u_n}{\sqrt {l_n}}=s_*$. Note that $I'(\alpha)=t_*=s_*/m_*$ by Proposition~\ref{prop:dual_conditions_superg}, and $\sigma^2(\alpha)=\varphi''(t_*)=\sigma_*^2$. We obtain, by Theorem~\ref{theo:large_dev},
\begin{equation}\label{eq:Pns_proof_ind_supergauss}
P_{n}(s) \sim \frac{\sqrt{m_*}}{2\sigma_* \sqrt{\pi \log n}}  \exp\left\{-l_n I\left(\frac{u_n-\frac{s}{u_n}}{\sqrt {l_n}}\right)\right\}.
\end{equation}
Next we develop the term under the sign of exponential in~\eqref{eq:Pns_proof_ind_supergauss} into a Taylor series. Consider the function $J(v)=vI(1/\sqrt v)$, $v>0$. By assumptions~\eqref{eq:I_supergauss}, \eqref{eq:s*_unique} it has a unique minimum at $v_*:=s_*^{-2}$.
The first two derivatives of $J$ are given by
\begin{equation}\label{eq:J_derivatives0}
J'(v)=I\left(\frac{1}{\sqrt v}\right)- \frac{1}{2\sqrt v} I'\left(\frac{1}{\sqrt v}\right),
\;\;\;
J''(v)=\frac{1}{4v^2}I''\left(\frac{1}{\sqrt v}\right)- \frac{1}{4v^{3/2}} I'\left(\frac{1}{\sqrt v}\right).
\end{equation}
For the values of $J$ and its derivatives at $v=v_*$ we obtain
\begin{equation}\label{eq:J_derivatives}
J(v_*)=\frac 1{2 m_*},
\;\;\;
J'(v_*)=0,
\;\;\;
J''(v_*)=\frac{s_*^4}4 \left(\frac 1 {\sigma_*^{2}}-\frac 1 {m_*}\right)>0.
\end{equation}
Note in passing that since $J$ attains a minimum at $v_*$, we have $J''(v_*)>0$. This proves that $\beta_*^2$ is indeed positive.
Now consider
$$
v_n
:=\frac{l_n}{(u_n-\frac{s}{u_n})^2}=\frac{d_*\log n+a\sqrt{\log n}+o(\sqrt{\log n})}{2m_*\log n+o(\sqrt {\log n})}
=
v_*+\frac{a+o(1)}{2m_*\sqrt{\log n}}. %+ o\left(\frac{1}{\sqrt{\log n}}\right).
$$
Expanding $J$ into a Taylor series at $v=v_*$, we obtain
\begin{align*}
l_n I\left(\frac{u_n-\frac{s}{u_n}}{\sqrt {l_n}}\right)
&=
J(v_n) \left(u_n-\frac{s}{u_n}\right)^2\\
&=
\left(\frac{1}{2m_*}+ \frac 12 \cdot \frac{a^2}{4m_*^2 \log n} \cdot J''(v_*)\right) (2m_*(\log n+\tau)-2s)+o(1)\\
&=
\log n+\tau+\frac {\beta_*^2 a^2}{2}- \frac{s}{m_*}+o(1).
\end{align*}
To complete the proof insert this into~\eqref{eq:Pns_proof_ind_supergauss}.
\end{proof}

\subsection{Local probability}\label{subsec:local_supergauss}
Next we compute the exceedance probability over a discrete square in the space of intervals. Recall from Section~\ref{subsec:local_subgauss} that for an interval $(x,y)\in\III$ of length $l:=y-x$ and $B\in\N$ we define $\TTT_B(x,y)$ to be the set of all intervals $(i,j)\in\III$ such that
$$
x-B < i \leq x \text{ and } y \leq  j < y + B.
$$
The set $\TTT_B(x,y)$ is a discrete square with side length $B$ in $\III\subset \Z^2$. Its right bottom point is the ``base interval'' $(x,y)$ which is contained in all other intervals belonging to $\TTT_B(x,y)$.
\begin{lemma}\label{lem:local_super_gauss}
Assume that $X_1$ is non-lattice.  Fix $B\in\N$ and let $l_n\in\N$ be a sequence such that $l_n=d_*\log n + a \sqrt{\log n}+o(\sqrt {\log n})$, for some $a\in\R$. Write $\TTT_n=\TTT_B(x,x+l_n)$.  Then, as $n\to \infty$,
\begin{equation}
Q_{n}:=\P\left[\max_{(i,j)\in \TTT_n} \ZZZ_{i,j} > u_n \right]
\sim
P_{n}(0) \cdot
\left\{
1+  H_*^2(B)
\right\},
\end{equation}
where $P_{n}(0)$ is as in Lemma~\ref{lem:one_point_super_gauss} and the function $H_*:\N\to (0,\infty)$ is defined by
\begin{equation}
H_*(B)=\E \left[\max _{k=0,\ldots,B-1} e^{t_* S_k-k\varphi(t_*)}\right].
\end{equation}
\end{lemma}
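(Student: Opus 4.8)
The plan is to repeat the argument for Lemma~\ref{lem:local_sub_gauss}, with one structural change: here the side length $B$ of the discrete square is \emph{fixed}, not growing, so no Donsker-type Brownian limit arises and the two ``incremental'' walks stay discrete. Every interval of $\TTT_n=\TTT_B(x,x+l_n)$ has the form $(x-k_1,x+l_n+k_2)$ with $0\le k_1,k_2<B$; put $S_{k_1}^{(1)}=S_x-S_{x-k_1}$ and $S_{k_2}^{(2)}=S_{x+l_n+k_2}-S_{x+l_n}$, two independent copies of the walk that are independent of $\ZZZ_{x,x+l_n}$. Writing $\ZZZ_{x,x+l_n}=u_n-u_n^{-1}V_n$, the choice $k_1=k_2=0$ shows $\{\max_{(i,j)\in\TTT_n}\ZZZ_{i,j}>u_n\}\supseteq\{V_n<0\}$, an event of probability $P_n(0)$. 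Conditioning on $V_n=s$ and using independence gives, exactly as in~\eqref{eq:Q_n_dec_subgauss},
\begin{align*}
Q_n&=P_n(0)+\int_0^\infty G_n(s)\,d\mu_n(s),\\
G_n(s)&=\P\left[\max_{0\le k_1,k_2<B}\left(S_{k_1}^{(1)}+S_{k_2}^{(2)}-u_n\big(\sqrt{l_n+k_1+k_2}-\sqrt{l_n}\big)-\frac{s\sqrt{l_n}}{u_n}\right)>0\right],
\end{align*}
where $\mu_n$ is the law of $V_n$.

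The next step is to compute $\lim_{n\to\infty}G_n(s_n)$ for every sequence $s_n\to s$ (the strengthening to varying $s_n$ is needed for Lemma~\ref{lem:dom_conv}). Here $u_n/\sqrt{l_n}\to s_*$ — this is the large-deviation threshold $\alpha=s_*$ used in Lemma~\ref{lem:one_point_super_gauss}, following from $u_n^2=2m_*(\log n+\tau)$ and $l_n\sim d_*\log n=2m_*s_*^{-2}\log n$ — so elementary calculus gives $u_n(\sqrt{l_n+k_1+k_2}-\sqrt{l_n})\to\tfrac{s_*}{2}(k_1+k_2)$ and $s_n\sqrt{l_n}/u_n\to s/s_*$. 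Since the walks are not rescaled, the finite maximum converges almost surely, whence
\begin{equation*}
G_n(s_n)\longrightarrow \P\left[\max_{0\le k_1,k_2<B}\left(S_{k_1}^{(1)}+S_{k_2}^{(2)}-\frac{s_*}{2}(k_1+k_2)\right)>\frac{s}{s_*}\right]
\end{equation*}
for all but countably many $s\ge0$. The algebraic point is the identity $\tfrac{s_*}{2}=\varphi(t_*)/t_*$, coming from $s_*t_*=2\varphi(t_*)$ (Proposition~\ref{prop:dual_conditions_superg}): it turns $S_k-\tfrac{s_*}{2}k$ into $t_*^{-1}(t_*S_k-k\varphi(t_*))$, so that, writing $\widetilde N_i=\max_{0\le k<B}(t_*S_k^{(i)}-k\varphi(t_*))\ge0$, the limit above equals $\P[\widetilde N_1+\widetilde N_2>s/m_*]$, while monotonicity of the exponential gives $\E e^{\widetilde N_i}=\E\,\max_{0\le k<B}e^{t_*S_k-k\varphi(t_*)}=H_*(B)$.

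It remains to pass the limit inside the integral through Lemma~\ref{lem:dom_conv}, applied to $\nu_n:=\mu_n/P_n(0)$ on $[0,\infty)$. Its first hypothesis holds because $P_n(s)/P_n(0)\to e^{s/m_*}$ by Lemma~\ref{lem:one_point_super_gauss} — this is where non-latticeness enters, via Theorem~\ref{theo:large_dev} — so $\nu_n$ converges on each $[0,t]$ to the measure with density $m_*^{-1}e^{s/m_*}\,ds$; the second hypothesis is the pointwise limit just obtained, which holds off a Lebesgue-null set and hence $\nu$-a.e.\ since $\nu$ is absolutely continuous; for the third I would show $\mu_n([0,s])/P_n(0)\le c_1 e^{c_2 s}$ (from Theorem~\ref{theo:petrov_est} applied to $P_n(s)$ together with monotonicity and convexity of $I$: the expansion of $J(v)=vI(1/\sqrt v)$ at $v_*$ from the proof of Lemma~\ref{lem:one_point_super_gauss} handles bounded $s$, a crude mean-value bound on $I$ handles $s$ up to order $u_n^2$) and $G_n(s)\le c_3 e^{-c_4 s^2}$ (from a union bound over $k_1,k_2$, Lemma~\ref{lem:ld_est}, and the quadratic lower bound $I(s)\ge s^2/(2m_*)$ for $s>s_*+\eps$ implied by~\eqref{eq:s*_unique}), and then invoke Remark~\ref{rem:dom_conv}. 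Substituting $w=s/m_*$ and arguing as for~\eqref{eq:lem_local_complete_proof},
\begin{equation*}
\lim_{n\to\infty}\int_0^\infty G_n(s)\,\frac{d\mu_n(s)}{P_n(0)}=\int_0^\infty \P[\widetilde N_1+\widetilde N_2>w]\,e^w\,dw=(\E e^{\widetilde N})^2=H_*^2(B),
\end{equation*}
which together with the decomposition above yields $Q_n\sim P_n(0)\{1+H_*^2(B)\}$.

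The step I expect to be the main obstacle is the third (domination) hypothesis of Lemma~\ref{lem:dom_conv}: the sharp asymptotics of $P_n(s)$ are only supplied by Lemma~\ref{lem:one_point_super_gauss} for bounded $s$, so the uniform-in-$n$ exponential bound on $\mu_n([0,s])/P_n(0)$ over the whole range of $s$ must be assembled from cruder Cram\'er-type estimates, in the spirit of the proof of Lemma~\ref{lem:local_est_sub_gauss}. A lesser subtlety absent in the superlogarithmic case is that $\widetilde N_1+\widetilde N_2$ is atomic, so $G_n(s_n)\to G(s)$ only off a null set — which is harmless because the limit measure is absolutely continuous. Finally it is worth noting that $e^{t_*S_k-k\varphi(t_*)}$ is a mean-one martingale, so $H_*(B)\ge1$ and, more to the point, $H_*(B)$ grows linearly in $B$; it is this linear growth (cf.\ the normalization in~\eqref{eq:beta*_H*2}) that, once one lets $B\to\infty$ in the subsequent steps, produces the finite constant $\Theta_*$ of Theorem~\ref{theo:main_supergauss}.
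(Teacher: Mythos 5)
Your proof follows the paper's argument essentially step for step: the same decomposition $Q_n = P_n(0) + \int_0^\infty G_n\,d\mu_n$ via the variable $V_n$ and the two incremental walks, the same computation of $\lim G_n(s_n)$ using $u_n/\sqrt{l_n}\to s_*$ and the identity $s_*/2 = \varphi(t_*)/t_*$ to recover $U = \max_k(t_*S_k - k\varphi(t_*))$, and the same appeal to Lemma~\ref{lem:dom_conv}/Remark~\ref{rem:dom_conv} with the uniform bounds on $G_n$ (Gaussian-type tail via Lemma~\ref{lem:ld_est} and~\eqref{eq:I_supergauss}) and on $\mu_n([0,s))/P_n(0)$ (exponential growth via Theorem~\ref{theo:petrov_est} and Lemma~\ref{lem:ld_est}, splitting into the regimes $s \lesssim u_n^2/2$, $u_n^2/2 \lesssim s \lesssim u_n^2$, and $s \gtrsim u_n^2$). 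Your remarks about non-latticeness entering through Theorem~\ref{theo:large_dev}, about the possible atoms of $\widetilde N_1 + \widetilde N_2$ being killed by the absolute continuity of the limit measure, and about the linear growth of the martingale maximum $H_*(B)$ are all accurate and consistent with the paper.
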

\begin{proof}
The proof follows the same idea as the proof of Lemma~\ref{lem:local_sub_gauss}, but the incremental process will be approximated by a discrete-time random walk rather than by a Brownian motion. Let $S_k^{(1)}$ and $S_k^{(2)}$ be independent random walks defined as in the proof of Lemma~\ref{lem:local_sub_gauss}.  Define a random variable $V_n$ by  $\ZZZ_{x,x+l_n}=u_n-u_n^{-1}V_n$.
Every interval from $\TTT_n$ has the form $(x-k_1,x+l_n+k_2)$ for some integers $0\leq k_1,k_2 < B$, hence %a distributional equality
$$
Q_n=\P\left[\max_{0\leq k_1,k_2<B} \frac{(u_n-\frac{V_n}{u_n})\sqrt{l_n}+S_{k_1}^{(1)} + S_{k_2}^{(2)}} {\sqrt{l_n+k_1+k_2}}>u_n\right].
$$
Conditioning on $V_n=s$  and integrating over  $s$, we obtain
\begin{equation}\label{eq:Q_n_dec_supergauss}
Q_n
=
P_n(0) + \int_{0}^{\infty} G_n(s) d\mu_n(s),
\end{equation}
where $\mu_n$ is the probability distribution of $V_n$ and $G_n$ is a non-increasing function defined by
\begin{equation}\label{eq:G_ns_expr_supergauss}
G_n(s)
=
\P\left[\max_{0\leq k_1,k_2 < B}
\left(S_{k_1}^{(1)} + S_{k_2}^{(2)}-u_n(\sqrt{l_n+k_1+k_2}-\sqrt{l_n}) -\frac{s\sqrt{l_n}}{u_n}\right)> 0 \right].
\end{equation}
By Lemma~\ref{lem:one_point_super_gauss}, for every $s\geq 0$,
\begin{equation}\label{eq:asympt_F_n_superg}
\lim_{n\to\infty} \frac{\mu_n([0,s))}{P_n(0)}= \lim_{n\to\infty} \frac{P_n(s)}{P_n(0)}= e^{\frac s{m_*}}.
\end{equation}
Let $s_n$ be any sequence converging to $s\geq 0$. We compute $\lim_{n\to\infty} G_n(s_n)$.
Let  $f_n(k_1,k_2;s_n)$ be a function given by
$$
f_n(k_1,k_2;s_n)=u_n(\sqrt{l_n+k_1+k_2}-\sqrt{l_n})+ \frac{s_n \sqrt{l_n}}{u_n}.
$$
Recall that $u_n\sim \sqrt{2m_*\log n}$ and $l_n\sim d_* \log n$, as $n\to\infty$.
An elementary calculus shows that
$$
\lim_{n\to\infty} f_n(k_1,k_2;s_n)=\frac{s_*}{2}(k_1+k_2)+\frac{s}{s_*}.
$$
Since the a.s.\ convergence  implies the distributional convergence, we obtain that
$$
\max_{0\leq k_1,k_2<B}(S_{k_1}^{(1)} + S_{k_2}^{(2)}- f_n(k_1,k_2;s_n))
\todistr
\frac{U_1+U_2}{t_*}- \frac{s}{s_*},
$$
where $U_1,U_2$ are two independent copies of the random variable
$$
U:=
\max_{k=0,\ldots, B-1} (t_*S_k-k\varphi(t_*)).
$$
It follows that for all but countably many  $s\geq 0$, and all sequences $s_n\to s$,
\begin{equation}\label{eq:asympt_G_n_superg}
\lim_{n\to\infty} G_n(s_n) = \P\left[U_1+U_2>\frac s{m_*}\right].
\end{equation}
Assuming for a moment that interchanging the limit and the integral is justified, we obtain from~\eqref{eq:asympt_F_n_superg} and~\eqref{eq:asympt_G_n_superg} that
\begin{equation}\label{eq:lim_Gn_dmu_n_supergauss}
\lim_{n\to\infty} \int_{0}^{\infty} G_n(s) \frac{d\mu_n(s)}{P_n(0)}
=
\int_0^{\infty} \P\left[U_1+U_2>\frac s{m_*}\right] e^{\frac s{m_*}} \frac{ds}{m_*}
%=
%m_* \cdot \E e^{U_1+U_2}
=
(\E e^{U})^2.
\end{equation}
Inserting this into~\eqref{eq:Q_n_dec_supergauss} completes the proof of Lemma~\ref{lem:local_super_gauss}.

The first equality in~\eqref{eq:lim_Gn_dmu_n_supergauss} will be justified using Lemma~\ref{lem:dom_conv}. To verify its last condition we have to obtain uniform estimates on $G_n$ and $\mu_n$; see Remark~\ref{rem:dom_conv}.   Continuing~\eqref{eq:G_ns_expr_supergauss} and recalling that $\sqrt{l_n}\sim u_n/s_*$, we obtain that for all large $n$, and all $s\geq 0$,
$$
G_n(s)
\leq
\P\left[\max_{0 \leq k_1,k_2 < B} (S_{k_1}^{(1)}+S_{k_2}^{(2)}) > \frac{s\sqrt{l_n}}{u_n}\right]
%\leq \P\left[\max_{k=0,\ldots,B-1} S_{k}^{(1)} > \frac{s\sqrt{l_n}}{2u_n}\right]
\leq
2 \P\left[\max_{0 \leq k < B} S_{k}^{(1)} > \frac{s}{3s_*}\right].
$$
By Lemma~\ref{lem:ld_est} and~\eqref{eq:I_supergauss} we obtain that for some constants $c_1,c_2>0$, all large $n$ and all $s\geq 0$,
\begin{equation}\label{eq:G_ns_uniform_supergauss}
G_n(s)\leq 2\sum_{k=0}^{B-1} \exp\left\{-k I\left(\frac{s}{3s_* k}\right)\right\}
\leq c_1 e^{-c_2s^2}.
\end{equation}
Now we bound $\mu_n([0,s))$. Let first $s\in [u_n^2/2, u_n^2]$. Using Lemma~\ref{lem:ld_est} and~\eqref{eq:I_supergauss} we obtain that %for some $\delta>0$,
$$
\P\left[\frac{S_{l_n}}{\sqrt {l_n}}>u_n-\frac{s}{u_n}\right]
\leq
\exp\left\{-l_n I\left(\frac{u_n-\frac{s}{u_n}}{\sqrt {l_n}} \right)\right\}\\
\leq
\exp\left\{-\frac{1}{2m_*}\left(u_n-\frac{s}{u_n}\right)^2\right\}.
$$
Let now $s\in [0,u_n^2/2]$. Using Theorem~\ref{theo:petrov_est} and~\eqref{eq:I_supergauss} we obtain
%\begin{align*}
$$
\P\left[\frac{S_{l_n}}{\sqrt {l_n}}>u_n-\frac{s}{u_n}\right]
%\leq
%C u_n^{-1}\exp\left\{-l_n I\left(\frac{u_n-\frac{s}{u_n}}{\sqrt {l_n}} \right)\right\}
\leq
C u_n^{-1}\exp\left\{-\frac{1}{2m_*}\left(u_n-\frac{s}{u_n}\right)^2\right\}.
$$
For $s>u_n^2$ we can estimate the probability by $1$. Combining all cases we obtain
$$
\mu_n([0,s))
=
\P\left[\frac{S_{l_n}}{\sqrt {l_n}}>u_n-\frac{s}{u_n}\right]
\leq
C u_n^{-1} \exp\left\{-\frac{u_n^2}{2m_*}+ \frac{s}{m_*}\right\}.
$$
Together with Lemma~\ref{lem:one_point_super_gauss} this implies that $\mu_n([0,s])\leq C e^{s/m_*} P_n(0)$ for all large $n\in\N$ and $s\geq 0$. Conditions of Remark~\ref{rem:dom_conv} are thus verified.
%&\leq
%C u_n^{-1}\exp\left\{-\frac{u_n^2}{2m_*} + \frac{s}{m_*}\right\}.
%\end{align*}
\end{proof}

\subsection{Estimating the double sum}
Given real numbers $A_1<A_2$ define  $l_n^-=d_*\log n+ A_1\sqrt{\log n}$ and $l_n^+=d_*\log n+ A_2\sqrt{\log n}$. The aim of this section is to prove the following result.
\begin{lemma}\label{lem:exc_JJJ_supergauss}
Assume that $X_1$ is non-lattice. Let $w_n\to\infty$ be any integer sequence such that $w_n=O(\log n)$. For $z\in\Z$ let $\JJJ_n(z)$ be the set of all intervals $(i,j)\in\III$ such that $z\leq i<z+w_n$ and $j-i\in [l_n^-, l_n^+]$.
Then, as $n\to\infty$,
\begin{equation}\label{eq:lem:exc_JJJ_supergauss}
\P\left[\max_{(i,j)\in \JJJ_n(z)} \ZZZ_{i,j}>u_n\right]\sim e^{-\tau} \frac{w_n}{n} \int_{A_1}^{A_2} \Theta(a) da,
\end{equation}
where $\Theta(a)=\frac{\sqrt{m_*}H_*^2}{2\sqrt {\pi} \sigma_*} e^{-\frac{\beta_*^2 a^2}2}$, $a\in\R$, and the constant $H_*$ is given by
\begin{equation}\label{eq:H*}
H_*=\lim_{B\to\infty} \frac{H_*(B)} B = \lim_{B\to\infty} \frac 1B \E \left[ \max _{k=0,\ldots,B-1} e^{t_* S_k-k\varphi(t_*)}\right]\in (0,1).
\end{equation}
\end{lemma}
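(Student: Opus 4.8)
The plan is to mirror, almost verbatim, the proof of Lemma~\ref{lem:exc_JJJ_subgauss} in the superlogarithmic case, the one structural difference being that in the logarithmic case the extremal decorrelation length is of \emph{constant} order (rather than a sequence $q_n\to\infty$ as in the superlogarithmic case): the relevant clumps have fixed linear size $B$, so we tile the block $\JJJ_n(z)$ by the discrete squares $\TTT_B(x,y)$ introduced in Section~\ref{subsec:local_supergauss}, and the band of relevant lengths has width $(A_2-A_1)\sqrt{\log n}$. By translation invariance we take $z=0$. As in~\eqref{eq:calXn}--\eqref{eq:calXn_prime} we pick grids $\calJ_n'(B)\subset\calJ_n(B)$ of mesh $B$ so that $\{\TTT_B(x,y)\}_{(x,y)\in\calJ_n(B)}$ is a disjoint cover of $\JJJ_n(0)$ and $\{\TTT_B(x,y)\}_{(x,y)\in\calJ_n'(B)}$ is a disjoint subfamily contained in $\JJJ_n(0)$; since the mesh is constant and $\JJJ_n(0)$ contains $\asymp w_n\sqrt{\log n}$ intervals, the squares meeting the boundary contribute only $o(w_n/n)$ to the sums below and may be discarded. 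The Bonferroni inequality then gives $S_n'(B)-S_n''(B)\le\P[\max_{(i,j)\in\JJJ_n(0)}\ZZZ_{i,j}>u_n]\le S_n(B)$, with $S_n(B),S_n'(B),S_n''(B)$ as in~\eqref{eq:Sn_B_subg}--\eqref{eq:Sn_B_2prime_subg} but with $\TTT_{Bq_n}$ replaced by $\TTT_B$; the lemma will follow by letting $n\to\infty$ and then $B\to\infty$.

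For $S_n(B)$ we group the terms by $l:=y-x$ (the probability depends only on $l$), introduce $\lambda_{n,B}(a)=n\sqrt{\log n}\,\P[\max_{(i,j)\in\TTT_B(0,l_{n,B}(a))}\ZZZ_{i,j}>u_n]$ with $l_{n,B}(a)$ the largest multiple of $B$ below $d_*\log n+a\sqrt{\log n}$, and note that its constancy intervals have length $\sim B/\sqrt{\log n}$, so $S_n(B)\le\frac{w_n+o(w_n)}{B^2 n}\int\lambda_{n,B}(a)\,da$. Lemmas~\ref{lem:local_super_gauss} and~\ref{lem:one_point_super_gauss} give the pointwise limit $\lambda_{n,B}(a)\to e^{-\tau}\Lambda_B(a)$ with $\Lambda_B(a)=\frac{\sqrt{m_*}}{2\sqrt{\pi}\,\sigma_*}e^{-\beta_*^2a^2/2}(1+H_*^2(B))$, and $\lambda_{n,B}$ is bounded uniformly in $n$ on compact $a$-intervals: since $J(v):=vI(1/\sqrt v)$ has its unique minimum $J(s_*^{-2})=1/(2m_*)$, every length $l'$ appearing in $\TTT_B$ satisfies $l'I(u_n/\sqrt{l'})=u_n^2 J(l'/u_n^2)\ge u_n^2/(2m_*)=\log n+\tau$, whence by Theorem~\ref{theo:petrov_est} each of the $B^2$ individual exceedance probabilities is $\le Ce^{-\tau}/(u_n n)$ and $\lambda_{n,B}(a)\le C'B^2e^{-\tau}$. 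Dominated convergence then yields $\limsup_n nw_n^{-1}S_n(B)\le e^{-\tau}\int_{A_1}^{A_2}B^{-2}\Lambda_B(a)\,da$, and letting $B\to\infty$ and using $H_*(B)/B\to H_*\in(0,1)$ (cf.~\eqref{eq:beta*_H*2} and Section~\ref{subsec:pickands_type_const}) so that $B^{-2}\Lambda_B(a)\to\Theta(a)$ uniformly on $[A_1,A_2]$, we get $\lim_{B\to\infty}\limsup_{n\to\infty}nw_n^{-1}S_n(B)\le e^{-\tau}\int_{A_1}^{A_2}\Theta(a)\,da$. The reverse bound $\lim_{B\to\infty}\liminf_{n\to\infty}nw_n^{-1}S_n'(B)\ge e^{-\tau}\int_{A_1}^{A_2}\Theta(a)\,da$ is obtained identically, working with $\calJ_n'(B)$.

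It remains to show $\lim_{B\to\infty}\limsup_{n\to\infty}nw_n^{-1}S_n''(B)=0$. Splitting each $\TTT_B(x,y)$ into its $B^2$ single intervals bounds $S_n''(B)$ by $\sum\P[\ZZZ_{v_1}>u_n,\ZZZ_{v_2}>u_n]$ over $v_1,v_2\in\VVV_n$ with $v_1\nsim v_2$, where $\VVV_n=\Z^2\cap\{(x,y):x\in[0,w_n],\ y-x\in[l_n^-,l_n^+]\}$ and $\nsim$ denotes $B$-inequivalence. The decisive input is the two-point estimate
\[
\P[\ZZZ_{K_1}>u_n,\ \ZZZ_{K_2}>u_n]\le C_1u_n^{-1}e^{-u_n^2/(2m_*)}e^{-C_2\Delta(K_1,K_2)}
\]
for intervals $K_1,K_2$ of length in $[l_n^-,l_n^+]$, proved by the same decomposition of the joint event as in Lemma~\ref{lem:double_probab_subgauss} (an overlap part, plus the independent product of the event ``$K_1$ exceeded'' with a deviation of the symmetric difference), now invoking Theorems~\ref{theo:large_dev} and~\ref{theo:petrov_est}, Lemma~\ref{lem:ld_est} and conditions~\eqref{eq:I_supergauss}, \eqref{eq:s*_unique}; it is in fact lighter than Lemma~\ref{lem:double_probab_subgauss}, since the constant-size squares require no $D^{\pm}$ corrections. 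Using $u_n^{-1}e^{-u_n^2/(2m_*)}\asymp 1/(n\sqrt{\log n})$ and $\Delta(v_1,v_2)\ge c\|v_1-v_2\|$ (the analogue of Lemma~\ref{lem:Delta_sup_norm}), we obtain $nw_n^{-1}S_n''(B)\le\frac{C}{w_n\sqrt{\log n}}\sum_{v_1\nsim v_2}e^{-cC_2\|v_1-v_2\|}$, which tends to $0$ as $n\to\infty$ and then $B\to\infty$ by Lemma~\ref{lem:double_sum_general}, together with $|\partial_B\VVV_n|=o(|\VVV_n|)$ and $|\VVV_n|\asymp w_n\sqrt{\log n}$. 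Feeding the three limits into the Bonferroni sandwich and recalling that the middle term does not depend on $B$ forces $nw_n^{-1}\P[\max_{(i,j)\in\JJJ_n(0)}\ZZZ_{i,j}>u_n]\to e^{-\tau}\int_{A_1}^{A_2}\Theta(a)\,da$, which is~\eqref{eq:lem:exc_JJJ_supergauss}.

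The main obstacle is the two-point decorrelation estimate together with the correct ordering of the two limits. Unlike in the Gaussian case, controlling the overlap and symmetric-difference contributions to $\P[\ZZZ_{K_1}>u_n,\ZZZ_{K_2}>u_n]$ requires the sharp large-deviation asymptotics in the precise form of Theorems~\ref{theo:large_dev} and~\ref{theo:petrov_est}, and the strict gaps in~\eqref{eq:s*_unique} are exactly what convert a length mismatch $\Delta$ into genuine exponential decay; once these are in hand, and once the facts $H_*(B)/B\to H_*\in(0,1)$ and the boundary- and Riemann-sum estimates adapted from the superlogarithmic case are invoked, the remaining interchanges of limits are routine.
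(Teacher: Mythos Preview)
Your proposal is correct and follows essentially the same approach as the paper: set $q_n=1$, tile $\JJJ_n(0)$ by squares $\TTT_B(x,y)$, apply Bonferroni, and then handle $S_n(B)$, $S_n'(B)$, $S_n''(B)$ exactly as you describe (these are the paper's Lemmas~\ref{lem:Sn_B_supergauss}, \ref{lem:Sn_B_prime_supergauss}, \ref{lem:double_sum_supergauss}, with your two-point estimate being Lemma~\ref{lem:double_probab_supergauss}). One small correction: the two-point estimate does not require Theorem~\ref{theo:large_dev}; Theorem~\ref{theo:petrov_est}, Lemma~\ref{lem:ld_est}, and conditions~\eqref{eq:I_supergauss}, \eqref{eq:s*_unique} suffice.
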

\begin{proof}
The existence of the limit in~\eqref{eq:H*} follows from by taking $Y_k=t_*X_k-\varphi(t_*)$ in Lemma~\ref{lem:pickands_const_supergauss}, below. In fact, \eqref{eq:H*} can be also obtained as a byproduct of the double sum argument presented below.
We prove~\eqref{eq:lem:exc_JJJ_supergauss}. Without restriction of generality, let $z=0$. To get rid of the boundary effects we introduce two sequences $\eps_n\to \infty$ and $\delta_n\to\infty$ such that $\eps_n=o(w_n)$ and $\delta_n=o(\sqrt{\log n})$ as $n\to\infty$. Take some $B\in\N$ and let $q_n=1$. Then, with the same notation as in~\eqref{eq:calXn}, \eqref{eq:calXn_prime}, \eqref{eq:Sn_B_subg}, \eqref{eq:Sn_B_prime_subg}, \eqref{eq:Sn_B_2prime_subg}, we have the Bonferroni inequality
%Introduce the one-dimensional discrete grids with mesh size $B$:
%\begin{align*}
%\calX_n(B)&= B\Z\cap [-\eps_n, w_n+\eps_n], &  \calL_n(B)&=B\Z\cap [l_n^--\delta_n, l_n^++\delta_n], \\
%\calX_n'(B)&= B\Z\cap [\eps_n, w_n-\eps_n], &  \calL_n'(B)&=B\Z\cap [l_n^-+\delta_n, l_n^+-\delta_n].
%\end{align*}
%The discrete squares $\TTT_B(x,l)$ (which were defined in Section~\ref{subsec:local_supergauss}), where $x\in \calX_n(B)$ and $l\in \calL_n(B)$, are disjoint and cover the set $\JJJ_n(0)$. Similarly, the discrete squares $\TTT_B(x,l)$, where $x\in \calX_n'(B)$ and $l\in \calL_n'(B)$ are disjoint and contained in $\JJJ_n(0)$. By the Bonferroni inequality, we have, for every $B\in\N$,
\begin{equation}\label{eq:bonferroni_supergauss}
S_n'(B)-S_n''(B)\leq \P\left[\max_{(i,j)\in \JJJ_n(0)} \ZZZ_{i,j}>u_n\right]\leq S_n(B).
\end{equation}
%where
%\begin{align}
%S_n(B)&=\sum_{x\in\calX_n(B)}\sum_{l\in\calL_n(B)} \P \left[\max_{(i,j)\in \TTT_B(x,l)} \ZZZ_{i,j}>u_n\right],
%\label{eq:Sn_B_superg}\\
%S_n'(B)&=\sum_{x\in\calX_n'(B)}\sum_{l\in\calL_n'(B)} \P \left[\max_{(i,j)\in \TTT_B(x,l)} \ZZZ_{i,j}>u_n\right],
%\label{eq:Sn_B_prime_superg}\\
%S_n''(B)&=\sum \P \left[\max_{(i,j)\in \TTT_B(x_1,l_1)} \ZZZ_{i,j}>u_n , \max_{(i,j)\in \TTT_B(x_1,l_1)} \ZZZ_{i,j}>u_n \right]
%\label{eq:Sn_B_2prime_superg}
%\end{align}
%and in~\eqref{eq:Sn_B_2prime_superg} the sum is taken over all pairs $(x_1,l_1)\neq (x_2,l_2)$ such that $x_1,x_2\in \calX_n'(B)$ and $l_1,l_2\in \calL_n'(B)$.
The statement of Lemma~\ref{lem:exc_JJJ_supergauss} follows by letting $n\to\infty$ and then $B\to\infty$ in~\eqref{eq:bonferroni_supergauss} and applying  Lemmas~\ref{lem:Sn_B_supergauss}, \ref{lem:Sn_B_prime_supergauss}, \ref{lem:double_sum_supergauss} which we will prove below.
\end{proof}

%The following lemmas complete the proof of Lemma~\ref{lem:exc_JJJ_supergauss}.
%\begin{lemma}\label{lem:H_*_supergauss}
%The following limit exists in $(0,\infty)$:
%$$
%H_*:=\lim_{B\to\infty} \frac{H_*(B)} B = \lim_{B\to\infty} \frac 1B \E \exp \max _{k=0,\ldots,B-1} (t_* S_k-k\varphi(t_*)).
%$$
%\end{lemma}

Recall that $w_n\to\infty$ is an integer sequence such that $w_n=O(\log n)$ as $n\to\infty$.
\begin{lemma}\label{lem:Sn_B_supergauss}
Let $S_n(B)$ be defined as in~\eqref{eq:Sn_B_subg} with $q_n=1$. We have
\begin{equation}\label{eq:lem:Sn_B_supergauss}
\lim_{B\to\infty} \limsup_{n\to\infty} nw_n^{-1} S_n(B) \leq  e^{-\tau} \int_{A_1}^{A_2} \Theta(a)da.
\end{equation}
\end{lemma}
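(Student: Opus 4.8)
The plan is to follow the proof of Lemma~\ref{lem:Sn_B_subgauss} from the superlogarithmic case, with the length parametrization $l = d_*\log n + a\sqrt{\log n}$ replacing $l = a\log^p n$, and with the Pickands-type constant $H_*(B)$ replacing $H(B)$. First I would use translation invariance: the probability in~\eqref{eq:Sn_B_subg} depends on $(x,y)$ only through $l = y-x$ (recall $q_n=1$ here), and there are $w_n/B + o(w_n/B)$ admissible values of $x$ in $\calJ_n(B)$, so
\[
S_n(B) \leq \frac{w_n + o(w_n)}{B} \sum_{l\in\calL_n(B)} \P\left[\max_{(i,j)\in\TTT_B(0,l)} \ZZZ_{i,j} > u_n\right], \qquad \calL_n(B) = B\Z\cap[l_n^- - \delta_n,\, l_n^+ + \delta_n].
\]
Writing $a = (l - d_*\log n)/\sqrt{\log n}$, I would introduce the locally constant function $\theta_{n,B}(a) = n\sqrt{\log n}\cdot\P[\max_{(i,j)\in\TTT_B(0,l_{n,B}(a))}\ZZZ_{i,j} > u_n]$, where $l_{n,B}(a)$ is the largest element of $B\Z$ not exceeding $d_*\log n + a\sqrt{\log n}$. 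Its constancy intervals have length $B/\sqrt{\log n}$, so the Riemann sum above is exactly $(Bn)^{-1}\int\theta_{n,B}(a)\,da$, which gives $n w_n^{-1} S_n(B) \leq (1+o(1))B^{-2}\int_{A_1-o(1)}^{A_2+o(1)}\theta_{n,B}(a)\,da$, the $o(1)$ in the limits of integration coming from $\delta_n = o(\sqrt{\log n})$.

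For each fixed $a$, the sequence $l_n = l_{n,B}(a)$ satisfies $l_n = d_*\log n + a\sqrt{\log n} + O(B) = d_*\log n + a\sqrt{\log n} + o(\sqrt{\log n})$, so Lemma~\ref{lem:local_super_gauss} gives $\P[\max_{\TTT_B(0,l_n)}\ZZZ_{i,j} > u_n] \sim P_n(0)(1 + H_*^2(B))$ and Lemma~\ref{lem:one_point_super_gauss} evaluates $P_n(0)$; together these yield the pointwise limit $\theta_{n,B}(a) \to e^{-\tau}\Theta_B(a)$ with $\Theta_B(a) = \frac{\sqrt{m_*}(1+H_*^2(B))}{2\sqrt{\pi}\sigma_*}e^{-\beta_*^2 a^2/2}$.

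The hard part will be supplying the domination that in the superlogarithmic case was provided by Lemma~\ref{lem:local_est_sub_gauss}: I need $\theta_{n,B}(a)$ bounded uniformly in $n$ by an integrable function on a fixed bounded $a$-interval. I would obtain this by a union bound over the $B^2$ intervals comprising $\TTT_B(0,l)$: each such interval has a standardized increment distributed as $S_m/\sqrt m$ with $m = l + k_1 + k_2 \in [l, l+2B)$, and Theorem~\ref{theo:petrov_est} (applicable for large $n$ since $u_n/\sqrt m \to s_*$) gives $\P[S_m/\sqrt m > u_n] \leq (C/u_n)\exp\{-u_n^2 J(m/u_n^2)\}$, where $J(v)=vI(1/\sqrt v)$ is the function from the proof of Lemma~\ref{lem:one_point_super_gauss}, with unique minimum $J(v_*) = 1/(2m_*)$ at $v_* = s_*^{-2}$ and $J''(v_*)>0$. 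Since $m/u_n^2 - v_* = a/(2m_*\sqrt{\log n}) + O(1/\log n) \to 0$ uniformly for $a$ in a bounded set, the quadratic lower bound $J(v) \geq 1/(2m_*) + \tfrac14 J''(v_*)(v-v_*)^2$ holds for all large $n$ and all such $a$, and a short computation (using $J''(v_*) = \tfrac14 s_*^4(\sigma_*^{-2}-m_*^{-1})$, so $\tfrac14 J''(v_*)/(4m_*) = \beta_*^2/8$) gives $\theta_{n,B}(a) \leq C_B e^{-\beta_*^2 a^2/4}$ uniformly in $n$. Dominated convergence then yields $\limsup_{n\to\infty} n w_n^{-1} S_n(B) \leq B^{-2} e^{-\tau}\int_{A_1}^{A_2}\Theta_B(a)\,da$, and letting $B\to\infty$ with $B^{-2}(1+H_*^2(B)) = B^{-2} + (H_*(B)/B)^2 \to H_*^2$ (the existence of the Pickands-type constant $H_* = \lim_{B\to\infty} H_*(B)/B \in (0,1)$ recorded in~\eqref{eq:H*}) produces the asserted bound $e^{-\tau}\int_{A_1}^{A_2}\Theta(a)\,da$ since $\Theta(a) = \frac{\sqrt{m_*}H_*^2}{2\sqrt{\pi}\sigma_*}e^{-\beta_*^2 a^2/2}$.
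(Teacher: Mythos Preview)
Your proof is correct and follows essentially the same route as the paper: translation invariance to reduce to a sum over $l$, the locally constant function $\theta_{n,B}(a)$, pointwise convergence via Lemmas~\ref{lem:local_super_gauss} and~\ref{lem:one_point_super_gauss}, dominated convergence, and finally $B\to\infty$ using~\eqref{eq:H*}. The only noteworthy difference is in the domination step: the paper simply uses the global bound $I(s)\geq s^2/(2m_*)$ from~\eqref{eq:I_supergauss} to get $\P[\ZZZ_{i,j}>u_n]\leq Cu_n^{-1}e^{-u_n^2/(2m_*)}\leq C/(n\sqrt{\log n})$ for each of the $B^2$ intervals, yielding $\theta_{n,B}(a)\leq C$ on bounded $a$-intervals, whereas you work harder via the quadratic lower bound on $J$ near $v_*$ to obtain a Gaussian-type envelope $C_B e^{-\beta_*^2 a^2/4}$. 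Your sharper bound is unnecessary here since the integral is over the bounded interval $[A_1,A_2]$, but it does no harm.
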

\begin{proof}
Since the probability in the right-hand side of~\eqref{eq:Sn_B_subg} does not depend on $x$, we have
$$
S_n(B)=\frac{w_n+o(w_n)}{B}\sum_{l\in\calL_n(B)} \P \left[\max_{(i,j)\in \TTT_B(0,l)} \ZZZ_{i,j}>u_n\right],
$$
where $\calL_n(B)=B\Z\cap [l_n^--\delta_n, l_n^++\delta_n]$.
The idea is  to apply to each probability Lemma~\ref{lem:local_super_gauss} and replace Riemann sums by Riemann integrals. Introduce the function
$$
\theta_{n,B}(a)=n\sqrt{\log n} \cdot \P \left[\max_{(i,j)\in \TTT_B(0,l_{n,B}(a))} \ZZZ_{i,j}>u_n\right], \;\;\; a\in\R,
$$
where $l_{n,B}(a)=\max\{l\in B\Z: l\leq d_*\log n+a\sqrt{\log n}\}$. The function $\theta_{n,B}(a)$ is locally constant and its constancy intervals have length $B/\sqrt{\log n}$.
It follows that
\begin{equation}\label{eq:tech22}
S_n(B)\leq \frac{w_n+o(w_n)}{B^2 n} \int_{A_1-\frac{2\delta_n}{\sqrt {\log n}}}^{A_2+\frac{2\delta_n}{\sqrt {\log n}}}
\theta_{n,B}(a)da.
\end{equation}
For every fixed $a\in\R$, the sequence $l_n=l_{n,B}(a)$ satisfies the assumption of Lemma~\ref{lem:local_super_gauss}. By Lemmas~\ref{lem:local_super_gauss} and~\ref{lem:one_point_super_gauss}, for every $a\in\R$,
$$
\lim_{n\to\infty} \theta_{n,B}(a) = e^{-\tau} \Theta_B(a), \;\;\; \Theta_B(a)= \frac{\sqrt {m_*}}{2\sqrt{\pi}\sigma_*} e^{-\frac{\beta_*^2 a^2}{2}}(1+ H_*^2(B)).
$$
We also need an estimate for $\theta_{n,B}(a)$ which is uniform in $a$. Assume that $a\in [-c,c]$, for some $c>0$.  For every interval $(i,j) \in \TTT_B(0,l_{n,B}(a))$ of length $l$ we have, by Theorem~\ref{theo:petrov_est} and~\eqref{eq:I_supergauss},
$$
\P[\ZZZ_{i,j}>u_n]\leq C u_n^{-1} \exp\left\{- lI\left(\frac{u_n}{\sqrt l}\right)\right\}
\leq
\frac{C}{\sqrt {\log n}} \exp\left\{-\frac{u_n^2}{2m_*}\right\}
\leq
\frac{C}{n\sqrt {\log n}}.
$$
Since $\TTT_B(0,l_{n,B}(a))$ consists of $B^2$ intervals, we obtain that $\theta_{n,B}(a)\leq C$ for all $a\in [-c,c]$, where $C$ does not depend on $n$ and $a$.
%$$
%\theta_{n,B}(a) \leq C_1 n\sqrt{\log n}\cdot u_n^{-1} \exp\left\{- c \right\} \leq C_2.
%$$
%Also, $|\calX_n(B)|\sim w_n/B$ and hence,
Taking the limit as $n\to\infty$ in~\eqref{eq:tech22} and applying the dominated convergence theorem, we obtain
\begin{equation}\label{eq:tech1}
\limsup_{n\to\infty} nw_n^{-1}S_n(B) \leq e^{-\tau} \int_{A_1}^{A_2}  B^{-2}  \Theta_B(a) da.
\end{equation}
This holds for every $B\in\N$. We let $B\to\infty$. The limit $H_*:=\lim_{B\to\infty} B^{-1}H_*(B)\in (0,\infty)$ exists by~\eqref{eq:H*}. Hence, $\lim_{B\to\infty} B^{-2}\Theta_B(a)=\Theta(a)$ uniformly in $a\in\R$, where $\Theta(a)$ is defined as in Lemma~\ref{lem:exc_JJJ_supergauss}. To complete the proof let $B\to\infty$ in~\eqref{eq:tech1}.
\end{proof}

\begin{lemma}\label{lem:Sn_B_prime_supergauss}
Let $S_n'(B)$ be defined as in~\eqref{eq:Sn_B_prime_subg} with $q_n=1$. We have
\begin{equation}\label{eq:lem:Sn_B_prime_supergauss}
\lim_{B\to\infty} \liminf_{n\to\infty} nw_n^{-1} S_n'(B)\geq e^{-\tau} \int_{A_1}^{A_2} \Theta(a)da.
\end{equation}
\end{lemma}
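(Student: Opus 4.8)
The plan is to run the argument of Lemma~\ref{lem:Sn_B_supergauss} with all inequalities reversed, using that $\calJ_n'(B)\subset\calJ_n(B)$ differs from $\calJ_n(B)$ only by the $\eps_n$- and $\delta_n$-collars, whose contribution is of lower order once $B$ is fixed and $n\to\infty$. First I would invoke translation invariance: the probability $\P[\max_{(i,j)\in\TTT_B(x,y)}\ZZZ_{i,j}>u_n]$ depends on $(x,y)$ only through $l=y-x$, and the number of admissible $x\in B\Z\cap[\eps_n,w_n-\eps_n]$ equals $(w_n-2\eps_n)/B+O(1)=(w_n+o(w_n))/B$ since $\eps_n=o(w_n)$ and $w_n/B\to\infty$. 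Hence
$$
S_n'(B)=\frac{w_n+o(w_n)}{B}\sum_{l\in\calL_n'(B)}\P\left[\max_{(i,j)\in\TTT_B(0,l)}\ZZZ_{i,j}>u_n\right],\qquad \calL_n'(B)=B\Z\cap[l_n^-+\delta_n,\,l_n^+-\delta_n].
$$

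Next I would convert this Riemann sum into an integral via the locally constant function $\theta_{n,B}(a)=n\sqrt{\log n}\cdot\P[\max_{(i,j)\in\TTT_B(0,l_{n,B}(a))}\ZZZ_{i,j}>u_n]$ with $l_{n,B}(a)=\max\{l\in B\Z:l\le d_*\log n+a\sqrt{\log n}\}$, exactly as in Lemma~\ref{lem:Sn_B_supergauss}; since the constancy intervals have length $B/\sqrt{\log n}$ and $\delta_n\to\infty$ dominates $B$, this yields, for all large $n$,
$$
S_n'(B)\ge\frac{w_n+o(w_n)}{B^2 n}\int_{A_1+2\delta_n/\sqrt{\log n}}^{A_2-2\delta_n/\sqrt{\log n}}\theta_{n,B}(a)\,da,
$$
and $\delta_n=o(\sqrt{\log n})$ makes the limits of integration tend to $A_1$ and $A_2$. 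By Lemmas~\ref{lem:local_super_gauss} and~\ref{lem:one_point_super_gauss} we have the pointwise convergence $\theta_{n,B}(a)\to e^{-\tau}\Theta_B(a)$, where $\Theta_B(a)=\frac{\sqrt{m_*}}{2\sqrt\pi\sigma_*}e^{-\beta_*^2a^2/2}(1+H_*^2(B))$, while Theorem~\ref{theo:petrov_est} together with~\eqref{eq:I_supergauss} gives a bound $\theta_{n,B}(a)\le C$ uniform in $n$ and in $a$ over any fixed compact set (recall $\TTT_B(0,l_{n,B}(a))$ has exactly $B^2$ elements). Dominated convergence then gives $\liminf_{n\to\infty}nw_n^{-1}S_n'(B)\ge e^{-\tau}\int_{A_1}^{A_2}B^{-2}\Theta_B(a)\,da$.

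Finally I would let $B\to\infty$. By~\eqref{eq:H*} the limit $H_*=\lim_{B\to\infty}H_*(B)/B\in(0,\infty)$ exists, so $B^{-2}\Theta_B(a)=\frac{\sqrt{m_*}}{2\sqrt\pi\sigma_*}e^{-\beta_*^2a^2/2}\,\frac{1+H_*^2(B)}{B^2}\to\Theta(a)$ uniformly in $a$, whence $\int_{A_1}^{A_2}B^{-2}\Theta_B(a)\,da\to\int_{A_1}^{A_2}\Theta(a)\,da$ and the claim follows. No step is a genuine obstacle; the only points requiring care are that all boundary collars contribute only lower-order terms — which is precisely why one must take $n\to\infty$ before $B\to\infty$ — and the justification of the interchange of limit and integral, both handled exactly as in the proof of Lemma~\ref{lem:Sn_B_supergauss}.
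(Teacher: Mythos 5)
Your proof is correct and is exactly the argument the paper intends: the paper's own proof of this lemma simply says ``Analogous to the proof of Lemma~\ref{lem:Sn_B_supergauss},'' and you have correctly reversed the inequalities, using that the $\eps_n$- and $\delta_n$-collars are of lower order. The only cosmetic remark is that Fatou's lemma would already suffice for the lower bound (no uniform domination is strictly needed), though invoking the same uniform bound as in Lemma~\ref{lem:Sn_B_supergauss} is also perfectly valid.
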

\begin{proof}
Analogous to the proof of Lemma~\ref{lem:Sn_B_supergauss}.
\end{proof}
\begin{remark}
It follows from~\eqref{eq:lem:Sn_B_supergauss} and~\eqref{eq:lem:Sn_B_prime_supergauss} that in both equations we can replace inequality by equality.
\end{remark}

The next lemma is needed to estimate the ``double sum'' $S_n''(B)$.
It provides an estimate for the correlation between exceedance events over different intervals. %It shows that the exceedance events over different intervals become asymptotically independent with exponential speed as the symmetric difference of the intervals gets larger.
Consider two intervals $K_1=(i_1,j_1)\in\III$ and $K_2=(i_2,j_2)\in\III$ such that $k_1:=j_1-i_1\in[l_n^-, l_n^+]$ and $k_2:=j_2-i_2\in [l_n^-,l_n^+]$. Let $K$ be the intersection of $K_1$ and $K_2$. Denote by $k\in\N_0$ the cardinality of $K$.  Assume that $k_1\leq k_2$ and let $\Delta=\Delta(K_1,K_2)=k_2-k$.
\begin{lemma}\label{lem:double_probab_supergauss}
There exist $C_1,C_2>0$ not depending on $n$, $\tau$ such that for all intervals $K_1$ and $K_2$ as above and all $|\tau|<\sqrt{\log n}$,
\begin{equation}\label{eq:lem_double_probab_main_superg}
\P\left[\ZZZ_{i_1,j_1}>u_n, \ZZZ_{i_2,j_2}>u_n\right] \leq  \frac{C_1 e^{-\tau}}{n\sqrt{\log n}} e^{-C_2 \Delta(K_1,K_2)}.
\end{equation}
\end{lemma}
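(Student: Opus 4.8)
The plan is to follow the blueprint of the proof of Lemma~\ref{lem:double_probab_subgauss}, except that all the deviation estimates are now of genuine large‑deviation type rather than moderate‑deviation type (and, since here $q_n=1$, we deal with single increments $\ZZZ_{i,j}$ rather than maxima over boxes). Identify an interval $(i,j)\in\III$ with the set $\{i+1,\dots,j\}$ and write $S_I=\sum_{m\in I}X_m$. Assume without loss of generality $k_1\le k_2$, set $K=K_1\cap K_2$, so that $|K|=k$ and $K_2\setminus K=K_2\setminus K_1$ is disjoint from $K_1$ and has cardinality $\Delta$; decompose $S_{K_2}=S_K+S_{K_2\setminus K}$ into independent summands. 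Fix a small constant $c'>0$, depending only on the law of $X_1$ and to be pinned down later, and introduce the events $E_1=\{S_K>\sqrt k\,u_n+c'\Delta\}$, $E_2=\{S_{K_1}>\sqrt{k_1}\,u_n\}=\{\ZZZ_{i_1,j_1}>u_n\}$ and $E_3=\{S_{K_2\setminus K}>(\sqrt{k_2}-\sqrt k)u_n-c'\Delta\}$. Exactly as in Section~\ref{subsec:double_sum_subgauss} one verifies $\{\ZZZ_{i_1,j_1}>u_n,\ \ZZZ_{i_2,j_2}>u_n\}\subset E_1\cup(E_2\cap E_3)$, while $E_2$ and $E_3$ are independent because they involve disjoint blocks of increments. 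Hence it suffices to show $\P[E_1]\le \frac{C e^{-\tau}}{n\sqrt{\log n}}e^{-C_2\Delta}$, $\P[E_2]\le \frac{C e^{-\tau}}{n\sqrt{\log n}}$ and $\P[E_3]\le e^{-C_3\Delta}$, uniformly.

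The bound on $\P[E_2]$ comes from Theorem~\ref{theo:petrov_est}: since $k_1\in[l_n^-,l_n^+]$ we have $u_n/\sqrt{k_1}\to s_*<s_\infty$ uniformly (here $|\tau|<\sqrt{\log n}$ ensures $k_1/u_n^2\to v_*=s_*^{-2}$ and $u_n\asymp\sqrt{\log n}$), so the theorem applies, and combined with $J(v):=vI(1/\sqrt v)\ge J(v_*)=\frac1{2m_*}$ — which holds by \eqref{eq:I_supergauss}--\eqref{eq:s*_unique}, cf.\ Lemma~\ref{lem:one_point_super_gauss} — it gives $k_1 I(u_n/\sqrt{k_1})=u_n^2 J(k_1/u_n^2)\ge \frac{u_n^2}{2m_*}=\log n+\tau$, hence $\P[E_2]\le C u_n^{-1}n^{-1}e^{-\tau}\le C e^{-\tau}(n\sqrt{\log n})^{-1}$. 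For $\P[E_3]$, note that $0\le k\le k_2$ and $u_n/\sqrt{k_2}\to s_*$, so the threshold of $E_3$ equals $\Delta\bigl(\frac{u_n}{\sqrt{k_2}+\sqrt k}-c'\bigr)\ge \frac{s_*}{4}\Delta$ for $n$ large and $c'$ small; Lemma~\ref{lem:ld_est} then yields $\P[E_3]\le e^{-\Delta I(s_*/4)}$, so we may take $C_3=I(s_*/4)>0$.

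The crux is $\P[E_1]$, where the exponential gain in $\Delta$ is produced. We may assume $k\ge 1$ and $u_n/\sqrt k<s_\infty$, for otherwise $E_1=\varnothing$. Put $y=u_n/\sqrt k+c'\Delta/k$. By convexity of $I$, $k I(y)\ge k I(u_n/\sqrt k)+c'\Delta\, I'(u_n/\sqrt k)$; using $k I(u_n/\sqrt k)=u_n^2 J(k/u_n^2)\ge \frac{u_n^2}{2m_*}$ and $I'(u_n/\sqrt k)\ge I'(u_n/\sqrt{k_2})\to I'(s_*)=t_*>0$, we get $k I(y)\ge \frac{u_n^2}{2m_*}+C_1\Delta$ with $C_1=c't_*/2$. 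If $\Delta\ge\delta\log n$ for a small fixed $\delta$, then Lemma~\ref{lem:ld_est} already gives $\P[E_1]\le e^{-kI(y)}\le n^{-1}e^{-\tau}e^{-C_1\Delta}\le (n\sqrt{\log n})^{-1}e^{-\tau}e^{-C_1\Delta/2}$ for $n$ large, because $e^{-C_1\Delta/2}\le n^{-C_1\delta/2}\le(\log n)^{-1/2}$. If $\Delta<\delta\log n$, then $k=k_2-\Delta$ is of order $\log n$, so $u_n/\sqrt k$ stays bounded and, upon choosing $\delta$ and $c'$ small enough, $y<A$ for a fixed $A<s_\infty$; now Theorem~\ref{theo:petrov_est} applies, and since its prefactor is $O(1/(y\sqrt k))=O(1/u_n)=O((\log n)^{-1/2})$, it yields $\P[E_1]\le C u_n^{-1}e^{-kI(y)}\le C e^{-\tau}(n\sqrt{\log n})^{-1}e^{-C_1\Delta}$. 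Combining the estimates for $E_1,E_2,E_3$ proves the lemma, e.g.\ with $C_2=\min(C_1/2,C_3)$.

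I expect the main obstacle to be precisely the treatment of $E_1$: one must quantify that \emph{concentrating} the required deviation of $S_{K_2}$ onto the overlap $K$ costs at least $C_2\Delta$ more than a deviation over a single interval — this is where convexity of $I$ together with the minimality property $J\ge J(v_*)$ is essential — and, simultaneously, one must recover the polynomial prefactor $(\log n)^{-1/2}$ uniformly over all overlap sizes $k$, including the degenerate regime in which the overlap is small (equivalently $\Delta$ is of order $\log n$); this is the reason for the case distinction $\Delta\lessgtr\delta\log n$, using Petrov's refined estimate in the non‑degenerate regime and the bare exponential bound of Lemma~\ref{lem:ld_est} otherwise. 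The remainder — verifying the set inclusion, the independence of $E_2$ and $E_3$, uniformity in $\tau$, and the boundary cases $K=\varnothing$ or $K_1=K_2$ — is routine bookkeeping entirely parallel to Lemma~\ref{lem:double_probab_subgauss}.
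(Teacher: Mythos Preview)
Your proof is correct and follows essentially the same approach as the paper: both use the decomposition $\{\ZZZ_{i_1,j_1}>u_n,\ZZZ_{i_2,j_2}>u_n\}\subset E_1\cup(E_2\cap E_3)$ with the same three events (up to the cosmetic difference between your shift $c'\Delta$ and the paper's $\eps\sqrt{k}\,\Delta/u_n$, which are equivalent since $\sqrt{k}/u_n$ is bounded), and both bound $\P[E_2]$ via Theorem~\ref{theo:petrov_est} together with $I(s)\ge s^2/(2m_*)$ and $\P[E_3]$ via Lemma~\ref{lem:ld_est}. The only noteworthy variation is in the $E_1$ estimate: the paper splits according to whether $k$ is small (so $u_n/\sqrt k$ is far from $s_*$ and the strict inequality~\eqref{eq:s*_unique} gives an extra factor $n^{-\delta}$) or large (so Theorem~\ref{theo:petrov_est} supplies the $1/\sqrt{\log n}$), whereas you split according to whether $\Delta$ is large (so $e^{-C_1\Delta/2}$ absorbs the missing $1/\sqrt{\log n}$) or small (so $k$ is of order $\log n$ and Theorem~\ref{theo:petrov_est} applies); both routes are sound and yield the same bound.
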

\begin{proof}
%Write $\Delta=\Delta(K_1,K_2)$.
Fix $\eps>0$. The event $\{\ZZZ_{i_1,j_1}>u_n, \ZZZ_{i_2,j_2}>u_n\}$ is contained in the event $E_1\cup (E_2\cap E_3)$, where
\begin{align*}
E_1&=\left\{S_K>\sqrt k u_n+\eps \frac{\sqrt {k}\Delta} {u_n}\right\},\\
E_2&=\{S_{K_1}>\sqrt{k_1} u_n\},\\
E_3&=\left\{S_{K_2\bsl K}>(\sqrt{k_2}-\sqrt{k})u_n-\eps \frac{\sqrt {k}\Delta} {u_n}\right\}.
\end{align*}
We estimate $\P[E_1]$. Let first $ u_n\geq 2s_*k$. Using  Lemma~\ref{lem:ld_est}  and~\eqref{eq:s*_unique} we obtain that there is $\delta>0$ such that
$$
\P[E_1]
\leq
\exp\left\{- k I\left(\frac{u_n+\eps u_n^{-1}\Delta}{\sqrt k}\right)\right\}
\leq
\exp\left\{-\frac{1+\delta}{2m_*}(u_n^2+2\eps \Delta)\right\}.
%\leq
%C n^{-(1+\delta)} \exp\{-\eps \Delta/m_*\}.
$$
Now let $u_n\leq 2s_*k$. We have, by Theorem~\ref{theo:petrov_est} and~\eqref{eq:I_supergauss},
$$
\P[E_1]
%&=\P[\ZZZ_K>u_n+\eps u_n^{-1} \Delta]\\
\leq C u_n^{-1} \exp\left\{-k I\left(\frac{u_n+\eps u_n^{-1}\Delta}{\sqrt k}\right)\right\}
\leq \frac{C}{\sqrt{\log n}} \exp\left\{- \frac{(u_n^2+2\eps\Delta)}{2m_*} \right\}.
$$
Combining both cases we obtain that
%Using~\eqref{eq:I_supergauss} we obtain
\begin{equation}\label{eq:est_E1_superg}
\P[E_1]
%\frac{C}{\sqrt{\log n}} \exp\left\{- (u_n^2+2\eps\Delta)/(2m_*) \right\}
\leq \frac{Ce^{-\tau}}{n\sqrt{\log n}} e^{-\eps \Delta/m_*}.
\end{equation}
We estimate $\P[E_2]$. By Theorem~\ref{theo:petrov_est} and~\eqref{eq:I_supergauss},
\begin{equation}\label{eq:est_E2_superg}
\P[E_2]
\leq
 C u_n^{-1} \exp\left\{-k I\left(\frac{u_n}{\sqrt k}\right)\right\}
\leq
\frac{C}{\sqrt{\log n}} \exp\left\{- \frac{u_n^2}{2m_*} \right\}
\leq
\frac{Ce^{-\tau}}{n\sqrt{\log n}}.
\end{equation}
We estimate $\P[E_3]$. We have, using that $k_2,k\leq l_n^+$,
$$
\P[E_3]
=
\P\left[\frac{S_{\Delta}}{\Delta} > \frac{u_n}{\sqrt {k_2} + \sqrt k}- \eps \frac{\sqrt k} {u_n}\right]
\leq
\P\left[\frac{S_{\Delta}}{\Delta} > \frac{u_n}{2\sqrt{l_n^+}} - \eps \frac{\sqrt {l_n^+}} {u_n} \right].
$$
We can choose $\eps>0$ so small that $\frac{u_n}{2\sqrt{l_n^+}} - \eps \frac{\sqrt {l_n^+}} {u_n}>\eps$. It follows by Lemma~\ref{lem:ld_est} that
\begin{equation}\label{eq:est_E3_superg}
\P[E_3]\leq \P[S_{\Delta}>\eps \Delta]\leq e^{-\Delta I(\eps)}.
\end{equation}
Here, $I(\eps)>0$.
The probability on the left-hand side of~\eqref{eq:lem_double_probab_main_superg} is not larger than $\P[E_1]+ \P[E_2] \P[E_3]$ since  the events $E_2$ and $E_3$ are independent.  Combining~\eqref{eq:est_E1_superg}, \eqref{eq:est_E2_superg}, \eqref{eq:est_E3_superg} we obtain the required estimate.
\end{proof}

%\noindent
%Introduce the finite set
%\begin{equation}\label{eq:VVV_n}
%\VVV_n=\Z^2\cap ([0, w_n]\times [l_n^-, l_n^+]).
%%\{(x,y)\in\R^2: x\in [0, w_n],\; y-x \in [l_n^-, l_n^+]\}.
%\end{equation}
\begin{lemma}\label{lem:double_sum_supergauss}
Let $S_n''(B)$ be defined as in~\eqref{eq:Sn_B_2prime_subg} with $q_n=1$. We have
$$
\lim_{B\to\infty} \limsup_{n\to\infty} n w_n^{-1} S_n''(B) = 0.
$$
\end{lemma}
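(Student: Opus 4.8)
The plan is to run, in the logarithmic regime, exactly the argument used for Lemma~\ref{lem:double_sum_subgauss}, specialized to the present setting $q_n=1$ (so that the ``small squares'' $\TTT_{q_n}$ degenerate to single intervals) and with Lemma~\ref{lem:double_probab_supergauss} taking the place of Lemma~\ref{lem:double_probab_subgauss}.

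First I would fix $B\in\N$ and, as in the proof of Lemma~\ref{lem:double_sum_subgauss}, represent each discrete square $\TTT_B(x,y)$ with base point $(x,y)\in\calJ_n'(B)\subset B\Z^2$ as the disjoint union of its $B^2$ constituent intervals $(i,j)$ (the singletons $\TTT_1(v')$, $v$ ranging over $v_0+\{0,\ldots,B-1\}^2$, $v_0=(x-B,y)\in B\Z^2$). Bounding the joint exceedance probability over two such squares by the union bound over pairs of constituent intervals, and using that two distinct base points $(x_1,y_1)\ne(x_2,y_2)$ in $\calJ_n'(B)$ yield distinct translates $v_0^{(1)}+\{0,\ldots,B-1\}^2\ne v_0^{(2)}+\{0,\ldots,B-1\}^2$ in the decomposition of $\Z^2$ into $B$-cubes, I would deduce from~\eqref{eq:Sn_B_2prime_subg} that
\[
S_n''(B)\le\sum_{\substack{v_1,v_2\in\VVV_n\\ v_1\nsim v_2}}\P\big[\ZZZ_{i_1,j_1}>u_n,\ \ZZZ_{i_2,j_2}>u_n\big],
\]
where $(i_\nu,j_\nu)$ is the interval corresponding to $v_\nu$, $\VVV_n\subset\Z^2$ is a parallelogram containing all these constituent intervals (namely $x\in[-B,w_n+B]$, $y-x\in[l_n^--2B,l_n^++2B]$), and $\nsim$ (which depends on $B$) is the equivalence relation of Lemma~\ref{lem:double_sum_general}. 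Since $|\tau|<\sqrt{\log n}$ for all large $n$, Lemma~\ref{lem:double_probab_supergauss} bounds each summand by $C_1 e^{-\tau}(n\sqrt{\log n})^{-1}e^{-C_2\Delta(v_1,v_2)}$, and exactly as in Lemma~\ref{lem:Delta_sup_norm} one has $\Delta(v_1,v_2)\ge c\|v_1-v_2\|$ for $v_1,v_2\in\VVV_n$, with $\|\cdot\|$ the sup-norm: when the two intervals overlap this is read off directly from the definition of $\Delta$, and when they are disjoint $\Delta$ equals the larger length, so $\Delta\ge l_n^--2B\ge c_1\log n$ while $\|v_1-v_2\|\le w_n+(l_n^+-l_n^-)+4B\le c_2\log n$, because $w_n=O(\log n)$, $l_n^\pm=d_*\log n+O(\sqrt{\log n})$ and $l_n^+-l_n^-=(A_2-A_1)\sqrt{\log n}$. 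Hence
\begin{align*}
n w_n^{-1}S_n''(B)
&\le\frac{C_1 e^{-\tau}}{w_n\sqrt{\log n}}\sum_{\substack{v_1,v_2\in\VVV_n\\ v_1\nsim v_2}}e^{-c_3\|v_1-v_2\|}\\
&= \frac{C_1 e^{-\tau}\,|\VVV_n|}{w_n\sqrt{\log n}}\cdot\frac{1}{|\VVV_n|}\sum_{\substack{v_1,v_2\in\VVV_n\\ v_1\nsim v_2}}e^{-c_3\|v_1-v_2\|}.
\end{align*}

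To finish, I would observe that $\VVV_n$ is (the lattice points of) a parallelogram whose two pairs of sides have lengths of order $w_n$ and $(A_2-A_1)\sqrt{\log n}$, both tending to $\infty$; consequently $|\VVV_n|\le C_4 w_n\sqrt{\log n}$ (so the prefactor $C_1 e^{-\tau}|\VVV_n|/(w_n\sqrt{\log n})$ stays bounded) and $|\partial_B\VVV_n|=O\big(B(w_n+\sqrt{\log n})\big)=o(|\VVV_n|)$ as $n\to\infty$ for each fixed $B$. Lemma~\ref{lem:double_sum_general} with $d=2$ then gives $\lim_{B\to\infty}\limsup_{n\to\infty}|\VVV_n|^{-1}\sum_{v_1\nsim v_2}e^{-c_3\|v_1-v_2\|}=0$, and letting $n\to\infty$ and then $B\to\infty$ in the last display yields the claim.

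I do not expect a genuine obstacle here: all the analytic content is already packaged in Lemma~\ref{lem:double_probab_supergauss} (exponential decorrelation of exceedance events for individual intervals) and Lemma~\ref{lem:double_sum_general} (summing the resulting geometric tails over a two-dimensional grid whose $B$-boundary is negligible). The only points requiring care are bookkeeping ones — setting up the correspondence between constituent intervals and lattice points of $\VVV_n$ so that distinct base points force $v_1\nsim v_2$, and checking the order-of-magnitude comparison $\Delta\ge c\|v_1-v_2\|$ in the disjoint case, which crucially uses that in the logarithmic regime $w_n$, $l_n^\pm$ and $l_n^+-l_n^-$ are all $O(\log n)$ while $l_n^-$ is of exact order $\log n$.
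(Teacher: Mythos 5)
Your proposal is correct and follows essentially the same route as the paper: decompose each $\TTT_B$ into its $B^2$ singletons, apply Lemma~\ref{lem:double_probab_supergauss} together with the $\Delta\ge c\|v_1-v_2\|$ comparison from Lemma~\ref{lem:Delta_sup_norm}, and invoke Lemma~\ref{lem:double_sum_general}. You spell out the bookkeeping (the $\nsim$ correspondence and the $|\partial_B\VVV_n|=o(|\VVV_n|)$ check) more explicitly than the paper does, but the argument is the same.
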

\begin{proof}
Introduce the finite set $\VVV_n=\Z^2\cap ([0, w_n]\times [l_n^-, l_n^+])$. Take some fixed $B\in\N$.
%For a vector $v=(x,y)\in\R^2$ we write $v'=(x+1,y)$.
Let $(x,y)\in B\Z^2$, $x<y$, be some interval.  The discrete square $\TTT_{B}(x,y)$ consists of $B^2$ intervals. We estimate the exceedance probability over $\TTT_{B}(x,y)$ by the sum of the exceedance probabilities over these intervals.
It follows from~\eqref{eq:Sn_B_2prime_subg} that
$$
S_n''(B)\leq \sum_{v_1,v_2} \P\left[\ZZZ_{v_1}>u_n, \ZZZ_{v_2}>u_n\right],
$$
where  the sum is taken over all $v_1\in \VVV_n$ and $v_2\in \VVV_n$ such that $v_1 \nsim v_2$.
Applying Lemmas~\ref{lem:double_probab_supergauss} and~\ref{lem:Delta_sup_norm} we get
$$
S_n''(B)\leq \frac{C}{n\sqrt{\log n}} \sum_{v_1,v_2} e^{-c_2 \Delta(v_1,v_2)}
\leq \frac{C}{n\sqrt{\log n}} \sum_{v_1,v_2} e^{-c_4 \|v_1-v_2\|}.
$$
Here, $\|\cdot\|$ is the sup-norm.
Applying to the right-hand side Lemma~\ref{lem:double_sum_general} and noting that $|\VVV_n|\leq C_1 w_n \sqrt{\log n}$ we arrive at the required statement.
\end{proof}

\subsection{Global probability}
Given $A_1<A_2$ recall that  $l_n^-=d_*\log n+ A_1\sqrt{\log n}$ and $l_n^+=d_*\log n+ A_2\sqrt{\log n}$.
Denote by $\III_n(A_1,A_2)$ the set of all intervals $(i,j)\in\III_n$ with length $l:=j-i\in[l_n^-, l_n^+]$. Theorem~\ref{theo:main_supergauss_scales} states that
\begin{equation}\label{eq:global_probab_superg}
\lim_{n\to\infty}\P\left[\max_{(i,j)\in \III_n(A_1,A_2)} \ZZZ_{i,j}\leq u_n\right]= \exp\left\{- e^{-\tau} \int_{A_1}^{A_2} \Theta(a)da \right\}.
\end{equation}
The proof of~\eqref{eq:global_probab_superg} goes as follows. Let $w_n=[3d_*\log n]$. We decompose the set $\III_n(A_1,A_2)$ into $\sim n/w_n$ sets of the form $\JJJ_n(z)$, $z\in w_n\Z$.  The exceedance  probability over any of these sets is asymptotically equivalent to $e^{-\tau} (\int_{A_1}^{A_2} \Theta(a)da)w_n/n$ by Lemma~\ref{lem:exc_JJJ_supergauss}. Also, the exceedance event over $\JJJ_n(z)$ is independent of all other exceedance events except for $\JJJ_n(z\pm w_n)$. Justifying the use of the Poisson limit theorem we obtain~\eqref{eq:global_probab_superg}. The proof, up to trivial changes, is the same as in Section~\ref{subsec:global_subg}.

\subsection{Non-optimal lengths}\label{subsec:non_optimal_superg}
In this section we complete the proof of Theorem~\ref{theo:main_supergauss}. For $A>0$ write $l_n^-=d_*\log n -A\sqrt {\log n}$ and $l_n^+=d_*\log n + A\sqrt {\log n}$. Denote by $\BBB_n(A)=\III_n\bsl \III_n(-A,A)$ the set of all intervals $(i,j)\in\III_n$ whose length $l:=j-i$ satisfies $l\notin[l_n^-, l_n^+]$. The aim of this section is to show that the contribution of these non-optimal lengths to $\MMM_n$ is negligible, if $A$ is large. More precisely, we will show that
\begin{equation}\label{eq:nonoptimal_supergauss}
\lim_{A\to\infty} \limsup_{n\to\infty}
\P\left[\max_{(i,j) \in \BBB_n(A)} \ZZZ_{i,j} >u_n\right]=0.
\end{equation}
Combined with Theorem~\ref{theo:main_supergauss_scales} proved above this yields Theorem~\ref{theo:main_supergauss}. We will cover the set $\BBB_n(A)$ by three sets $\BBB_n', \BBB_n'', \BBB_n'''$ (depending on some further parameters) which will be considered separately in the next three lemmas. In this section we don't need the non-lattice assumption.
First we consider intervals which are sufficiently small but not close to the optimal length $d_*\log n$.
\begin{lemma}
Let $0<\eps<d_*$ be fixed.  For $\delta>0$ denote by $\BBB'_n=\BBB_n'(\eps,\delta)$ the set of all intervals $(i,j)\in \III_n$ whose length $l=j-i$ satisfies $l\leq n^{\delta}$ and $|l-d_*\log n|>\eps \log n$. Then, we can choose  $\delta>0$ so small that
$$
\lim_{n\to\infty} \P\left[\max_{(i,j) \in \BBB_n'} \ZZZ_{i,j} >u_n\right]=0.
$$
\end{lemma}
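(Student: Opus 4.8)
The plan is to prove the lemma by a crude first--moment (union) bound, exploiting that on the length range defining $\BBB_n'$ the individual exceedance probability $\P[\ZZZ_{i,j}>u_n]$ is of order $n^{-(1+2\eta)}$ for some $\eta=\eta(\eps)>0$, which beats the number $\le C n^{1+\delta}$ of intervals in $\BBB_n'$ provided $\delta<2\eta$. First I would record, using Lemma~\ref{lem:ld_est}, that for an interval of length $l$
\[
\P[\ZZZ_{i,j}>u_n]=\P\left[\frac{S_l}{\sqrt l}>u_n\right]\le\exp\left\{-lI\left(\frac{u_n}{\sqrt l}\right)\right\}.
\]
Writing $s=u_n/\sqrt l$, so that $l=u_n^2/s^2$, the exponent becomes $lI(s)=\frac{u_n^2}{2}\cdot\frac{I(s)}{s^2/2}=m_*(\log n+\tau)\cdot\frac{I(s)}{s^2/2}$ by~\eqref{eq:def_un_supergauss}; thus everything reduces to a lower bound on $\frac{I(u_n/\sqrt l)}{(u_n/\sqrt l)^2/2}$ that is uniform over all $l$ with $(i,j)\in\BBB_n'$.

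Next I would translate the constraint on $l$ into a statement that $u_n/\sqrt l$ stays away from $s_*$. Recalling $d_*=2m_*/s_*^2$ from~\eqref{eq:d_*}, hence $s_*=\sqrt{2m_*/d_*}$, and $u_n/\sqrt l=\sqrt{2m_*(\log n+\tau)/l}$: if $l<(d_*-\eps)\log n$ then $u_n/\sqrt l>\sqrt{2m_*/(d_*-\eps)}\,(1+o(1))$, and since $\sqrt{2m_*/(d_*-\eps)}>s_*$ there is $\eps'=\eps'(\eps)>0$ with $u_n/\sqrt l>s_*+\eps'$ for all large $n$; if instead $(d_*+\eps)\log n<l\le n^{\delta}$ then $0<u_n/\sqrt l<\sqrt{2m_*/(d_*+\eps)}\,(1+o(1))<s_*-\eps''$ for all large $n$, where $\eps''=\eps''(\eps)>0$. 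Since $\BBB_n'$ contains no interval whose length lies in $[(d_*-\eps)\log n,(d_*+\eps)\log n]$, for every $(i,j)\in\BBB_n'$ the number $u_n/\sqrt l$ lies in $(0,s_*-\eps'')\cup(s_*+\eps',\infty)$. By~\eqref{eq:I_supergauss} and~\eqref{eq:s*_unique} the quantity $\eta_0:=\min\{\inf_{0<s<s_*-\eps''}I(s)/(s^2/2),\,\inf_{s>s_*+\eps'}I(s)/(s^2/2)\}-1/m_*$ is strictly positive and depends only on $\eps$. Setting $2\eta:=m_*\eta_0$, the preceding observation gives, for every $(i,j)\in\BBB_n'$ and all large $n$ (so that $\log n+\tau>0$),
\[
\P[\ZZZ_{i,j}>u_n]\le\exp\left\{-m_*(\log n+\tau)\cdot\tfrac{1+2\eta}{m_*}\right\}=e^{-(1+2\eta)\tau}\,n^{-(1+2\eta)}.
\]

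Finally, since $|\BBB_n'|\le(n+1)n^{\delta}$, the union bound yields $\P[\max_{(i,j)\in\BBB_n'}\ZZZ_{i,j}>u_n]\le C n^{\delta-2\eta}$, which tends to $0$ as soon as $\delta<2\eta$; as $\eta$ was obtained from $\eps$ alone, this is an admissible choice of $\delta$, and the lemma follows. I expect the only delicate point to be the uniformity in the middle step: a single $\eta>0$ must simultaneously control very short intervals, where $u_n/\sqrt l\to\infty$, and intervals as long as $n^\delta$, where $u_n/\sqrt l\to 0$. This is exactly what the two--sided uniform version~\eqref{eq:s*_unique} of the uniqueness of the maximizer provides — together with the elementary fact that $I(s)\sim s^2/2$ as $s\downarrow 0$, so that $I(s)/(s^2/2)\to 1>1/m_*$ near the origin — so beyond this bookkeeping there is no real obstacle.
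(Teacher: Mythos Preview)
Your proof is correct and follows essentially the same approach as the paper's: bound each individual probability via Lemma~\ref{lem:ld_est}, rewrite the exponent as $\frac{u_n^2}{2}\cdot\frac{I(u_n/\sqrt l)}{(u_n/\sqrt l)^2/2}$, use~\eqref{eq:s*_unique} to get a uniform lower bound $\frac{1}{m_*}+\eta_0$ on the second factor once $u_n/\sqrt l$ is bounded away from $s_*$, and finish with a union bound over at most $Cn^{1+\delta}$ intervals. Your write-up is in fact slightly more explicit than the paper's in verifying that the length constraint forces $u_n/\sqrt l$ away from $s_*$ on both sides, but the argument is identical in substance.
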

\begin{proof}
Let $l\in\N$ be such that $l\leq n^{\delta}$ and $|l-d_*\log n|>\eps \log n$.
%Write $s=u_n/\sqrt l$.
Then, we can find an $\eps_1>0$ (depending on $\eps$, but not on $\delta, l,n$) such that  $|\frac{u_n}{\sqrt l}-s_*|>\eps_1$. By~\eqref{eq:s*_unique} there are $\delta_1,\delta_2>0$ such that for all large $n$ and all $l$ as above,
$$
l I\left(\frac{u_n}{\sqrt l}\right) = \frac{I(u_n/\sqrt l)}{(u_n/\sqrt l)^2/2} \cdot \frac{u_n^2}{2}>\left(\frac 1 {m_*}+\delta_1\right) \cdot \frac{u_n^2}{2}>(1+\delta_2)\log n.
$$
Using Lemma~\ref{lem:ld_est} we obtain that for all $(i,j)\in \BBB_n'$, the individual exceedance probability can be estimated as follows:
$$
\P[\ZZZ_{i,j}>u_n] \leq e^{-l I(u_n/\sqrt l)} <  n^{-(1+\delta_2)}.
$$
Choose any $\delta\in (0, \delta_2)$. Since the number of intervals in $\BBB_n'$ is at most $n^{1+\delta}$,  the statement of the lemma follows.
\end{proof}
Next we consider intervals whose length is close to being optimal.
\begin{lemma}
For $\eps>0$ and $A>0$ let $\BBB_n''=\BBB_n''(A)$ be the set of all intervals $(i,j)\in\III_n$ whose length $l=j-i$ satisfies
$(d_*-\eps)\log n \leq l \leq l_n^-$ or  $l_n^+ \leq l \leq (d_*+\eps)\log n$.
%(Recall that $l_n^-$ and $l_n^+$ depend on $A$).
Then, we can choose $\eps>0$ so small that
$$
\lim_{A\to\infty} \limsup_{n\to\infty}
\P\left[\max_{(i,j) \in \BBB_n''(A)} \ZZZ_{i,j} >u_n\right]=0.
$$
\end{lemma}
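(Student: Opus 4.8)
The plan is to dispose of $\BBB_n''(A)$ by a plain union bound over its individual intervals, which converges only because Theorem~\ref{theo:petrov_est} supplies a factor $u_n^{-1}\asymp(\log n)^{-1/2}$ on top of the exponential rate, combined with a quadratic lower bound for the rate function near its minimum. I would first recall from~\eqref{eq:J_derivatives} (established inside the proof of Lemma~\ref{lem:one_point_super_gauss}) that the function $J(v)=vI(1/\sqrt v)$ satisfies $J(v_*)=\tfrac1{2m_*}$, $J'(v_*)=0$, $J''(v_*)>0$, where $v_*=s_*^{-2}$. Since $s_*<s_\infty$ and $I$ is infinitely differentiable on $[0,s_\infty)$, $J$ is $C^2$ in a neighborhood of $v_*$, so there are $c_0>0$ and $\eta>0$ with $J(v)\ge\tfrac1{2m_*}+c_0(v-v_*)^2$ whenever $|v-v_*|\le\eta$. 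I then fix $\eps\in(0,m_*\eta)$ once and for all; it depends only on the law of $X_1$.

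\textbf{One-point estimate.} Let $(i,j)\in\BBB_n''(A)$ have length $l=j-i$, and write $a=a(l):=(l-d_*\log n)/\sqrt{\log n}$, so that $|a|\in[A,\eps\sqrt{\log n}]$ (we may assume $A<\eps\sqrt{\log n}$, otherwise $\BBB_n''(A)=\emptyset$ and there is nothing to prove). Put $v=v(l):=l/u_n^2$. Using $u_n^2=2m_*(\log n+\tau)$ and $d_*=2m_*v_*$ one gets $v-v_*=\dfrac{a\sqrt{\log n}-d_*\tau}{2m_*(\log n+\tau)}$, whence, for all $n$ large (depending on $A$), both $|v-v_*|\le\eta$ (here one uses $\eps<m_*\eta$) and $u_n^2(v-v_*)^2\ge c_2 a^2$ for a constant $c_2>0$ independent of $n,A$. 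Since $u_n/\sqrt l\to s_*<s_\infty$, Theorem~\ref{theo:petrov_est} applies with a fixed constant $C$, and together with the quadratic bound for $J$ we obtain
\[
\P[\ZZZ_{i,j}>u_n]=\P\Big[\tfrac{S_l}{\sqrt l}>u_n\Big]\le\frac{C}{u_n}e^{-u_n^2 J(v)}\le\frac{C}{u_n}e^{-\frac{u_n^2}{2m_*}}e^{-c_0u_n^2(v-v_*)^2}\le\frac{C_1 e^{-\tau}}{n\sqrt{\log n}}\,e^{-c_1 a^2},
\]
where $c_1:=c_0c_2>0$ and $C_1>0$ do not depend on $n$ or $A$; here I used $u_n^2/(2m_*)=\log n+\tau$ and $u_n\sim\sqrt{2m_*\log n}$.

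\textbf{Summation and conclusion.} For each admissible length $l$ there are at most $n$ choices of $i$ with $(i,i+l)\in\III_n$, so
\[
\P\Big[\max_{(i,j)\in\BBB_n''(A)}\ZZZ_{i,j}>u_n\Big]\le\sum_{l}n\cdot\frac{C_1e^{-\tau}}{n\sqrt{\log n}}e^{-c_1 a(l)^2}=\frac{C_1e^{-\tau}}{\sqrt{\log n}}\sum_{l}e^{-c_1 a(l)^2}.
\]
The admissible $l$ are consecutive integers, so the $a(l)$ form an arithmetic grid of spacing $(\log n)^{-1/2}$ inside $\{|a|\ge A\}$; by monotonicity of $a\mapsto e^{-c_1 a^2}$ on $[A,\infty)$ and on $(-\infty,-A]$ this yields $\sum_l e^{-c_1a(l)^2}\le 2\sqrt{\log n}\int_{A/2}^{\infty}e^{-c_1 a^2}\,da$ for all large $n$, hence $\P[\max_{\BBB_n''(A)}\ZZZ_{i,j}>u_n]\le 2C_1 e^{-\tau}\int_{A/2}^{\infty}e^{-c_1a^2}\,da$ for all large $n$. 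Letting $n\to\infty$ and then $A\to\infty$ finishes the proof. I expect the only delicate part to be the bookkeeping of the first two steps: producing a quadratic lower bound for $J$ that is \emph{uniform} on a fixed neighborhood of $v_*$ (this is exactly what forces $\eps$ to be small) and tracking the $\tau$-dependent relation between $v-v_*$ and $a$; once the $u_n^{-1}$ gain from Theorem~\ref{theo:petrov_est} is in hand, the summation is routine and the non-lattice hypothesis is not needed.
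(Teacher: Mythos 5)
Your proposal is correct and uses the same key ingredients as the paper: the quadratic lower bound $J(v)\ge\frac1{2m_*}+c_0(v-v_*)^2$ on a fixed neighborhood of $v_*$ (this is precisely the paper's estimate~\eqref{eq:est_J_quadr}, with $\eps$ chosen small enough so that all the relevant $v=l/u_n^2$ fall into that neighborhood), Theorem~\ref{theo:petrov_est} to harvest the extra factor $u_n^{-1}$, and a union bound over individual intervals. The only difference is organizational: the paper groups lengths into dyadic shells $\BBB_{n,k}''$ with $2^k\sqrt{\log n}\le|l-d_*\log n|\le 2^{k+1}\sqrt{\log n}$ and bounds each shell uniformly, arriving at a series $\sum_{k\ge\log_2 A}2^k e^{-c 2^{2k}}$, whereas you sum directly over all admissible $l$ and recognize the result as a Riemann sum (of spacing $(\log n)^{-1/2}$) for the Gaussian tail integral $\int_{A/2}^\infty e^{-c_1a^2}\,da$. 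These are interchangeable devices for the same computation and buy nothing over each other; your version is arguably slightly cleaner as it makes the Gaussian decay explicit and avoids introducing a discrete scale parameter $k$.
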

\begin{proof}
It follows from~\eqref{eq:J_derivatives} that we can choose $\eps,\delta>0$ so small that for all $v\in (v_*-\eps, v_*+\eps)$,
\begin{equation}\label{eq:est_J_quadr}
J(v) > \frac{1}{2m_*} + \delta (v-v_*)^2.
\end{equation}
Here, $v_*=s_*^{-2}$.  For $k\in\N_0$ let $\BBB_{n,k}''$ be the set of all intervals $(i,j)\in\III_n$ with length $l$ satisfying
$$
d_*\log n+2^{k}\sqrt{\log n}\leq l \leq  d_*\log n+2^{k+1}\sqrt{\log n}.
$$
It follows from~\eqref{eq:est_J_quadr} that for every $(i,j)\in \BBB_{n,k}''$,
$$
l I\left(\frac{u_n}{\sqrt l}\right)
=
u_n^2 J\left(\frac{l}{u_n^2}\right)
>
\frac{u_n^2}{2m_*}+ \frac {\delta} {u_n^{2}}\left(d_*\log n +2^{k}\sqrt {\log n}-\frac{u_n^2}{s_*^2}\right)^2
>
\log n + c_1 2^{2k}.
$$
By Theorem~\ref{theo:petrov_est} we obtain
$$
\P\left[\frac{S_l}{\sqrt l}>u_n\right]
\leq
c_2 u_n^{-1} \exp \left\{- l I\left(\frac{u_n}{\sqrt l}\right)\right\}
\leq
\frac{c_3}{n\sqrt {\log n}} e^{-c_4 2^{2k}}.
$$
Let $\BBB''_{n,+}(A)$ be the set of all intervals $(i,j)\in\III_n$ such that $l_n^+ \leq l \leq (d_*+\eps)\log n$. The number of intervals in $\BBB_{n,k}''$ is at most $c_5 2^k n\sqrt {\log n}$.  For the exceedance probability over the set $\BBB''_{n,+}(A)$ we obtain the estimate
$$
\P\left[\max_{(i,j) \in \BBB_{n,+}''(A)} \ZZZ_{i,j}>u_n\right] \leq c_6 \sum_{k=[\log_2 A]}^{\infty} 2^k e^{-c_4 2^{2k}}.
$$
The right-hand side goes to $0$ as $A\to\infty$. Exceedance probability over the set $\BBB_{n,-}''(A)$ consisting of all intervals $(i,j)\in\III_n$ with length satisfying $(d_*-\eps)\log n \leq l \leq l_n^-$ can be estimated analogously.
\end{proof}
\begin{lemma}
Let $\delta>0$ be arbitrary. Let $\BBB_n'''$ be the set of all intervals $(i,j)\in\III_n$ whose length $l=j-i$ satisfies $l\geq \log^{1+\delta} n$. Then,
$$
\lim_{n\to\infty} \P\left[\max_{(i,j) \in \BBB_n'''} \ZZZ_{i,j} >u_n\right]=0.
$$
\end{lemma}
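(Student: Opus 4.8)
The plan is to control the long intervals by a moderate/large–deviation bound combined with a covering by discrete squares whose side is of the order of the extremal decorrelation length; the hypothesis $m_*>1$ is exactly what leaves enough room for this to work, and the main difficulty is that a crude union bound over all (up to $n^2$) long intervals is too weak.

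\emph{One long interval.} If $(i,j)\in\BBB_n'''$ then $l:=j-i\geq\log^{1+\delta} n$, whereas $u_n\sim\sqrt{2m_*\log n}$ by~\eqref{eq:def_un_supergauss}; hence $u_n/\sqrt l\to 0$ uniformly over $\BBB_n'''$ as $n\to\infty$. Since $\E X_1=0$ and $\Var X_1=1$ we have $I(s)=\tfrac{s^2}{2}+o(s^2)$ as $s\downarrow 0$, so, fixing $\eta\in(0,\,1-m_*^{-1})$ (possible because $m_*>1$), there is $n_0$ such that
\[
l\,I\!\left(\frac{u_n}{\sqrt l}\right)\ \geq\ (1-\eta)\,\frac{u_n^2}{2}\ =\ (1-\eta)\,m_*(\log n+\tau)
\]
for all $(i,j)\in\BBB_n'''$ and all $n\geq n_0$. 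By Lemma~\ref{lem:ld_est} this gives $\P[\ZZZ_{i,j}>u_n]\leq c\,n^{-(1-\eta)m_*}$ for each such interval, and $(1-\eta)m_*>1$.

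\emph{Covering by squares.} A union bound over the at most $n^2$ intervals of $\BBB_n'''$ would require the exponent $(1-\eta)m_*$ to exceed $2$, which may fail; so, as in Section~\ref{subsec:non_opt_subgauss}, I would split $\BBB_n'''$ into dyadic length blocks and cover each block by discrete squares of side comparable to the decorrelation length, which for intervals of length $l$ is of order $l/u_n^2$. For $k\in\N_0$ let $\BBB_{n,k}'''$ consist of the $(i,j)\in\III_n$ with $2^k\log^{1+\delta} n\leq j-i<2^{k+1}\log^{1+\delta} n$, so $\BBB_n'''\subseteq\bigcup_{k\geq 0}\BBB_{n,k}'''$. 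Put $r_{n,k}=\max\{1,\lfloor 2^k\log^{1+\delta} n/u_n^2\rfloor\}$; since $2^k\log^{1+\delta} n/u_n^2\geq\log^{\delta} n/(2m_*)\to\infty$ uniformly in $k\geq 0$, one has $r_{n,k}\geq\tfrac12\,2^k\log^{1+\delta} n/u_n^2$ for all large $n$, and $\BBB_{n,k}'''$ is covered by at most $c_1\,n\,2^k\log^{1+\delta} n/r_{n,k}^2\leq c_2\,n\,u_n^4/(2^k\log^{1+\delta} n)$ discrete squares $\TTT_{r_{n,k}}(x,x+l)$ with base length $l\geq 2^k\log^{1+\delta} n$.

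\emph{Per square and summation.} For one such square I would argue as in the proof of Lemma~\ref{lem:double_probab_subgauss}: introducing the two boundary–correction variables $D^{+},D^{-}$ (the maxima of the increments of the walk over $r_{n,k}$ steps beyond the two endpoints of the base interval) and applying the inequality from Theorem~2.4 on p.~52 in~\cite{petrov_book} twice to replace $D^{+},D^{-}$ by independent copies of $S_{r_{n,k}}$, one gets
\[
\P\!\Big[\max_{(i,j)\in\TTT_{r_{n,k}}(x,x+l)}\!\ZZZ_{i,j}>u_n\Big]\ \leq\ 4\,\P\!\Big[\frac{S_{l+2r_{n,k}}}{\sqrt{l+2r_{n,k}}}>u_n-\frac{c_4}{u_n}\Big],
\]
where $c_4$ is an absolute constant and the bound $r_{n,k}\leq l/u_n^2$ is what keeps the shifted threshold at $u_n-O(u_n^{-1})$. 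By Theorem~\ref{theo:petrov_est} (legitimate since $u_n=o(\sqrt l)$ uniformly over $\BBB_n'''$) and the estimate of the first step, each square contributes at most $(C/u_n)\exp\{-(1-\eta)u_n^2/2\}=C'\,n^{-(1-\eta)m_*}/u_n$. Multiplying by the number of squares and summing the geometric series in $k$,
\[
\P\!\Big[\max_{(i,j)\in\BBB_n'''}\ZZZ_{i,j}>u_n\Big]\ \leq\ c_5\,\frac{n\,u_n^{3}}{\log^{1+\delta} n}\,n^{-(1-\eta)m_*}\ \leq\ c_6\,n^{\,1-(1-\eta)m_*}\,\log^{\frac12-\delta} n\ \ton 0,
\]
because $1-(1-\eta)m_*<0$. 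The main obstacle, as noted, is the gap between the (up to) $n^2$ intervals and the much smaller number of effectively independent long ones; meeting it requires the square–covering above with the side carefully tuned to $r_{n,k}\asymp l/u_n^2$ — small enough that a square contributes only $O(1)$ times the individual exceedance probability, large enough that the squares are few. Everything else is routine, using only $I(s)\sim s^2/2$ as $s\downarrow 0$ and the Petrov-type bounds of Section~\ref{sec:large_dev}.
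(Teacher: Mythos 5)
Your proof is correct and follows essentially the same route as the paper: a dyadic decomposition in the length variable, covering each dyadic block by discrete squares $\TTT_r(x,x+l)$ of side $r\asymp l/\log n$, a per-square bound of order $n^{-(1-\eta)m_*}$ obtained from $I(s)\sim s^2/2$ as $s\downarrow 0$ together with a Petrov-type estimate (exploiting $m_*>1$), and then a geometric summation over the dyadic scales. The only cosmetic difference is that you derive the per-square bound via the boundary-correction trick $D^\pm$ plus Theorem~\ref{theo:petrov_est} (as in Lemma~\ref{lem:double_probab_subgauss}), whereas the paper repackages the argument of Lemma~\ref{lem:local_est_sub_gauss} with the modified estimate $F_{l,u_n}(s)\leq e^s\exp\{-(1-\eps)u_n^2/2\}$; both yield the same conclusion.
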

\begin{proof}
For $k\in\N$ such that $2^k>\log n$ let $\BBB_{n,k}'''$ be the set of all intervals $(i,j)\in\III_n$ with length $l$ satisfying $2^k\leq l\leq 2^{k+1}$. We can cover this set by discrete squares of the form $\TTT_{r_k}(x, x+l)$, see Section~\ref{subsec:local_supergauss}, where $r_k=[2^k/\log n]$ and $x,l\in r_k\Z$. We need at most $c2^{-k} n \log^2 n$ such squares. The exceedance probability over each such square can be estimated using the same method as in Lemma~\ref{lem:local_est_sub_gauss}.
For every $\eps>0$ there is $\eta>0$ such that $I(y)\geq (1-\eps) y^2/2$ for all $y\in [0,\eta]$. Recall that $l\geq \log^{1+\delta} n$ and hence,  $u_n/\sqrt l \leq \eta$ for all sufficiently large $n$. By Lemma~\ref{lem:ld_est}, for every $s\in [0,u_n^2]$,
$$
F_{l,u_n}(s):=\P\left[\frac{S_l}{\sqrt l}>u_n-\frac{s}{u_n}\right]\leq \exp\left\{- l I\left(\frac{u_n-\frac{s}{u_n}}{\sqrt l}\right) \right\}
\leq
e^s \exp\left\{-\frac{(1-\eps)u_n^2}2\right\}.
$$
This inequality continues to hold for $s\geq u_n^2$, since in this case the right-hand side is greater than $1$. Arguing in the same way as in the proof of Lemma~\ref{lem:local_est_sub_gauss}, but replacing~\eqref{eq:asympt_F_lu} by the above inequality, we obtain
$$
\P\left[\max_{(i,j)\in \TTT_{r_k}(x,x+l)} \ZZZ_{i,j}>u_n\right] \leq  C\exp\left\{-\frac{(1-\eps)u_n^2}2\right\}
\leq
Cn^{-(1+\eps)}.
$$
For the exceedance probability over the set $\BBB_{n,k}'''$ we obtain
$$
\P\left[\max_{(i,j) \in \BBB_{n,k}'''} \ZZZ_{i,j} >u_n\right]\leq   C 2^{-k} n^{-\frac {\eps} 2}.
$$
To complete the proof, take the sum over all $k\in\N$.
% By the strong approximation theorem of Komlos--Major--Tusnady, we can construct, on some probability space,  the random variables $X_1,X_2,\ldots$  %together with standard Gaussian random variables $Y_1,Y_2,\ldots$ with partial sums $W_k=Y_1+\ldots+Y_k$, $k\in\N$, such that
%\begin{equation}\label{eq:kmt}
%\lim_{n\to\infty} \P\left[\max_{k=0,\ldots,n} |S_k-W_k| > C \log n\right]=0.
%\end{equation}
%For $0\leq A_1 < A_2\leq n$ recall the definition of $\MMM_n(A_1,A_2)$ in~\eqref{eq:def_Ln_A1A2} and write
%$$
%R_n(A_1,A_2)=\max_{\substack{0\leq i < j\leq n\\ A_1\leq j-i\leq A_2}} \frac{W_j-W_i}{\sqrt {j-i}}.
%$$
%Let $l_n=\log^{1+\delta} n$. It follows from~\eqref{eq:kmt} that
%$$
%\lim_{n\to\infty} \P\left[|\MMM_n(l_n,n) - R_n(l_n,n)| > 2C \log^{\frac {1-\delta}2} n\right] = 0.
%$$
%On the other hand, by~\cite{shao} (one could also use~\cite{siegmund_venkatraman} or~\cite{kabluchko_unpub_07}), for every $\delta>0$,
%$$
%\P[R_n(l_n,n)> \sqrt{(2+\delta)\log n}]\leq \P[R_n(1,n)> \sqrt{(2+\delta)\log n}]\to 0, \;\;\, n\to\infty.
%$$
%It follows that
%$$
%\lim_{n\to\infty} \P[\MMM_n(l_n,n)> \sqrt{(2+2\delta)\log n}]=0.
%$$
%Recall that $u_n\sim \sqrt{2m_*\log n}$, where $m_*>1$. Choosing $\delta>0$ so small that $2+2\delta<2m_*$, we obtain the statement of the lemma.
\end{proof}

\subsection{Tightness  in the lattice case}
Now we allow the distribution of $X_1$ to be lattice and prove Theorem~\ref{theo:main_supergauss_lattice}.  The tightness of the sequence $\MMM^2_n-2m_*\log n$ follows from Lemmas~\ref{lem:tight_lattice_1} and~\ref{lem:tight_lattice_2} below.
We use the same notation as in Section~\ref{subsec:non_optimal_superg}. Namely, for $A>0$ we write $l_n^-=d_*\log n-A\sqrt{\log n}$ and $l_n^+=d_*\log n+A\sqrt{\log n}$.   Let $\III_n(A)$ be the set of all intervals $(i,j)\in\III_n$ with length $l=j-i$ satisfying $l\in [l_n^-, l_n^+]$. Write $\BBB_n(A)=\III_n\bsl \III_n(A)$.
Let $u_n=u_n(\tau)>0$ be defined by
\begin{equation}
u_n^2(\tau)=2m_*(\log n+\tau),\;\;\; \tau\in\R.
\end{equation}
\begin{lemma}\label{lem:tight_lattice_1}
For every $\eps>0$ we can find $\tau=\tau(\eps)$ such that $\P[\MMM_n>u_n(\tau)]<\eps$ for all large $n$.
\end{lemma}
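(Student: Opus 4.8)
The plan is to exploit the fact that we only need a \emph{one-sided} bound: we must show that $\MMM_n^2-2m_*\log n$ cannot be large, never that it cannot be small. This means we never have to invoke the sharp large-deviation asymptotics of Theorem~\ref{theo:large_dev} (the only ingredient in this section that genuinely needs $X_1$ to be non-lattice), and can get by with the distribution-free upper estimates of Theorem~\ref{theo:petrov_est} and Lemma~\ref{lem:ld_est} together with a crude union bound. Accordingly, fix $\eps>0$ and split $\III_n$ into the intervals whose length is \emph{optimal}, i.e.\ lies in $[d_*\log n-A\sqrt{\log n},\,d_*\log n+A\sqrt{\log n}]$ for a suitable $A$, and those whose length is not.

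For the non-optimal lengths I would simply quote the estimates already carried out in Section~\ref{subsec:non_optimal_superg}: the three lemmas establishing \eqref{eq:nonoptimal_supergauss} rely only on Theorem~\ref{theo:petrov_est}, Lemma~\ref{lem:ld_est} and the method of Lemma~\ref{lem:local_est_sub_gauss}, none of which uses the non-lattice hypothesis, so \eqref{eq:nonoptimal_supergauss} is available verbatim in the lattice case as well. Applying it with the parameter $\tau$ set equal to $0$, I fix $A=A(\eps)$ so large that $\limsup_{n\to\infty}\P\big[\max_{(i,j)\in\BBB_n(A)}\ZZZ_{i,j}>u_n(0)\big]<\eps/2$. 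Since $\tau\mapsto u_n(\tau)$ is increasing, this same inequality persists with $u_n(\tau)$ in place of $u_n(0)$ for every $\tau\geq 0$; this monotonicity is exactly what allows $A$ to be chosen before $\tau$ and thus avoids a circular choice of constants.

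For the optimal lengths I would use the convex function $J(v)=vI(1/\sqrt v)$ from the proof of Lemma~\ref{lem:one_point_super_gauss}. By \eqref{eq:J_derivatives} together with the uniqueness of its minimiser (which comes from \eqref{eq:s*_unique}), one has $J(v)\geq J(v_*)=\tfrac1{2m_*}$ for all $v>0$, whence $l\,I(u_n/\sqrt l)=u_n^2\,J(l/u_n^2)\geq u_n^2/(2m_*)=\log n+\tau$ for every length $l$. For $l$ in the optimal window $u_n/\sqrt l\to s_*<s_\infty$, so Theorem~\ref{theo:petrov_est} applies with a constant not depending on $l$ and gives, for all large $n$, the individual bound $\P[\ZZZ_{i,j}>u_n]\leq C u_n^{-1}e^{-l I(u_n/\sqrt l)}\leq C' e^{-\tau}/(n\sqrt{\log n})$. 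There are at most $C_1 A n\sqrt{\log n}$ intervals $(i,j)\in\III_n$ of optimal length, so the union bound caps their joint exceedance probability by $C_A e^{-\tau}$ with $C_A$ independent of $n$ and $\tau$. Choosing $\tau=\tau(\eps)\geq 0$ so large that $C_A e^{-\tau}<\eps/2$ and combining with the previous paragraph yields $\P[\MMM_n>u_n(\tau)]<\eps$ for all large $n$.

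I do not expect a genuine analytic obstacle here; the one thing to keep straight is the order of quantifiers ($A$ first, then the window count $C_A$, then $\tau$), which is legitimate only because of the monotonicity of $u_n(\tau)$ in $\tau$. The conceptual point worth stressing is that the $u_n^{-1}$ prefactor in the moderate/large-deviation estimate is precisely strong enough to absorb the extra factor $\sqrt{\log n}$ coming from the width of the optimal-length window, so a plain union bound over all near-optimal intervals—with no block decomposition, no decorrelation estimate and no double-sum argument—already suffices for one-sided tightness, and in particular works identically in the lattice and non-lattice cases.
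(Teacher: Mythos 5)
Your proposal is correct and follows essentially the same route as the paper: the non-optimal lengths are handled by \eqref{eq:nonoptimal_supergauss} (whose proof, as the paper itself notes, does not require the non-lattice assumption), and the optimal window $[d_*\log n-A\sqrt{\log n},\,d_*\log n+A\sqrt{\log n}]$ is controlled by the Petrov upper bound $\P[\ZZZ_{i,j}>u_n(\tau)]\leq C e^{-\tau}/(n\sqrt{\log n})$ together with a union bound over the $O(An\sqrt{\log n})$ intervals in that window. The minor expository differences (phrasing the inequality $l\,I(u_n/\sqrt l)\geq u_n^2/(2m_*)$ via the convex function $J$, and making the quantifier order / monotonicity of $u_n(\tau)$ explicit) do not change the substance.
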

\begin{proof}
Fix $\eps>0$. By~\eqref{eq:nonoptimal_supergauss} we can find $A=A(\eps)$ such that  for large $n$,
\begin{equation}\label{eq:tight_latt_tech1}
\P\left[\max_{(i,j)\in\BBB_n(A)}\ZZZ_{i,j}>u_n(0)\right]<\frac{\eps} 2.
\end{equation}
We estimate the exceedance probability over $\III_n(A)$. Let $\tau>0$. For every interval $(i,j)\in\III_n(A)$ with length $l\in [l_n^-, l_n^+]$ we have, by Theorem~\ref{theo:petrov_est} and~\eqref{eq:s*_unique},
$$
\P[\ZZZ_{i,j}>u_n(\tau)]
\leq
C u_n^{-1}\exp\left\{-l I\left(\frac{u_n}{\sqrt l}\right)\right\}
\leq
\frac C{\sqrt{\log n}} \exp\left\{-\frac{u_n^2}{2m_*}\right\}
=
\frac {Ce^{-\tau}}{n\sqrt{\log n}}.
$$
Since the number of elements in $\III_n(A)$ is at most $4A n \sqrt{\log n}$, we obtain that there is $C_1$ depending only on $A$ such that
\begin{equation}\label{eq:tight_latt_tech2}
\P\left[\max_{(i,j)\in\III_n(A)}\ZZZ_{i,j} > u_n(\tau)\right]\leq C_1 e^{-\tau},
\end{equation}
We can choose $\tau>0$ so large that $C_1 e^{-\tau}< \eps/2$. To complete the proof combine~\eqref{eq:tight_latt_tech1} and~\eqref{eq:tight_latt_tech2}.
\end{proof}

We now give a lower estimate for the exceedance probability. Let $w_n=[3d_*\log n]$. Take $A=1$. Define $\JJJ_n(z)$ as in Lemma~\ref{lem:exc_JJJ_supergauss}.
\begin{lemma}\label{lem:exc_JJJ_lattice_supergauss}
There is  a constant $C$ such that for all $z\in\Z$, large $n\in\N$, and all $|\tau|<\sqrt{\log n}$,
$$
\P\left[\max_{(i,j)\in \JJJ_n(z)} \ZZZ_{i,j}>u_n(\tau)\right] \geq \frac{C e^{-\tau}}{n/w_n}.
$$
\end{lemma}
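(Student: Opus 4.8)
The plan is to prove the lower bound by the second moment method (the Paley--Zygmund inequality) applied to the number of exceedances inside the single block $\JJJ_n(z)$, rather than by a Bonferroni-type lower bound: inside one block the first and second moments of this count are of the same order, so inclusion--exclusion is too wasteful. Set $N_n=\sum_{(i,j)\in\JJJ_n(z)}\ind\{\ZZZ_{i,j}>u_n(\tau)\}$. Since $\{\max_{(i,j)\in\JJJ_n(z)}\ZZZ_{i,j}>u_n(\tau)\}=\{N_n\geq 1\}$, the Cauchy--Schwarz (Paley--Zygmund) inequality gives $\P[N_n\geq 1]\geq (\E N_n)^2/\E N_n^2$, so it suffices to prove $\E N_n\geq c_1 e^{-\tau} w_n/n$ and $\E N_n^2\leq c_2 e^{-\tau} w_n/n$ with constants $c_1,c_2>0$ independent of $n$, $z$ and of $\tau$ in the range $|\tau|<\sqrt{\log n}$.

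For the lower bound on $\E N_n$ the only genuinely new ingredient, compared with the non-lattice case, is a lower bound on the individual exceedance probability. Fix a length $l\in[l_n^-,l_n^+]$ and put $x=\sqrt l\,u_n$, so that $x/l=u_n/\sqrt l\to s_*$, uniformly in $l$ and in $|\tau|<\sqrt{\log n}$. I would argue by exponential tilting: letting $m_0$ be the smallest lattice value of $S_l$ that is $\geq x$ (so $m_0=x+O(1)$) and tilting to the law $\widetilde\P$ under which $\varphi'(t)=m_0/l$, one gets $\P[S_l\geq x]\geq\P[S_l=m_0]=e^{-lI(m_0/l)}\,\widetilde\P[S_l=m_0]$. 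Under $\widetilde\P$ the summands are i.i.d., lattice with the same span, have finite positive variance $\varphi''(t)$, and $\widetilde\E[S_l]=m_0$; the lattice local central limit theorem therefore gives $\widetilde\P[S_l=m_0]\geq c_3/\sqrt l$, whence $\P[S_l\geq x]\geq (c_3/\sqrt l)\,e^{-lI(u_n/\sqrt l)}$ with $c_3>0$ uniform over the relevant $l$ and $\tau$ (all of $t=I'(m_0/l)$, $\varphi''(t)$ and $u_n/\sqrt l$ staying in fixed compact sets bounded away from the endpoints, by \eqref{eq:varphi_def}, \eqref{eq:I_supergauss}, \eqref{eq:s*_unique}). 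The Taylor expansion of $J(v)=vI(1/\sqrt v)$ about $v_*=s_*^{-2}$ carried out in the proof of Lemma~\ref{lem:one_point_super_gauss}, which nowhere uses the non-lattice assumption, shows $lI(u_n/\sqrt l)=u_n^2/(2m_*)+\beta_*^2 a^2/2+o(1)=\log n+\tau+O(1)$ uniformly for $l=d_*\log n+a\sqrt{\log n}$ with $|a|\leq 1$ and $|\tau|<\sqrt{\log n}$. Hence $\P[\ZZZ_{i,j}>u_n(\tau)]\geq c_4 e^{-\tau}/(n\sqrt{\log n})$ for every $(i,j)\in\JJJ_n(z)$, and since $|\JJJ_n(z)|\geq w_n\sqrt{\log n}$, summing yields $\E N_n\geq c_1 e^{-\tau} w_n/n$. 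The matching upper bound $\P[\ZZZ_{i,j}>u_n(\tau)]\leq C e^{-\tau}/(n\sqrt{\log n})$, already used in the proof of Lemma~\ref{lem:tight_lattice_1} via Theorem~\ref{theo:petrov_est} and \eqref{eq:I_supergauss}, gives $\E N_n\leq C' e^{-\tau} w_n/n$.

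For the second moment, write $\E N_n^2=\E N_n+\Sigma_n$, where $\Sigma_n=\sum\P[\ZZZ_{i_1,j_1}>u_n(\tau),\,\ZZZ_{i_2,j_2}>u_n(\tau)]$ is summed over ordered pairs of distinct intervals $K_1=(i_1,j_1)$, $K_2=(i_2,j_2)$ in $\JJJ_n(z)$. Lemma~\ref{lem:double_probab_supergauss}, whose proof uses only Theorem~\ref{theo:petrov_est} and Lemma~\ref{lem:ld_est} and is therefore valid in the lattice case, bounds each summand by $\tfrac{C_1 e^{-\tau}}{n\sqrt{\log n}}\,e^{-C_2\Delta(K_1,K_2)}$, and Lemma~\ref{lem:Delta_sup_norm}, applied with $q_n=1$ so that $\VVV_n=\Z^2\cap([0,w_n]\times[l_n^-,l_n^+])$ is precisely the index set of $\JJJ_n(z)$ up to translation, gives $\Delta(K_1,K_2)\geq c\|v_1-v_2\|$ for the corresponding lattice points. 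Summing the geometric series, $\sum_{K_2}e^{-cC_2\|v_1-v_2\|}\leq\sum_{v\in\Z^2}e^{-cC_2\|v\|}=O(1)$ uniformly in $K_1$, so $\Sigma_n\leq |\JJJ_n(z)|\cdot\tfrac{C_1 e^{-\tau}}{n\sqrt{\log n}}\cdot O(1)\leq C_5 e^{-\tau}w_n/n$ because $|\JJJ_n(z)|\leq 3w_n\sqrt{\log n}$. Combining this with the upper bound on $\E N_n$ gives $\E N_n^2\leq c_2 e^{-\tau} w_n/n$, and the Paley--Zygmund inequality then yields $\P[N_n\geq 1]\geq (c_1^2/c_2)\,e^{-\tau} w_n/n$, which is the claim with $C=c_1^2/c_2$.

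The main obstacle is the individual lower bound in the lattice case: one must verify that all quantities entering the tilting estimate and the Taylor expansion — the tilting parameter, the tilted variance, the ratio $u_n/\sqrt l$, and in particular the $O(1)$ corrections coming from rounding $x$ up to the nearest lattice point and from the Taylor remainder of $J$ — stay genuinely bounded (not merely $o(\log n)$) uniformly over $l\in[l_n^-,l_n^+]$ and $|\tau|<\sqrt{\log n}$. Once this bookkeeping is settled, the rest is a routine application of the second moment method together with the already established decorrelation estimate (Lemma~\ref{lem:double_probab_supergauss}) and the combinatorial Lemma~\ref{lem:Delta_sup_norm}.
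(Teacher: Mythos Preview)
Your proof is correct and takes a genuinely different route from the paper's own argument.

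The paper proves the lower bound by a Bonferroni inequality, but \emph{not} on the full block $\JJJ_n(z)$: it first thins to a sparse subgrid $\calJ_n(B)=B\Z^2\cap\JJJ_n(0)$ with fixed mesh $B$, so that the double sum picks up an extra factor $\sum_{v\in B\Z^2\setminus\{0\}} e^{-c\|v\|}$ that can be driven below the single-sum constant by taking $B$ large. For the individual lower bound the paper simply cites the lattice-friendly converse to Petrov's estimate (Theorem~6 of~\cite{petrov_ld}), rather than reproving it via tilting and the local CLT as you do.

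Your second-moment argument dispenses with the thinning parameter $B$ altogether: on the full grid the first moment and the off-diagonal second moment are indeed of the same order (so a naive Bonferroni would fail, as you note), but Paley--Zygmund handles exactly this situation and returns the correct $e^{-\tau}w_n/n$ scaling directly. The price you pay is that you also need the \emph{upper} bound on the individual probability to control $\E N_n^2$, but that was already established in Lemma~\ref{lem:tight_lattice_1}. Both arguments rest on the same two ingredients from the paper---the decorrelation estimate of Lemma~\ref{lem:double_probab_supergauss} (which is lattice-free) and Lemma~\ref{lem:Delta_sup_norm}---so there is no essential gain or loss in strength; your version is slightly more self-contained (you sketch the Petrov-type lower bound rather than cite it), while the paper's is marginally shorter once one accepts the citation.
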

\begin{proof}
Without restriction of generality let $z=0$.
%To get rid of the boundary effects we introduce two sequences $\eps_n\to \infty$ and $\delta_n\to\infty$ such that $\eps_n=o(w_n)$ and $\delta_n=o(\sqrt{\log n})$ as $n\to\infty$.
Take some $B\in\N$. Let $\calJ_n(B)$ be a two-dimensional discrete grid with mesh size $B$ defined by
$$
\calJ_n(B)=B\Z^2\cap \JJJ_n(0)=\{(i,j)\in B\Z^2:  i\in [0, w_n], j-i\in [l_n^-, l_n^+]\}.
$$
Then, by the Bonferroni inequality,
\begin{equation}\label{eq:tech_bonferr}
\P\left[\max_{(i,j)\in \JJJ_n(0)} \ZZZ_{i,j}>u_n(\tau)\right] \geq S_n'(B)-S_n''(B),
\end{equation}
where $S_n'(B)$ and $S_n''(B)$ are defined by
\begin{align*}
S_n'(B)=\sum_{(i,j)\in \calJ_n(B)} \P[\ZZZ_{i,j}>u_n(\tau)],\;\;\;
S_n''(B)=\sum_{} \P[\ZZZ_{v_1}>u_n(\tau), \ZZZ_{v_2}>u_n(\tau)],
\end{align*}
and the second sum is taken over all $v_1\neq v_2$ with $v_1=(i_1,j_1)\in \calJ_n(B)$ and $v_2=(i_2,j_2)\in \calJ_n(B)$.
We estimate $S_n''(B)$ first. By Lemma~\ref{lem:double_probab_supergauss} and Lemma~\ref{lem:Delta_sup_norm},
$$
\P[\ZZZ_{v_1}>u_n(\tau), \ZZZ_{v_2}>u_n(\tau)]\leq \frac{Ce^{-\tau}}{n\sqrt{\log n}} e^{-c\|v_1-v_2\|}.
$$
%We have $c_1w_n\sqrt {\log n}\leq |\calJ_n(B)|\leq c_2w_n\sqrt {\log n}$.
It follows that
\begin{equation}\label{eq:tech_Sn''B}
S_n''(B)
\leq
\frac{C_1e^{-\tau}}{n\sqrt{\log n}} \sum e^{-c\|v_1-v_2\|}
\leq
\frac{C_1e^{-\tau}}{n\sqrt{\log n}} |\calJ_n(B)| \sum_{v\in B\Z^2} e^{-c\|v\|}.
\end{equation}
%where $C_B=\sum_{v\in \Z^2} e^{-c\|v\|}$.
Now we estimate $S_n'(B)$. By Theorem~6 of~\cite{petrov_ld} (which is a converse inequality to Theorem~\ref{theo:petrov_est}),
$$
\P[\ZZZ_{i,j}>u_n(\tau)]
\geq
C_2 u_n^{-1}\exp\left\{-l I\left(\frac{u_n}{\sqrt l}\right)\right\}
\geq
%\frac C{\sqrt{\log n}} \exp\left\{-\frac{u_n^2}{2m_*}\right\}
%=
\frac {C_3e^{-\tau}}{n\sqrt{\log n}}.
$$
Hence,
\begin{equation}\label{eq:tech_Sn'B}
S_n'(B)\geq |\calJ_n(B)| \frac {C_3e^{-\tau}}{n\sqrt{\log n}}.
\end{equation}
We can choose $B$ so large that $C_1\sum_{v\in B\Z^2} e^{-c\|v\|} < C_3/2$. Taking~\eqref{eq:tech_bonferr}, \eqref{eq:tech_Sn''B}, \eqref{eq:tech_Sn'B} together and noting that $|\calJ_n(B)|>c w_n\sqrt{\log n}$  yields the statement of the lemma.
\end{proof}

\begin{lemma}\label{lem:tight_lattice_2}
For every $\eps>0$ we can find $\tau=\tau(\eps)$ (sufficiently close to $-\infty$) such that $\P[\MMM_n < u_n(\tau)]<\eps$ for all large $n$.
\end{lemma}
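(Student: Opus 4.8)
The plan is to convert the one-block lower bound of Lemma~\ref{lem:exc_JJJ_lattice_supergauss} into a global lower bound for $\MMM_n$ by exhibiting a large family of \emph{independent} block exceedance events. First I would fix $A=1$ and $w_n=[3d_*\log n]$ exactly as in that lemma, and consider the blocks $\JJJ_n(z)$ placed at the equally spaced base points $z_k=2kw_n$, for $k=0,1,\ldots,N_n-1$, where $N_n$ is the largest integer with $2(N_n-1)w_n+w_n+l_n^+\leq n$; then $N_n\sim n/(2w_n)$, every such $\JJJ_n(z_k)$ is contained in $\III_n$, and therefore $\MMM_n\geq\max_{(i,j)\in\JJJ_n(z_k)}\ZZZ_{i,j}$ for each $k$.

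The key point is that these block events are mutually independent. Indeed, the random variable $\max_{(i,j)\in\JJJ_n(z_k)}\ZZZ_{i,j}$ is a function of the increments $X_{z_k+1},\ldots,X_{z_k+w_n-1+l_n^+}$ only, and for all large $n$ the spacing $2w_n$ between consecutive base points exceeds $w_n-1+l_n^+$ (since $w_n\sim 3d_*\log n$ while $l_n^+\sim d_*\log n$), so the relevant index ranges for distinct $k$ are pairwise disjoint. Writing $p_{n,k}$ for the exceedance probability of the $k$-th block at the level $u_n(\tau)$, this gives
\[
\P[\MMM_n<u_n(\tau)]\leq\prod_{k=0}^{N_n-1}(1-p_{n,k}).
\]

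Now fix $\tau\in\R$. Since $\tau$ is a constant, $|\tau|<\sqrt{\log n}$ for all large $n$, so Lemma~\ref{lem:exc_JJJ_lattice_supergauss} applies and yields $p_{n,k}\geq Cw_ne^{-\tau}/n$ uniformly in $k$ for $n$ large. Using the elementary inequality $1-x\leq e^{-x}$ and $N_nw_n/n\to 1/2$, we then obtain
\[
\P[\MMM_n<u_n(\tau)]\leq\exp\Bigl\{-N_n\,\tfrac{Cw_ne^{-\tau}}{n}\Bigr\}\longrightarrow\exp\Bigl\{-\tfrac{C}{2}e^{-\tau}\Bigr\}\qquad(n\to\infty).
\]
Given $\eps>0$, it suffices to choose $\tau=\tau(\eps)$ so negative that $\exp\{-\tfrac{C}{2}e^{-\tau}\}<\eps/2$; then $\P[\MMM_n<u_n(\tau)]<\eps$ for all sufficiently large $n$, which is the assertion.

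The only genuinely delicate step is the independence claim, together with the accompanying bookkeeping that the chosen blocks really lie inside $\III_n$ and that their number is asymptotic to $n/(2w_n)$; once this is in place, the rest is a routine application of Lemma~\ref{lem:exc_JJJ_lattice_supergauss}. Note that, just as in Section~\ref{subsec:non_optimal_superg}, the non-lattice assumption plays no role here — the lower bound needed from Lemma~\ref{lem:exc_JJJ_lattice_supergauss} relies only on the converse deviation inequality of~\cite{petrov_ld}, which is valid in the lattice case as well.
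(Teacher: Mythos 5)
Your proof is correct and takes essentially the same route as the paper's: place blocks $\JJJ_n(z)$ at base points in $2w_n\Z$, observe that these blocks are independent (since $l_n^+ < w_n$ for large $n$), bound the non-exceedance probability by a product, invoke Lemma~\ref{lem:exc_JJJ_lattice_supergauss} on each factor, and send $\tau\to-\infty$. The only cosmetic difference is that you count $N_n\sim n/(2w_n)$ blocks while the paper is content with the cruder lower bound $n/(3w_n)$ — both lead to the same conclusion.
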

\begin{proof}
 Consider the sets $\JJJ_n(z)$, where $z\in 2w_n\Z$. There are at least $n/(3w_n)$ such sets contained in $\III_n$.  The exceedance events over these sets are independent, hence,
\begin{align*}
%\P\left[\max_{(i,j)\in \III_n(1)} \ZZZ_{i,j}< u_n(\tau)\right]
\P[\MMM_n < u_n(\tau)]
\leq
\left(1-\P\left[\max_{(i,j)\in\JJJ_n(0)}\ZZZ_{i,j}>u_n(\tau)\right]\right)^{\frac n{3w_n}}
\leq
\left(1-\frac{C e^{-\tau}}{n/w_n}\right)^{\frac n{3w_n}}.
\end{align*}
The right hand-side converges to $\exp\{-\frac 13 Ce^{-\tau}\}$, as $n\to\infty$.  It follows that we can choose $\tau$ so close to $-\infty$ that for all large $n$, the right-hand side  is smaller than $\eps$.
%Then, we have
%$$
%\P[\MMM_n < u_n(\tau)]\leq \P\left[\max_{(i,j)\in \III_n(1)}\ZZZ_{i,j} < u_n(\tau)\right] < \eps.
%$$
The proof is complete.
\end{proof}

\subsection{Pickands-type constant}\label{subsec:pickands_type_const}
In this section we provide two alternative expressions for the Pickands-type constant $H_*$; see~\eqref{eq:beta*_H*2}.  Let $Y_1,Y_2,\ldots$ be non-degenerate i.i.d.\ random variables such that $\E e^{Y_k}=1$, $k\in\N$.  Independently, let also $Y_{-1},Y_{-2},\ldots$ be i.i.d.\ random variables such that
\begin{equation}
\P[Y_{-k}\in dy]= e^y \P[Y_k\in dy],\;\;\; k\in\N.
\end{equation}
Note that $\E e^{-Y_{-k}}=1$, $k\in\N$. Define a stochastic process $\{W_k, k\in\Z\}$ by $W_0=0$ and
\begin{equation}
W_{k}=Y_1+\ldots+Y_k,
\;\;\;
W_{-k}=Y_{-1}+\ldots+Y_{-k},
\;\;\;
k\in \N.
\end{equation}
\begin{lemma}\label{lem:pickands_const_supergauss}
Let $L_N=\max_{k=0,\ldots,N} W_k$, $k\in\N$. Then,
\begin{equation}\label{eq:pickands_const_supergauss}
\lim_{N\to\infty} \frac 1 N \E e^{L_N} = \P\left[\forall k\in\N: W_k<0, W_{-k}\leq 0\right].
%R_+ R_-\in (0,\infty),
\end{equation}
%where $R_+=\P[\max_{k\in\N} W_k<0]$ and $R_-=\P[\max_{k\in\N} W_{-k}\leq 0]$.
%\begin{align*}
%R_+&=\P[\max_{k\in\N} W_k<0]=\exp\left\{-\sum_{k=1}^{\infty} \frac 1k \P[W_k>0]\right\},\\
%R_-&=\P[\max_{k\in\N} W_{-k}\leq 0]=\exp\left\{-\sum_{k=1}^{\infty} \frac 1k \P[W_{-k}\geq 0]\right\}.
%\end{align*}
\end{lemma}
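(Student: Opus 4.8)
The plan is to run the classical argmax decomposition together with an Esscher change of measure and a time reversal; this is the discrete counterpart of the standard representations of Pickands-type constants. Note first that $\E e^{W_k}=1$ gives $e^{L_N}\le\sum_{k=0}^N e^{W_k}$, so $\E e^{L_N}\le N+1<\infty$. Let $\theta_N\in\{0,\dots,N\}$ be the \emph{last} index with $W_{\theta_N}=L_N$. The events $\{\theta_N=m\}$, $m=0,\dots,N$, partition the sample space, and $\{\theta_N=m\}=A_m\cap B_m$, where $A_m=\{W_j\le W_m,\ 0\le j<m\}\in\mathcal F_m:=\sigma(Y_1,\dots,Y_m)$ while $B_m=\{W_{m+i}-W_m<0,\ 1\le i\le N-m\}$ depends only on $Y_{m+1},\dots,Y_N$ and is therefore independent of $\mathcal F_m$. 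Hence
\begin{equation*}
\E e^{L_N}=\sum_{m=0}^N\E\big[e^{W_m}\ind_{A_m}\big]\,\P(B_m).
\end{equation*}

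The future factor is simply $\P(B_m)=\P[W_1<0,\dots,W_{N-m}<0]=:p_{N-m}$, since $(W_{m+i}-W_m)_i\eqdistr(W_i)_i$. For the past factor, tilt $\mathcal F_m$ by the mean-one density $e^{W_m}$: under the tilted law the increments $Y_1,\dots,Y_m$ become i.i.d.\ with law $e^y\,\P[Y_1\in dy]$, which is precisely the law of $Y_{-1}$. Reversing time about $m$, the path $k\mapsto W_m-W_{m-k}$, $k=0,\dots,m$, is then a random walk with increments of that law, hence has the same distribution as $(W_{-k})_{k=0}^m$; under this identification $A_m$ becomes $\{W_{-1}\ge0,\dots,W_{-m}\ge0\}$ (Feller-type duality for walk maxima). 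Therefore $\E[e^{W_m}\ind_{A_m}]=\P[W_{-1}\ge0,\dots,W_{-m}\ge0]=:q_m$, and we obtain the exact identity $\E e^{L_N}=\sum_{m=0}^N q_m\,p_{N-m}$.

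Both $(q_m)_m$ and $(p_j)_j$ are non-increasing (nested intersections) and bounded by $1$, so by continuity from above $q_m\downarrow q_\infty:=\P[\forall k\in\N:\,W_{-k}\ge0]$ and $p_j\downarrow p_\infty:=\P[\forall k\in\N:\,W_k<0]$. An elementary estimate gives $\frac1N\sum_{m=0}^N a_mb_{N-m}\to a_\infty b_\infty$ whenever $0\le a_m,b_j\le1$, $a_m\to a_\infty$, $b_j\to b_\infty$: split the sum into the ranges $m<M$, $M\le m\le N-M$, $m>N-M$; the middle block is within $o(N)$ of $N a_\infty b_\infty$ and the two end blocks contribute $O(M/N)$. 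Hence $\frac1N\E e^{L_N}\to q_\infty p_\infty=\P[\forall k\in\N:\,W_k<0,\ W_{-k}\ge0]$ by the independence of $(W_k)_{k\ge1}$ and $(W_{-k})_{k\ge1}$. Finally both factors lie strictly between $0$ and $1$: by Jensen $\E Y_1\le\log\E e^{Y_1}=0$, with equality only in the degenerate case, so $(W_k)$ has strictly negative drift while $(W_{-k})$ — carrying the Esscher-tilted increments — has strictly positive drift; consequently $q_\infty,p_\infty\in(0,1)$, and in particular $H_*\in(0,1)$.

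The delicate point is the tilting-and-reversal step: one has to check carefully that the reversed increments seen from the argmax have exactly the law defining $(W_{-k})$, and keep precise track of which inequalities are strict — passing between the first and the last argmax interchanges strict and weak inequalities and swaps the two half-line events, so a little bookkeeping is needed to land exactly on the right-hand side. The convolution limit and the finiteness and positivity bounds are then routine.
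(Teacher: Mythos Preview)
Your argmax--tilting argument is correct and takes a genuinely different route from the paper. The paper writes the generating function $g(w)=\sum_{N\ge 0} w^N\E e^{L_N}$, invokes an identity from Spitzer's book to obtain $g(w)\sim R_+R_-/(1-w)^2$ as $w\uparrow 1$, and then applies the Hardy--Littlewood Tauberian theorem. Your approach is more elementary and self-contained --- no Tauberian theorems, no appeal to Spitzer's factorization --- and the exact convolution identity $\E e^{L_N}=\sum_{m=0}^N q_m\,p_{N-m}$ is a clean byproduct. The paper's route, in exchange, ties the constant directly to the Wiener--Hopf-type exponential formulae for $R_\pm$.

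About your final paragraph: no ``bookkeeping'' with first versus last argmax will reproduce the literal right-hand side $\P[\forall k\in\N:\,W_k<0,\ W_{-k}\le 0]$, because that probability is zero. Indeed $\E e^{-Y_{-1}}=1$ with $Y_{-1}$ non-degenerate gives $\E Y_{-1}>0$ by Jensen (you noted this yourself), so $W_{-k}\to+\infty$ a.s.\ and $\P[\forall k:\,W_{-k}\le 0]=0$. The stated right-hand side carries a sign slip (as does the claim $\E Y_k<0$ for all $k\in\Z$ in the remark that follows); the paper's own generating-function computation actually produces $\P[\forall k:\,W_k\le 0]\cdot\P[\forall k:\,W_{-k}>0]$, and this equals your $\P[\forall k:\,W_k<0]\cdot\P[\forall k:\,W_{-k}\ge 0]$ because $\P[W_k=0]=\P[W_{-k}=0]$ for every $k$ (on $\{W_k=0\}$ the tilting density $e^{W_k}$ equals $1$). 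So your derivation is complete as it stands; the discrepancy you sensed lies in the statement, not in your proof.
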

\begin{remark}\label{rem:spitzer}
%By~\cite{spitzer_book} it follows that $\lim_{N\to\infty} \frac 1 N \E e^{L_N}=R_-R_+$, where
By~\cite{spitzer_book} the probability on the right-hand side of~\eqref{eq:pickands_const_supergauss} is equal to $R_-R_+$, where
\begin{equation}
R_+=\exp\left\{-\sum_{k=1}^{\infty} \frac 1k \P[W_k>0]\right\},\;\;\;
R_-=\exp\left\{-\sum_{k=1}^{\infty} \frac 1k \P[W_{-k}\geq 0]\right\}.
\end{equation}
\end{remark}
\begin{remark}
By taking $Y_k=t_*X_k-\varphi(t_*)$ we obtain alternative expressions for the constant $H_*$ defined in~\eqref{eq:beta*_H*2}. It follows from~\eqref{eq:pickands_const_supergauss} that $0\leq H_*\leq 1$. Since $\E Y_k<0$, $k\in\Z$, we have $\lim_{k\to\pm \infty} W_k=-\infty$ a.s.\ by the law of large numbers. Therefore, we even have strict inequalities $0<H_*<1$.
\end{remark}
\begin{proof}[Proof of Lemma~\ref{lem:pickands_const_supergauss}]
Let $g(w)=\sum_{N=0}^{\infty} w^N \E e^{L_N}$, $|w|<1$. Then, by p.\ 207 of~\cite{spitzer_book},
\begin{align*}
g(w)
&=
\frac{1}{1-w} \exp\left\{- \sum_{k=1}^{\infty}\frac{w^k}{k} \E [(1-e^{W_k})\ind_{W_k>0}]\right\}\\
&=
\frac{1}{(1-w)^2} \exp\left\{- \sum_{k=1}^{\infty}\frac{w^k}{k} (\P[W_k>0]+\P[W_{-k}\leq 0])\right\}.
\end{align*}
It follows that $g(w)\sim R_+R_-/(1-w)^2$, as $w\uparrow 1$. The sequence $\E e^{L_N}$ is non-decreasing.  By the Hardy--Littlewood Tauberian theorem, see Corollary~1.7.3 on p.~40 in~\cite{bingham_book}, it follows that
$
\lim_{N\to\infty} \frac 1N \E e^{L_N} = R_+R_-. %\exp\left\{- \sum_{k=1}^{\infty}\frac{1}{k} (\P[W_k>0]+\P[W_{-k}\leq 0])\right\}.
$
In view of  Remark~\ref{rem:spitzer} this completes the proof.
\end{proof}
%\subsection*{Acknowledgment}

%********************************************************%
%************* PROOF SUBLOGARITHMIC *********************%
%********************************************************%

\section{Proof in the sublogarithmic case}\label{sec:proof_exp}

\subsection{Proof of Theorem~\ref{theo:main_exp}}
Let $\UUU_n=\max\{X_1,\ldots,X_n\}$. Take any $\frac 12 < \beta < \min (\frac{1}{\alpha},1)$. By the assumption of the theorem, we have
$$
n\P[X_1>\log^{\beta} n] > ne^{-\log^{\alpha \beta} n}\to\infty, \;\;\; n\to\infty,
$$
since $\alpha\beta < 1$. It follows that
$$
\lim_{n\to\infty} \P[\UUU_n\leq \log^{\beta} n]=0.
$$
The proof will be complete after we have shown that
\begin{equation}\label{eq:proof_sublog_tech1}
\lim_{n\to\infty} \P[\MMM_n(a\log n, n) \leq \log^{\beta} n]=1.
\end{equation}
Recall the definition of $I$ in~\eqref{eq:def_I}. Since $I$ is a convex function we can find $s_0>0$  such that $I(s)>3/a$ for all $s>s_0$.  For every interval $(i,j)\in\III_n$ with length $l=j-i$ such that $a\log n\leq l\leq s_0^{-2} \log^{2\beta}n$ we have, by Lemma~\ref{lem:ld_est},
$$
\P[\ZZZ_{i,j}>\log^{\beta} n] \leq \exp\left\{- l I\left(\frac{\log^{\beta} n}{\sqrt l}\right) \right\}
\leq \exp\left\{-\frac{3l} a\right\}\leq \frac 1 {n^3}.
$$
Since $I(s)\sim \frac {s^2}{2}$ as $s\downarrow 0$, we can find $c>0$ such that $I(s)>cs^2$ for all $s\in [0, s_0]$. It follows that for every interval $(i,j)\in \III_n$ of length $l\geq s_0^{-2} \log^{2\beta} n$,
$$
\P[\ZZZ_{i,j}>\log^{\beta} n]
\leq
\exp\left\{- l I\left(\frac{\log^{\beta} n}{\sqrt l}\right) \right\}
\leq
\exp\{-c \log^{2\beta} n \}
\leq
\frac 1 {n^3},
$$
where we have used that $\beta>\frac 12$. Since the number of intervals in $\III_n$ is at most $n^2$ it follows that~\eqref{eq:proof_sublog_tech1} holds.

\subsection{Proof of Theorem~\ref{theo:main_exp_regular}}
Choose $v\in (0,1)$ such that $2^{1-\frac {\alpha}2} v^{1+\alpha}>1$ (recall that $\alpha<2$). By assumption~\eqref{eq:exponenntial_tail} we have
$$
n\P\left[X_1> v\left(\frac{\log n}{D}\right)^{1/\alpha} \right] = n \exp\{-(v^{\alpha}+o(1)) \log n\} \to\infty, \;\;\; n\to\infty.
$$
It follows that the maximum $\UUU_n=\max\{X_1,\ldots,X_n\}$ satisfies
$$
\lim_{n\to\infty}\P\left[\UUU_n \leq  v \left(\frac{\log n}{D}\right)^{1/\alpha} \right]=0.
$$
In view of Theorem~\ref{theo:main_exp} the proof of Theorem~\ref{theo:main_exp_regular} will be complete after we have shown that
\begin{equation}\label{eq:exp_proof1}
\lim_{n\to\infty}\P\left[\MMM_n (2, \log n) \geq  v \left(\frac{\log n}{D}\right)^{1/\alpha} \right]=0.
\end{equation}
Assume first that $\alpha=1$. Then, condition~\eqref{eq:exponenntial_tail} implies that $\varphi(t)=\log \E e^{tX_1}$ is finite for $t\in [0,D)$, and equal to $+\infty$ for $t>D$. This implies that
$$
I(s)\sim Ds \text{ as } s\to\infty.
$$
Consider now the case $\alpha\in (1,2)$. By Kasahara's theorem~\cite[p.~253]{bingham_book}, condition~\eqref{eq:exponenntial_tail} is equivalent to  $\varphi(t)\sim G t^{\beta}$ as  $t\to\infty$
where $\frac 1 {\alpha}+\frac 1 {\beta}=1$ and $G^{1-\alpha}\beta^{-\alpha} (1-\beta)=D$. For the Legendre--Fenchel conjugate, one obtains~\cite[p.~48]{bingham_book}
$$
I(s)\sim Ds^{\alpha}  \text{ as } s\to\infty.
$$
Hence, both for $\alpha=1$ and for $\alpha\in (1,2)$ we have $I(s)> vDs^{\alpha}$ for large $s$. By Lemma~\ref{lem:ld_est}, for every interval $(i,j)\in \III_n$ of length $2\leq l\leq \log n$, we have, for large $n$,
$$
\P\left[\ZZZ_{i,j}>v \left(\frac{\log n}{D}\right)^{1/\alpha}\right]
\leq
\exp\left\{-l I\left(\frac {v}{\sqrt l} \left(\frac{\log n}{D}\right)^{1/\alpha}\right)\right\}
\leq
\exp\left\{- 2^{1-\frac {\alpha}2} v^{1+\alpha}  \log n\right\}.
$$
%$$
%\P\left[\ZZZ_{i,j}>v \frac{\log n}{D}\right]
%\leq
%\exp\left\{-l I\left(v\frac{\log n}{\sqrt l D}\right)\right\}
%\leq
%\exp\left\{-v^2 \sqrt 2 \log n\right\}.
%$$
Recall that $2^{1-\frac {\alpha}2} v^{1+\alpha}>1$. Since the number of intervals in $\III_n$ with length not exceeding $\log n$ is at most $n\log n$, we obtain~\eqref{eq:exp_proof1}.

\section*{Acknowledgement} Zakhar Kabluchko is grateful to Axel Munk from whom he learned about multiscale scan statistics.

\bibliographystyle{elsarticle-harv}
\bibliography{stand_incr_bib}

\end{document}